\newtheorem{theorem}{Theorem}[section]
\newtheorem*{theorem*}{Theorem}
\newtheorem{lemma}[theorem]{Lemma}
\newtheorem{prop}[theorem]{Proposition}
\newtheorem{cor}[theorem]{Corollary}
\newtheorem{property}[theorem]{Property}
\theoremstyle{definition}
\newtheorem{defin}[theorem]{Definition}
\newtheorem{oss}[theorem]{Observation}
\theoremstyle{example}
\DeclareMathOperator{\diag}{Diag}
\DeclareMathOperator{\id}{Id}
\DeclareMathOperator{\R}{R}
\DeclareMathOperator{\re}{r}
\DeclareMathOperator{\pl}{PL}
\DeclareMathOperator{\mi}{MIN}
\DeclareMathOperator{\adj}{Ad}
\DeclareMathOperator{\N}{N}
\newcommand{\cdr}[1]{\overline{#1}}
\DeclareMathOperator{\stab}{Stab}
\newcommand{\matr}[1]{\mathbf{#1}}
\DeclareMathOperator{\RM}{RM}
\newenvironment{manualtheorem}[1]{%
  \manualtheoreminner
}{\endmanualtheoreminner}
\title{On the Bruhat $\mathcal{G}$-order between local systems on the $B$-orbits of a Hermitian symmetric variety}
\date{August 2020}
\author{Michele Carmassi}
\begin{document}
\maketitle
\section{Introduction}
Let $G$ be a semisimple linear algebraic group over $\mathbb{C}$.
Fix a maximal torus $T\subseteq G$ and suppose that the root system $\Phi=\Phi(G,T)$ is irreducible.
Then, fix a basis $\Delta$ for $\Phi$ or equivalently a Borel subgroup $T\subseteq B\subseteq G$.
This gives a partition  of $\Phi=\Phi^+\sqcup -\Phi^+$ where $\Phi^+$ are the positive roots.
Let $P\supseteq B$ be a parabolic subgroup with Levi decomposition $P=L\ltimes P^u$ where $L$ is the Levi subgroup and $P^u$ is the unipotent radical of $P$.
Suppose $P^u$ abelian (this gives restrictions on the root system and the parabolic $P$).
Then $L$ is the set of fixed points for an involution of $G$ and the quotient $G/L$ is said to be an \textit{Hermitian symmetric varieties}.
The Borel $B$ acts by multiplication on $G/L$ and this action has finitely many orbits.
These orbits are the first ingredient of our study.

The situation is quite similar to the action of $B$ on the flag variety $G/B$, which was studied by Chevalley.
We know that every roots $\alpha\in\Phi$ determines a reflection $s_\alpha$ which is the linear map on $\Phi\otimes \mathbb{R}$ that fixes the hyperplane orthogonal to $\alpha$ and sends $\alpha$ to $-\alpha$.
The group generated by these reflections is called Weyl group and is denoted with $W$.
It is well known that the orbits in $G/B$ are parametrized by the elements of $W$.
Moreover, there is an isomorphism between $W$ and the group $\N_G(T)/T$ where $\N_G(T)$ is the normalizer of $T$ in $G$.
With this in mind, the orbit corresponding to $\omega\in W$ is exactly $BwB/B$ where $w$ is any representative in $\N_G(T)$ of $\omega$.
More in general, there exists a similar parametrization for the $B$-orbits on the partial flag variety $G/P$ for $P\supseteq B$ a parabolic subgroup.
Recall that the parabolic subgroups $P\supseteq G$ correspond to the subsets $S\subseteq \Delta$.
From this, we can associate two subsets of $W$ to the parabolic subgroup.
The first one is $W_P$, which is the subgroup generated by the reflections $s_\alpha$ with $\alpha\in S$, and the second one is $W^P$ which is the set of minimal length representatives for the cosets of $W_P$ in $W$.
With this notation and the correspondence above in mind we then have the decomposition 
\[G/P=\bigcup_{w\in W^P}BwP/P\]

The inclusion $L\subseteq P$ gives a map $G/L\longrightarrow G/P$ that is $B$-equivariant.
It follows that the $B$-orbits in $G/L$ can be parametrized by the elements of $W^P$ plus some other data.
The parametrization of the $B$-orbits on a Hermitian symmetric variety is due to Richardson and Springer (Theorem 5.2.4, \cite{RS2}), but we will use the description by Gandini and Maffei (\cite{GM}) which is a bit different.
For this parametrization, the additional data is a set $S\subseteq \Phi^+$ of positive roots which are mutually orthogonal and such that $v(S)<0$.
These roots all live in the subset $\Psi\doteqdot\left\lbrace \beta\in \Phi^+\mid \exists v\in W^P v(\beta)<0\right\rbrace$.
The pairs $(v,S)$ obtained this way are called \textit{admissible pairs}.

For every $\alpha\in\Psi$ fix a generator $e_\alpha$ for the root space $\mathfrak{u}_\alpha$ and if $S\subseteq \Psi$ write $e_S$ for $\sum_{\alpha\in S}e_\alpha$.
Finally, put $x_S=\exp(e_S)L/L$.
Then the correspondence in \cite{GM} is
\begin{align*}
\left\lbrace (v,S) \text{ admissible}\right\rbrace &\leftrightarrow \left\lbrace B-\text{orbits in }G/L\right\rbrace\\
(v,S) &\mapsto Bvx_S
\end{align*}
More details on this parametrization and its properties can be found in section \ref{orbits}.

In $G/B$, as well as in $G/L$, we may order the orbits with respect to the inclusion of the closures.
That is: $\mathcal{O}<\mathcal{O}'$ if and only if $\mathcal{O}\subseteq \cdr{\mathcal{O}'}$.
We obtain the so-called \textit{Bruhat order} on the $B$-orbits.
In the flag variety case, this induces an order on $W$ which is still called Bruhat order and that has a well known combinatorial characterization in terms of sub-expression of $w\in W$ when $w$ is written as a product of the simple reflections (the reflections associated to the roots in the basis) in a reduced way.
The characterization of the Bruhat order for $G/L$ is Theorem \ref{ordineGM}.
It was originally conjectured by Richardson and Springer and has been proved by Gandini and Maffei in \cite{GM}.

The second ingredient of this study are the $B$-equivariant $\mathbb{C}$-local systems of rank $1$ over the $B$-orbits.
A $\mathbb{C}$-local system of rank $1$ is a sheaf which is locally isomorphic to the constant sheaf $\mathbb{C}$.
Another way to see a local system is as a complex line bundle $\pi\colon E\longrightarrow\mathcal{O}$ with flat connection which locally trivializes as $E|_U\cong U\times \mathbb{C}$ where on $\mathbb{C}$ we consider the discrete topology.
Note that the existence of a flat connection is equivalent to $(E,\pi)$ having a trivializing open covering $\left\lbrace U_i\right\rbrace_i$ of $\mathcal{O}$ such that the transition functions are constant.
We also ask that $E$ admits a $B$-action that commutes with the $B$-action on $\mathcal{O}$, namely, that $E$ is \textit{$B$-equivariant}.
Finally, we are not interested in every local system per se, but only in their isomorphism classes.
The set of pairs $(\mathcal{O},\gamma)$ where $\mathcal{O}$ is a $B$-orbit on $G/L$ and $\gamma$ is an isomorphism class of $B$-equivariant local systems over $\mathcal{O}$ will be denoted with $\mathcal{D}$.
Following Lusztig and Vogan (\cite{Vogan} and \cite{LV}) we may put an order in $\mathcal{D}$ which is called \textit{Bruhat G-order} (definition \ref{ordinelb}).

The aim of this paper is to study the Bruhat $\mathcal{G}$-order.
In particular we want to find a combinatorial characterization of this order and we want to study the associated Hasse diagram.

The first general result is that if we define the subset $\mathcal{D}_0=\left\lbrace \left(\mathcal{O},\gamma\right)\mid \gamma \text{ is trivial}\right\rbrace$, then the Bruhat $\mathcal{G}$-order restricted to $\mathcal{D}_0$ coincides with the Bruhat order on the orbits (Proposition \ref{ordtriv}).
We then need to study the Bruhat $\mathcal{G}$-order when there are orbits that admit non-trivial root systems.
The results will depend on the type of the root system $\Phi$, but also on the group $G$.
More precisely, with some calculation we can see that if $G$ is adjoint, then all the local systems are trivial.
We then focus our attention on $G$ simply connected.
We will see that in this case we can find non-trivial local systems even though we may need some additional hypothesis.

If $\Phi$ is simply laced we will show that the orbits admit non-trivial root systems if and only if $\Psi$ verifies an additional property (Property \ref{unic}).
In this case we say that an orbit $(v,S)$ is of \textit{maximum rank} if $S$ is maximal among the orthogonal subsets of $\Psi$.
The following result characterizes completely the Bruhat $\mathcal{G}$-order in the simply laced case and it is probably the most interesting result in this paper.
\begin{manualtheorem}{\ref{risADE}}
Suppose that the linear algebraic group $G$ is simply connected and the root system $\Phi$ is simply laced.
If $\Psi$ doesn't verify Property \ref{unic}, then all local systems are trivial and $\mathcal{D}=\mathcal{D}_0$.

If instead $\Psi$ verifies Property \ref{unic}, then:
\begin{enumerate}
\item the orbits of maximum rank admit exactly two non-isomorphic local systems, one being trivial and one being non-trivial.
The other orbits admit only the trivial local system;
\item the subset of all the orbits with trivial local system is a connected component of the Hasse diagram, while the subset of the orbits of maximum rank with non-trivial local system is another connected component;
\item in every connected component, the Bruhat $\mathcal{G}$-order between the elements coincides with the Bruhat order between the underlying orbits.
\end{enumerate}
\end{manualtheorem}

If $\Phi$ is of type $\matr{B}$ the situation is similar to the simply laced cases.
The only orbits that admit non-trivial local systems are the orbits of maximum rank and again all the non-trivial local systems are isomorphic.
If $(v,S)$ is an admissible pair and hence an orbit, denote with $H(v,S)$ the set of orbits of maximum rank that are smaller than $(v,S)$.
If it is not empty it admits a maximum.
Then the following theorem characterizes the Bruhat $\mathcal{G}$-order in $\mathcal{D}$
\begin{manualtheorem}{\ref{GorderB}}
Let $(Bvx_S,\gamma),(Bux_R,\tau)\in\mathcal{D}$.
Then $(Bvx_S,\gamma)\leq (Bux_R,\tau)$ if and only if $Bvx_S\leq Bux_R$ and one of the following is true:
\begin{enumerate}
\item both $\gamma$ and $\tau$ are trivial;
\item both $\gamma$ and $\tau$ are non-trivial;
\item $\#S\neq 2$ and $\gamma$ is trivial while $\tau$ is non-trivial;
\item $\#S=2$, $u<v$ and $\gamma$ is trivial while $\tau$ is non-trivial;
\item $\gamma$ is non-trivial while $\tau$ is trivial, $H(u,R)\neq \varnothing$ and $(u',R')=\max H(u,R)$ verifies
$(v,S)\leq (u',R')$ with $v<u'$.
\end{enumerate}
\end{manualtheorem}

For $\Phi$ of type \textbf{C} we will see that the number of non isomorphic local systems for an orbit $(v,S)$ is equal to $2^i$ where $i$ is the number of long roots in $S$.
In this case the characterization of the Bruhat $\mathcal{G}$-order is incomplete.
We can associate to any local system on $(v,S)$ a sequence $X(S)$ of length $i$ of $1$s and $-1$s.
Then, to any sequence $X$ of this kind we can apply an algorithm to reduce it to a sequence $\re(X)$ of smaller length where the $1$s and $-1$s are alternated.
The most important result for this case is the following:
\begin{manualtheorem}{\ref{GorderC}}
Let $X$ and $Y$ be the sequences associated respectively to a local system on $(v,S)$ and $(u,R)$.
Then the corresponding element in $\mathcal{D}$ are in the same connected component of the Hasse diagram if and only if $\re(X)=\re(Y)$.
\end{manualtheorem}
Note that this characterizes the connected components of the Hasse diagram and it gives a necessary condition for elements of $\mathcal{D}$ to be comparable.

The paper is organized the following way.
In section \ref{notation} we will introduce the notations and definitions we will use across all the paper.
In section \ref{orbits} there will be a brief overview of many results from \cite{GM} regarding the orbits in $G/L$ and the Bruhat order between them, while in section \ref{linebundle} we will follow \cite{Vogan} and \cite{LV} results on the local systems on the aforementioned orbit. 
Both these sections will introduce many useful theorems and properties that will be used in the latter part of the paper.
Finally sections \ref{sl},\ref{B} and \ref{C} will contain our results regarding respectively the simply laced case, the type \textbf{B} case and the type \textbf{C} case.

\textit{Acknowledgements}. I want to thank Jacopo Gandini and Andrea Maffei for their help in understanding the problem at hand, especially from the geometric point of view.
In particular, I want to thank Jacopo Gandini for his help with the more material stuff, like computing the connected components of the stabilizers in the simply connected and adjoint cases and teaching me how to use the software Lie to verify ideas and conjectures.

\section{Notations and definitions}\label{notation}

From now on, $G$ will be a connected semisimple algebraic group over $\mathbb{C}$.
We suppose that $G$ admits a parabolic subgroup $P$ such that the unipotent radical $P^u$ of $P$ is abelian.
This is the same as asking that Lie algebra $\mathfrak{p}^u$ of $P^u$ is abelian.

Fix $B$ a Borel subgroup of $G$ such that $B\subseteq P\subseteq G$ and a torus $T\subseteq B$.
This gives a root system $\Phi=\Phi(G,T)$ and a basis for $\Phi$ that we denote with $\Delta$.
The set of positive roots will be denoted with $\Phi^+$.
Recall that the Lie algebra $\mathfrak{g}$ of $G$ admits a decomposition
\[\mathfrak{g}=\mathfrak{t}\oplus \bigoplus_{\alpha\in\Phi}\mathfrak{u}_\alpha\]
where $\mathfrak{t}$ is the Lie algebra of $T$ and $\mathfrak{u}_\alpha$ is the root space relative to the root $\alpha$ which is always uni-dimensional.
Similarly, $G$ can be generated by $T$ and the images of the one parameter subgroups $u_\alpha\colon\mathbb{C}\longrightarrow G$ for every $\alpha\in \Phi$.
We denote the images of these morphisms with $U_\alpha$.

It is known that every parabolic group $Q\supseteq B$ corresponds to a subset of $\Delta$ and it is easy to see that our parabolic group $P$ must correspond to a subset $S=\Delta\setminus \left\lbrace\alpha_P\right\rbrace$ where $\alpha_P\in\Delta$ is a simple root that appears with coefficient $1$ in the decomposition of the highest root $\theta$.
Note that this implies that $\Phi$ can't be of type $\bf{E}_8,\bf{F}_4$ or $\bf{G}_2$.
We denote with $\Phi_P$ the root sub-system generated by the roots in $S$.
Put $\Psi=\Phi^+\setminus\Phi_P$.
Equivalently, $\Psi$ is the set of positive roots with $\alpha_P$ in their decomposition.

For every root $\alpha\in\Delta$ we have a hyperplane $H_\alpha$ in $\Phi\otimes \mathbb{R}$ which is orthogonal to $\alpha$ and the reflection which fixes $H_\alpha$ and sends $\alpha$ to $-\alpha$.
We will call these reflections \textit{simple} and we will denote them as $s_\alpha$.
The group of endomorphisms of $\Phi\otimes \mathbb{R}$ generated by the $s_\alpha (\alpha\in\Delta)$ is called the \textit{Weyl group} and we will denote it with $W$.
It is naturally isomorphic to $\N_G(T)/T$ where $\N_G(T)$ is the normalizer of $T$ in $G$.
Note that every element $w$ of $W$ admits a (non-unique) smallest expression as product of the $s_\alpha$ which we will call \textit{reduced expression} of $w$.
The length of this minimal expression will be the \textit{length} of $w$ and will be denoted as $l(w)$.
Note that if $\Phi^+(w)=\left\lbrace \alpha\in\Phi^+\mid v(\alpha)<0\right\rbrace$ then $l(w)=\#\Phi^+(w)$.

In $W$ we can consider the subgroup $W_P$ generated by the reflections $s_\alpha$ with $\alpha\neq \alpha_P$.
This is the Weyl group of $\Phi_P$.
Every coset of $W_P$ in $W$ admits a representative of minimal length; the set of these representatives is denoted with $W^P$.
We also have 
\[W^P=\left\lbrace w\in W\mid w(\alpha)>0 \forall \alpha\neq \alpha_P\right\rbrace\]

Now, $P$ admits a Levi decomposition $P\cong L\rtimes P^u$ where $L$ is called the  \textit{Levi subgroup} of $P$ and the variety $G/L$ is said to be a \textit{Hermitian symmetric variety}.
Note that $L$ is reductive and its root system is $\Phi_P$.
Put $B_L=B\cap L$.
The Borel subgroup acts on $G/L$ by multiplication and the orbits of this action will be the center of the next section.

While analysing the orbits we will come across some specific involutions in $W$.
In general, if $\sigma\in W$ is an involution and $\alpha\in\Delta$ we will say that $\alpha$ is:
\begin{enumerate}
\item \textit{real} if $\sigma(\alpha)=-\alpha$;
\item \textit{imaginary} if $\sigma(\alpha)=\alpha$;
\item \textit{complex} if $\sigma(\alpha)\neq \pm\alpha$.
\end{enumerate}

\section{The orbits}\label{orbits}

The first object of our study are the $B$-orbits in $G/L$.
Note that they naturally correspond to the $L$-orbits in the flag variety $B\backslash G$ as well as the $B\times L$ orbits in $G$.

It is easy to see that the (Zariski) closure of an orbit $\mathcal{O}$ is a union of $B$-orbits.
This let us put an order on the set of the orbits by imposing that $\mathcal{O}<\mathcal{O}'$ if and only if $\mathcal{O}\subseteq\cdr{\mathcal{O}'}$.
In analogy with the case of the $B$-orbits in $G/B$ this will be called the \textit{Bruhat order} on the $B$-orbits. 
Later, we will associate every one of these orbits with one or more non-isomorphic local systems.
The set $\mathcal{D}$ of pairs $(\mathcal{O},\gamma)$ where $\mathcal{O}$ is a $B$-orbit and $\gamma$ a (isomorphism class of) local system on $\mathcal{O}$ will be given another order in definition \ref{ordinelb}.
We will see that on the level of orbits this order is quite similar to the Bruhat order defined above.
Hence, the Bruhat order will be of great importance in our study.

The most complete result on the Bruhat order in this case can be found in \cite{GM}.
It equals the order among two orbits $\mathcal{O}$ and $\mathcal{O}'$ to the Bruhat order between combinatorial objects associated to these orbits.

In this paper we will use not only the final characterization of Gandini and Maffei, but also some of the intermediate results.
For this reason, we will now briefly summarize most of \cite{GM}.
Recall that $\Phi^+(v)=\left\lbrace \alpha\in\Phi^+\mid v(\alpha)<0\right\rbrace$

\begin{prop}[Lemma 2.4, \cite{GM}]\label{maxim}
Let $v\in W^P$ and let $\alpha\in \Delta$ such that $s_\alpha v<v$. 
Denote $\beta=-v^{-1}(\alpha)$.
Then $\beta$ is maximal in $\Phi^+(v)$ and minimal in $\Psi\setminus \Phi^+(s_\alpha v)$.

Vice versa:
\begin{enumerate}
\item if $\beta$ is maximal in $\Phi^+(v)$ then $\alpha=-v(\beta)\in \Delta$ and $s_\alpha v<v$;
\item if $\beta$ is minimal in $\Psi\setminus \Phi^+(v)$ then $\alpha=v(\beta)\in \Delta$ and $s_\alpha v>v$.
\end{enumerate}
\end{prop}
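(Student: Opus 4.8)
The plan is to reduce everything to two structural facts about $v\in W^P$ and the cominuscule hypothesis, and then read off all four assertions from the combinatorics of a single partial order on $\Psi$. First I would record the standard translation of the length condition: $s_\alpha v<v$ holds exactly when $v^{-1}(\alpha)<0$, so that $\beta=-v^{-1}(\alpha)$ is a positive root with $v(\beta)=-\alpha<0$; hence $\beta\in\Phi^+(v)$, and since $v(\Phi_P^+)\subseteq\Phi^+$ for $v\in W^P$ the root $\beta$ cannot lie in $\Phi_P$, i.e.\ $\beta\in\Psi$. The same computation, tracking how $s_\alpha$ acts on roots, shows $\Phi^+(s_\alpha v)=\Phi^+(v)\setminus\{\beta\}$, which is what links the two halves of the first paragraph of the statement.

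The key lemma I would isolate is: since $P^u$ is abelian, $\alpha_P$ occurs with coefficient $1$ in the highest root, so every root has $\alpha_P$-coefficient in $\{-1,0,1\}$ and $\Psi$ is precisely the set of positive roots with coefficient $1$. Using $v(\alpha_i)>0$ for all $i\neq P$, I would then prove that $\Phi^+(v)$ is contained in $\Psi$ and is an \emph{order ideal} of $(\Psi,\leq)$: if $\gamma\in\Phi^+(v)$ and $\delta\in\Psi$ with $\delta<\gamma$, then $\gamma-\delta$ is a non-negative combination of the $\alpha_i$ with $i\neq P$ (the $\alpha_P$-coefficients cancel), so $-v(\delta)=-v(\gamma)+\sum c_i v(\alpha_i)$ lies in the positive cone and is a root, forcing $v(\delta)<0$. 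Granting this, the first paragraph is immediate: every $\delta\in\Psi$ below $\beta$ lies in $\Phi^+(v)$, so nothing below $\beta$ survives in $\Psi\setminus\Phi^+(s_\alpha v)=(\Psi\setminus\Phi^+(v))\cup\{\beta\}$, giving minimality; and if some $\gamma\in\Phi^+(v)$ were strictly above $\beta$, the ideal property would put a covering root $\beta+\alpha_j$ (with $\alpha_j\neq\alpha_P$, else the $\alpha_P$-coefficient would reach $2$) into $\Phi^+(v)$, whence $v(\alpha_j)-\alpha=v(\beta+\alpha_j)$ would be a negative root with $v(\alpha_j)>0$ and $\alpha$ simple --- impossible, since a positive root cannot be strictly below a simple root. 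This contradiction gives maximality.

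For the converse statements I would argue by contradiction on the assumption that $\alpha=-v(\beta)$ (respectively $\alpha=v(\beta)$) is not simple; note that once $\alpha$ is shown to be simple, $s_\alpha v<v$ (resp.\ $>v$) follows at once from $v^{-1}(\alpha)=-\beta<0$ (resp.\ $=\beta>0$). If $\alpha$ is a non-simple positive root, choose a simple $\alpha_i$ with $\langle\alpha,\alpha_i^\vee\rangle\geq 1$, so that $\alpha-\alpha_i\in\Phi^+$, and transport back: in case $(1)$ set $\gamma=-v^{-1}(\alpha-\alpha_i)=\beta+v^{-1}(\alpha_i)$, which satisfies $v(\gamma)=-(\alpha-\alpha_i)<0$. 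I expect the genuine work --- and the main obstacle --- to be the bookkeeping on the sign and $\alpha_P$-coefficient of $v^{-1}(\alpha_i)$: when $v^{-1}(\alpha_i)>0$ it adds a root above $\beta$ inside the ideal $\Phi^+(v)$, contradicting maximality, while when $v^{-1}(\alpha_i)<0$ the root $-v^{-1}(\alpha_i)$ itself lies in $\Phi^+(v)$ strictly above $\beta$ (its $\alpha_P$-coefficient forces it out of $\Phi_P$), again a contradiction. The minimality case $(2)$ is the mirror image: $\delta=v^{-1}(\alpha-\alpha_i)$ is forced to be positive (a negative value would demand $\alpha_P$-coefficient $2$), and a short case split on whether $\delta\in\Psi$ or $\delta\in\Phi_P^+$ produces an element of $\Psi\setminus\Phi^+(v)$ strictly below $\beta$, again contradicting minimality. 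Throughout, the only inputs are $v\in W^P$ and the $\{-1,0,1\}$ grading coming from the abelianness of $P^u$; the behaviour of the longest element in type $\matr{A}_2$, where maximal inversions need not map to simple roots, shows that these hypotheses are genuinely used.
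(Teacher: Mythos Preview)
The paper does not prove this proposition; it is quoted verbatim from \cite{GM} (as Lemma~2.4 there) without argument, so there is no in-paper proof to benchmark against. Your write-up therefore has to stand on its own, and by and large it does: the reduction to the $\{-1,0,1\}$ grading by the $\alpha_P$-coefficient, the observation that $\Phi^+(v)$ is an order ideal of $\Psi$ for $v\in W^P$, and the identity $\Phi^+(s_\alpha v)=\Phi^+(v)\setminus\{\beta\}$ are exactly the right structural inputs, and the converse arguments by contradiction on simplicity of $\alpha$ are correct once the sign bookkeeping is completed.

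Two places deserve a line more of care. First, in the maximality argument you invoke a covering root $\beta+\alpha_j\in\Phi^+(v)$ below $\gamma$; the existence of such a cover in $\Psi$ is true but not entirely free (it needs a small induction or an inner-product argument on $\gamma-\beta$). You can bypass this entirely: if $\gamma>\beta$ lies in $\Phi^+(v)$ then $-v(\gamma)=\alpha-\sum_{i\neq P}c_i\,v(\alpha_i)$ with $c_i\geq0$ not all zero and each $v(\alpha_i)>0$; a simple root minus a nonzero non-negative sum of positive roots is never a positive root, contradicting $-v(\gamma)>0$. Second, in converse~(1), case $v^{-1}(\alpha_i)<0$: the assertion that $\delta:=-v^{-1}(\alpha_i)$ lies \emph{strictly above} $\beta$ is correct but needs the extra step that $\delta-\beta=v^{-1}(\alpha-\alpha_i)$ has $\alpha_P$-coefficient $0$, hence lies in $\Phi_P$, and must be positive because $v\in W^P$ sends $\Phi_P^+$ into $\Phi^+$ while $v(\delta-\beta)=\alpha-\alpha_i>0$. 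With these two clarifications your proof is complete.
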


We denote with $<$ the Bruhat order on $W$.
We now study the Bruhat order between elements of $W^P$.
\begin{lemma}[Proposition 2.3, \cite{GM}]\label{ordWP}
Let $v,w\in W^P$.
Then $v\leq w$ if and only if $\Phi^+(v)\subseteq \Phi^+(w)$.
\end{lemma}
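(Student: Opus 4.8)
The plan is to reformulate everything in terms of inversion sets and prove the two implications separately; throughout I write $N(v)=\Phi^+(v)$. The first thing I would record is that for $v\in W^P$ the inversion set lies in $\Psi$: if $\eta\in\Phi_P^+$ then $v(\eta)>0$ by the description $W^P=\{w\mid w(\alpha)>0\ \forall\alpha\neq\alpha_P\}$, so no root of $\Phi_P$ is inverted and $N(v)\subseteq\Phi^+\setminus\Phi_P=\Psi$. The key structural input, and the reason the statement can hold at all (it already fails for $P=B$, where $W^P=W$), is that the hypothesis $\mathfrak{p}^u$ abelian forces every root of $\Psi$ to have $\alpha_P$-coefficient exactly $1$. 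From this I would prove that $N(v)$ is a \emph{lower set} (order ideal) of $\Psi$ for the root order: if $\beta\in N(v)$ and $\beta'\le\beta$ with $\beta'\in\Psi$, then $\beta-\beta'$ has $\alpha_P$-coefficient $0$, hence is a non-negative combination $\eta$ of the simple roots in $S$; since $v\in W^P$ sends those to positive roots, $v(\eta)$ lies in the positive cone, so $v(\beta')=v(\beta)-v(\eta)$ is a root with non-positive coordinates, i.e. $\beta'\in N(v)$.

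For the implication $N(v)\subseteq N(w)\Rightarrow v\le w$ I would build a saturated chain from $v$ up to $w$ inside $W^P$, adjoining one inverted root at a time. If $N(v)\subsetneq N(w)$, choose $\beta$ minimal in $N(w)\setminus N(v)$; because $N(w)$ is a lower set, $\beta$ is in fact minimal in $\Psi\setminus N(v)$. By the ``vice versa'' part of Proposition \ref{maxim}, $\alpha:=v(\beta)\in\Delta$ and $s_\alpha v>v$; moreover $s_\alpha v\in W^P$ (as $\beta\in\Psi$ is not in $\Phi_P$) and $N(s_\alpha v)=N(v)\sqcup\{\beta\}\subseteq N(w)$. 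Iterating replaces $v$ by $s_\alpha v$, strictly enlarging $N(v)$ while staying below $N(w)$, and terminates at $w$; each step is a Bruhat cover, so $v\le w$.

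For the converse $v\le w\Rightarrow N(v)\subseteq N(w)$ — the genuinely non-trivial direction — I would induct on $l(w)$ using the lifting property of the Bruhat order. Assuming $v<w$, pick $\alpha\in\Delta$ with $s_\alpha w<w$; then $s_\alpha w\in W^P$ and, by Proposition \ref{maxim}, $N(w)=N(s_\alpha w)\sqcup\{\gamma\}$ with $\gamma=-w^{-1}(\alpha)$ maximal in $N(w)$. If $s_\alpha v>v$, the lifting property gives $v\le s_\alpha w$, and the inductive hypothesis yields $N(v)\subseteq N(s_\alpha w)\subseteq N(w)$. If instead $s_\alpha v<v$, then $s_\alpha v\in W^P$, the lifting property gives $s_\alpha v\le s_\alpha w$, and induction gives $N(s_\alpha v)\subseteq N(s_\alpha w)$; writing $N(v)=N(s_\alpha v)\sqcup\{\delta\}$ with $\delta=-v^{-1}(\alpha)$ maximal in $N(v)$, everything is settled except the single membership $\delta\in N(w)$.

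The main obstacle is exactly this last point, $\delta\in N(w)$, in the case where $\alpha$ is a common left descent of $v$ and $w$. Here the order-ideal property does the decisive work: since $N(w)$ is a lower set of $\Psi$ and $\gamma\in N(w)$, it suffices to prove $\delta\le\gamma$ in the root order, for then $\delta\in N(w)$ automatically by downward closure. Both $\delta$ and $\gamma$ are the maximal inverted roots attached to the \emph{same} simple root $\alpha$ (they satisfy $v(\delta)=w(\gamma)=-\alpha$), and I would extract the comparison $\delta\le\gamma$ from the already-established containment $N(s_\alpha v)\subseteq N(s_\alpha w)$ together with the fact that all three sets are lower sets of $\Psi$. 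Proving this comparison cleanly — equivalently, showing that a Bruhat cover inside $W^P$ never drops an inverted root, a phenomenon that genuinely fails for arbitrary parabolic quotients — is the crux of the argument and the step I expect to require the most care.
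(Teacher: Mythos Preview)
The paper does not prove this lemma; it simply records it as Proposition~2.3 of \cite{GM}. So there is no ``paper's proof'' to compare against, and your attempt must stand on its own.

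Your argument for the implication $\Phi^+(v)\subseteq\Phi^+(w)\Rightarrow v\le w$ is correct: the lower-set property of $N(v)$ is established cleanly, the use of Proposition~\ref{maxim} to climb one root at a time is exactly right, and each step is a genuine Bruhat cover inside $W^P$.

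The forward implication, however, has a real gap precisely where you flag it. In Case~2 you reduce everything to the single membership $\delta\in N(w)$ and propose to deduce it from $\delta\le\gamma$, but you never prove that inequality; you only say you ``would extract'' it from $N(s_\alpha v)\subseteq N(s_\alpha w)$ and the lower-set property. Those two facts alone do not force $\delta\le\gamma$. What they do give is: (i) $\gamma\notin N(v)\setminus\{\delta\}$ (else $\gamma\in N(s_\alpha v)\subseteq N(s_\alpha w)$, absurd), hence $\gamma=\delta$ or $\gamma\notin N(v)$; and (ii) $\delta\not>\gamma$. This still leaves open the possibility that $\delta$ and $\gamma$ are \emph{incomparable}, and in that situation nothing you have written rules out $\delta\notin N(w)$. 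One can push further---if $\delta\notin N(w)$ then $\delta$ is minimal in both $\Psi\setminus N(w)$ and $\Psi\setminus N(s_\alpha w)$, so both $w(\delta)$ and $(s_\alpha w)(\delta)$ are simple, forcing $w(\delta)=\mu\in\Delta$ with $\mu\perp\alpha$---but turning this into a contradiction with $v\le w$ still requires an extra argument you have not supplied. So as written the proof is incomplete, and your own closing remark (``the step I expect to require the most care'') is an honest acknowledgement that the crux is missing rather than a proof of it.
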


Note that this means that $v\in W^P$ is uniquely determined by $\Phi^+(v)$.
Moreover, it is clear that the sets $\Phi^+(v)$ are saturated in the sense that if $\beta\in \Phi^+(v)$ and $\alpha\leq \beta$, then $\alpha\in\Phi^+(v)$.

\begin{prop}
If $V\subseteq \Psi$ is saturated then there is $v\in W^P$ such that $V=\Phi^+(v)$.
\end{prop}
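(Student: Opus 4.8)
The plan is to argue by induction on the cardinality $\#V$, building $v$ from a slightly smaller element of $W^P$ by a single simple reflection. The base case $V=\varnothing$ is handled by $v=e$, since $\Phi^+(e)=\varnothing$. For the inductive step, suppose $V\neq\varnothing$ and pick $\beta$ maximal in $V$ for the root order. First I would check that $V'\doteqdot V\setminus\{\beta\}$ is again saturated: if $\gamma\in\Psi$ satisfies $\gamma\leq\beta'$ for some $\beta'\in V'$, then $\gamma\in V$ by saturation of $V$, and $\gamma\neq\beta$ because $\gamma=\beta$ would give $\beta\leq\beta'$ with $\beta'\in V$, contradicting the maximality of $\beta$; hence $\gamma\in V'$. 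By the inductive hypothesis there is then $w\in W^P$ with $\Phi^+(w)=V'$.

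Next I would produce the simple reflection that turns $w$ into the desired $v$. The key observation is that $\beta$ is minimal in $\Psi\setminus\Phi^+(w)=\Psi\setminus V'$: any $\gamma\in\Psi$ with $\gamma<\beta$ lies in $V$ by saturation, and being different from $\beta$ it lies in $V'$, so it cannot belong to $\Psi\setminus V'$. Applying the second part of Proposition \ref{maxim} to $w$ and $\beta$, we obtain that $\alpha\doteqdot w(\beta)\in\Delta$ and $s_\alpha w>w$. I then set $v\doteqdot s_\alpha w$.

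It remains to identify $\Phi^+(v)$ and to check that $v\in W^P$. A direct sign analysis gives the inversion-set identity $\Phi^+(s_\alpha w)=\Phi^+(w)\cup\{\beta\}$: for a positive root $\gamma$, applying $s_\alpha$ after $w$ changes the sign of $w(\gamma)$ only when $w(\gamma)=\pm\alpha$, and among positive $\gamma$ the only new inversion is $\gamma=w^{-1}(\alpha)=\beta$, while the case $w(\gamma)=-\alpha$ forces $\gamma=-\beta<0$ and so removes nothing from the positive side. Hence $\Phi^+(v)=V'\sqcup\{\beta\}=V$. Finally, since $\Psi$ contains no simple root other than $\alpha_P$, the condition $\Phi^+(u)\subseteq\Psi$ is equivalent to $u(\gamma)>0$ for every simple $\gamma\neq\alpha_P$, i.e. to $u\in W^P$; as $\Phi^+(v)=V\subseteq\Psi$, this yields $v\in W^P$ and closes the induction. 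The main point to get right is the precise reading of \emph{saturated} — the comparisons $\gamma\leq\beta$ are taken among roots of $\Psi$, which is legitimate because $\Phi^+(w)\subseteq\Psi$ already excludes the roots of $\Phi_P$ — together with the careful bookkeeping in the inversion-set identity; everything else is a formal consequence of Proposition \ref{maxim}.
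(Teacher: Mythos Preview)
Your argument is correct and follows essentially the same route as the paper: induction on $\#V$, removal of a maximal $\beta$, application of Proposition~\ref{maxim} to the resulting $w\in W^P$, and setting $v=s_\alpha w$. You spell out explicitly the saturation of $V'$, the minimality of $\beta$ in $\Psi\setminus\Phi^+(w)$, the inversion-set identity $\Phi^+(s_\alpha w)=\Phi^+(w)\cup\{\beta\}$, and the membership $v\in W^P$, all of which the paper leaves implicit.
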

\begin{proof}
We will show this by induction on the cardinality of $V$.
If $\#V=0$ the claim is clear, so suppose $\#V=d>0$ and fix $\beta\in V$ maximal.
Then $V'=V\setminus\left\lbrace\beta\right\rbrace$ is still saturated, hence by induction there is $v'$ such that $V'=\Phi^+(v')$.
But now $\beta$ is minimal in $\Psi\setminus \Phi^+(v')$ so there is $\alpha\in \delta$ such that $V=\Phi^+(s_\alpha v')$ as we wanted.
\end{proof}

Note that this implies that for every $v,v'\in W^P$ the set $\left\lbrace w\in W^P\mid w\leq v, w\leq v'\right\rbrace$ has a maximum $w_0$ that is defined by $\Phi^+(w_0)=\Phi^+(v)\cap \Phi^+(v')$.
Moreover, the maximal elements of a saturated set $V\subseteq \Psi$ uniquely identify $V$, hence they uniquely identify $v\in W^P$.

\begin{lemma}
Let $v, w\in W^P$ and $v\leq w$.
Then there is a sequence of simple roots $\alpha_1,\ldots,\alpha_n$ such that $w=s_{\alpha_n}\cdots s_{\alpha_1}v$ and for every $i\in\left\lbrace 1,\ldots,n\right\rbrace, s_{\alpha_i}\left(s_{\alpha_{i-1}}\cdots s_{\alpha_1}v\right)>s_{\alpha_{i-1}}\cdots s_{\alpha_1}v$
\end{lemma}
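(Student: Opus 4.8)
The plan is to translate the statement into the combinatorics of inversion sets and then induct on $l(w)-l(v)$. By Lemma \ref{ordWP} the hypothesis $v\leq w$ is equivalent to $\Phi^+(v)\subseteq\Phi^+(w)$, and since $v,w\in W^P$ both inversion sets lie inside $\Psi$ (for $v\in W^P$ one has $\Phi^+(v)\cap\Phi_P=\varnothing$, because $v$ sends the simple roots of $\Phi_P$, hence all of $\Phi_P^+$, to positive roots), so the whole argument takes place inside $\Psi$. I would induct on $d=\#(\Phi^+(w)\setminus\Phi^+(v))=l(w)-l(v)$. When $d=0$ the two inversion sets coincide, hence $v=w$ (recall $v\in W^P$ is determined by $\Phi^+(v)$) and the empty sequence works. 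For the inductive step it suffices to produce a single simple root $\alpha_1$ with $s_{\alpha_1}v>v$, still $\leq w$, and $l(s_{\alpha_1}v)=l(v)+1$; prepending $\alpha_1$ to the sequence furnished by the inductive hypothesis applied to the pair $(s_{\alpha_1}v,w)$ then finishes, since each step of that sequence is length-increasing and the prepended step is as well.

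To find $\alpha_1$, I would choose $\beta$ minimal, for the root order, in $\Phi^+(w)\setminus\Phi^+(v)$. The crucial observation is that $\beta$ is in fact minimal in the larger set $\Psi\setminus\Phi^+(v)$: if some $\gamma\in\Psi\setminus\Phi^+(v)$ had $\gamma<\beta$, then saturation of $\Phi^+(w)$ (recall the sets $\Phi^+(\cdot)$ are downward closed) would force $\gamma\in\Phi^+(w)$, giving $\gamma\in\Phi^+(w)\setminus\Phi^+(v)$ with $\gamma<\beta$ and contradicting the minimality of $\beta$. Now Proposition \ref{maxim}(2) applies to $\beta$ and yields $\alpha_1\doteqdot v(\beta)\in\Delta$ together with $s_{\alpha_1}v>v$.

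It then remains to identify $\Phi^+(s_{\alpha_1}v)$. Since $s_{\alpha_1}v>v$ and left multiplication by a simple reflection changes the length by exactly one, we get $l(s_{\alpha_1}v)=l(v)+1$, so $\Phi^+(v)\subsetneq\Phi^+(s_{\alpha_1}v)$ differ by a single root. Because $(s_{\alpha_1}v)(\beta)=s_{\alpha_1}(\alpha_1)=-\alpha_1<0$, that extra root is $\beta$, whence $\Phi^+(s_{\alpha_1}v)=\Phi^+(v)\cup\{\beta\}\subseteq\Phi^+(w)$; by Lemma \ref{ordWP} this gives $s_{\alpha_1}v\leq w$ and lowers $d$ by one, closing the induction. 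I expect the step requiring the most care to be the upgrade from ``minimal in $\Phi^+(w)\setminus\Phi^+(v)$'' to ``minimal in $\Psi\setminus\Phi^+(v)$'', which is precisely what licenses the use of Proposition \ref{maxim}(2); a secondary bookkeeping check is that $s_{\alpha_1}v$ again lies in $W^P$, which holds since $\alpha_1=v(\beta)$ with $\beta\in\Psi$ cannot equal $v(\gamma)$ for any $\gamma\in\Delta\setminus\{\alpha_P\}$ (else $\gamma=\beta\notin\Phi_P$, a contradiction).
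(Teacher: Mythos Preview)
Your proof is correct and follows essentially the same approach as the paper: pick $\beta$ minimal in $\Phi^+(w)\setminus\Phi^+(v)$, observe (via saturation of $\Phi^+(w)$) that it is then minimal in $\Psi\setminus\Phi^+(v)$, apply Proposition~\ref{maxim}(2) to obtain a simple $\alpha$ with $v<s_\alpha v\leq w$, and induct. You have spelled out several points the paper leaves implicit, namely the saturation argument, the identification $\Phi^+(s_{\alpha_1}v)=\Phi^+(v)\cup\{\beta\}$, and the verification that $s_{\alpha_1}v\in W^P$.
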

\begin{proof}
We know that $v\leq w$ if and only if $\Phi^+(v)\subseteq \Phi^+(w)$.
Then, if $v\neq w$ there is a minimal element $\beta\in\Phi^+(w)\setminus\Phi^+(v)$.
This element must be minimal also in $\Psi\setminus \Phi^+(v)$, so there is $\alpha\in \Delta$ such that $v<s_\alpha v\leq w$ and inductively we conclude.
\end{proof}

For general elements in $W$ we have the following:

\begin{lemma}[Lemma 2.7, \cite{GM}]\label{propord}
Let $u,v\in W$ and suppose $u<v$.
For every $\alpha\in \Delta$ we have:
\begin{enumerate}
\item if $s_\alpha u>u$ and $s_\alpha v>v$ then $s_\alpha u<s_\alpha v$;
\item if $s_\alpha u<u$ and $s_\alpha v<v$ then $s_\alpha u<s_\alpha v$;
\item if $s_\alpha u>u$ and $s_\alpha v<v$ then $u\leq s_\alpha v$ and $s_\alpha u\leq v$.
\end{enumerate}
\end{lemma}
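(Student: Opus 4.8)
The three assertions are the classical \emph{lifting} (or \emph{Z}-) properties of the Bruhat order on the Coxeter group $W$, and the plan is to deduce all of them uniformly from the standard \emph{subword property}: for $x,y\in W$ one has $x\leq y$ if and only if, having fixed \emph{any} reduced expression $y=s_{i_1}\cdots s_{i_k}$, there is a subsequence $j_1<\dots<j_l$ with $s_{i_{j_1}}\cdots s_{i_{j_l}}$ a reduced expression for $x$. Throughout I write $s=s_\alpha$ and use that $l(sw)=l(w)\pm 1$, so exactly one of $sw>w$, $sw<w$ holds; concretely, $sw<w$ is equivalent to $w$ admitting a reduced expression beginning with $s$, while $sw>w$ is equivalent to $w$ admitting no such expression.

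For part (1) I would start from a reduced expression of $v$. Since $sv>v$, prepending $s$ yields a reduced expression $sv=s\,s_{i_1}\cdots s_{i_k}$. Because $su>u$, a reduced expression for $su$ is obtained by prepending $s$ to any reduced expression of $u$; choosing for $u$ the reduced subword of $s_{i_1}\cdots s_{i_k}$ supplied by $u\leq v$, the letter $s$ followed by that subword is a reduced subexpression for $su$ sitting inside the fixed reduced expression of $sv$. Hence $su\leq sv$, and equality is impossible, since $su=sv$ would force $u=v$; thus $su<sv$.

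For parts (2) and (3) I would instead exploit $sv<v$, writing $v=s\,q_1\cdots q_m$ reduced, so that $sv=q_1\cdots q_m$ is reduced, and then fix the reduced subexpression for $u$ furnished by $u\leq v$ inside $s\,q_1\cdots q_m$. In case (3) the hypothesis $su>u$ means no reduced expression of $u$ begins with $s$, so this subexpression cannot use the initial letter $s$; it therefore lies inside $q_1\cdots q_m$, giving $u\leq sv$, and prepending $s$ to it produces a reduced subexpression for $su$ inside $s\,q_1\cdots q_m=v$, giving $su\leq v$. In case (2), where $su<u$, I would split according to whether the subexpression uses the initial $s$: if it does not, then $u\leq sv$, and $su<u\leq sv$ gives $su<sv$; if it does, then the subexpression is $s$ followed by a word $w'$ inside $q_1\cdots q_m$, and since $su<u$ this $w'$ is reduced for $su=w'$, whence $su\leq sv$, with strictness again because $su=sv$ would force $u=v$.

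The only delicate point is the case analysis in (2), combined with the precise quantifier in the subword property: one must invoke the form valid for \emph{every} chosen reduced expression of the larger element, so that fixing a reduced expression of $v$ beginning with $s$ is legitimate and the embedded reduced subword for $u$ is genuinely available inside it. Granting this standard fact, the strictness of each conclusion is routine, following in every case from the observation that $su=sv$ implies $u=v$.
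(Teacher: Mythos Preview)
The paper does not supply its own proof of this lemma: it is quoted verbatim as Lemma~2.7 of \cite{GM} and used as a black box, so there is nothing to compare against. Your argument is the standard derivation of the lifting (or ``Z-'') property from the subword characterisation of the Bruhat order, and it is correct; the one point that deserves care---and that you flagged---is using the version of the subword property valid for \emph{every} reduced expression of the larger element, so that in parts (2) and (3) you may legitimately fix a reduced expression of $v$ beginning with $s$.
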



Following \cite{RS} we will associate to every orbit a particular involution in $W$.

Let $\mathcal{I}\subseteq W$ be the subset of all involutions.
We can define an action of the set of simple reflections $s_\alpha$ on $\mathcal{I}$ in the following way:
$$s_\alpha\circ\sigma =\left\lbrace \begin{array}{ll}
                  s_\alpha\sigma & \text{if } s_\alpha\sigma=\sigma s_\alpha\\
                  s_\alpha\sigma s_\alpha &\text{if } s_\alpha\sigma\neq\sigma s_\alpha\\
                  
                \end{array}
              \right.
$$
Note that $s_\alpha\circ\sigma=\tau$ if and only if $s_\alpha\circ\tau=\sigma$.
\begin{lemma}[3.1, \cite{GM}]\label{lemmaBruhatcirc}
Let $\alpha\in\Delta$ and $\sigma\in \mathcal{I}$.
Then $ s_\alpha \circ \sigma$ and $\sigma$ are always comparable.
Moreover, $s_\alpha \circ \sigma>\sigma$ if and only if $s_\alpha \sigma>\sigma$.
\end{lemma}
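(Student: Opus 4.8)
The plan is to work directly from the two-case definition of the operation $\circ$ and to translate every Bruhat comparison into the sign of a single root, $\sigma(\alpha)$. Throughout I would lean on two standard facts about Coxeter groups. First, for any $w\in W$ and any simple reflection $s$, the elements $w$ and $sw$ (respectively $w$ and $ws$) are always comparable, with $\ell(sw)=\ell(w)\pm 1$, so one covers the other in the Bruhat order. Second, the root criterion $s_\alpha w>w\iff w^{-1}(\alpha)>0$ and $ws_\alpha>w\iff w(\alpha)>0$. Because $\sigma$ is an involution, $\sigma^{-1}=\sigma$, so both criteria collapse to the same condition: $s_\alpha\sigma>\sigma\iff\sigma(\alpha)>0$ and $\sigma s_\alpha>\sigma\iff\sigma(\alpha)>0$. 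This coincidence is what makes the involution hypothesis do the work.

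The commuting case is immediate. If $s_\alpha\sigma=\sigma s_\alpha$ then $s_\alpha\circ\sigma=s_\alpha\sigma$, which differs from $\sigma$ by a single left multiplication by a simple reflection, hence is comparable to $\sigma$ by the first standard fact; and the ``moreover'' clause is a tautology, since the two quantities being compared literally coincide.

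For the non-commuting case I would first record that $s_\alpha\sigma\neq\sigma s_\alpha$ is equivalent to $\sigma(\alpha)\neq\pm\alpha$, i.e. to $\alpha$ being complex for $\sigma$: indeed $\sigma s_\alpha\sigma^{-1}=s_{\sigma(\alpha)}$, and this equals $s_\alpha$ exactly when $\sigma(\alpha)=\pm\alpha$. Here $s_\alpha\circ\sigma=s_\alpha\sigma s_\alpha$, which I read as a left multiplication $\sigma\mapsto s_\alpha\sigma$ followed by a right multiplication $s_\alpha\sigma\mapsto s_\alpha\sigma s_\alpha$, and the point is that both steps move in the same direction, governed by the sign of $\sigma(\alpha)$. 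If $\sigma(\alpha)>0$ then $s_\alpha\sigma>\sigma$; moreover $\sigma(\alpha)$ is then a positive root different from $\alpha$, so $s_\alpha$ preserves its sign, giving $(s_\alpha\sigma)(\alpha)=s_\alpha(\sigma(\alpha))>0$ and hence $s_\alpha\sigma s_\alpha>s_\alpha\sigma$. Thus $\sigma<s_\alpha\sigma<s_\alpha\sigma s_\alpha$ is a Bruhat chain, so $s_\alpha\circ\sigma>\sigma$ and the two are comparable. The case $\sigma(\alpha)<0$ is symmetric: writing $\sigma(\alpha)=-\gamma$ with $\gamma$ a positive root $\neq\alpha$ gives $s_\alpha(\sigma(\alpha))=-s_\alpha(\gamma)<0$, so the chain reverses to $\sigma>s_\alpha\sigma>s_\alpha\sigma s_\alpha$. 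In both subcases $s_\alpha\circ\sigma$ and $\sigma$ are comparable, and $s_\alpha\circ\sigma>\sigma\iff\sigma(\alpha)>0\iff s_\alpha\sigma>\sigma$, which is exactly the claimed equivalence.

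The only genuinely delicate point, and the step I would treat most carefully, is the assertion that the two multiplications in the non-commuting case are monotone in the same direction; this is precisely where the hypothesis $\sigma(\alpha)\neq\pm\alpha$ enters, through the elementary fact that $s_\alpha$ preserves the sign of every root other than $\pm\alpha$. Once the chains are exhibited, comparability and the order relation come for free, and the remainder is routine bookkeeping with the length/sign dictionary.
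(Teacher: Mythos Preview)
Your proof is correct and is essentially the standard argument; the paper itself does not give a proof (it cites \cite{GM}), but the remark immediately following the lemma in the paper --- that in the non-commuting case one has the chains $s_\alpha\sigma s_\alpha>s_\alpha\sigma>\sigma$ and $s_\alpha\sigma s_\alpha>\sigma s_\alpha>\sigma$ --- is exactly the mechanism you use. Your identification of the key step, that $s_\alpha$ preserves the sign of $\sigma(\alpha)$ precisely because $\sigma(\alpha)\neq\pm\alpha$ in the non-commuting case, is the right emphasis.
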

Note that if $s_\alpha\sigma\neq \sigma s_\alpha$ then $s_\alpha\sigma s_\alpha>s_\alpha\sigma >\sigma$ and $s_\alpha\sigma s_\alpha>\sigma s_\alpha >\sigma$.

The action on involutions interacts with the Bruhat orders with properties similar to the one in \ref{propord}

\begin{lemma}[3.2, \cite{GM}]\label{Bruhatcirc}
Let $\sigma,\tau\in \mathcal{I}$ and suppose $\sigma<\tau$.
For every $\alpha\in \Delta$ we have:
\begin{enumerate}
\item if $s_\alpha\circ\sigma>\sigma$ and $s_\alpha\circ\tau>\tau$ then $s_\alpha\circ\sigma<s_\alpha\circ\tau$;
\item if $s_\alpha\circ\sigma<\sigma$ and $s_\alpha\circ\tau<\tau$ then $s_\alpha\circ\sigma<s_\alpha\circ\tau$;
\item if $s_\alpha\circ\sigma>\sigma$ and $s_\alpha\circ\tau<\tau$ then $s_\alpha\circ\sigma\leq\tau$ and $\sigma\leq s_\alpha\circ \tau$.
\end{enumerate}
\end{lemma}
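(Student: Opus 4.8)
The plan is to reduce every assertion to the ordinary lifting property of Lemma~\ref{propord}, exploiting three elementary observations. First, since $\sigma$ and $\tau$ are involutions, inversion preserves both the Bruhat order and the length, so a right-handed version of Lemma~\ref{propord} (comparing $us_\alpha$ with $vs_\alpha$) holds by applying the stated left-handed version to the inverses; in particular $s_\alpha\sigma>\sigma$ is equivalent to $\sigma s_\alpha>\sigma$. Second, Lemma~\ref{lemmaBruhatcirc} lets me translate the $\circ$-directions into ordinary ones: $s_\alpha\circ\sigma>\sigma$ iff $s_\alpha\sigma>\sigma$, and likewise for $\tau$. Third, in the non-commuting case I will use the chains recorded after Lemma~\ref{lemmaBruhatcirc}: when $s_\alpha\sigma\neq\sigma s_\alpha$ and $s_\alpha\circ\sigma>\sigma$ one has $\sigma<s_\alpha\sigma<s_\alpha\sigma s_\alpha$ together with $\sigma<\sigma s_\alpha<s_\alpha\sigma s_\alpha$, and the reversed chains when $s_\alpha\circ\sigma<\sigma$.

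With these in hand I would split each of the three parts into four subcases according to whether $s_\alpha$ commutes with $\sigma$ and whether it commutes with $\tau$. In the doubly-commuting subcase one has $s_\alpha\circ\sigma=s_\alpha\sigma$ and $s_\alpha\circ\tau=s_\alpha\tau$, and parts (1), (2), (3) are literally Lemma~\ref{propord}(1), (2), (3). In a mixed or doubly-non-commuting subcase the operator $\circ$ inserts a conjugating $s_\alpha$ on the right, so I would first apply Lemma~\ref{propord} to the left multiplications to compare $s_\alpha\sigma$ with $s_\alpha\tau$, and then apply the right-handed version a second time to pass from $s_\alpha\sigma,s_\alpha\tau$ to $s_\alpha\sigma s_\alpha,s_\alpha\tau s_\alpha$, reading off the required monotonicity from the chains above. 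For instance, in part (1) with both $\sigma$ and $\tau$ non-commuting, $\sigma<\tau$ together with the two ``up'' hypotheses gives $s_\alpha\sigma<s_\alpha\tau$ by Lemma~\ref{propord}(1), and since both $s_\alpha\sigma\mapsto s_\alpha\sigma s_\alpha$ and $s_\alpha\tau\mapsto s_\alpha\tau s_\alpha$ go up, a second application yields $s_\alpha\circ\sigma<s_\alpha\circ\tau$.

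The delicate subcases are those in which the left and the right multiplications pull in opposite directions, so that the second application of the lifting property lands in case~(3) of Lemma~\ref{propord} and produces only a weak inequality. In each such instance I would upgrade $\leq$ to a strict $<$ by assuming equality of the two extremal elements and deducing $s_\alpha\sigma=\sigma s_\alpha$ (or the analogous commutation for $\tau$), which contradicts the standing non-commuting hypothesis of that subcase: whenever an element of the form $s_\alpha\sigma$ or $\sigma s_\alpha$ is forced to coincide with an involution, it must itself be an involution, and that is exactly the commutation relation. This strictness bookkeeping is the only genuinely fiddly point in parts (1) and (2), and it is also what licenses re-applying Lemma~\ref{propord} a second time, since that lemma requires a strict hypothesis $\sigma<\tau$.

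Part (3) is the main obstacle, because there the conclusion is itself a pair of weak inequalities and there is no cancellation to exploit. Here I would start from Lemma~\ref{propord}(3) applied to $\sigma<\tau$ with $s_\alpha\sigma>\sigma$ and $s_\alpha\tau<\tau$, obtaining $\sigma\leq s_\alpha\tau$ and $s_\alpha\sigma\leq\tau$, and then propagate each of these two inequalities through the conjugation demanded by the relevant non-commuting subcase. Concretely, to reach $\sigma\leq s_\alpha\circ\tau=s_\alpha\tau s_\alpha$ one routes instead through the right multiplication, deriving $\sigma\leq\tau s_\alpha$ from the right-handed Lemma~\ref{propord}(3), checking this is strict unless $\tau$ commutes with $s_\alpha$, and then applying the left-handed Lemma~\ref{propord}(3) once more using the chain $\tau s_\alpha>s_\alpha\tau s_\alpha$; symmetrically for $s_\alpha\circ\sigma\leq\tau$. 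The hard part will be the bookkeeping of which chain inequality to splice in, and verifying that the comparison that finally emerges is between the two elements actually named in the statement rather than between one of their conjugates, so that transitivity genuinely closes the gap.
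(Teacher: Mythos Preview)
The paper does not actually prove this lemma: it is quoted verbatim from \cite{GM} (as Lemma~3.2 there) and no argument is supplied here, so there is no in-paper proof to compare against.

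That said, your strategy is the standard one and it goes through. Reducing to Lemma~\ref{propord} by a four-way case split on whether $s_\alpha$ commutes with $\sigma$ and with $\tau$, and iterating the lifting property once on the left and once on the right in the non-commuting cases, is exactly how such statements are proved (this is essentially how Richardson--Springer and Gandini--Maffei argue). Your mechanism for upgrading a weak $\leq$ to a strict $<$---observing that an equality such as $s_\alpha\sigma=\tau$ or $\sigma=s_\alpha\tau$ forces the offending element to be an involution and hence forces $s_\alpha\sigma=\sigma s_\alpha$---is correct and is precisely what is needed to re-enter Lemma~\ref{propord} at the second step. I checked the worst subcase of part~(3) (neither $\sigma$ nor $\tau$ commuting) along the lines you sketch, and the two applications of the right-handed lifting property do land on $s_\alpha\sigma s_\alpha\leq\tau$ and $\sigma\leq s_\alpha\tau s_\alpha$ as required; the chains $s_\alpha\tau s_\alpha<s_\alpha\tau<\tau$ and $\sigma<s_\alpha\sigma<s_\alpha\sigma s_\alpha$ supply the up/down directions you need. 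So the proposal is sound; just be careful in the write-up to state explicitly which side (left or right) each invocation of Lemma~\ref{propord} is acting on, since that is where a reader is most likely to get lost.
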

We define the \textit{length} of an involution $\sigma$ as
$$L(\sigma)=\frac{l(\sigma)+\lambda(\sigma)}{2}$$
where $l(\sigma)$ is the usual length in $W$ and $\lambda(\sigma)$ is the dimension of the $(-1)$-eigenspace of $\sigma$ on $\Phi\otimes \mathbb{R}$.

If $\sigma\in W$ is an involution, we know that $l(\sigma)$ is the length of a reduced expression for $\sigma$, that is, the minimum amount of simple reflections we need to compose to obtain $\sigma$.
As our intuition would suggest, this new length $L$ has the same property, except that instead of composing the simple reflections we use the action defined by $\circ$.

\begin{lemma}\label{invlength}
Let $\alpha\in\Delta$ and $\sigma\in \mathcal{I}$.

$$L(s_\alpha\circ\sigma)=\left\lbrace \begin{array}{ll}
										L(\sigma)+1 & \text{if }s_\alpha\circ\sigma>\sigma\\
										L(\sigma)-1 & \text{if }s_\alpha\circ\sigma<\sigma\\
									\end{array}
							\right.
$$
\end{lemma}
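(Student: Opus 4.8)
The plan is to split according to whether $s_\alpha$ commutes with $\sigma$, since this is exactly what distinguishes the two branches of the $\circ$ action, and to track separately how the ordinary length $l$ and the eigenspace dimension $\lambda$ change. Throughout I will use the standard rule that for an involution $\sigma$ one has $l(s_\alpha\sigma)>l(\sigma)$ if and only if $\sigma(\alpha)>0$, together with Lemma \ref{lemmaBruhatcirc}, which guarantees that $s_\alpha\circ\sigma>\sigma$ if and only if $s_\alpha\sigma>\sigma$, i.e. if and only if $\sigma(\alpha)>0$. Thus in both cases the sign of $\sigma(\alpha)$ will simultaneously decide the direction of the inequality and the signs of the two increments, and it remains only to check that $l$ and $\lambda$ move together, so that their average $L$ changes by exactly $1$.

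First suppose $s_\alpha\sigma=\sigma s_\alpha$. Then $\sigma s_\alpha\sigma^{-1}=s_{\sigma(\alpha)}$ equals $s_\alpha$, forcing $\sigma(\alpha)=\pm\alpha$, so $\alpha$ is real or imaginary and $s_\alpha\circ\sigma=s_\alpha\sigma$ is again an involution. Since $\sigma$ commutes with $s_\alpha$ it preserves both the line $\mathbb{R}\alpha$ and the hyperplane $H_\alpha$, and on $H_\alpha$ it agrees with $s_\alpha\sigma$ because $s_\alpha$ is the identity there; the only change occurs on $\mathbb{R}\alpha$, where $s_\alpha$ flips the sign of the $\sigma$-eigenvalue. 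If $\sigma(\alpha)=\alpha$ (imaginary) then $\mathbb{R}\alpha$ is moved from the $(+1)$- into the $(-1)$-eigenspace, so $\lambda(s_\alpha\sigma)=\lambda(\sigma)+1$, while $\sigma(\alpha)=\alpha>0$ gives $l(s_\alpha\sigma)=l(\sigma)+1$; if instead $\sigma(\alpha)=-\alpha$ (real) both quantities drop by $1$. In either subcase $l$ and $\lambda$ change by the same unit, so $L(s_\alpha\circ\sigma)=L(\sigma)\pm1$ with the sign governed by that of $\sigma(\alpha)$, as required.

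Now suppose $s_\alpha\sigma\neq\sigma s_\alpha$, so $\sigma(\alpha)\neq\pm\alpha$ (complex) and $s_\alpha\circ\sigma=s_\alpha\sigma s_\alpha$. Being conjugate to $\sigma$, this element has the same eigenvalues with the same multiplicities, so in particular $\lambda(s_\alpha\sigma s_\alpha)=\lambda(\sigma)$; hence here the entire variation of $L$ must come from $l$, and I claim $l$ changes by exactly $2$. Indeed, say $\sigma(\alpha)>0$: then $l(s_\alpha\sigma)=l(\sigma)+1$, and multiplying by $s_\alpha$ on the right raises the length again precisely because $(s_\alpha\sigma)(\alpha)=s_\alpha(\sigma(\alpha))>0$, the point being that a positive root is sent to a negative one by $s_\alpha$ only when it equals $\alpha$, which is excluded since $\sigma(\alpha)\neq\alpha$. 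Thus $l(s_\alpha\sigma s_\alpha)=l(\sigma)+2$ and $L$ rises by $1$; the symmetric computation with $\sigma(\alpha)<0$ (using $\sigma(\alpha)\neq-\alpha$) gives $l(s_\alpha\sigma s_\alpha)=l(\sigma)-2$ and $L$ drops by $1$. Combining the two cases with Lemma \ref{lemmaBruhatcirc} finishes the argument.

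The main obstacle I anticipate is precisely this last step in the complex case: one must rule out the a priori possibility that the two simple multiplications cancel and leave the length unchanged, and it is exactly the hypothesis $\sigma(\alpha)\neq\pm\alpha$ that prevents $s_\alpha$ from undoing the first length change. By contrast, the eigenspace bookkeeping in the commuting case is routine once one observes that $\mathbb{R}\alpha$ is $\sigma$-stable and that the $(-1)$-eigenspace splits compatibly with the decomposition $\mathbb{R}\alpha\oplus H_\alpha$, so I do not expect difficulty there.
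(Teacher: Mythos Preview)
Your proof is correct. The paper actually states this lemma without proof (it is a standard fact about the twisted involution length, going back to Richardson--Springer), so there is no argument to compare against; your case split on whether $s_\alpha$ commutes with $\sigma$, tracking $l$ and $\lambda$ separately, is exactly the standard verification and goes through cleanly.
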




This and Lemma \ref{lemmaBruhatcirc} imply that if $\sigma\in W$ is an involution then $\sigma$ can be written as 
\[\sigma=s_{\alpha_1}\circ\ldots \circ s_{\alpha_{L(\sigma)}}\]
and $L(\sigma)$ is the minimum number with this property.

To every set $S\subseteq\Psi$ of mutually orthogonal roots we can naturally attach the involution

$$\sigma_S=\prod_{\alpha\in S} s_\alpha$$
Note that if $\alpha$ and $\beta$ are orthogonal then $s_\alpha s_\beta=s_\beta s_\alpha$, so $\sigma_S$ is well defined.
The $(-1)$-eigenspace of such involution is generated by $S$ so we have
$$L(\sigma_S)=\frac{l(\sigma_S)+\#S}{2}$$
\begin{lemma}[3.6, \cite{GM}]\label{sommaradici}
Let $\beta, \beta'\in \Psi$ be orthogonal.
Then:
\begin{enumerate}
\item $\beta$ and $\beta'$ are strongly orthogonal, that is $\beta\pm\beta'\notin\Psi$;
\item if $\beta+\alpha\in \Phi$ for some $\alpha\in \Phi^+$ then $\beta'+\alpha\notin \Phi$;
\item if $\beta-\alpha\in \Psi$ for some $\alpha\in \Phi^+$ then $\beta'-\alpha\notin \Psi$.
\end{enumerate}
\end{lemma}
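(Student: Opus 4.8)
The plan is to reduce everything to a single numerical invariant coming from the abelian hypothesis: write each root in the basis $\Delta$ and let $c(\gamma)$ be the coefficient of $\alpha_P$ in $\gamma$. Since $\alpha_P$ appears with coefficient $1$ in the highest root (which is exactly what makes $P^u$ abelian), we have $c(\gamma)\in\{-1,0,1\}$ for every $\gamma\in\Phi$, and $\Psi=\{\gamma\in\Phi^+\mid c(\gamma)=1\}$. As $c$ is additive whenever a sum of roots is again a root, the one crucial consequence I will use repeatedly is that for any $\mu,\nu\in\Psi$ the sum $\mu+\nu$ has $c(\mu+\nu)=2$ and hence is never a root. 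Besides this I will only invoke three elementary facts: that $W$ permutes $\Phi$ (so $s_{\beta'}$ carries roots to roots); that if $\mu,\nu\in\Phi$ with $\mu+\nu\neq 0$ and $\mu+\nu\notin\Phi$ then $(\mu,\nu)\geq 0$; and that if $\mu,\nu\in\Phi$ with $\mu\neq\nu$ and $(\mu,\nu)>0$ then $\mu-\nu\in\Phi$.

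For part (1) I would first note $c(\beta+\beta')=2$, so $\beta+\beta'\notin\Phi$. For $\beta-\beta'$ I argue by contradiction: if $\beta-\beta'\in\Phi$, then using orthogonality $\langle\beta,(\beta')^{\vee}\rangle=0$ we get $\langle\beta-\beta',(\beta')^{\vee}\rangle=-2$, whence $s_{\beta'}(\beta-\beta')=(\beta-\beta')+2\beta'=\beta+\beta'$ would be a root, contradicting the previous sentence. This gives the full strong orthogonality $\beta\pm\beta'\notin\Phi$ (in particular $\notin\Psi$), which is the form I actually need below.

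Parts (2) and (3) are symmetric, and the key preliminary move is a coefficient count pinning down $\alpha$. If $\beta+\alpha\in\Phi$ with $\alpha\in\Phi^+$, then $c(\beta+\alpha)=1+c(\alpha)\in\{-1,0,1\}$ forces $c(\alpha)=0$, so $\alpha$ lies in the positive roots of $\Phi_P$ and $\beta+\alpha\in\Psi$; the same count applies to the hypothesis $\beta-\alpha\in\Psi$ in part (3). Arguing by contradiction I assume $\beta'+\alpha\in\Phi$ (resp. $\beta'-\alpha\in\Psi$) as well, so that $\beta\pm\alpha$ and $\beta'\pm\alpha$ all lie in $\Psi$. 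Then I apply the abelian constraint twice: the sums $(\beta+\alpha)+\beta'$ and $(\beta'+\alpha)+\beta$ both have $c=2$ and so are non-roots, giving $(\beta+\alpha,\beta')\geq 0$ and $(\beta'+\alpha,\beta)\geq 0$, which by orthogonality of $\beta,\beta'$ reduce to $(\alpha,\beta')\geq 0$ and $(\alpha,\beta)\geq 0$. Hence $(\beta+\alpha,\beta'+\alpha)=(\beta,\alpha)+(\alpha,\beta')+|\alpha|^2\geq|\alpha|^2>0$, and the third standard fact yields $(\beta+\alpha)-(\beta'+\alpha)=\beta-\beta'\in\Phi$, contradicting part (1). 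Part (3) runs identically after passing from $\alpha$ to $-\alpha$ in the inner-product computation: the relevant pairings become $\leq 0$, but $(\beta-\alpha,\beta'-\alpha)=-(\beta,\alpha)-(\alpha,\beta')+|\alpha|^2\geq|\alpha|^2>0$ again, and the same contradiction appears.

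The main thing to get right is the logical ordering and the direction of the inequalities rather than any hard computation: part (1) must be proved first, since it supplies the contradiction for the other two, and one must check each time that the sum fed into the ``sum-not-a-root $\Rightarrow$ nonnegative pairing'' fact is a genuine nonzero non-root, which the $c=2$ bookkeeping guarantees. The one slightly nonobvious step is producing the \emph{strict} inequality $(\beta\pm\alpha,\beta'\pm\alpha)\geq|\alpha|^2>0$ from merely nonnegative cross terms; everything else is routine. Notably the argument never touches root-string lengths, so I do not expect the exclusion of $G_2$ (or of types $\mathbf{E}_8,\mathbf{F}_4$) to play any role: the whole proof rests on the coefficient function $c$ together with the two elementary two-root lemmas.
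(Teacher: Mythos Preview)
Your argument is correct. The paper does not prove this lemma at all: it is simply quoted as Lemma~3.6 of \cite{GM} and used thereafter, so there is no in-paper proof to compare against. What you have written is a clean, self-contained verification.

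A couple of minor remarks for polish. First, in part~(1) the statement only asks for $\beta\pm\beta'\notin\Psi$, but you correctly prove the stronger $\beta\pm\beta'\notin\Phi$; this is exactly what you need to close parts~(2) and~(3), so it is worth saying explicitly that the stronger conclusion is the one being used. Second, in the contradiction step you should note (as you implicitly do) that $\beta+\alpha\neq\beta'+\alpha$ since $\beta\neq\beta'$, so the ``$(\mu,\nu)>0\Rightarrow\mu-\nu\in\Phi$'' lemma applies. Finally, your coefficient bookkeeping forcing $c(\alpha)=0$ is the heart of the matter and is done correctly: for part~(2) one has $c(\alpha)\in\{0,1\}$ from $\alpha\in\Phi^+$ and $1+c(\alpha)\in\{-1,0,1\}$ from $\beta+\alpha\in\Phi$, hence $c(\alpha)=0$; the part~(3) count is identical. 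The sign analysis for part~(3) is also right: the inequalities flip to $(\alpha,\beta),(\alpha,\beta')\le 0$, but this only helps in $(\beta-\alpha,\beta'-\alpha)=|\alpha|^2-(\alpha,\beta)-(\alpha,\beta')>0$.
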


Fix $S\subset \Phi$ orthogonal and define
\[\Gamma_S=\left\lbrace \alpha\in\Phi\mid \sigma_S(\alpha)=-\alpha\right\rbrace\]
With the notations of \cite{Vogan}, $\Gamma_S$ is the set of real descents of $\sigma_S$.
The following result is an easy corollary of proposition 3.8 of \cite{GM}, but we will state it here given its importance:

\begin{prop}\label{complinv}
Suppose that $\Phi$ is simply laced and $S\subseteq \Phi$ is strongly orthogonal.
Then
$\Gamma_S=S\cup-S$.
\end{prop}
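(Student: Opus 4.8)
The plan is to reduce the statement to a combinatorial fact about roots in $\spa(S)$ and then extract a contradiction from the abelianness of $P^u$. First I would record the shape of $\sigma_S$: since the elements of $S$ are pairwise orthogonal, each $s_\beta$ ($\beta\in S$) negates $\beta$ and fixes $S\setminus\{\beta\}$, so $\sigma_S$ acts by $-1$ on $\spa(S)$ and, fixing $\spa(S)^\perp$ pointwise, by $+1$ on the complement. Hence its $(-1)$-eigenspace is exactly $\spa(S)$, as already noted, and therefore $\Gamma_S=\Phi\cap\spa(S)$. The inclusion $S\cup -S\subseteq\Gamma_S$ is immediate, so everything comes down to showing that the only roots lying in $\spa(S)$ are $\pm$ the elements of $S$. (Here I use that $S\subseteq\Psi$, which is the running setting of admissible pairs; this is essential, since for a general strongly orthogonal $S\subseteq\Phi$ the statement fails, e.g. in type $\matr{D}$.)

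Next I would fix $\gamma\in\Phi\cap\spa(S)$ and write $\gamma=\sum_{\alpha\in S}c_\alpha\alpha$. Normalising the form so that $(\alpha,\alpha)=2$ for every root (legitimate as $\Phi$ is simply laced), orthogonality of $S$ gives $c_\alpha=\tfrac12(\gamma,\alpha)=\tfrac12\langle\gamma,\alpha^\vee\rangle$, whose double is a Cartan integer; thus $c_\alpha\in\{0,\pm\tfrac12,\pm1\}$, with $c_\alpha=\pm1$ only when $\gamma=\pm\alpha$. Comparing norms yields $2=(\gamma,\gamma)=2\sum_\alpha c_\alpha^2$, i.e. $\sum_\alpha c_\alpha^2=1$. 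If some $c_\alpha=\pm1$ we get $\gamma=\pm\alpha\in S\cup -S$ and are done; otherwise every $c_\alpha\in\{0,\pm\tfrac12\}$ and the norm identity forces exactly four indices $\beta_1,\beta_2,\beta_3,\beta_4\in S$ with $c_{\beta_i}=\tfrac12\epsilon_i$, $\epsilon_i=\pm1$, so $\gamma=\tfrac12(\epsilon_1\beta_1+\epsilon_2\beta_2+\epsilon_3\beta_3+\epsilon_4\beta_4)$.

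The crux is to exclude this last configuration. I would set $\eta_i=\epsilon_i\beta_i$: these are four pairwise orthogonal roots with $\gamma=\tfrac12\sum_i\eta_i$. Since $(\gamma,\eta_i)=\tfrac12(\eta_i,\eta_i)=1$, we have $s_{\eta_i}(\gamma)=\gamma-\eta_i\in\Phi$, and an induction on $|I|$ shows $\gamma-\sum_{i\in I}\eta_i\in\Phi$ for every $I\subseteq\{1,2,3,4\}$: if $\delta=\gamma-\sum_{i\in I}\eta_i$ is a root and $j\notin I$, then $\langle\delta,\eta_j^\vee\rangle=1$ by orthogonality, so $s_{\eta_j}(\delta)=\delta-\eta_j$ is again a root. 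As $I$ ranges over all subsets, these vectors realise every signed half-sum $\tfrac12\sum_i\nu_i\eta_i$ with $\nu_i=\pm1$; choosing $\nu_i=\epsilon_i$ shows in particular that $\mu:=\tfrac12\sum_i\epsilon_i\eta_i=\tfrac12(\beta_1+\beta_2+\beta_3+\beta_4)$ is a root. But each $\beta_i\in\Psi$ has coefficient $1$ on $\alpha_P$, so $\mu$ would have coefficient $2$ on $\alpha_P$ — impossible, since $P^u$ being abelian forces $\alpha_P$ to occur with coefficient $1$ in the highest root and hence at most $1$ in any root. This contradiction eliminates the four-term case and finishes the proof.

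I expect the main obstacle to be exactly this final step. The eigenspace reduction and the norm/Cartan-integer bookkeeping are routine, and the genuine difficulty is ruling out a $\matr{D}_4$-type ``spinor'' configuration uniformly, without splitting into the admissible simply laced types. The key device is to turn the hypothetical root $\gamma$ into the companion root $\mu=\tfrac12\sum_i\beta_i$, whose $\alpha_P$-coefficient is $2$; this is what converts the abelianness of $P^u$ into a contradiction. The step I would most want to double-check is the innocent-looking reflection-chain lemma, since it is there that orthogonality and the simply laced hypothesis are actually used.
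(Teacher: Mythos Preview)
Your argument is correct and complete. The paper does not actually prove this proposition; it simply records it as ``an easy corollary of Proposition~3.8 of \cite{GM}'', so there is no internal argument to compare against. Your route---identifying $\Gamma_S$ with $\Phi\cap\spa(S)$, using the norm constraint $\sum c_\alpha^2=1$ to reduce any exceptional root to a signed half-sum of four elements of $S$, and then reflecting to produce the companion root $\mu=\tfrac12(\beta_1+\beta_2+\beta_3+\beta_4)$ whose $\alpha_P$-coefficient equals $2$---is a clean, self-contained proof that avoids the external reference entirely.

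You are also right to flag the hypothesis. The statement as printed (``$S\subseteq\Phi$ strongly orthogonal'') is false in general: in $\matr{D}_4$ the set $\{e_1\pm e_2,\,e_3\pm e_4\}$ is strongly orthogonal but $\sigma_S=-\id$, so $\Gamma_S=\Phi$. What is actually needed, and what your $\alpha_P$-coefficient argument uses, is $S\subseteq\Psi$. This is how the result is invoked later (e.g.\ in the proof of Lemma~\ref{prop}, where it is applied to $v(S)$ with $S\subseteq\Psi$, and the general case follows from the $\Psi$-case by conjugation since $\Gamma_{v(S)}=v(\Gamma_S)$). So your caveat is a genuine correction to the paper's wording rather than a gap in your reasoning.
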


Now, consider the projection map $\pi\colon G/L\longrightarrow G/P$.
It is $B$-equivariant.
Recall that $G/P=\bigcup_{v\in w^P} BvP/P$ and for $v\in W^P$ define $B^v=vPv^{-1}\cap B$ the stabilizer of $vP/P\in G/P$ in $B$.
Then
$$\pi^{-1}(BvP/P)= BvP/L\cong B\times^{B^v} \pi^{-1}(vP/P)=B\times^{B^v} vP/L$$
Hence we have a bijection between the $B$-orbits in $BvP/L$ and the $B^v$ orbits in $vP/L$ which is compatible with the Bruhat order.
If we define $B_v=P\cap v^{-1}Bv$ then these orbits are in bijection with the $B_v$-orbits in $P/L$.
\begin{lemma}(4.1, \cite{GM})
Let $v\in W^P$. 
Then $B_L=B_v\cap L$ and $B_v=B_L\ltimes U_v$ where $U_v$ is the subgroup of $P_u$ generated by the $U_\alpha$ with $\alpha\in \Psi\setminus\Phi^+(v)$.
\end{lemma}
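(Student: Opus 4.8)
The plan is to describe all the relevant groups ($B$, $P$, $v^{-1}Bv$ and $L$) in terms of the torus $T$ together with the root subgroups they contain, to compute the intersection $B_v = P\cap v^{-1}Bv$ at the level of roots, and then to read off both the equality $B_L = B_v\cap L$ and the semidirect product decomposition. First I would fix the root-theoretic descriptions. We have $B = T\prod_{\alpha\in\Phi^+}U_\alpha$; the parabolic $P$ is directly spanned by $T$ and the $U_\alpha$ with $\alpha\in\Phi^+\cup(-\Phi_P^+)$, and its unipotent radical is $P^u=\prod_{\alpha\in\Psi}U_\alpha$, which is a genuine (abelian) subgroup precisely because of the standing hypothesis that $P^u$ is abelian. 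Conjugating $B$ by $v^{-1}$ gives $v^{-1}Bv = T\prod_{\alpha\in v^{-1}\Phi^+}U_\alpha$, where $v^{-1}\Phi^+=\{\alpha\in\Phi\mid v(\alpha)>0\}=(\Phi^+\setminus\Phi^+(v))\cup(-\Phi^+(v))$.

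The key computation is to determine for which $\alpha$ the root subgroup $U_\alpha$ lies in $B_v$. Since $vU_\alpha v^{-1}=U_{v\alpha}$, one has $U_\alpha\subseteq v^{-1}Bv$ iff $v(\alpha)>0$, and $U_\alpha\subseteq P$ iff $\alpha\in\Phi^+\cup(-\Phi_P^+)$. Here I would use that $v\in W^P$ forces $\Phi^+(v)\subseteq\Psi$ (the paper already treats $\Phi^+(v)$ as a saturated subset of $\Psi$), equivalently $v(\Phi_P^+)\subseteq\Phi^+$ and hence $\Phi^+(v)\cap\Phi_P=\varnothing$. Combining the two membership conditions, the negative roots $-\Phi_P^+$ are all sent to negative roots by $v$ and drop out, while among $\Phi^+$ exactly those outside $\Phi^+(v)$ survive; so the set of roots of $B_v$ is
\[
\Phi^+\setminus\Phi^+(v)=\Phi_P^+\cup\bigl(\Psi\setminus\Phi^+(v)\bigr),
\]
all of which are positive (the last equality again using $\Phi^+(v)\cap\Phi_P=\varnothing$). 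Invoking the standard structure theory for $T$-stable closed connected subgroups (the intersection of the parabolic $P$ with the Borel $v^{-1}Bv$, both containing $T$, is directly spanned by $T$ and the root subgroups it contains), I conclude
\[
B_v=T\prod_{\alpha\in\Phi_P^+}U_\alpha\cdot\prod_{\alpha\in\Psi\setminus\Phi^+(v)}U_\alpha=B_L\cdot U_v,
\]
where $B_L=T\prod_{\alpha\in\Phi_P^+}U_\alpha=B\cap L$ and $U_v=\prod_{\alpha\in\Psi\setminus\Phi^+(v)}U_\alpha$ is a subgroup because it sits inside the abelian group $P^u$.

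It then remains to extract the two assertions. For $B_L=B_v\cap L$, I intersect the root set of $B_v$ with the root system $\Phi_P$ of $L$: since $\Psi\cap\Phi_P=\varnothing$, only the roots $\Phi_P^+$ survive, giving $B_v\cap L=T\prod_{\alpha\in\Phi_P^+}U_\alpha=B\cap L=B_L$. For the semidirect structure, the cleanest route is to observe that $U_v=B_v\cap P^u$; since $P^u$ is normal in $P$ and $B_v\subseteq P$, this intersection is normal in $B_v$, so $U_v\trianglelefteq B_v$. Moreover $B_L\cap U_v\subseteq L\cap P^u=\{1\}$, and the product decomposition above shows $B_v=B_L U_v$, whence $B_v=B_L\ltimes U_v$.

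The step I expect to be the main obstacle is the rigorous justification of the root-group description of $B_v$, in particular that $B_v$ is exactly (and not merely contains) the group directly spanned by $T$ and the surviving root subgroups. The inclusion $\supseteq$ is immediate by checking that each listed generator lies in both $P$ and $v^{-1}Bv$; the reverse inclusion needs the fact that $B_v$ is $T$-stable, closed and connected and is therefore determined by its weights, which is where one leans on the general structure theory of the intersection of a parabolic and a Borel subgroup sharing a maximal torus. Everything else is bookkeeping with the root sets, the only genuinely used inputs being $\Phi^+(v)\cap\Phi_P=\varnothing$ for $v\in W^P$ and the abelianness of $P^u$, which makes $U_v$ automatically a subgroup.
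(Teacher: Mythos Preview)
Your argument is correct. The paper itself does not prove this lemma; it merely quotes it from \cite{GM} (their Lemma~4.1) without proof, so there is no ``paper's own proof'' to compare against. Your root-by-root computation of $P\cap v^{-1}Bv$ using $\Phi^+(v)\subseteq\Psi$ for $v\in W^P$, together with the observation $U_v=B_v\cap P^u\trianglelefteq B_v$, is exactly the standard way one proves this, and is presumably what is done in \cite{GM}. The one point you rightly flag---that $B_v$ is connected, hence equal to (and not just containing) the product of $T$ with the surviving root subgroups---follows from the general fact that the intersection of two parabolic subgroups sharing a maximal torus is connected.
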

Note that the Lie algebra of $U_v$ is $\mathfrak{u}_v=\bigoplus_{\alpha\in \Psi\setminus\Phi^+(v)} \mathfrak{u}_\alpha\subseteq \mathfrak{p}^u$.

Let $\exp\colon\mathfrak{p}_u\longrightarrow P^u$ be the exponential map and compose it with the projection $\pi\colon G\longrightarrow G/L$.
We obtain an isomorphism $r_P\colon\mathfrak{p}_u\longrightarrow P/L$ that is not $P$-equivariant if we consider the adjoint action on $\mathfrak{p}_u$ and the left multiplication on $P/L$.
We want to define an action of $P$ on $\mathfrak{p}_u$ that makes $r_P$ a $P$-equivariant map.
Consider the isomorphisms
$$L\ltimes \mathfrak{p}_u\cong L\ltimes P^u\cong P$$
from left to right $(g,y)\longmapsto g\exp(y)$.
Note that with this identification we have $B_v=B_L\ltimes \mathfrak{u}_v$.
Let $(g,y)\in P$ and $x\in\mathfrak{p}_u$. 
Define the action

\begin{equation}\label{action}
(g,y).x=\adj_g(x+y)
\end{equation}
\begin{lemma}[4.2, \cite{GM}]\label{orderiso}
Let $v\in W^P$.
Then the map $B_ve\longmapsto Bv\exp(e)L/L$ is an order isomorphism between the $B_v$-orbits in $\mathfrak{p}_u$ and the $B$-orbits in $BvP/L$.
\end{lemma}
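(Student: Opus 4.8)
The plan is to realize the map as a composite of three order isomorphisms, following the chain of identifications set up just before the statement. Order everything by closure inclusion and write $\mathcal{P}_1,\mathcal{P}_2,\mathcal{P}_3,\mathcal{P}_4$ for the posets of, respectively, the $B_v$-orbits in $\mathfrak{p}_u$, the $B_v$-orbits in $P/L$, the $B^v$-orbits in $vP/L$, and the $B$-orbits in $BvP/L$; the goal is an order isomorphism $\mathcal{P}_1\cong\mathcal{P}_4$. Two of the links are immediate. The isomorphism $\mathcal{P}_3\cong\mathcal{P}_4$ is the fiber-bundle correspondence $BvP/L\cong B\times^{B^v}vP/L$ recalled above: for an associated bundle $B\times^{B^v}F$ the $B$-orbits are the $B\times^{B^v}\mathcal{O}$ with $\mathcal{O}$ a $B^v$-orbit in the fiber $F$, and $\overline{B\times^{B^v}\mathcal{O}}=B\times^{B^v}\overline{\mathcal{O}}$, so closures, and hence the order, are respected; this is the compatibility already asserted. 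The isomorphism $\mathcal{P}_2\cong\mathcal{P}_3$ is left translation by $v$: the variety isomorphism $P/L\to vP/L$, $pL\mapsto vpL$, intertwines the $B_v$-action with the $B^v$-action because $B^v=vPv^{-1}\cap B$ and $B_v=P\cap v^{-1}Bv$ satisfy $B_v=v^{-1}B^vv$. Being an isomorphism of varieties it is a homeomorphism carrying orbits to orbits, so it preserves closures and the order.

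The heart of the argument is the link $\mathcal{P}_1\cong\mathcal{P}_2$ via $r_P$. First I would note that $r_P\colon\mathfrak{p}_u\to P/L$ is an isomorphism of varieties, being the composite of the exponential isomorphism $\mathfrak{p}_u\xrightarrow{\sim}P^u$ with the isomorphism $P^u\xrightarrow{\sim}P/L$ afforded by the Levi decomposition. Next I would verify that, with $\mathfrak{p}_u$ carrying the action \eqref{action}, $r_P$ is $P$-equivariant: for $(g,y)\in L\ltimes\mathfrak{p}_u\cong P$ and $x\in\mathfrak{p}_u$, using that $P^u$ is abelian so that $\exp(y)\exp(x)=\exp(x+y)$, one computes
\[
(g,y)\cdot r_P(x)=g\exp(y)\exp(x)L/L=g\exp(x+y)L/L=\exp(\adj_g(x+y))\,gL/L=r_P\bigl((g,y).x\bigr),
\]
where the third equality uses $g\exp(z)g^{-1}=\exp(\adj_g(z))$ for $g\in L$ together with $gL=L$. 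Restricting to $B_v\subseteq P$ then makes $r_P$ a $B_v$-equivariant isomorphism of varieties, so it induces a closure-preserving bijection of $B_v$-orbits, i.e. an order isomorphism $\mathcal{P}_1\cong\mathcal{P}_2$.

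It remains to compose and check the formula: starting from $e\in\mathfrak{p}_u$, the map $r_P$ sends $B_ve$ to the $B_v$-orbit of $\exp(e)L/L$, translation by $v$ sends this to the $B^v$-orbit of $v\exp(e)L/L$, and the bundle correspondence sends that to the $B$-orbit $Bv\exp(e)L/L$, which is exactly the asserted $B_ve\mapsto Bv\exp(e)L/L$; as a composite of order isomorphisms it is one. The main obstacle is less any single step than bookkeeping: keeping the two conjugate stabilizers $B^v$ and $B_v$ straight across the translation by $v$, and carrying out the equivariance check for $r_P$. The latter is where the hypothesis that $P^u$ is abelian is genuinely used---it is what makes $\exp$ a group isomorphism $\mathfrak{p}_u\to P^u$ turning sums into products---so I would make sure that identity is in force before running the computation.
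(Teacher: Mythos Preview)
Your argument is correct. The paper does not supply its own proof of this lemma---it is simply quoted from \cite{GM}---so there is nothing to compare against beyond the chain of identifications the paper records just before the statement; your proof does exactly the job of turning that chain into a proof by checking that each link ($r_P$, translation by $v$, and the associated-bundle correspondence) is an equivariant homeomorphism and hence preserves orbit closures.
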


It follows that if we want to parametrize the $B$-orbits on $G/L$ it is enough to parametrize the $B_v$-orbits in $\mathfrak{p}^u$ for every $v\in W^P$.
This is easier to handle because we know very well the adjoint representation of $G$ over $\mathfrak{g}$.
It turns out (Proposition 4.7, \cite{GM}) that these are parametrized exactly by the orthogonal subsets in $\Phi^+(v)$.
Hence, the $B$-orbits in $G/L$ are parametrized by the following combinatorial objects.

\begin{defin}[Admissible pairs]
Let $(v,S)$ be a pair such that $v\in W^P$ and $S\subseteq \Psi$.
Then it is \textit{admissible} if and only if $S$ is orthogonal and $S\subseteq \Phi^+(v)$.
\end{defin}

We will say that $v$ is the $W^P$-part and $S$ is the $\Psi$-part of $(v,S)$.

As we said, the admissible pairs parametrize the orbits.
The following theorem is Corollary 4.8 in \cite{GM}.
Note that following \cite{GM} we denote $x_S=\exp(e_S)L$ for every $S\subseteq \Psi$.

\begin{theorem}
There is a correspondence:
\begin{align*}
\left\lbrace \text{admissible pairs} \right\rbrace &\longrightarrow \left\lbrace B-\text{orbits in }G/L\right\rbrace\\
(v,S)&\mapsto Bvx_S
\end{align*}
\end{theorem}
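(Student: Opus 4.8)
The plan is to assemble the results recalled above along the $B$-equivariant projection $\pi\colon G/L\longrightarrow G/P$. First I would use the decomposition $G/P=\bigsqcup_{v\in W^P}BvP/P$: since $\pi$ is $B$-equivariant, its fibres over the distinct cells $BvP/P$ give a $B$-stable partition
\[G/L=\bigsqcup_{v\in W^P}\pi^{-1}(BvP/P)=\bigsqcup_{v\in W^P}BvP/L,\]
so that every $B$-orbit in $G/L$ lies in exactly one piece $BvP/L$ for a unique $v\in W^P$. This reduces the problem to parametrizing the $B$-orbits inside a single cell $BvP/L$.

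For a fixed $v\in W^P$, Lemma \ref{orderiso} identifies the $B$-orbits in $BvP/L$ with the $B_v$-orbits in $\mathfrak{p}^u$ via $B_ve\mapsto Bv\exp(e)L/L$, and this identification is even an order isomorphism. It then remains to describe the $B_v$-orbits in $\mathfrak{p}^u$, and here I would invoke Proposition 4.7 of \cite{GM}, which states that these orbits are in bijection with the orthogonal subsets $S\subseteq\Phi^+(v)$, the subset $S$ corresponding to the orbit of $e_S=\sum_{\alpha\in S}e_\alpha$. Note that for $v\in W^P$ one has $\Phi^+(v)\subseteq\Psi$: indeed $v(\alpha)>0$ for every $\alpha\neq\alpha_P$, so any $\alpha$ with $v(\alpha)<0$ must have $\alpha_P$ in its decomposition, hence $\alpha\in\Psi$; in particular $e_S\in\mathfrak{p}^u=\bigoplus_{\alpha\in\Psi}\mathfrak{u}_\alpha$, so the representative $x_S=\exp(e_S)L$ is well defined and $S\subseteq\Phi^+(v)$ is exactly the admissibility condition.

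Combining the two steps, a $B$-orbit in $G/L$ is uniquely encoded by the pair $(v,S)$, where $v\in W^P$ indexes the cell containing it and $S\subseteq\Phi^+(v)$ is the orthogonal subset furnished by Proposition 4.7; this is precisely the notion of admissible pair. Chasing the two bijections, the orbit attached to $(v,S)$ is the image of the $B_v$-orbit of $e_S$ under the map of Lemma \ref{orderiso}, namely $Bv\exp(e_S)L/L=Bvx_S$, which gives the stated correspondence. The only points requiring care — and the place where the real content sits — are the injectivity and surjectivity of this assignment: surjectivity is the disjoint covering of the first step together with the surjectivity in Proposition 4.7, while injectivity rests on the uniqueness of $v$ (the cells $BvP/P$ are disjoint) and, within a fixed cell, on the fact that distinct orthogonal subsets of $\Phi^+(v)$ yield distinct $B_v$-orbits of the $e_S$. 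The main obstacle is thus the orbit classification of Proposition 4.7 of \cite{GM}; once that is granted, the theorem is a formal consequence of the equivariant fibration and Lemma \ref{orderiso}.
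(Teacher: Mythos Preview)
Your proposal is correct and follows exactly the approach the paper outlines: the paper itself does not prove this statement but records it as Corollary 4.8 of \cite{GM}, after sketching precisely the two reductions you carry out --- the $B$-equivariant fibration $\pi\colon G/L\to G/P$ to reduce to a single cell $BvP/L$, Lemma \ref{orderiso} to pass to $B_v$-orbits in $\mathfrak{p}^u$, and Proposition 4.7 of \cite{GM} for the parametrization of the latter by orthogonal subsets of $\Phi^+(v)$. Your write-up is in fact more explicit than the paper's, which simply cites the result; the only minor imprecision is the phrase ``$v(\alpha)>0$ for every $\alpha\neq\alpha_P$'', which should read ``for every simple $\alpha\neq\alpha_P$'' (the conclusion $\Phi^+(v)\subseteq\Psi$ then follows since any positive root in $\Phi_P$ is a nonnegative combination of such simple roots).
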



With this parametrization comes a combinatorial characterization of the orbits, also in \cite{GM}.
If $w\in W$ we will denote with $\left[w\right]^P$ the minimal length representative of the coset $wW_P$.
\begin{theorem}\label{ordineGM}
Let $(v,S)$ and $(u,R)$ be admissible pairs.
Then $Bux_R<Bvx_S$ if and only if 
\[ \left[v\sigma_S\right]^P\leq \left[u\sigma_R\right]^P\leq u\leq v \text{ and }\sigma_{u(R)}\leq\sigma_{v(S)}\]
\end{theorem}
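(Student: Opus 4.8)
The plan is to realise the closure order through its covering relations and to compute those with minimal parabolics. Since $\cdr{Bvx_S}$ is a union of $B$-orbits, the relation $Bux_R\le Bvx_S$ is the transitive closure of covers $\mathcal O\lessdot\mathcal O'$ (codimension one in the closure), so it suffices to understand a single cover and to check that the right-hand side is a transitive relation agreeing with the LHS on covers. Two structural reductions organise the computation. Applying the $B$-equivariant projection $\pi\colon G/L\to G/P$ shows that any comparison forces $u\le v$ on the $W^P$-parts, since $\pi(\cdr{Bvx_S})\subseteq\cdr{BvP/P}$. For orbits lying over a single base point (that is, sharing the same $v$) Lemma \ref{orderiso} turns the order into the order of $B_v$-orbits in $\mathfrak p_u$ under the adjoint action defined in \eqref{action}, which is amenable to direct computation. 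Thus the full order is assembled from the fibrewise order, the base order on $W^P$, and the minimal parabolic moves that pass between fibres.

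The technical core is the classification of $\cdr{P_\alpha\cdot Bvx_S}$ as a union of orbits, for each simple $\alpha$. I would track two coordinates. The first is the $W^P$-part, whose change is dictated by whether $s_\alpha v\gtrless v$ and by whether $s_\alpha v$ remains in $W^P$; Proposition \ref{maxim} is the tool that converts the condition $s_\alpha v<v$ into the maximal/minimal root data and hence describes the new first coordinate $[s_\alpha v]^P$. The second is the associated involution $\sigma_{v(S)}=v\sigma_Sv^{-1}$, whose change is governed by the $\circ$-action of $s_\alpha$ and by the real/imaginary/complex type of $\alpha$ relative to $\sigma_{v(S)}$ in the sense of Section \ref{notation}. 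The identity $v\sigma_S=\sigma_{v(S)}v$ is what links the third coordinate $[v\sigma_S]^P$ to the other two, so that a single move updates all three simultaneously.

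With the per-move table in hand I would prove the two implications by monotonicity. For necessity it suffices to check that each cover preserves the three inequalities: Lemma \ref{propord} controls how $u$, $v$ and the bracketed elements move under one $s_\alpha$, while Lemma \ref{Bruhatcirc} and the length count of Lemma \ref{invlength} control how $\sigma_{u(R)}$ and $\sigma_{v(S)}$ move under one $\circ$-step; together these show the right-hand relation is preserved along any chain, hence is implied by $Bux_R\le Bvx_S$. For sufficiency I would build an explicit ascending chain from $(u,R)$ to $(v,S)$: raise the $W^P$-part from $u$ to $v$ by the connectivity of $W^P$ (any $u\le v$ is joined by simple reflections each increasing length and staying in $W^P$), and raise the involution from $\sigma_{u(R)}$ to $\sigma_{v(S)}$ using the minimal $\circ$-expression guaranteed after Lemma \ref{invlength}, interleaving the two processes so that every intermediate pair stays admissible and the lower bracket $[v\sigma_S]^P\le[u\sigma_R]^P$ is respected throughout.

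The step I expect to be the main obstacle is the coordination of these two motions through the projection $[\,\cdot\,]^P$. The naive update $v\mapsto s_\alpha v$ need not lie in $W^P$, so re-projection can alter $\Phi^+$ in a way invisible to $\sigma_{v(S)}$ alone; this is exactly why the statement carries a separate lower bracket $[v\sigma_S]^P\le[u\sigma_R]^P$ running opposite to $u\le v$, and why none of the three conditions is redundant—each orbit is bracketed between the Weyl elements $[v\sigma_S]^P$ and $v$, with the order comparing tops directly and bottoms in reverse. Proving that the right-hand side is genuinely transitive, so that the transitive closure of the covers introduces no spurious comparabilities, is the delicate combinatorial point; I expect it to require the saturation description of the sets $\Phi^+(v)$ behind Lemma \ref{ordWP} together with repeated use of Lemmas \ref{propord} and \ref{Bruhatcirc}.
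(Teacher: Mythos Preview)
The paper does not prove this theorem; it is quoted from \cite{GM} (Gandini--Maffei), and the surrounding text only remarks that ``the proof of Maffei and Gandini makes great use of the action of the minimal parabolic groups.'' So there is no detailed argument here to compare against.

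Your plan is broadly aligned with that one-line description, but it is a sketch rather than a proof, and it contains a structural misstep. You propose to analyse the closure order through its covers and to compute each cover via a single minimal parabolic $P_\alpha$. That is not how the standard order works: as Lemma~\ref{storder} records, the equivalence of the standard order with the Bruhat order requires the more elaborate characterisation in which one lifts two orbits $\mathcal U_i$ and $m_{\alpha_{i,0}}\mathcal U_i$ simultaneously by a \emph{common} chain of ascents. Not every cover $\mathcal O\lessdot\mathcal O'$ in the closure order arises as $\mathcal O\in\mathcal E_\alpha(\mathcal O')$ for a single simple $\alpha$; the situation is parallel to the Bruhat order on $W$, where covers are given by arbitrary reflections, not only simple ones. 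Consequently your necessity argument (``each cover preserves the right-hand side'') does not get started until you have a complete description of the covers, and obtaining that description is already a substantial part of the theorem.

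A smaller point: the transitivity of the right-hand relation is immediate, since it is a conjunction of Bruhat-order inequalities in $W$, each transitive on its own; this is not where the difficulty lies. What is genuinely hard in \cite{GM} is the sufficiency direction --- constructing an ascending chain of admissible pairs from $(u,R)$ to $(v,S)$ given only the three inequalities --- and in particular the coordination problem you flag in your last paragraph, namely keeping every intermediate pair admissible while moving the $W^P$-part and the involution in step. You correctly identify this as the crux, but your plan does not supply the mechanism (in \cite{GM} this is handled by a careful case analysis of the $m_\alpha$-moves combined with the dimension formula, rather than by an abstract interleaving argument).
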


The proof of Maffei and Gandini makes great use of the action of the minimal parabolic groups which we will now introduce.

Given a simple root $\alpha\in \Delta$ we can define a parabolic subgroup $P_\alpha$ which is the subgroup generated by $B$ and $U_{-\alpha}$.
It is minimal among the parabolic subgroups that strictly contain $B$ and every such subgroup is obtained this way.

Now fix a $B$-orbit $BxL/L$ in $G/L$ and a simple root $\alpha\in\Delta$.
The minimal parabolic subgroup $P_\alpha$ acts on $G/L$, so the $B$-orbit $BxL/L$ is contained in the $P_\alpha$-orbit $P_\alpha xL/L$.
\begin{prop}
The Borel subgroup $B$ acts on $P_\alpha xL/L$ with finitely many orbits.
In fact there are at most $3$ $B$-orbits in $P_\alpha xL/L$.
\end{prop}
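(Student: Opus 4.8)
The plan is to reduce the count to a rank-one computation by means of the standard $\mathbb{P}^1$-bundle associated with the minimal parabolic $P_\alpha$. Write $y_0=xL/L$ and $\mathcal{O}=By_0$, so that, since $B\subseteq P_\alpha$, the $P_\alpha$-orbit is $Y:=P_\alpha y_0=P_\alpha\mathcal{O}$. First I would introduce the $P_\alpha$-equivariant morphism $\mu\colon Z:=P_\alpha\times^{B}\mathcal{O}\to Y$, $[g,y]\mapsto gy$, which is surjective because $B\subseteq P_\alpha$. The source $Z$ is a fibre bundle over $P_\alpha/B\cong\mathbb{P}^1$ with fibre $\mathcal{O}$, and the projection $p\colon Z\to\mathbb{P}^1$ is $B$-equivariant. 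Finiteness of the $B$-orbits on $Y$ is inherited from the finiteness on all of $G/L$ recalled in the introduction; since $\mu$ is a surjective $B$-equivariant morphism, every $B$-orbit of $Y$ is the image of a $B$-orbit of $Z$, so it suffices to bound the number of $B$-orbits on $Z$.

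Next I would count the $B$-orbits of $Z$ fibrewise over the two $B$-orbits of $\mathbb{P}^1$, namely the fixed point $o=eB$ and its open complement $\Omega$. The fibre $p^{-1}(o)\cong\mathcal{O}$ is a single $B$-orbit. The open part is $\Omega\cong B/B_{(\alpha)}$, where $B_{(\alpha)}=B\cap s_\alpha B s_\alpha^{-1}=T\prod_{\gamma\in\Phi^+\setminus\{\alpha\}}U_\gamma$ is the codimension-one subgroup of $B$ obtained by dropping the factor $U_\alpha$; accordingly $p^{-1}(\Omega)\cong B\times^{B_{(\alpha)}}\mathcal{O}$, whose $B$-orbits are in bijection with the $B_{(\alpha)}$-orbits on $\mathcal{O}$. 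Thus the number of $B$-orbits of $Z$ equals $1+\#\{B_{(\alpha)}\text{-orbits on }\mathcal{O}\}$.

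The crux is to show that $B_{(\alpha)}$ has at most two orbits on the single $B$-orbit $\mathcal{O}=B/B_{y_0}$. Here I would pass to the complementary description $B_{(\alpha)}\backslash B\cong U_\alpha\cong\mathbb{A}^1$ and identify the $B_{(\alpha)}$-orbits on $\mathcal{O}$ with the orbits of the residual right action of the stabiliser $B_{y_0}$ on $\mathbb{A}^1$. This action is by affine transformations $t\mapsto at+b$ coming from the torus scaling and the $U_\alpha$-translation, so any subgroup of $\mathrm{Aff}(\mathbb{A}^1)$ that has finitely many orbits on $\mathbb{A}^1$ has at most two of them: either a translation is present and the action is transitive, or the subgroup fixes one point and acts transitively on the complement. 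Finiteness, already guaranteed, rules out the degenerate possibility of infinitely many fixed points, so the count is at most two. Assembling the pieces, $Z$ has at most $1+2=3$ $B$-orbits, and by surjectivity of $\mu$ so does $Y$.

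The main obstacle I expect is precisely this rank-one step: making rigorous the identification of $B_{(\alpha)}\backslash B$ with $U_\alpha$ together with the affine nature of the residual $B_{y_0}$-action, and checking that passing through $\mu$ can only merge orbits (when the images over $o$ and over $\Omega$ coincide, for instance when $\mathcal{O}$ is already $P_\alpha$-stable) and hence lower the count rather than raise it. A pleasant feature of this approach is that it treats uniformly the two cases $\alpha\neq\alpha_P$, where $P_\alpha\subseteq P$, and $\alpha=\alpha_P$, where $P_\alpha\not\subseteq P$, so that no separate analysis of the projection to $G/P$ is needed.
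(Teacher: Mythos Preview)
Your reduction to the associated bundle $Z=P_\alpha\times^B\mathcal{O}$ is natural, but the argument breaks at the crucial step: you assert that $B_{(\alpha)}$ has at most two orbits on $\mathcal{O}$ because the residual $B_{y_0}$-action on $\mathbb{A}^1$ is affine and has ``finitely many orbits, already guaranteed''. That finiteness is \emph{not} guaranteed. What is known is that $B$ has finitely many orbits on $Y\subseteq G/L$; this does not imply that $B$ has finitely many orbits on the cover $Z$, nor equivalently that $B_{(\alpha)}$ has finitely many orbits on $\mathcal{O}$. Concretely, take $G=\mathrm{Sp}_{2n}$ and let $\mathcal{O}$ be the open $B$-orbit in $G/L$. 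Then $\dim\mathcal{O}=\dim G/L=n^2+n=\dim B$, so $B_{y_0}$ is \emph{finite}; its image in $\mathrm{Aff}(\mathbb{A}^1)$ is a finite subgroup of $\mathbb{G}_m$, and the action on $\mathbb{A}^1$ has infinitely many orbits. Hence $Z$ has infinitely many $B$-orbits and your bound ``$\#\{B\text{-orbits on }Y\}\le\#\{B\text{-orbits on }Z\}\le 3$'' collapses. The map $\mu$ does merge orbits in this situation (indeed $\dim Z>\dim Y$), but your argument never exploits this collapsing to recover a bound on $Y$.

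The paper's proof avoids the cover entirely and works directly on $Y$: the $B$-orbits in $P_\alpha xL/L$ are identified with the orbits of $L\cap x^{-1}P_\alpha x$ on $x^{-1}P_\alpha x/x^{-1}Bx\cong\mathbb{P}^1$, and here finiteness \emph{is} inherited from $G/L$. One then classifies algebraic subgroups of $\mathrm{Aut}(\mathbb{P}^1)$ acting with finitely many orbits (unipotent: two orbits; a torus: three; all of $\mathrm{PGL}_2$: one; anything containing a torus: at most three), which yields the bound. Your argument can be repaired along the same lines by replacing $B_{y_0}$ acting on $\mathbb{A}^1\cong B_{(\alpha)}\backslash B$ with $\mathrm{Stab}_{P_\alpha}(y_0)$ acting on $\mathbb{P}^1\cong B\backslash P_\alpha$; then finiteness is automatic and the classification of subgroups of $\mathrm{PGL}_2$ gives at most three orbits.
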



There must be a unique $B$-orbit $\mathcal{O}$ in $P_\alpha vx_S$ such that $\cdr{\mathcal{O}}=P_\alpha vx_S$.
We will call $\mathcal{O}$ the \textit{open orbit} of $P_\alpha vx_S$.

We also have
\[\dim P_\alpha=\dim B+1\]
so $\dim Bvx_S\leq \dim P_\alpha vx_S\leq \dim Bvx_S+1$.
In particular if $\mathcal{O}$ is the open orbit, then $\dim \mathcal{O}=\dim P_\alpha vx_S$.
This implies that if $\mathcal{O}$ and $\mathcal{O}'$ are distinct $B$-orbits in $P_\alpha vx_S$ then they are comparable if and only if one of them is the open orbit.

Following \cite{GM}, we will use this notation:
\begin{defin}
Let $(v,S)$ be an admissible pair and $\alpha\in \Delta$ a simple root.
Then we define
\begin{enumerate}

\item $m_\alpha(v,S)=(v',S') \text{ if and only if }(v',S') \text{ is the open orbit in }P_\alpha vx_S$;
\item $m(\alpha_1)(v,S)=m_{\alpha_1}(v,S)$ and inductively $m(\alpha_1,\ldots,\alpha_n)(v,S)=m_{\alpha_n}m(\alpha_1,\ldots,\alpha_{n-1})(v,S)$;
\item $\mathcal{E}_\alpha(v,S)=\left\lbrace (v',S')\neq (v,S)\mid m_\alpha(v',S')=(v,S)\right\rbrace$.
\end{enumerate}
\end{defin}

We will also say that $\alpha\in\Delta$ is an \textit{ascent} for $(v,S)$ if $m_\alpha(v,S)\neq (v,S)$ and that it is a \textit{descent} if $\mathcal{E}_\alpha(v,S)\neq 0$.
We will similarly say that $\alpha$ is an \textit{ascent} for $\sigma_{v(S)}$ if $\sigma_{v(S)}(\alpha)>0$ and that is a \textit{descent} if $\sigma_{v(S)}(\alpha)<0$.

We will also say that a sequence $\alpha_1\ldots,\alpha_n$ in $\Delta$ is a \textit{sequence of ascents} for $(v,S)$ if $\alpha_1$ is an ascent for $(v,S)$ and for every $i=2,\ldots, n$ we have that $\alpha_i$ is an ascent for $m(\alpha_1,\ldots,\alpha_{i-1})(v,S)$.

Similarly, we will say that $\alpha_1,\ldots,\alpha_n$ is a \textit{sequence of descents} for $(v,S)$ if there is a sequence of orbits $\mathcal{O}_1,\ldots,\mathcal{O}_n=(v,S)$ such that $\alpha_i$ is a descent for $\mathcal{O}_i$ and $\mathcal{O}_{i-1}\in\mathcal{E}_{\alpha_i}\mathcal{O}_i$ for every $i=2,\ldots,n$.

Now, we can put an order in the set of orbits by imposing that $\mathcal{O}\leq m_\alpha \mathcal{O}$ and that if $\mathcal{O}\leq\mathcal{O}'$, then $m_\alpha\mathcal{O}\leq m_\alpha \mathcal{O}'$.
The smallest order with this property is called the \textit{standard order} in \cite{RS} and it is in fact equivalent to the Bruhat order.
It follows that the Bruhat order admits the following characterization.

\begin{lemma}\label{storder}
We have $Bvx_S\leq Bux_R$  in the Bruhat order if and only if there is a sequence $Bvx_S=\mathcal{O}_1,\mathcal{O}_2,\ldots,\mathcal{O}_n=Bux_R$ of orbits such that for every $i=1,\ldots,n-1$ there is $k\in\mathbb{N}$, a sequence $(\alpha_{i,0},\alpha_{i,1},\ldots,\alpha_{i,k})\in\Delta$ and an orbit $\mathcal{U}_i$ with the following properties:
\begin{enumerate}
\item $\alpha_{i,0}$ is an ascent for $\mathcal{U}_i$ and $(\alpha_{i,1},\ldots,\alpha_{i,k})$ is a sequence of ascents both for $\mathcal{U}_i$ and $m_{\alpha_{i,0}}\mathcal{U}_i$;
\item $\mathcal{O}_i=m(\alpha_{i,1},\ldots,\alpha_{i,k})\mathcal{U}_i$ and $\mathcal{O}_{i+1}=m(\alpha_{i_0},\alpha_{i,1},\ldots,\alpha_{i,k})\mathcal{U}_i$.
\end{enumerate}
\end{lemma}

In the proof of Theorem \ref{ordineGM} by Gandini and Maffei as well as in the rest of this paper the following lemma is fundamental:

\begin{lemma}[Lemma 5.1, \cite{GM}]
Let $(v,S)$ be an admissible pair and $\alpha\in\Delta$.
Then
\begin{enumerate}
\item if $m_\alpha(v,S)=(v',S')\neq (v,S)$ then $\sigma_{v'(S')}=s_\alpha\circ\sigma_{v(S)}>\sigma_{v(S)}$;
\item $\mathcal{E}_\alpha(v,S)\neq \varnothing$ if and only if $s_\alpha\circ\sigma_{v(S)}<\sigma_{v(S)}$ or equivalently $\sigma_{v(S)}(\alpha)<0$.
\end{enumerate}
\end{lemma}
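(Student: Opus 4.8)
The plan is to prove both statements by reducing the action of the minimal parabolic $P_\alpha$ to a rank-one computation and then tracking the invariant $(v,S)\mapsto\sigma_{v(S)}$ through it. The guiding principle is that each $B$-orbit in $G/L$ is recorded by the involution $\sigma_{v(S)}$, and that passing from an orbit to the open orbit of its $P_\alpha$-sweep can only happen by one of a short list of elementary moves, each of which alters the associated involution exactly by the $\circ$-action. Thus the entire content of (1) is to match the geometry of $P_\alpha vx_S$ with the two branches in the definition of $s_\alpha\circ\sigma_{v(S)}$.

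First I would set up the rank-one reduction. Since $\dim P_\alpha=\dim B+1$, the sweep $P_\alpha vx_S$ contains $Bvx_S$ together with at most two further $B$-orbits, and by the Proposition quoted above there is a unique open orbit $\mathcal{O}$ of dimension $\dim P_\alpha vx_S$. Fibering $P_\alpha vx_S$ over $P_\alpha/B\cong\mathbb{P}^1$ and restricting attention to the rank-one group generated by $U_{\pm\alpha}$, the problem becomes the classical analysis of $B$-orbits on a rank-one Hermitian symmetric variety. The relevant model situations are the \emph{complex} case, in which $s_\alpha$ and $\sigma_{v(S)}$ do not commute (equivalently $\sigma_{v(S)}(\alpha)\neq\pm\alpha$) and the open orbit has associated involution $s_\alpha\sigma_{v(S)}s_\alpha$, and the \emph{imaginary/real} case, in which they commute and the associated involution is $s_\alpha\sigma_{v(S)}$. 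These are precisely the two branches of the definition of $\circ$, so whenever $m_\alpha(v,S)=(v',S')\neq(v,S)$ one reads off $\sigma_{v'(S')}=s_\alpha\circ\sigma_{v(S)}$.

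Next I would make this precise on the combinatorial side by exhibiting the explicit effect of $m_\alpha$ on the pair, splitting into cases according to the sign of $v^{-1}(\alpha)$ and the position of $\alpha$ relative to $v(S)$, using Proposition \ref{maxim} to control when $s_\alpha v$ leaves $W^P$ (and re-minimizing the coset when it does) and Lemma \ref{sommaradici} to control which roots may be adjoined to or removed from $S$. In each case I would compute $v'(S')$ directly and check the identity $\sigma_{v'(S')}=s_\alpha\circ\sigma_{v(S)}$. The inequality $\sigma_{v'(S')}>\sigma_{v(S)}$ is then automatic: as $\mathcal{O}$ is the open orbit its dimension is strictly larger, so by Lemma \ref{orderiso} and the length formula of Lemma \ref{invlength} we get $L(s_\alpha\circ\sigma_{v(S)})=L(\sigma_{v(S)})+1$, and Lemma \ref{lemmaBruhatcirc} upgrades this to the Bruhat inequality. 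This case analysis is the main obstacle: the bookkeeping of which branch of $\circ$ occurs, and of whether a root is added to or deleted from $S$ (the three-orbit situation, where two smaller orbits share the involution $\sigma_{v(S)}$ but differ in $S$), is where all the real work lies, and it is exactly where the hypothesis that $P^u$ is abelian, hence that $S$ is strongly orthogonal by Lemma \ref{sommaradici}, is used to keep the list of moves finite and short.

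Finally, statement (2) follows formally once (1) is established. If $\mathcal{E}_\alpha(v,S)\neq\varnothing$, choose $(u,R)\neq(v,S)$ with $m_\alpha(u,R)=(v,S)$; then (1) gives $\sigma_{v(S)}=s_\alpha\circ\sigma_{u(R)}>\sigma_{u(R)}$, and since $s_\alpha\circ\sigma_{u(R)}=\sigma_{v(S)}$ forces $s_\alpha\circ\sigma_{v(S)}=\sigma_{u(R)}$ by the involutivity of the $\circ$-action noted above, we obtain $s_\alpha\circ\sigma_{v(S)}<\sigma_{v(S)}$. Conversely, if $s_\alpha\circ\sigma_{v(S)}<\sigma_{v(S)}$ then $\alpha$ is a descent, and tracing the rank-one picture backwards shows that $Bvx_S$ is the open orbit of its own $P_\alpha$-sweep, so a strictly smaller $B$-orbit mapping onto it under $m_\alpha$ must exist, i.e. $\mathcal{E}_\alpha(v,S)\neq\varnothing$. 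The equivalence with $\sigma_{v(S)}(\alpha)<0$ is the standard descent criterion: by Lemma \ref{lemmaBruhatcirc}, $s_\alpha\circ\sigma_{v(S)}<\sigma_{v(S)}$ holds iff $s_\alpha\sigma_{v(S)}<\sigma_{v(S)}$, which holds iff $\sigma_{v(S)}(\alpha)<0$ since $\sigma_{v(S)}$ is an involution.
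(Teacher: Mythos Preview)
The paper does not prove this lemma; it merely quotes it as Lemma~5.1 of \cite{GM} and uses it as a black box. So there is no in-paper proof to compare your proposal against.

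That said, your outline is essentially the strategy of \cite{GM}: reduce to the $P_\alpha$-sweep, run a case analysis on the position of $\beta=v^{-1}(\alpha)$ relative to $\Psi$, $\Phi^+(v)$ and $S$, compute $(v',S')$ explicitly in each case, and read off $\sigma_{v'(S')}=s_\alpha\circ\sigma_{v(S)}$. You correctly identify that the case analysis is where the work is, but you do not actually carry it out; what you have written is a plan rather than a proof. One concrete point to tighten: your argument for the strict inequality $\sigma_{v'(S')}>\sigma_{v(S)}$ via ``larger dimension $\Rightarrow$ $L$ increases'' is not supported by the lemmas you cite. Lemma~\ref{orderiso} says nothing about the relation $\dim Bvx_S=\text{const}+L(\sigma_{v(S)})$, and that dimension formula is in fact proved in \cite{GM} \emph{alongside} this very lemma, so invoking it here is circular. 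The cleaner route (and the one taken in \cite{GM}) is to check directly in each case that $\sigma_{v(S)}(\alpha)>0$ whenever $m_\alpha(v,S)\neq(v,S)$, which by Lemma~\ref{lemmaBruhatcirc} gives $s_\alpha\circ\sigma_{v(S)}>\sigma_{v(S)}$ without appealing to dimensions. Your derivation of~(2) from~(1) is fine; for the converse direction you should make explicit that $\sigma_{v(S)}(\alpha)<0$ rules out the compact-imaginary case (so $P_\alpha vx_S\neq Bvx_S$) and, combined with~(1), forces $Bvx_S$ to be open in its $P_\alpha$-sweep, whence $\mathcal{E}_\alpha(v,S)\neq\varnothing$.
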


\section{The local systems}\label{linebundle}

Following Lusztig and Vogan (\cite{Vogan} and \cite{LV}) we can consider for every $B$-orbit $\mathcal{O}$ in $G/L$ a $B$-equivariant local system over $\mathcal{O}$.
We will use the following definition:
\begin{defin}[$B$-equivariant $\mathbb{C}$-local system of rank 1]
A $\mathbb{C}$-local system of rank $1$ (or simply \textit{local system}) over $\mathcal{O}$ is a complex line bundle $\pi\colon E\longrightarrow \mathcal{O}$ with constant transition functions such that for every trivializing open $U\subseteq \mathcal{O}$ we have $\pi^{-1}(U)\cong U\times \mathbb{C}$ where $\mathbb{C}$ has the discrete topology.

We say that a local system over $\mathcal{O}$ is \textit{$B$-equivariant} if there is an action $B\times E\longrightarrow E$ of $B$ on $E$ that makes the following diagram commute:
\begin{center}
\begin{tikzcd}
B\times E\arrow[d,"\pi"] \arrow[r] & E\arrow[d,"\pi"]\\
B\times \mathcal{O}\arrow[r] & \mathcal{O}
\end{tikzcd}
\end{center}
\end{defin}
It is easy to see that the isomorphism classes of local systems of this kind are in a one to one correspondence with the continuous representations of $\stab_B(x)$ on the stalk at $x$ for any one $x\in\mathcal{O}$.
In this case we consider in $\mathbb{C}$ the discrete topology, so a continuous representation is a representation of $\pi_0(\stab_B(x))$, the group of connected components.

\begin{defin}
For a $B$-orbit $\mathcal{O}$ define $L_\mathcal{O}$ to be the set of isomorphism classes of $B$-equivariant local systems over $\mathcal{O}$. 
Then we define 
\[\mathcal{D}=\left\lbrace (\mathcal{O},\gamma)\mid \mathcal{O} \text{ is a } B\text{-orbit in }G/L, \gamma\in L_\mathcal{O}\right\rbrace\]

\end{defin}

Note that the information on the underlying orbit is inherently contained in the line bundle, so we will often refer to $(\mathcal{O},\gamma)\in\mathcal{D}$ simply as $\gamma$.

We know by now that $\Delta$ acts on the set of the orbits through the minimal parabolic subgroups.
Fix an orbit $Bvx_S$ and $\alpha\in\Delta$.
If $\gamma$ is a (isomorphism class of) local system on $Bvx_S$ we can ask when and how we can extend $\gamma$ to $P_\alpha vx_S$.
If $(Bvx_S,\gamma)=\gamma\in\mathcal{D}$ define
\[\alpha\circ\gamma=\left\lbrace (\mathcal{O},\tau)\mid \mathcal{O}=m_\alpha(v,S)\neq (v,S) \text{ and }\tau \text{ extends }\gamma \text{ to }Bvx_S\cup\mathcal{O}\right\rbrace\]

By combining definition 6.4 of \cite{Vogan} and Lemma 3.5 of \cite{LV} we get the following.

\begin{lemma}\label{listofaction}
Fix $\alpha\in\Delta$ and $(v,S)$ an admissible pair with local system $\gamma$.
Then one and only one of the following is true:
\begin{description}
\item[a)] $\alpha$ is imaginary for $\sigma_{v(S)}$ and $P_\alpha vx_S=Bvx_S$.
We say in this case that $\alpha$ is \textit{compact imaginary} for $(v,S)$;
\item[b1)] $\sigma_{v(S)}(\alpha)>0$ and $\sigma_{v(S)}(\alpha)\neq \alpha$. 
Then $m_\alpha(v,S)=(u,R)\neq (v,S)$ and $\alpha\circ \gamma$ contains a single element;
\item[b2)] $\sigma_{v(S)}(\alpha)<0$ and $\sigma_{v(S)}(\alpha)\neq -\alpha$.
Then $\mathcal{E}_\alpha(v,S)$ contains a single orbit $(u,R)$ and there is a unique $\gamma'$ over $(u,R)$ such that $\alpha\circ \gamma'=\left\lbrace \gamma\right\rbrace$;
\item[c1)] $\sigma_{v(S)}(\alpha)=\alpha$ and $P_\alpha v x_S$ contains only two orbits.
Then $m_\alpha(v,S)=(u,R)\neq (v,S)$ and $\alpha\circ \gamma$ contains exactly two elements;
\item[c2)] $\sigma_{v(S)}(\alpha)=-\alpha$, $\mathcal{E}_\alpha(v,S)$ contains only an orbit $(u,R)$ and there is $\gamma'$ over $(u,R)$ such that $\gamma\in\alpha\circ \gamma'$.
Then $\gamma'$ is unique and $\alpha\circ\gamma'$ contains exactly two elements (one being $\gamma$);
\item[d1)] $\sigma_{v(S)}(\alpha)=\alpha$ and $P_\alpha vx_S$ contains three orbit.
Then $m_\alpha(v,S)=(u,R)\neq (v,S)$ and $\alpha\circ \gamma$ contains a single element;
\item[d2)] $\sigma_{v(S)}(\alpha)=-\alpha$ and $P_\alpha vx_S$ contains three orbits and $\gamma$ extends to all $P_\alpha vx_S$.
Then this extension is unique;
\item[e)]  $\sigma_{v(S)}(\alpha)=-\alpha$.
Then $\mathcal{E}_\alpha(v,S)\neq \varnothing$ but $\gamma$ could admit no extensions.

\end{description}

\end{lemma}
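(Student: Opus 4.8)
The plan is to reduce the whole statement to the rank-one situation attached to $P_\alpha$ and then to read off both the orbit geometry and the behaviour of local systems from the classification of $\alpha$ relative to the involution $\sigma=\sigma_{v(S)}$. First I would record the coarse trichotomy: $\sigma(\alpha)$ lies in exactly one of the mutually exclusive cases $\sigma(\alpha)=\alpha$ (imaginary), $\sigma(\alpha)=-\alpha$ (real), or $\sigma(\alpha)\neq\pm\alpha$ (complex), and in the complex case Lemma \ref{lemmaBruhatcirc} together with Lemma 5.1 separates $\sigma(\alpha)>0$ from $\sigma(\alpha)<0$. I would combine this with the earlier proposition bounding the number of $B$-orbits in $P_\alpha vx_S$ by three, and with the dimension inequality $\dim Bvx_S\le \dim P_\alpha vx_S\le \dim Bvx_S+1$, to organise every configuration by its orbit count in $\{1,2,3\}$. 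This refines the trichotomy into the listed cases and is what secures the ``one and only one'' claim: once I check, via the $SL_2$/$PGL_2$ model below, which $(\sigma\text{-type},\ \text{orbit count})$ combinations actually occur (ruling out, e.g., a complex root with one or three orbits, or an imaginary compact root with a dimension jump), the remaining data — number of extensions, existence of a descent — is then forced rather than a further disjunction.

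For the complex case (b1, b2) I would argue that $\sigma(\alpha)\neq\pm\alpha$ forces $P_\alpha vx_S$ to contain exactly two orbits of consecutive dimension, with an ascent when $\sigma(\alpha)>0$ (case b1) and a descent $\mathcal{E}_\alpha(v,S)\neq\varnothing$ when $\sigma(\alpha)<0$ (case b2, by Lemma 5.1(2)). The key point is that in the rank-one model the stabilizer of a point in the open orbit and that of a point in the closed orbit differ only by a unipotent factor, so their groups of connected components coincide; since isomorphism classes of $B$-equivariant local systems correspond to characters of $\pi_0(\stab_B)$, a local system on one orbit has a unique counterpart on the other, giving the single element of $\alpha\circ\gamma$ in b1 and the unique $\gamma'$ in b2. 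This is the translation into the present notation of the complex-type entry of Vogan's definition~6.4 and the uniqueness in LV Lemma~3.5.

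For the imaginary case $\sigma(\alpha)=\alpha$ (a, c1, d1) I would first isolate the compact subcase geometrically, by testing whether $P_\alpha vx_S=Bvx_S$: this is exactly case a, and it is not detected by $\sigma$ alone, being the $SL_2$-theoretic datum of whether the rank-one subgroup meets the stabilizer in a torus. In the noncompact subcase $m_\alpha(v,S)\neq(v,S)$ is an ascent and the dimension jumps by one; here the orbit count discriminates the two remaining cases, two orbits giving case c1 and three orbits giving case d1. The number of extensions of $\gamma$ to the open orbit is then governed by comparing $\pi_0(\stab_B)$ across the two orbits: in the two-orbit configuration the relevant component group acquires an extra $\mathbb{Z}/2$, so $\gamma$ admits exactly two extensions (c1), whereas in the three-orbit configuration it is unchanged and the extension is unique (d1). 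This two-versus-three and one-versus-two dichotomy is precisely the $SL_2$ versus $PGL_2$ alternative for the subgroup attached to $\alpha$, which is what definition~6.4 encodes.

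Finally, for the real case $\sigma(\alpha)=-\alpha$ (c2, d2, e) Lemma 5.1(2) forces $\mathcal{E}_\alpha(v,S)\neq\varnothing$, so $(v,S)$ is the open orbit over a strictly smaller $(u,R)$, and I would again split by orbit count: three orbits give case d2, where $\gamma$ extends uniquely to all of $P_\alpha vx_S$; two orbits give the dichotomy between c2 and e according to whether the fixed $\gamma$ is the extension of a (necessarily unique) $\gamma'$ on $(u,R)$ — in which case $\alpha\circ\gamma'$ has exactly two elements with $\gamma$ among them (c2) — or whether no such $\gamma'$ exists (case e). This last existence question is governed by a parity/character-compatibility condition for $\pi_0(\stab_B)$ on the smaller orbit, and it is the main obstacle: the two coupled invariants ``two versus three orbits'' and ``one, two, or no extensions'' cannot be read off from $\sigma(\alpha)$ and must be computed from the component groups of the relevant stabilizers, i.e.\ from the fine $SL_2$/$PGL_2$ structure of the rank-one subgroup. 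Establishing these counts and the criterion behind case e is where the cited rank-one computations of Vogan and Lusztig--Vogan do the real work, and the remainder of the argument is the routine translation of those computations into the $(v,S)$, $\sigma_{v(S)}$ language used here.
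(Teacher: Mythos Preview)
Your sketch is correct and, in fact, more detailed than what the paper does: the paper gives no proof at all for this lemma, stating it simply as the combination of Vogan's definition~6.4 and Lemma~3.5 of Lusztig--Vogan. Your outline---reducing to the rank-one picture inside $P_\alpha vx_S\cong\mathbb{P}^1$, separating cases by the $\sigma_{v(S)}$-type of $\alpha$ together with the orbit count, and tracking extensions of local systems via the change in $\pi_0(\stab_B)$---is precisely the content of those cited results, so there is no divergence in method, only in the level of detail you chose to unpack.
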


There are two crucial results in the above lemma that we need to emphasize.
The first one, is that every time we have an ascent $\alpha$ for $(v,S)$ and a (isomorphism class of) local system $\gamma$ on $(v,S)$, then $\gamma$ always extends to $P_\alpha vx_S$ and $\alpha\circ\left((v,S),\gamma\right)\neq \varnothing$.
The second is that this extension is unique except in case $c1)$.
It is then logical to study this case with more attention.
For this recall that $P_\alpha=B\cup Bs_\alpha B=Bs_\alpha \cup BU_{-\alpha}$.

Fix $(v,S)$ and $\alpha\in\Delta$ with $\sigma_{v(S)}(\alpha)=\alpha$.
Put $\beta=v^{-1}(\alpha)$.
If $\beta\in\Psi$, then $P_\alpha vx_S$ contains three orbits, so that's excluded.
Then $\beta\in\Delta$ and
\[P_\alpha vx_S=Bvx_{s_\beta(S)}\cup Bvx_S\cup\bigcup_{t\in\mathbb{C}^*} Bvu_{-\beta}(t)x_S\]
Note that
\[\sigma_{v(S)}(\alpha)=\alpha\Leftrightarrow \sigma_{S}(\beta)=\beta\Rightarrow \sigma_Ss_\beta=s_\beta\sigma_S\Leftrightarrow s_\beta\sigma_Ss_\beta=\sigma_S\Leftrightarrow \sigma_{s_\beta(S)}=\sigma_S\]
and the last equality implies $s_\beta(S)=S$ because they are both orthogonal subsets of $\Phi^+(v)$.
Moreover, suppose that the set $H=\left\lbrace \gamma\in S\mid (\gamma,\beta)\neq 0\right\rbrace$ is non empty.
Then 
\[\beta=\sigma_S(\beta)=\beta+\sum_{\gamma_i\in H} a_i\gamma_i\]
with $a_i\neq 0$.
But that's absurd, because $\gamma_i$ are linearly independent, so $H=\varnothing$.

For a fixed $t\in\mathbb{C}^*$ we know that $Bvu_{-\beta}(t)x_S=Bv\exp(u_{-\beta}(t).e_S)$.
There must then be $\gamma\in S$ such that $\gamma-\beta\in\Psi$.
But, $(\gamma,\beta)=0$, so it must also be $\gamma+\beta\in\Psi$.
It follows that we can be in case $c1)$ if and only if there is a root $\gamma\in S$ to which we can both add and subtract $\beta$.
This simple fact will be useful later.

Now we look back at $\mathcal{D}$.
Following \cite{Vogan} and \cite{LV} we will endow it with an order.

\begin{defin}[Bruhat $\mathcal{G}$-order]\label{ordinelb}
We call Bruhat $\mathcal{G}$-order and we denote it with $<$ the smallest order on $\mathcal{D}$ with the following properties:
\begin{enumerate}
\item if $\gamma'\in\alpha\circ\gamma$, then $\gamma<\gamma'$;
\item if $\gamma<\tau$ and $\gamma'\in\alpha\circ\gamma,\tau'\in\alpha\circ\tau$ then $\gamma'\leq\tau'$.
\end{enumerate}
\end{defin}

\begin{prop}\label{propordlb}
Fix $\gamma\leq\tau\in\mathcal{D}$ and suppose there is a sequence $(\alpha_1,\ldots,\alpha_n)$ in $\Delta$ with a subsequence $(\alpha_{i_1},\ldots,\alpha_{i_k})$ such that:
\begin{enumerate}
\item there is a sequence $(\tau_1,\ldots,\tau_n)$ such that $\tau_i\in\alpha_i\circ\tau_{i-1}$ where $\tau_0=\tau$;
\item there is a sequence $(\gamma_1,\ldots,\gamma_k)$ such that $\gamma_h=\alpha_{i_h}\circ\gamma_{h-1}$ where $\gamma_0=\gamma$.
\end{enumerate}
Then $\gamma_k\leq \tau_n$.
\end{prop}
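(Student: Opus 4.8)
The plan is to prove this by induction on $n$, the length of the full sequence of ascents applied to $\tau$, peeling off the simple reflections one at a time and using the defining properties of the Bruhat $\mathcal{G}$-order from Definition \ref{ordinelb}. The base case $n=0$ forces $k=0$ (the subsequence is contained in the full sequence), so the hypothesis $\gamma\leq\tau$ is exactly the conclusion $\gamma_0\leq\tau_0$. For the inductive step I want to distinguish according to whether the last index $n$ belongs to the chosen subsequence or not, since these two cases interact differently with the order axioms.

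First I would treat the case where $n$ is \emph{not} one of the indices $i_1,\ldots,i_k$. Here the subsequence $(\alpha_{i_1},\ldots,\alpha_{i_k})$ is entirely contained in $(\alpha_1,\ldots,\alpha_{n-1})$, so by the inductive hypothesis applied to the truncated sequence we already have $\gamma_k\leq\tau_{n-1}$. Now $\tau_n\in\alpha_n\circ\tau_{n-1}$ means $\tau_{n-1}<\tau_n$ by property (1) of the order, hence $\gamma_k\leq\tau_{n-1}<\tau_n$ by transitivity, which is what we want. The point to be careful about is that $\gamma_k$ itself is not being advanced at this step, so all that is needed is monotonicity of the chain $\tau_0<\tau_1<\cdots<\tau_n$, which follows directly from axiom (1).

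The harder case, and the main obstacle, is when $n=i_k$, so that the final step advances \emph{both} chains by $\alpha_n$. Writing $\alpha=\alpha_n$, we have $\tau_n\in\alpha\circ\tau_{n-1}$ and $\gamma_k=\alpha\circ\gamma_{k-1}$ (the latter being a single element since, by hypothesis, $\gamma_h=\alpha_{i_h}\circ\gamma_{h-1}$ is written as an equality of elements, not membership). By the inductive hypothesis applied to $(\alpha_1,\ldots,\alpha_{n-1})$ with the shortened subsequence $(\alpha_{i_1},\ldots,\alpha_{i_{k-1}})$, we obtain $\gamma_{k-1}\leq\tau_{n-1}$. Now I want to invoke property (2) of Definition \ref{ordinelb}: from $\gamma_{k-1}\leq\tau_{n-1}$ together with $\gamma_k\in\alpha\circ\gamma_{k-1}$ and $\tau_n\in\alpha\circ\tau_{n-1}$, the axiom yields $\gamma_k\leq\tau_n$ directly. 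The subtlety I expect here is the degenerate subcase $\gamma_{k-1}=\tau_{n-1}$: property (2) is stated for $\gamma<\tau$ strictly, so when the two elements coincide one must argue separately. In that situation $\gamma_k$ and $\tau_n$ both lie in $\alpha\circ\gamma_{k-1}$; if this set is a singleton (every case of Lemma \ref{listofaction} except \textbf{c1}) they are equal and we are done, and if we are in case \textbf{c1} where $\alpha\circ\gamma_{k-1}$ has two elements, one has to check that the hypothesis $\gamma_k=\alpha\circ\gamma_{k-1}$ written as an equality pins down which element is meant, so that $\gamma_k$ and $\tau_n$ are forced to coincide or the strict inequality can be recovered. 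Apart from this bookkeeping around equality versus strict inequality and the singleton-versus-\textbf{c1} distinction, the argument is a clean double induction driven entirely by the two axioms defining the order.
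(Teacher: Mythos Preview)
Your proof is correct and follows exactly the same induction on $n$ as the paper's, splitting according to whether $i_k=n$ (apply property~2 of Definition~\ref{ordinelb}) or $i_k<n$ (apply property~1 and transitivity). Your extra discussion of the degenerate case $\gamma_{k-1}=\tau_{n-1}$ and the \textbf{c1} ambiguity is a point the paper's own proof simply glosses over.
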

\begin{proof}
We will show this by induction on $n$.
If $n=1$ and $k=0$ it is clear by property $1$ above and the transitive property.
If $n=k=1$ the claim is property $2$ above.

Now suppose $n>1$.
If $i_k=n$, then we know by inductive hypothesis that $\gamma_{k-1}\leq \tau_{n-1}$ so we conclude with property $2$.
If instead $i_k<n$ we know $\gamma_k\leq \tau_{n-1}$ and we conclude with property $1$.
\end{proof}

There is an evident similarity between the Bruhat $\mathcal{G}$-order and the standard order of \cite{RS}.
It is proved in \cite{RS} that the standard order is equivalent to the Bruhat order, so it is not unexpected that there is a relation between the Bruhat $\mathcal{G}$-order and the Bruhat order.

\begin{prop}[Lemma 5.9, \cite{Vogan}]
Suppose $(Bvx_S,\gamma),(Bux_R,\tau)\in\mathcal{D}$ and $(Bvx_S,\gamma)\leq (Bux_R,\tau)$.
Then $Bvx_S\leq Bux_R$.
\end{prop}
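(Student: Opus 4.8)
The statement to prove is: if $(Bvx_S,\gamma)\leq(Bux_R,\tau)$ in the Bruhat $\mathcal{G}$-order, then $Bvx_S\leq Bux_R$ in the Bruhat order on orbits. The natural strategy is to exploit the definition of the Bruhat $\mathcal{G}$-order as the \emph{smallest} order satisfying properties (1) and (2) of Definition \ref{ordinelb}. Concretely, I would define a relation $\preceq$ on $\mathcal{D}$ by setting $(Bvx_S,\gamma)\preceq(Bux_R,\tau)$ exactly when $Bvx_S\leq Bux_R$ in the Bruhat order on orbits, and then show that $\preceq$ is an order satisfying the two generating properties. Since the Bruhat $\mathcal{G}$-order is by definition the smallest such order, it will be contained in $\preceq$, which is precisely the claim.

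First I would check that $\preceq$ is genuinely a partial order: reflexivity, transitivity, and antisymmetry of $\preceq$ all follow immediately from the corresponding properties of the Bruhat order on orbits (noting that $\preceq$ ignores the local-system data, so it is really the pullback of a partial order along the forgetful map $\mathcal{D}\to\{\text{orbits}\}$, and such a pullback is automatically a preorder; antisymmetry holds because comparable elements in either direction share the same underlying orbit). Next I would verify generating property (1): if $\gamma'\in\alpha\circ\gamma$ with $\gamma$ a local system on $(v,S)$ and $\gamma'$ on $m_\alpha(v,S)=(v',S')\neq(v,S)$, then by the remark following Lemma \ref{listofaction} (cases b1, c1, d1 — the ascent cases) we have $(v',S')$ is the open orbit in $P_\alpha vx_S$, hence $Bvx_S\subseteq\cdr{Bv'x_{S'}}$, i.e. $Bvx_S\leq Bv'x_{S'}$; thus $\gamma\preceq\gamma'$.

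The main work is generating property (2): assuming $\gamma\preceq\tau$ (i.e. $Bvx_S\leq Bux_R$) with $\gamma'\in\alpha\circ\gamma$ and $\tau'\in\alpha\circ\tau$, I must show $\gamma'\preceq\tau'$, i.e. that the underlying orbits still satisfy $m_\alpha(v,S)\leq m_\alpha(u,R)$ in the Bruhat order. This reduces entirely to a statement about orbits, independent of local systems, and I expect it to be exactly the monotonicity of the $m_\alpha$ operators built into the \emph{standard order} of \cite{RS}: the standard order is defined by demanding precisely $\mathcal{O}\leq m_\alpha\mathcal{O}$ and that $\mathcal{O}\leq\mathcal{O}'\Rightarrow m_\alpha\mathcal{O}\leq m_\alpha\mathcal{O}'$, and this standard order coincides with the Bruhat order (as recorded before Lemma \ref{storder}). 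Hence the required inequality $m_\alpha(v,S)\leq m_\alpha(u,R)$ holds by definition of the standard order together with its identification with the Bruhat order.

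The one subtlety, which is the main obstacle to watch, is that $\alpha\circ\gamma$ is only defined (nonempty) when $\alpha$ is an ascent for the orbit, so in applying property (2) I must be careful that both $\gamma'$ and $\tau'$ arise from genuine ascents, meaning $m_\alpha$ is being applied nontrivially on both sides; when $\alpha$ fixes an orbit the operator $m_\alpha$ acts as the identity there and monotonicity is trivial, but I should confirm that the mixed cases (ascent on one orbit, fixed on the other) are still covered by the standard-order axioms, which they are since $\mathcal{O}\leq m_\alpha\mathcal{O}$ holds in all cases. Once property (2) is secured, minimality of the Bruhat $\mathcal{G}$-order closes the argument: $\gamma\leq\tau$ implies $\gamma\preceq\tau$, which unwinds to $Bvx_S\leq Bux_R$.
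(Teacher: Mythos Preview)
The paper does not supply its own proof of this proposition; it simply cites Lemma~5.9 of \cite{Vogan}. Your approach is the natural one and is correct in outline: pull back the Bruhat order on orbits along the forgetful map $\mathcal{D}\to\{\text{$B$-orbits}\}$, check that the resulting relation $\preceq$ satisfies the two generating axioms of Definition~\ref{ordinelb}, and conclude by minimality that $\leq_{\mathcal{G}}\subseteq\preceq$. Your verification of property~(2) via the identification of the standard order with the Bruhat order (stated just before Lemma~\ref{storder}) is exactly the right ingredient.

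One small correction: your relation $\preceq$ is \emph{not} antisymmetric. If $\gamma$ and $\gamma'$ are two distinct local systems on the same orbit $\mathcal{O}$, then $\gamma\preceq\gamma'$ and $\gamma'\preceq\gamma$ both hold while $\gamma\neq\gamma'$; your sentence ``antisymmetry holds because comparable elements in either direction share the same underlying orbit'' stops one step short of what would be needed. So $\preceq$ is only a preorder. This does not damage the argument, but you should phrase the conclusion differently. Either argue by induction on derivations (the Bruhat $\mathcal{G}$-order is generated from the diagonal by rules (1), (2), and transitivity, and each of these steps is visibly compatible with $\preceq$), or observe that $\leq_{\mathcal{G}}\cap\preceq$ is a genuine partial order satisfying (1) and (2) and hence equals $\leq_{\mathcal{G}}$ by minimality, whence $\leq_{\mathcal{G}}\subseteq\preceq$.
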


Every orbit admits at least an isomorphism class of local systems; the trivial one.
So, we can consider the subset 
\[\mathcal{D}_0=\left\lbrace (Bvx_S,\gamma)\in\mathcal{D}\mid \gamma \text{ is trivial}\right\rbrace\]
which is clearly in one to one correspondence with the set of orbits on $G/L$.
We can now ask what is the relation between the Bruhat $\mathcal{G}$-order restricted to $\mathcal{D}_0$ and the Bruhat order on the set of orbits.
The answer is that they are the same.

\begin{prop}\label{ordtriv}
Let $(Bvx_S,\gamma),(Bux_R,\tau)\in\mathcal{D}_0$.
Then $Bvx_S\leq Bux_R$ if and only if 
\[(Bvx_S,\gamma)\leq (Bux_R,\tau)\]
\end{prop}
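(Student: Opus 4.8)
One implication requires nothing new. By the immediately preceding proposition (Lemma 5.9 of \cite{Vogan}), which is valid for arbitrary local systems, the inequality $(Bvx_S,\gamma)\le(Bux_R,\tau)$ already forces $Bvx_S\le Bux_R$. So the whole content lies in the converse: assuming $Bvx_S\le Bux_R$ in the Bruhat order, and that $\gamma$ and $\tau$ are both trivial, I must produce the inequality $(Bvx_S,\gamma)\le(Bux_R,\tau)$ in the Bruhat $\mathcal{G}$-order.

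The only geometric input I need is that the trivial local system extends to the trivial local system along every ascent. Indeed, if $\alpha$ is an ascent for an orbit $\mathcal{O}$ and $\mathcal{O}'=m_\alpha(\mathcal{O})$ is the open orbit of $P_\alpha\mathcal{O}$, then the constant sheaf $\underline{\mathbb{C}}$ on $\mathcal{O}\cup\mathcal{O}'$ restricts to the trivial system on each of the two orbits; hence the trivial system on $\mathcal{O}'$ extends the trivial system on $\mathcal{O}$, i.e. $(\mathrm{triv})\in\alpha\circ(\mathrm{triv})$. Reading off Lemma \ref{listofaction}, in cases $b1)$ and $d1)$ this is the unique extension, while in case $c1)$ it is one of the two available extensions, namely the one I shall always select. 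By the first defining property of Definition \ref{ordinelb}, this gives $(\mathcal{O},\mathrm{triv})<(\mathcal{O}',\mathrm{triv})$ whenever $\alpha$ is an ascent.

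Now I invoke the standard-order characterisation of the Bruhat order, Lemma \ref{storder}: from $Bvx_S\le Bux_R$ I obtain a chain $\mathcal{O}_1=Bvx_S,\dots,\mathcal{O}_n=Bux_R$ in which each step is witnessed by an orbit $\mathcal{U}_i$ and simple roots $(\alpha_{i,0},\dots,\alpha_{i,k})$, all ascents in the precise sense of that lemma, with $\mathcal{O}_i=m(\alpha_{i,1},\dots,\alpha_{i,k})\mathcal{U}_i$ and $\mathcal{O}_{i+1}=m(\alpha_{i,0},\alpha_{i,1},\dots,\alpha_{i,k})\mathcal{U}_i$. I lift each step to $\mathcal{D}$ by starting from the trivial system on $\mathcal{U}_i$ and choosing the trivial extension at every ascent, as licensed above. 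Applying Proposition \ref{propordlb} with $\gamma=\tau=(\mathcal{U}_i,\mathrm{triv})$, with the full sequence $(\alpha_{i,0},\dots,\alpha_{i,k})$ on the $\tau$-side and its subsequence $(\alpha_{i,1},\dots,\alpha_{i,k})$ (dropping the initial root) on the $\gamma$-side, yields $(\mathcal{O}_i,\mathrm{triv})\le(\mathcal{O}_{i+1},\mathrm{triv})$. Transitivity then chains these into $(Bvx_S,\mathrm{triv})=(\mathcal{O}_1,\mathrm{triv})\le\dots\le(\mathcal{O}_n,\mathrm{triv})=(Bux_R,\mathrm{triv})$, which is exactly what is wanted.

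The one place that needs care, and the step I expect to be the real obstacle, is the compatibility of these extension choices with the hypotheses of Proposition \ref{propordlb}. What that proposition actually requires is only the memberships $\gamma_h\in\alpha_{i_h}\circ\gamma_{h-1}$ and $\tau_i\in\alpha_i\circ\tau_{i-1}$, which the trivial choices satisfy; the subtlety is the strict comparison demanded by the second property of Definition \ref{ordinelb} in the recursion of its proof. Here I would point out that immediately after the branching root $\alpha_{i,0}$ the two lifted chains sit over genuinely different orbits, so the two incomparable extensions of case $c1)$ are never forced into comparison and the needed strictness is automatic (it comes from the first property of Definition \ref{ordinelb} at the base of the recursion). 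Thus the argument is essentially bookkeeping on top of Lemma \ref{storder} and Proposition \ref{propordlb}, with no further geometric difficulty.
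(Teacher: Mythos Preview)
Your proof is correct and follows essentially the same approach as the paper: both directions are handled identically, invoking Lemma~5.9 of \cite{Vogan} for one implication and then combining Lemma~\ref{storder} with Proposition~\ref{propordlb} (applied with $\gamma=\tau=(\mathcal{U}_i,\mathrm{triv})$) for the other. Your final paragraph of worry is unnecessary: since you always choose the trivial extension, the intermediate $\gamma_h$ and $\tau_j$ are uniquely determined within $\mathcal{D}_0$, so the case~$c1)$ ambiguity never materialises and the recursion in Proposition~\ref{propordlb} goes through without any strictness issue.
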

\begin{proof}
Given that the local systems are trivial, they always admit extensions and given that we restricted ourselves to the trivial local systems, this extensions are always unique.

Note that one implication is just the proposition above, so suppose $Bvx_S\leq Bux_R$.
By \cite{RS} the Bruhat order is equivalent to the standard order and it can be characterized as in Lemma \ref{storder}.
So it is enough to prove the claim for $n=2$.
But this is clear by Proposition \ref{propordlb}.
\end{proof}

In the next sections we will always consider $\Phi$ to be irreducible and the algebraic group $G$ to be simply connected.
The second assumption in particular seems quite restrictive.
Actually, it is a matter of tedious but easy calculations to show that in some cases when $G$ is simply connected the orbits admit non-trivial local systems and we will give proof of this in the following chapter.
It is also easy to see that, on the other hand, when $G$ is adjoint the orbits never admit non-trivial local systems and so our question is easily answered by Proposition \ref{ordtriv}.
\section{The simply laced case}\label{sl}
In this section we will suppose that the root system $\Phi$ is simply laced, which means it is of type $\bf{ADE}$.

In the first part we will show a result on the Bruhat order of a specific subset of $B$-orbits in $G/L$, which are the 
orbits of maximum rank.
While the result concerns the geometry of the orbits, most of the proof will be purely about the combinatorial properties of the involutions associated to these orbits.

In the last part we will show that the orbits of maximum rank are exactly the orbits which can have non-trivial local systems and  thanks to the preceding results  we will show that the Hasse diagram has two connected components and that in every connected component the order induced by the local systems coincides with the Bruhat order on the underlying orbits.

We will start with a formal definition of orbit of maximum rank.
\begin{defin}
A set of orthogonal roots $S\subseteq \Psi$ is said to be of \textit{maximum rank} in $\Psi$ if for every $\alpha\in \Psi$ either $\alpha\in S$ or $S\cup\left\lbrace \alpha\right\rbrace$ is not orthogonal.

Having fixed $P$ and $\Psi$, an admissible pair $(v,S)$ will be of \textit{maximum rank} if and only if $S$ is of maximum rank in $\Psi$.
The orbit associated to such pair will also be an orbit of \textit{maximum rank}.

Finally, we will denote with $\RM$ the set of admissible pairs (or orbits) of maximum rank.
\end{defin}

Note that a set $S\subseteq \Psi$ of maximum rank need not to be of maximum rank in all $\Phi$.
We will also introduce some additional notation regarding descents and ascents with respect to the action of minimal parabolic subgroups.

\begin{defin}
Suppose that $(v,S)$ is admissible and $\alpha$ is a descent for $\sigma_{v(S)}$. Let $\beta=v^{-1}(\alpha)$.
Then we have the following possibilities:
\begin{itemize}
\item $\alpha$ is real. Then $0<-\beta\in S$ and $s_\alpha v<v$. 
The other $B$-orbits in the $P_\alpha$-orbits of $(v,S)$ are $(s_\alpha v,S\setminus\left\lbrace-\beta\right\rbrace)$ and $(v,S\setminus\left\lbrace-\beta\right\rbrace)$.
They both share the same involution $s_\alpha\sigma_{v(S)}$. 
In this case we will say that $\alpha$ is \textit{real};
\item $\alpha$ is complex and $0<-\beta\in\Psi$. Then $-\beta\notin S$ and $s_\alpha v<v$.
In this case the other $B$-orbit is $(s_\alpha v,S)$ with involution $s_\alpha\sigma_{v(S)}s_\alpha$. We will say that $\alpha$ is a \textit{descent on v} or \textit{on $W^P$};
\item $\alpha$ is complex and $\beta\in \Delta$.
Then the other $B$-orbit is $(v,s_\beta(S))$ with involution $s_\alpha\sigma_{v(S)}s_\alpha$.
We will say that $\alpha$ is a \textit{descent on S} or \textit{on $\Psi$}.

\end{itemize}
\end{defin}

We have similar definitions for the ascents.
\begin{defin}
Suppose that $(v,S)$ is admissible and $\alpha$ is an ascent for $\sigma_{v(S)}$. Let $\beta=v^{-1}(\alpha)$.
Then we have the following possibilities:
\begin{itemize}
\item $\alpha$ is imaginary and $P_\alpha vx_S=Bvx_S$.
Then we say that $\alpha$ is \textit{compact imaginary};
\item $\alpha$ is imaginary and $P_\alpha vx_S\neq Bvx_S$. 
Then, if $v'$ is the maximum element between $v$ and $s_\alpha v$, the open $B$-orbit in $P_\alpha vx_S$ is $(v',S\cup \left\lbrace \beta\right\rbrace)$ with involution $s_\alpha\sigma_{v(S)}$. 
In this case we will say that $\alpha$ is \textit{non-compact imaginary};
\item $\alpha$ is complex and $0<\beta\in\Psi$. Then $\beta\notin S$ and $s_\alpha v>v$.
In this case the open $B$-orbit is $(s_\alpha v,S)$ with involution $s_\alpha\sigma_{v(S)}s_\alpha$. We will say that $\alpha$ is an \textit{ascent on v} or \textit{on $W^P$};
\item $\alpha$ is complex and $\beta\in \Delta$.
Then the other $B$-orbit is $(v,s_\beta(S))$ with involution $s_\alpha\sigma_{v(S)}s_\alpha$.
We will say that $\alpha$ is an \textit{ascent on S} or \textit{on $\Psi$}.

\end{itemize}
\end{defin}

We need an additional property on the set $\Psi$.
Recall that on $\Phi$ we have the partial order $\alpha\leq\beta$ if and only if $\beta-\alpha$ can be written as a positive sum of roots in $\Delta$.
We say that a subset $S\subseteq \Phi$ is incomparable if $\alpha$ and $\beta$ are incomparable for every $\alpha,\beta\in S$.
\begin{property}\label{unic}
If $S,T\subseteq \Psi$ are of maximum rank and both all the roots in $S$ and all the roots in $T$ are incomparable then $S=T$.
\end{property}
Note that not every $\Psi$ has this property, but we will show later that all $\Psi$ for which non-trivial local systems exist have this property.
If $\Psi$ doesn't have property \ref{unic}, then most of the following results are false.
\begin{lemma}\label{due}
Suppose that $\Psi$ has property \ref{unic}.
Then 
\begin{enumerate}
\item if $S$ is of maximum rank and $\beta\in S$, the only set of roots of maximum rank that contains $S\setminus\left\lbrace\beta\right\rbrace$ is $S$;
\item fix $\alpha\in \Delta$ and $(v,S)\in\RM$.
Then either $\alpha$ is orthogonal to $v(S)$, $-\alpha\in v(S)$ or $\alpha$ is not orthogonal to exactly two roots in $\sigma_{v(S)}$.
\end{enumerate}
\end{lemma}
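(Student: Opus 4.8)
The plan is to establish item (1) first, since it is the combinatorial heart of the statement, and then deduce item (2) from it together with the structural properties of $\Psi$. For item (1) I would first observe that it is equivalent to the assertion that $\beta$ is the \emph{only} root of $\Psi$ orthogonal to $U:=S\setminus\{\beta\}$. Indeed, if this holds then any maximum rank $T\supseteq U$ has $T\setminus U\subseteq\{\beta\}$, forcing $T=S$; conversely a root $\gamma\neq\beta$ with $\gamma\perp U$ (necessarily $\gamma\notin S$) could be completed to a maximum rank set $T'\supseteq U$ with $T'\neq S$. Moreover such a $\gamma$ must satisfy $(\beta,\gamma)\neq 0$, for otherwise $\gamma\perp S$ with $\gamma\notin S$, contradicting that $S$ is of maximum rank. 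So everything reduces to excluding a second orthogonal completion $\gamma$ of $U$, in the simply laced setting where then $\langle\beta,\gamma\rangle=\pm 1$.

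To exclude such a $\gamma$ I would invoke Property \ref{unic}, and this is where I expect the main obstacle to lie. Property \ref{unic} only speaks about maximum rank sets all of whose roots are mutually incomparable, whereas $S$ and $T'$ need not be incomparable. The task is therefore to manufacture, out of the two maximum rank sets $S$ and $T'$ sharing $U$, a genuine pair of \emph{distinct incomparable} maximum rank sets, so that the equality forced by Property \ref{unic} contradicts $\beta\neq\gamma$. I would attempt this by a straightening procedure that replaces comparable orthogonal pairs of roots inside a maximum rank set by incomparable ones while preserving both membership in $\Psi$ and the maximum rank property, and then track the images of the distinguished roots $\beta$ and $\gamma$. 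Controlling this procedure, and in particular checking that it does not itself identify $\beta$ with $\gamma$ prematurely, is the delicate point, and its precise form depends on the type of $\Phi$.

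For item (2) I would reduce to item (1). Put $R=v(S)$ and let $N=\#\{\mu\in R:(\alpha,\mu)\neq 0\}$ be the number of roots of $\sigma_{v(S)}$ not orthogonal to $\alpha$; the claim is that $N=0$ (the orthogonal case), or $N=1$ occurring precisely when $-\alpha\in R$, or $N=2$. Since $\Phi$ is simply laced, orthogonality implies strong orthogonality, so $R$ is strongly orthogonal and Proposition \ref{complinv} gives $\Gamma_{v(S)}=R\cup -R$. Because $S\subseteq\Phi^+(v)$, the set $R$ consists of negative roots, so $\sigma_{v(S)}(\alpha)=\alpha$ exactly when $\alpha\perp R$ (whence $N=0$) and $\sigma_{v(S)}(\alpha)=-\alpha$ exactly when $-\alpha\in R$ (whence $N=1$). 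In the remaining complex case I would set $\beta=v^{-1}(\alpha)$, so that $N=\#\{\delta\in S:(\delta,\beta)\neq 0\}$, and split, according to the classification of complex ascents and descents, into the subcase $\pm\beta\in\Psi$ and the subcase $\beta\in\Delta$.

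The bound $N\le 2$ then follows from the geometry of $\Psi$. If $\pm\beta\in\Psi$, abelianness of $\mathfrak{p}^u$ forbids two roots of $\Psi$ from summing to a root, which forces $\langle\beta,\delta\rangle\ge 0$ for $\delta\in S$; since $\alpha_P$ has coefficient $1$ in the highest root, every root has $\alpha_P$-coefficient in $\{-1,0,1\}$, and as $\sigma_S(\beta)=\beta-\sum_{\delta}\langle\beta,\delta\rangle\delta$ has $\alpha_P$-coefficient $1-N$, being a root forces $N\le 2$. If instead $\beta\in\Delta$, Lemma \ref{sommaradici} gives that at most one $\delta\in S$ has $\delta+\beta\in\Phi$ and at most one has $\delta-\beta\in\Psi$, again yielding $N\le 2$. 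Finally I would exclude $N=1$ in the complex case using item (1): the unique non-orthogonal root $\delta_0\in S$ could be exchanged, via $s_\beta(S)$ when $\beta\in\Delta$ or via $(S\setminus\{\delta_0\})\cup\{\beta'\}$ with $\beta'\in\{\beta,-\beta\}\cap\Psi$ when $\pm\beta\in\Psi$, for a second maximum rank set containing $S\setminus\{\delta_0\}$, which by item (1) must equal $S$ — impossible, since $\delta_0$ was moved. Hence $N=2$, completing item (2). Thus the entire lemma rests on the straightening argument for item (1), which is where I expect essentially all of the difficulty to be concentrated.
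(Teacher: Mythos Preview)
Your reduction of item (1) to ``$\beta$ is the unique root of $\Psi$ orthogonal to $U=S\setminus\{\beta\}$'' is correct, and your treatment of item (2) is essentially the paper's argument, only reorganised (first bounding $N\le 2$, then excluding $N=1$ via item (1)); that part is fine.

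The genuine gap is in item (1). Your proposed ``straightening procedure'' is not defined, and you yourself flag that tracking $\beta$ and $\gamma$ through it is the delicate point. As stated, the plan does not work: straightening $S$ and $T'$ separately to incomparable maximum-rank sets will, by Property \ref{unic}, send both to the \emph{same} incomparable set $S_0$, and there is no mechanism to recover $\beta\neq\gamma$ from that. The paper avoids this problem entirely by a two-step argument. First, it reduces to the case where $S$ itself is incomparable by using the transitivity of the relevant Weyl group on orthogonal subsets of $\Psi$ of a given cardinality (citing \cite{RRS}): choose $w$ with $w(S)=S_0$, so a hypothetical second completion $\gamma$ of $U$ transports to a second completion $w(\gamma)$ of $S_0\setminus\{w(\beta)\}$. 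Second, it handles the incomparable case directly: if $\gamma$ completes $S_0\setminus\{\beta\}$ with $\gamma\neq\beta$, then $(S_0\setminus\{\beta\})\cup\{\gamma\}$ is again of maximum rank, hence by Property \ref{unic} is not incomparable, so $\gamma$ is comparable to some $\alpha\in S_0\setminus\{\beta\}$; a minimal-$\gamma$ argument combined with Lemma \ref{sommaradici} then produces a contradiction (two distinct roots of $S_0$ to which the same simple root can be added). This is the missing idea in your proposal: use Weyl transitivity to reduce to $S$ incomparable, rather than trying to straighten two sets simultaneously.
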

\begin{proof}
\begin{enumerate}
\item
Suppose that $S$ is of maximum rank and all the roots in $S$ are incomparable.
Suppose that there is $\beta\in S$ and $\gamma\in\Psi$ such that $\left(S\setminus\left\lbrace\beta\right\rbrace\right)\cup\left\lbrace\gamma\right\rbrace$ is orthogonal.
Then by maximality $(\beta,\gamma)\neq 0$, hence $\gamma-\beta\in\Phi^+$ or $\beta-\gamma\in\Phi^+$.
Moreover, by property \ref{unic} there must be $\alpha\in S$ such that $\alpha$ and $\gamma$ are comparable.

To simplify the notation, suppose $\tau=\gamma-\beta\in\Phi^+$, so $\gamma>\beta$.
If $\beta-\gamma\in\Phi^+$ the proof is similar.

It must be $\gamma>\alpha$, so $\gamma-\alpha=\sum a_i\alpha_i$ where $a_i\in\mathbb{N}$ and $\alpha_i\in \Delta$.
Suppose at first $\tau\in \Delta$.
By incomparability $\beta-\alpha$ is not a positive sum of simple roots.
But $\beta-\alpha=s_\tau(\gamma-\alpha)$ and $s_\tau$ changes the positivity only to $\tau$, hence $\tau$ must appear in $\gamma-\alpha$ which then implies again that $\beta-\alpha=\left(\gamma-\tau\right)-\alpha$ is positive.

Now consider the set 
\[ \left\lbrace \gamma\in \Psi\setminus S\mid \exists\beta\in S \text{ such that }\left(S\setminus\left\lbrace\beta\right\rbrace\right)\cup\left\lbrace\gamma \right\rbrace\text{ is orthogonal}\right\rbrace\]
This is not empty, so take a minimal element $\gamma$.

Then there are $\tau_1,\ldots,\tau_n\in \Delta$ ($n>1$) such that $\gamma=\beta+\tau_1+\cdots+\tau_n$ and for every $1\leq i\leq n$, $\beta+\tau_1+\cdots+\tau_i\in\Psi$.
Consider $\gamma'=\beta+\tau_1+\cdots+\tau_{n-1}$.
By the minimality of $\gamma$, there must be two roots $\alpha_1,\alpha_2\in S$ such that $\gamma'$ is not orthogonal to both.
In particular, it must be  $(\gamma',\alpha_1)>0$ and $(\gamma',\alpha_2)>0$.
Then $(\gamma,\alpha_1)=(\gamma'+\tau_n,\alpha_1)=(\gamma',\alpha_1)+(\tau_n,\alpha_1)=0$ and $(\gamma,\alpha_2)=(\gamma'+\tau_n,\alpha_2)=(\gamma',\alpha_2)+(\tau_n,\alpha_2)=0$ would imply $(\tau_n,\alpha_1),(\tau_n,\alpha_2)<0$ which is absurd because $\tau_n$ can't be added to two different roots in $S$.

We obtain the general result by noting that the Weyl group is transitive on the set of orthogonal roots with the same cardinality(\cite{RRS}).

\item Denote $\beta=v^{-1}(\alpha)$.
We know that $\beta\in\Delta_P$ or $\beta\in\Psi$ or $-\beta\in \Psi$.

In the first case, if $\beta$ is not orthogonal to $S$, then there is $\gamma\in S$ such that $(\gamma,\beta)\neq 0$.
Suppose for simplicity that $(\gamma,\beta)>0$, hence $\gamma-\beta\in \Psi$.
Then by the point above, there must be $\gamma'\neq \gamma\in S$ that is not orthogonal to $\gamma-\beta$, that is $(\gamma',\beta)\neq 0$.
Because of the strong orthogonality property we know that $(\gamma',\beta)< 0$ and that all the other roots in $S$ are orthogonal to $\beta$.

If $\beta\in \Psi$, then it must be $\beta\notin \Phi^+(v)$, so $\beta\notin S$.
Then, by maximality we know that there is a root in $S$ that is not orthogonal to $\beta$ and because of property above they must be at least two.
Suppose there are three of these roots: $\gamma_1,\gamma_2,\gamma_3$.
Then $\beta-\gamma_1\in\Phi$ and $(\beta-\gamma_1,\gamma_2)>0$, hence $\beta-\gamma_1-\gamma_2\in\Phi$.
But now $(\beta-\gamma_1-\gamma_2,\gamma_3)>0$, hence $\beta-\gamma_1-\gamma_2-\gamma_3\in\Phi$ and that's impossible because $[\beta-\gamma_1-\gamma_2-\gamma_3,\alpha_P]=-2$ and we know that for every root $\tau\in\Phi$ it must be $[\tau,\alpha_P]\in\left\lbrace -1,0,1\right\rbrace$.

If $-\beta\in \Psi$ then either $-\beta\in S$ or $-\beta\notin S$.
In the second case, repeat the reasoning above with $-\beta$ instead of $\beta$.
\end{enumerate}
\end{proof}
\begin{lemma}\label{prop} The maximum rank orbits have the following properties:
\begin{enumerate}
\item if $(v,S)\in \RM$ and $\alpha$ is an ascent for $(v,S)$ then $m_\alpha(v,S)=(u,R)\in\RM$ and $\#S=\#R$;
\item all the $\Psi$-parts of orbits in $\RM$ have the same cardinality;
\item there is a minimum orbit $(v_0,S_0)$. 
Moreover, $v_0$ is minimum among the $W^P$-parts of the orbits in $\RM$ and all the roots in $S_0$ are incomparable.
The involution $\sigma_{v_0(S_0)}$ is the product of $\#S_0$ simple reflections related to orthogonal roots;
\item if $\alpha$ is a complex descent for $(v,S)\in\RM$ then there are only two orbits in $P_\alpha vx_S$; $Bvx_S$ is the open one and the other is still in $\RM$;
\item \label{seq}  if $(v,S)\in \RM$ there is a sequence $(\alpha_n,\ldots,\alpha_1)\in\Delta^n$ of ascents for $(v_0,S_0)$ such that 
\[m(\alpha_n,\ldots,\alpha_1)(v_0,S_0)=(v,S)\]
This implies that
\[\sigma_{v(S)}=s_{\alpha_1}\cdots s_{\alpha_n}\sigma_{v_0(S_0)}s_{\alpha_n}\cdots s_{\alpha_1}\]
\item if $(v,S),(u,T)\in\RM$ and $\sigma_{v(S)}=\sigma_{u(T)}$ then $(u,T)=(v,S)$;
\item if $(v,S)\in \RM$ and $\alpha\in\Delta$ is such that $\beta=v^{-1}(\alpha)\in\Psi$, then $\alpha$ is a complex ascent (on $v$) for $(v,S)$ while if $\beta\in -\Psi$ it is a descent.
Moreover, if $-\beta\notin S$, the descent is complex (on $v$).
\end{enumerate}
\end{lemma}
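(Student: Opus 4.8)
The plan is to derive all seven statements from a single local analysis --- the classification of the minimal-parabolic moves in Lemma \ref{listofaction} --- constrained by the maximality of $S$ and by the rigidity furnished by Property \ref{unic}, Lemma \ref{due} and Proposition \ref{complinv}. I would first dispose of the three local items (1), (4), (7). For (1), if $\alpha$ is an ascent for $(v,S)\in\RM$ then $m_\alpha(v,S)\neq(v,S)$, so Lemma \ref{listofaction} puts us in case b1, c1 or d1; cases c1 and d1 are non-compact imaginary and create the $\Psi$-part $S\cup\{\beta\}$ with $\beta=v^{-1}(\alpha)$ orthogonal to $S$ and lying in $\Psi$, contradicting maximality. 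Hence $\alpha$ is complex (case b1) and $m_\alpha(v,S)$ is an ascent on $v$ or on $S$, of the form $(s_\alpha v,S)$ or $(v,s_\beta S)$ with $\beta\in\Delta_P$; in both cases $\#R=\#S$ and $R$ is of maximum rank, since $S$ is unchanged in the first case and $s_\beta$ (with $\beta\neq\alpha_P$) preserves $\Psi$ and orthogonality in the second. The same argument through case b2 --- where $\mathcal{E}_\alpha(v,S)$ is a single orbit, so $P_\alpha vx_S$ has exactly two $B$-orbits with $Bvx_S$ open --- gives (4). For (7) I would set $\beta=v^{-1}(\alpha)$ and exclude the imaginary and real cases: if $\beta\in\Psi$ then $\beta\notin\Phi^+(v)$, and as $S\subseteq\Phi^+(v)$ this gives $\beta\notin S$, so maximality forces $\beta\not\perp S$ and $\alpha$ is not imaginary, while Proposition \ref{complinv} gives $\Gamma_{v(S)}=v(S)\cup-v(S)$ and $\alpha=v(\beta)$ lies in neither set because $\beta>0$ and $\beta\notin S$; thus $\alpha$ is a complex ascent on $v$, and the case $-\beta\in\Psi$, $-\beta\notin S$ is symmetric and gives a complex descent on $v$.

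For (3) I would take any $(v_0,S_0)\in\RM$ of minimal involution length $L(\sigma_{v_0(S_0)})$. Such an orbit admits no complex descent, for by (4) a complex descent would yield a strictly smaller orbit of $\RM$ with strictly smaller $L$ (Lemma \ref{invlength}); hence all its descents are real, and by Proposition \ref{complinv} each simple descent lies in the orthogonal set $-v_0(S_0)$, so $-v_0(S_0)\subseteq\Delta$ and $\sigma_{v_0(S_0)}=\prod_{\gamma\in S_0}s_{-v_0(\gamma)}$ is a product of $\#S_0$ orthogonal simple reflections. By Proposition \ref{maxim} each $\gamma\in S_0$ is maximal in $\Phi^+(v_0)$, so the roots of $S_0$ are pairwise incomparable and $\Phi^+(v_0)$ --- being saturated with maximal elements $S_0$ --- is the smallest saturated subset of $\Psi$ containing $S_0$. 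Property \ref{unic} makes $S_0$ the unique incomparable maximum-rank set and Lemma \ref{ordWP} makes $v_0$ the unique element of $W^P$ with that inversion set; reading these backwards shows that any $(v,S)\in\RM$ whose involution is a product of orthogonal simple reflections must equal $(v_0,S_0)$, so in particular the minimiser of $L$ is unique.

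The heart of the proof --- and the step I expect to be the main obstacle --- is reachability (5): every $(v,S)\in\RM$ arises from $(v_0,S_0)$ by a sequence of complex ascents, whence minimality of $(v_0,S_0)$ in (3) and statement (2) will follow. I would induct on $L(\sigma_{v(S)})$: if $(v,S)\neq(v_0,S_0)$ then $\sigma_{v(S)}\neq e$ has a descent, and were all its descents real we would get $-v(S)\subseteq\Delta$ and hence $(v,S)=(v_0,S_0)$ by the previous paragraph, so $(v,S)$ has a complex descent. By (4) this descent lands on a strictly smaller orbit of $\RM$, to which induction applies; reversing the descent chain produces complex ascents from $(v_0,S_0)$, and iterating Lemma 5.1 of \cite{GM} with $s_\alpha\circ\sigma=s_\alpha\sigma s_\alpha$ for complex moves yields the displayed conjugation formula. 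Statement (2) is then immediate since (1) preserves $\#S$ along ascent chains. For (6), if $\sigma_{v(S)}=\sigma_{u(T)}$ is non-minimal it has a complex descent $\alpha$ that is a descent of both orbits; by (4) and the inductive hypothesis the two smaller orbits coincide, and the uniqueness of the open orbit over them (case b2 of Lemma \ref{listofaction}) forces $(v,S)=(u,T)$, the minimal case being the uniqueness proved above. Throughout, the genuinely global inputs are Property \ref{unic} and Lemma \ref{due}(2), which force a simple root to be non-orthogonal to at most two roots of $v(S)$; this is what keeps the relevant moves complex and confines the descent chains to $\RM$, and securing this confinement is the part I expect to demand the most care.
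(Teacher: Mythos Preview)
Your approach is essentially the paper's: the same case analysis on Lemma \ref{listofaction} for (1), (4), (7); the same ``no complex descent forces $-v(S)\subseteq\Delta$'' argument for (3); induction on $L$ via (4) for (5); and (2), (6) as corollaries.

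One small gap in your sketch of (3): from ``each simple descent lies in $-v_0(S_0)$'' you conclude ``$-v_0(S_0)\subseteq\Delta$'', but the inclusion you have runs the other way --- you only know $T\subseteq -v_0(S_0)$ where $T$ is the set of simple real descents. The paper closes this by checking that $\sigma_T\sigma_{v_0(S_0)}$ sends every simple root to a positive root (if $\alpha\in T$ this is clear; if $\alpha\notin T$ then $\sigma_{v_0(S_0)}(\alpha)>0$, and were $\sigma_T\sigma_{v_0(S_0)}(\alpha)<0$ we would have $\sigma_{v_0(S_0)}(\alpha)\in T$, whence $\alpha=\sigma_{v_0(S_0)}^2(\alpha)<0$, absurd). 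Thus $\sigma_T=\sigma_{v_0(S_0)}$, so $|T|=\#S_0$ by comparing $(-1)$-eigenspaces, and the inclusion becomes an equality.

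One organisational difference: you derive (2) from (5) by chaining descents down to $(v_0,S_0)$, whereas the paper chains ascents \emph{up} to the open orbit $(\omega_P,S)$ and invokes (1); both are valid and equally short.
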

\begin{proof}
\begin{enumerate}
\item Note that $\alpha$ can't be non-compact imaginary because $S$ is of maximum rank.
It follows that $\alpha$ is a complex ascent which is either an ascent on $v$ or on $S$.
In the first case $S=R$, so the claim is clear.
In the second case $R=s_\beta(S)$ for $\beta=v^{-1}(\alpha)$ which implies $\#S=\#R$ and also $S$ of maximum rank.

\item We know that there is an open orbit $(\omega_P,S)$ which is greater than any other orbit.
Note that any orbit $(v,T)\neq (\omega_P,S)$ admits at least an ascent.
It follows that there is a chain of ascents between $(v,T)$ and $(\omega_P,S)$ and by the above point $\#T=\#S$.

\item fix $(v,S)\in\RM$.
We want to show that $\sigma_{v(S)}$ is either the product of simple reflections related to orthogonal roots or admits a complex descents.
So, suppose that for every $\alpha\in \Delta$, $\sigma_{v(S)}(\alpha)$ is either $-\alpha$ or positive and denote $T=\left\lbrace \alpha\in\Delta \mid \sigma_{v(S)}(\alpha)=-\alpha\right\rbrace$.
By Proposition \ref{complinv} it must be $T\subseteq -v(S)$.
It follows that the roots in $T$ are mutually orthogonal.
But then $\sigma_T=\sigma_{v(S)}$.
For, if $\alpha\in T$ then $\sigma_T\sigma_{v(S)}(\alpha)=\alpha>0$, while if $\alpha\in \Delta\setminus T$, then $\sigma_{v(S)}(\alpha)$ is positive and $\sigma_T\sigma_{v(S)}(\alpha)$ is negative if and only if $\sigma_{v(S)}(\alpha)\in T$ which is absurd because then $\alpha=\sigma_{v(S)}^2(\alpha)<0$.
This also implies $T=-v(S)$.

We showed that if $(v,S)$ admits no complex descent then $-v(S)\subseteq \Delta$.
But then using proposition \ref{maxim} we get that every root in $S$ is maximal in $\Phi^+(v)$, so they must be incomparable.
By Property \ref{unic}, $S$ is the only incomparable set of maximum rank and $v$ is uniquely identified by the fact that every element in $S$ is maximal in $\Phi^+(v)$.
It follows that there is a unique minimal orbit $(v_0,S_0)$.

It is left to prove that $v$ is minimal among the $w\in W^P$ for which there is $T\subseteq \Phi^+(v)$ of maximum rank.
Consider $(w,T)\in\RM$.
Then $(v_0,S_0)\leq (w,T)$ and by the characterization of Theorem \ref{ordineGM} it must be $v_0\leq w$.

\item Fix $(v,S)\in\RM$.
Then we know by Lemma \ref{listofaction} that if $\alpha\in\Delta$ is a complex descent, $P_\alpha vx_S$ contains only two orbits
(we are in case $b2$).
The smaller one, let's call it $(u,R)$, is still in $\RM$ because either $R=S$ or $R=s_{v^{-1}(\alpha)}(S)$.
\item By point $(3)$ every non minimal orbit admits a complex descent and the smaller orbit in this descent is still of maximum rank.
Inductively we obtain the claim.
\item we saw above that both $(v,S)$ and $(u,T)$ admit a chain of descents to $(v_0,S_0)$.
But the descents are determined by the associated involutions, so they admit the same chain of descent, from which follows that they are the same orbit.
\item In the first case $\beta$ can't be orthogonal to $S$, so the ascent must be complex.
If $-\beta\in\Psi$, instead, $\alpha$ is a descent and if $-\beta\notin S$ we know that $Pvx_S=Bvx_S\sqcup Bs_\alpha vx_S$.
\end{enumerate}
\end{proof}

\begin{lemma}
Let $(v,S)\in\RM$.
Then $(v,S_0)\leq (v,S)$.
\end{lemma}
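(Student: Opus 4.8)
The plan is to argue by induction on the Bruhat order inside $\RM$, using the minimum orbit $(v_0,S_0)$ supplied by Lemma \ref{prop}, part (3), as the base case and the monotonicity of the maps $m_\alpha$ as the inductive engine. First I would record that $(v,S_0)$ is genuinely an admissible pair of maximum rank: since $(v_0,S_0)$ is the minimum element of $\RM$ we have $(v_0,S_0)\le (v,S)$, hence $v_0\le v$, and by Lemma \ref{ordWP} this gives $\Phi^+(v_0)\subseteq\Phi^+(v)$; as $S_0\subseteq\Phi^+(v_0)$ we get $S_0\subseteq\Phi^+(v)$, so $(v,S_0)$ is admissible, and $S_0$ is of maximum rank by construction, whence $(v,S_0)\in\RM$. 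The same reasoning shows $(u,S_0)\in\RM$ for every $W^P$-part $u$ occurring in $\RM$.

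For the induction, if $(v,S)=(v_0,S_0)$ the statement is trivial. Otherwise, by Lemma \ref{prop}, part (3), the orbit $(v,S)$ admits a complex descent $\alpha$, and by Lemma \ref{prop}, part (4), there is exactly one other orbit $(u,R)\in\RM$ in $P_\alpha vx_S$, with $(u,R)<(v,S)$ and $m_\alpha(u,R)=(v,S)$. By the inductive hypothesis applied to $(u,R)$ we have $(u,S_0)\le (u,R)$. Monotonicity of the standard order, namely $\mathcal{O}\le\mathcal{O}'$ implies $m_\alpha\mathcal{O}\le m_\alpha\mathcal{O}'$ together with $\mathcal{O}\le m_\alpha\mathcal{O}$, which coincides with the Bruhat order, is what I would use to push this inequality up through $m_\alpha$.

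It then remains to split according to the type of the ascent $\alpha$ from $(u,R)$ to $(v,S)$, governed by $\beta=u^{-1}(\alpha)$. If $\alpha$ is an ascent on $\Psi$, so that $v=u$, I would simply chain $(v,S_0)=(u,S_0)\le m_\alpha(u,S_0)\le m_\alpha(u,R)=(v,S)$, where the first inequality is the basic relation $\mathcal{O}\le m_\alpha\mathcal{O}$ and the second is monotonicity from the inductive hypothesis. If instead $\alpha$ is an ascent on $W^P$, so that $\beta\in\Psi$ and $v=s_\alpha u$, then because $(u,S_0)\in\RM$ and $u^{-1}(\alpha)=\beta\in\Psi$, Lemma \ref{prop}, part (7), guarantees that $\alpha$ is again a complex ascent on $W^P$ for $(u,S_0)$, so that $m_\alpha(u,S_0)=(s_\alpha u,S_0)=(v,S_0)$; monotonicity then yields $(v,S_0)=m_\alpha(u,S_0)\le m_\alpha(u,R)=(v,S)$.

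The main point to get right is this second case: one must be sure that the very same simple root $\alpha$ realizing the ascent $(u,R)\to(v,S)$ keeps the $\Psi$-part $S_0$ fixed when applied to $(u,S_0)$, which is exactly the content of the ascent-on-$W^P$ description and of Lemma \ref{prop}, part (7), and that $(u,S_0)$ really lies in $\RM$ so that this lemma applies. Neither case requires the full order characterization of Theorem \ref{ordineGM}; the argument stays at the level of the $m_\alpha$-action, which is why the monotonicity of the standard order and the classification of ascents in Lemma \ref{prop} are the load-bearing ingredients.
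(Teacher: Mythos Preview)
Your proof is correct and is essentially the inductive (top-down) version of the paper's argument: the paper takes the full chain of ascents from $(v_0,S_0)$ to $(v,S)$ and keeps only the subsequence of ascents on $W^P$, applying it to $(v_0,S_0)$ to land on $(v,S_0)$, whereas you unwind the same chain one step at a time via the complex descent and rebuild it with monotonicity. You are in fact more explicit than the paper about the one delicate point, namely that a $W^P$-ascent for $(u,R)$ is again a $W^P$-ascent for $(u,S_0)$, which you justify via Lemma~\ref{prop}(7).
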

\begin{proof}
Take a sequence $(\alpha_1,\ldots,\alpha_n)$ for $(v,S)$ like the one in point \ref{seq} of Lemma \ref{prop}.
Suppose $(\alpha_{i_1},\ldots,\alpha_{i_k})$ is the subsequence that contains exactly the ascents on $W^P$.
Then if $(v',S_0)=M(\alpha_{i_1},\ldots,\alpha_{i_k}).(v_0,S_0)$ we have
\[
\begin{array}{ccc}
(v',S_0)\leq (v,S) & \text{and} & v=v'
\end{array}
\]
\end{proof}

\begin{cor}\label{soprasotto}
Let $S\in\RM$ with $S\neq S_0$.
Then for every $\beta\in S_0$ there is $\alpha\in S$ such that $\alpha<\beta$.
\end{cor}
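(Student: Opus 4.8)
The claim only involves the subsets $S,S_0\subseteq\Psi$, so I may realise $S$ as the $\Psi$-part of any convenient orbit. The plan is to build $S$ up from $S_0$ and argue by induction. Concretely, by Lemma \ref{prop} there is a sequence of ascents from the minimum orbit $(v_0,S_0)$ to an orbit with $\Psi$-part $S$; by part (1) of that lemma every such ascent is complex, hence either an ascent ``on $v$'', which leaves the $\Psi$-part unchanged, or an ascent ``on $S$'', which replaces the current orthogonal maximum-rank set $T$ by $s_\delta(T)$ for a suitable simple root $\delta$. I would therefore induct on the number of ascents ``on $S$'' needed to pass from $S_0$ to $S$, the base case $S=S_0$ being immediate (if one is content with $\alpha\le\beta$, take $\alpha=\beta$). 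The preceding lemma, giving $(v,S_0)\le(v,S)$ for a fixed $v$, can be used to keep both sets inside the same $\Phi^+(v)$ throughout.

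The heart of the induction is the single step $T\mapsto s_\delta(T)$. By Lemma \ref{due} the simple root $\delta$ is non-orthogonal to exactly two roots $\gamma_1,\gamma_2\in T$, and these are strongly orthogonal by Lemma \ref{sommaradici}; the same lemma (parts 2 and 3) forces $(\gamma_1,\delta)$ and $(\gamma_2,\delta)$ to have opposite signs, so that in the simply laced case $s_\delta$ lowers one of the two roots by $\delta$ and raises the other by $\delta$, while fixing every remaining root of $T$. Now fix $\beta\in S_0$ and carry along the chain a witness root of the current set lying $\le\beta$, starting from $\beta$ itself. At each ascent ``on $S$'' there are three cases: if the witness is fixed by $s_\delta$ it remains a witness; if it is the root that $s_\delta$ lowers, its image is again $\le\beta$ and is now strictly below the old witness; the only troublesome case is when the witness is the root that $s_\delta$ raises.

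I expect this last case to be the main obstacle. My plan there is to abandon the raised witness and pass to its strongly orthogonal partner, the root that $s_\delta$ lowers, and to prove that this partner is still $\le\beta$. The available leverage is strong orthogonality of $\gamma_1,\gamma_2$ together with the special features of the minimum orbit from Lemma \ref{prop} and Proposition \ref{maxim}: the roots of $S_0$ are mutually incomparable and each is maximal in $\Phi^+(v_0)$, which should pin down the relative position of $\gamma_1,\gamma_2,\delta$ and $\beta$ tightly enough to place the lowered partner below $\beta$. Finally, to upgrade $\le$ to the strict inequality $<$ I would track where the witness is genuinely lowered: along a non-trivial chain ($S\ne S_0$) the witness must be moved at least once by some $s_\delta$ touching the portion of $\Phi^+(v)$ below $\beta$, producing a witness in $S$ strictly smaller than $\beta$. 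Excluding the degenerate scenario in which $\beta$ is fixed by every reflection of the chain, so that it survives unchanged all the way into $S$, is the subtle remaining point; here I would use $S\ne S_0$, the uniqueness of incomparable maximum-rank sets (Property \ref{unic}) and Lemma \ref{due}(1).
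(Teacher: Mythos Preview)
Your approach is genuinely different from the paper's, and the difference is instructive.  The paper does not induct along a chain of ascents at all.  Instead it argues geometrically in $\mathfrak{p}^u$: taking $v$ large enough so that both $(v,S_0)$ and $(v,S)$ are admissible, the preceding lemma gives $B_v e_{S_0}\subseteq\overline{B_v e_S}$.  One then sets $\Psi'=\{\gamma\in\Psi\mid \exists\,\alpha\in S,\ \alpha\le\gamma\}$ and $V=\bigoplus_{\gamma\in\Psi'}\mathfrak{u}_\gamma$; since $\Psi'$ is upward closed, $V$ is $B_v$-stable and closed, and contains $e_S$, hence contains $\overline{B_v e_S}$ and in particular $e_{S_0}$.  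Reading off the root components of $e_{S_0}$ gives the corollary in one stroke.  No witness-tracking, no case analysis.

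Your inductive scheme, by contrast, has a real gap precisely where you flag it.  In the ``troublesome case'' the current witness $\gamma_2\le\beta$ is raised by $s_\delta$ and you propose to replace it by the lowered partner $\gamma_1-\delta$.  But nothing you have recorded forces $\gamma_1-\delta\le\beta$: you only know $\gamma_1\perp\gamma_2$, $(\gamma_1,\delta)>0$, $(\gamma_2,\delta)<0$, and $\gamma_2\le\beta$.  After several steps of the chain $\gamma_1$ can sit anywhere in $\Psi$ compatible with these constraints, and there is no evident mechanism placing $\gamma_1-\delta$ under the fixed $\beta\in S_0$.  The appeal to ``strong orthogonality together with the special features of the minimum orbit'' is a hope, not an argument; the special features of $S_0$ (incomparability, maximality in $\Phi^+(v_0)$) constrain $\beta$, not $\gamma_1$.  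The same difficulty recurs in your final paragraph when you try to upgrade $\le$ to $<$ and to rule out $\beta$ surviving unchanged into $S$.

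If you want to salvage a combinatorial proof, you would need an invariant stronger than ``some root of the current set lies below $\beta$'' --- something that also controls the partner root --- and it is not clear what that invariant should be.  The geometric argument sidesteps the whole issue by using the closure relation $(v,S_0)\le(v,S)$ directly.
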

\begin{proof}
Take $v\in W^P$ such that $S\subseteq \Phi^+(v)$.
Then $(v,S_0)<(v,S)$ if and only if $B_ve_{S_0}\subseteq \cdr{B_ve_S}$.
Recall that $\mathfrak{p}^u=\bigoplus_{\gamma\in\Psi} \mathfrak{u}_\gamma$.
Define $\Psi'=\left\lbrace \gamma\in\Psi\mid \exists \beta\in S \textit{ such that }\beta\leq \gamma\right\rbrace$ and $V=\bigoplus_{\gamma\in \Psi'} \mathfrak{u}_\gamma\subseteq \mathfrak{p}^u$.
Then 
\[B_ve_S\subseteq V\]
The claim follows by noting that $V$ is closed in $\mathfrak{p}^u$, so $B_ve_{S_0}\subseteq V$.
\end{proof}

The following theorem is the central result for characterizing the Bruhat $\mathcal{G}$-order in the simply laced case.
\begin{theorem}\label{ordmax}

Let $(u,T)$ and $(v,S)$ be admissible pairs.
Suppose that both $T$ and $S$ are maximal orthogonal subsets of $\Psi$ and that there is the following relation between the associated involutions
\[ \sigma_{u(T)}<\sigma_{v(S)}\]
Suppose, at last that
\[\sigma_{v(S)}=s_{\alpha_1}\cdots s_{\alpha_n}\sigma_{v_0(S_0)}s_{\alpha_n}\cdots s_{\alpha_1}\]
Then there is  a sequence $1\leq i_1<\ldots<i_k\leq n$ such that
\[\sigma_{v(S)}=s_{\alpha_{i_1}}\cdots s_{\alpha_ {i_n}}\sigma_{v_0(S_0)}s_{\alpha_{i_n}}\cdots s_{\alpha_{i_1}}\]
\end{theorem}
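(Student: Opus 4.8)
The plan is to read the statement as a subword (exchange) property for the Bruhat order on involutions, where the simple reflections act through the Richardson--Springer operation $\circ$ rather than by multiplication. First I would rewrite the hypothesis in terms of $\circ$: since $(v,S)\in\RM$, Lemma~\ref{prop}(1),(5) shows every ascent in the sequence producing $(v,S)$ from $(v_0,S_0)$ is complex, so each factor $s_{\alpha_i}\circ(-)$ is a genuine conjugation and the given identity reads $\sigma_{v(S)}=s_{\alpha_1}\circ\cdots\circ s_{\alpha_n}\circ\sigma_{v_0(S_0)}$, with $L$ strictly increasing at each step by Lemma~\ref{invlength} (a reduced $\circ$-expression over the base $\sigma_{v_0(S_0)}$). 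Since $(v_0,S_0)$ is the minimum of $\RM$ (Lemma~\ref{prop}(3)) we also have $\sigma_{v_0(S_0)}\le\sigma_{u(T)}$ by Theorem~\ref{ordineGM}. The goal thus becomes: given $\sigma_{v_0(S_0)}\le\sigma_{u(T)}\le\sigma_{v(S)}$, exhibit a reduced $\circ$-subexpression of $(\alpha_1,\dots,\alpha_n)$ over $\sigma_{v_0(S_0)}$ equal to $\sigma_{u(T)}$.

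I would prove this by induction on $n$, peeling the outermost letter $\alpha:=\alpha_1$ and using Lemma~\ref{Bruhatcirc} as the lifting lemma, exactly as in the classical proof of the subword property. Set $\sigma':=s_{\alpha_2}\circ\cdots\circ s_{\alpha_n}\circ\sigma_{v_0(S_0)}$, so $\sigma_{v(S)}=s_\alpha\circ\sigma'$ and $s_\alpha\circ\sigma_{v(S)}=\sigma'<\sigma_{v(S)}$, with $\sigma'\in\RM$ carrying the shorter word $(\alpha_2,\dots,\alpha_n)$. Writing $\tau:=\sigma_{u(T)}$, the induction splits on $\alpha$: if $s_\alpha\circ\tau>\tau$, then since $s_\alpha\circ\sigma_{v(S)}<\sigma_{v(S)}$, Lemma~\ref{Bruhatcirc}(3) gives $\tau\le\sigma'$ and I recurse, omitting $\alpha$; if $s_\alpha\circ\tau<\tau$ with $\alpha$ a \emph{complex} descent for $\tau$, then $s_\alpha\circ\tau\in\RM$ by Lemma~\ref{prop}(4) and Lemma~\ref{Bruhatcirc}(2) gives $s_\alpha\circ\tau\le\sigma'$, so I recurse on $s_\alpha\circ\tau$ and prepend $\alpha$, the prepended step again being a conjugation. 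That the output consists only of conjugations I would record as a rank count using $L=\tfrac12(l+\lambda)$: along any $L$-increasing $\circ$-chain $\lambda$ is nondecreasing, and as $\lambda(\sigma_{u(T)})=\#T=\#S_0=\lambda(\sigma_{v_0(S_0)})$ no $\lambda$-raising (non-compact imaginary) step can appear, so the result is literally $w\,\sigma_{v_0(S_0)}\,w^{-1}$ for $w=s_{\alpha_{i_1}}\cdots s_{\alpha_{i_k}}$, which is the asserted expression for $\sigma_{u(T)}$.

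The case not covered by the generic lifting lemma, which I expect to be the main obstacle, is when $\alpha$ is a \emph{real} descent for $\tau$ (so $\tau(\alpha)=-\alpha$) while being complex for $\sigma_{v(S)}$: here $s_\alpha\circ\tau=s_\alpha\tau$ leaves $\RM$ (its $\lambda$ drops), so I cannot recurse on it. The type $\matr{A}_3$ situation with $\alpha_P=\alpha_2$ shows this case genuinely occurs (take $\tau=\sigma_{v_0(S_0)}=s_{\alpha_1}s_{\alpha_3}$, with $\alpha_1$ a real descent for $\tau$ but complex for the next orbit up), yet there one still has $\tau\le\sigma'$, so $\alpha$ should simply be omitted. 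Hence what must be proved is the comparison: if $\tau,\sigma'\in\RM$, $\alpha$ is real for $\tau$ but a complex ascent for $\sigma'$, and $\tau<s_\alpha\circ\sigma'$, then $\tau\le\sigma'$. This does not follow from Lemma~\ref{Bruhatcirc} alone, which only yields $s_\alpha\tau<\sigma'$ and then overshoots to $s_\alpha\sigma'$ on lifting back up.

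To settle that comparison I would deploy the maximum-rank machinery rather than generic Bruhat combinatorics. Using Proposition~\ref{complinv} to pin the real roots of the two involutions to $\pm u(T)$ and $\pm v'(S')$, the explicit shape of a real descent ($-\,u^{-1}(\alpha)\in T$, $s_\alpha u<u$), and the facts that both pairs lie above $(v_0,S_0)$ with $\Psi$-parts of equal cardinality (Lemma~\ref{prop}(2),(3)) and that the $\Psi$-parts are controlled below by $S$ (Corollary~\ref{soprasotto}), I would build an explicit chain of complex ascents from the orbit of $\tau$ to that of $\sigma'$, equivalently deduce $(u,T)\le(v',S')$ from the criterion of Theorem~\ref{ordineGM}. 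Once this comparison is in hand the real-descent case is absorbed into the omission case, the induction closes, and the rank count guarantees the subexpression is all-conjugation, completing the proof.
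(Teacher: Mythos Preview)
Your overall framing---reading the claim as a subword property for the Richardson--Springer $\circ$-action and running an induction on $n$ that peels the outermost letter $\alpha=\alpha_1$---is natural, and your treatment of the two easy cases (ascent for $\tau$; complex descent for $\tau$) via Lemma~\ref{Bruhatcirc} is correct. Your $\lambda$-count ensuring the output word is all-conjugation is also fine.

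The gap is exactly where you flag it: the case where $\alpha$ is a \emph{real} descent for $\tau$ but a complex ascent for $\sigma'$. You claim that here one still has $\tau\le\sigma'$, and propose to extract this from Proposition~\ref{complinv}, Corollary~\ref{soprasotto}, and Theorem~\ref{ordineGM}. But this last route is circular: the criterion of Theorem~\ref{ordineGM} for $(u,T)\le(v',S')$ already \emph{requires} $\sigma_{u(T)}\le\sigma_{v'(S')}$, i.e.\ $\tau\le\sigma'$, as one of its clauses; and ``build an explicit chain of complex ascents from $(u,T)$ to $(v',S')$'' presupposes the inequality you are trying to establish. The remaining tools you cite (real roots pinned to $\pm u(T)$, the shape of a real descent, Corollary~\ref{soprasotto}) do not by themselves yield $\tau\le\sigma'$: Lemma~\ref{Bruhatcirc} only gives $s_\alpha\tau<\sigma'$, and lifting back up lands you at $\tau<\sigma_{v(S)}$, which is where you started. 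So the real-descent case is genuinely unresolved in your proposal, not merely sketched.

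For comparison, the paper does \emph{not} prove the inequality $\tau\le\sigma'$ in general. It sidesteps the problem with a different architecture: first three preparatory lemmas handle the special cases $L(\sigma_{v(S)})=L(\sigma_{u(T)})+1$, $T=S_0$, and $u=v$, each disposing of the real-descent subcase by an ad hoc argument tailored to that situation (a length contradiction in the first and third; a commutation/subword argument exploiting $\Psi_0=-v_0(S_0)$ in the second). The main induction is then a \emph{downward} induction on $l(u)$, where the pivot $\alpha$ is chosen as a complex ascent on the $W^P$-part of $(u,T)$ (always available when $u\neq\omega_P$). With this choice $\alpha$ is never real for $\tau$, so the problematic case never arises; the codimension-one lemma and the $u=v$ lemma close the remaining branches. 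If you want to salvage your single-induction approach, you would need an independent proof of $\tau\le\sigma'$ in the real-descent case; absent that, the paper's detour through the three special cases is what makes the argument go through.
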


\begin{oss}
Note that this implies that the Bruhat order on the maximum rank orbit and the Bruhat $\mathcal{G}$-order are the same.
\end{oss}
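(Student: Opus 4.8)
We have two maximum-rank admissible pairs $(u,T)$ and $(v,S)$ with $\sigma_{u(T)} < \sigma_{v(S)}$, and we know a reduced-type expression $\sigma_{v(S)} = s_{\alpha_1}\cdots s_{\alpha_n} \sigma_{v_0(S_0)} s_{\alpha_n}\cdots s_{\alpha_1}$ coming from a sequence of ascents from the minimum orbit $(v_0, S_0)$.

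We want to find a subsequence $i_1 < \dots < i_k$ giving the same form for... wait, the conclusion says $\sigma_{v(S)}$ again, but that must be a typo for $\sigma_{u(T)}$. Let me re-read.

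Yes, the conclusion should be: there's a subsequence such that $\sigma_{u(T)} = s_{\alpha_{i_1}}\cdots s_{\alpha_{i_k}} \sigma_{v_0(S_0)} s_{\alpha_{i_k}}\cdots s_{\alpha_{i_1}}$.

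So we're expressing the smaller involution $\sigma_{u(T)}$ using a subsequence of the conjugating reflections that build up $\sigma_{v(S)}$.

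**My proof strategy.**

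This is essentially a "subword property" for the Bruhat $\mathcal{G}$-order on maximum-rank orbits, analogous to the classical subword property of Bruhat order.

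The key connecting facts are:
- Lemma 3.2 (Bruhatcirc) on how $s_\alpha \circ \sigma$ interacts with Bruhat order on involutions.
- The length function $L$ and Lemma invlength.
- Lemma prop point (5): every maximum-rank orbit is reached from $(v_0, S_0)$ by a sequence of ascents, and this realizes the conjugated-product form.

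**Strategy — induction on $n$ (the length of the expression building $\sigma_{v(S)}$ from $\sigma_{v_0(S_0)}$):**

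Set $\tau = \sigma_{v(S)}$ and $\tau' = s_{\alpha_1} \cdots s_{\alpha_{n-1}} \sigma_{v_0(S_0)} s_{\alpha_{n-1}} \cdots s_{\alpha_1}$, so $\tau = s_{\alpha_n} \circ \tau'$ (since $\alpha_n$ is the last ascent), and $\tau' < \tau$ with $L(\tau) = L(\tau') + 1$.

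We have $\sigma_{u(T)} < \tau$. Consider the simple reflection $s_{\alpha_n}$ acting on $\sigma_{u(T)}$.

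Case A: $s_{\alpha_n} \circ \sigma_{u(T)} < \sigma_{u(T)}$. Then apply part (3) of Lemma Bruhatcirc: since $s_{\alpha_n}\circ \sigma_{u(T)} < \sigma_{u(T)}$ and $s_{\alpha_n}\circ\tau = \tau$... hmm, careful — $\alpha_n$ raises $\tau'$ to $\tau$, so relative to $\tau$, is $\alpha_n$ a descent? Yes: $s_{\alpha_n} \circ \tau = \tau'< \tau$.

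So both $\sigma_{u(T)}$ and $\tau$ go down under $s_{\alpha_n}\circ$. By part (2) of Lemma Bruhatcirc, $s_{\alpha_n}\circ \sigma_{u(T)} < s_{\alpha_n}\circ\tau = \tau'$. Then the pair $(s_{\alpha_n}\circ\sigma_{u(T)})$ vs $\tau'$ with the shorter expression lets us apply induction — but wait, is $s_{\alpha_n}\circ\sigma_{u(T)}$ still of the form $\sigma_{u'(T')}$ for a max-rank pair? I need that the action $\circ$ preserves max-rank. By Lemma 5.1 and the max-rank lemmas (prop point 4), descents from max-rank orbits stay in $\RM$. So yes.

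Case B: $s_{\alpha_n}\circ\sigma_{u(T)} > \sigma_{u(T)}$. Then $s_{\alpha_n}$ raises $\sigma_{u(T)}$ but lowers $\tau$. Apply part (3) of Lemma Bruhatcirc: $\sigma_{u(T)} \le s_{\alpha_n}\circ\tau = \tau'$. Now $\sigma_{u(T)} \le \tau'$, and $\tau'$ has the shorter expression, so by induction we get the subsequence inside $\{1,\dots,n-1\}$ directly (not using index $n$). If $\sigma_{u(T)} = \tau'$ we stop; otherwise strict and induction applies.

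Here is a subtlety in Case A: I must check $s_{\alpha_n}\circ\sigma_{u(T)} \ne \sigma_{u(T)}$ genuinely, i.e. that $\alpha_n$ is a real descent for $\sigma_{u(T)}$ that lowers it. This is fine because I assumed we're in the case "goes down," and by Lemma invlength this strictly decreases $L$.

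**The plan in order:**

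Induct on $n$. Base case $n=0$: then $\tau = \sigma_{v_0(S_0)}$ is the minimum involution among $\RM$ pairs, so $\sigma_{u(T)} \le \tau$ forces equality by prop point (3)+(6), and the empty subsequence works.

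Inductive step: split on whether $s_{\alpha_n}$ raises or lowers $\sigma_{u(T)}$, as in Cases A and B above, using Lemma Bruhatcirc parts (2) and (3) respectively, and invoke that $\circ$ preserves $\RM$ (Lemma prop (4), Lemma 5.1) so the inductive hypothesis applies to a genuine max-rank involution.

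**The main obstacle.**

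The hardest part is making sure that the intermediate object $s_{\alpha_n}\circ\sigma_{u(T)}$ in Case A is again of the form $\sigma_{w(W)}$ for a max-rank pair — i.e., that the $\circ$-action keeps us inside the involutions attached to max-rank orbits. In the simply-laced setting, Proposition complinv (real descents of $\sigma_S$ are exactly $\pm S$) plus Lemma prop (4) (complex descents from $\RM$ stay in $\RM$) should handle this, but I need to treat the real-descent case carefully too: when $\alpha_n$ is real for $\sigma_{u(T)}$, the descent drops a root from $T$, potentially leaving $\RM$. I would need to argue via prop (4) that in the relevant situation the descent is complex, or otherwise re-index. This bookkeeping — tracking whether each $s_{\alpha_n}$ acts as real/complex/imaginary on $\sigma_{u(T)}$ and confirming we stay within max-rank involutions throughout the induction — is where the real work lies.
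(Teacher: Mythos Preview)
Your proposal targets Theorem~\ref{ordmax} rather than the Observation itself; since the Observation is a one-line remark with no separate proof in the paper (the implication goes through Proposition~\ref{propordlb} once Theorem~\ref{ordmax} is in hand), comparing your attempt to the paper's proof of Theorem~\ref{ordmax} is the right thing to do.

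There is first a minor indexing slip. In the paper's convention the ascents are applied in the order $\alpha_n,\ldots,\alpha_1$, so the \emph{outermost} conjugating reflection is $s_{\alpha_1}$, and the penultimate involution is $s_{\alpha_2}\cdots s_{\alpha_n}\sigma_{v_0(S_0)}s_{\alpha_n}\cdots s_{\alpha_2}$, with $\tau=s_{\alpha_1}\circ(\text{that})$. Your formula $\tau=s_{\alpha_n}\circ\tau'$ with $\tau'$ as you wrote it is incorrect, though the idea survives once the indices are fixed.

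The genuine gap is the one you yourself flag under ``main obstacle'' and then leave unresolved. In your Case~A the outermost reflection $\alpha$ (dictated by the given expression for $\sigma_{v(S)}$) may well be a \emph{real} descent for $(u,T)$; then the orbits in $\mathcal{E}_\alpha(u,T)$ have $\Psi$-part $T\setminus\{\beta\}$, leave $\RM$, and your inductive hypothesis---which requires both pairs to be of maximum rank---no longer applies. Lemma~\ref{prop}(4) covers only complex descents and is silent here. You cannot ``re-index'' around this, because $\alpha$ is forced on you by the decomposition of $\sigma_{v(S)}$, not by $(u,T)$.

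The paper's proof is organised precisely to sidestep this. It proceeds in four stages: Lemma~\ref{codim1} handles $L(\sigma_{v(S)})=L(\sigma_{u(T)})+1$, where the real-descent scenario is shown to be \emph{contradictory} by a length-and-orthogonality argument using Lemma~\ref{due}; Lemma~\ref{above} treats $T=S_0$; a further lemma treats $u=v$; and the final assembly runs a \emph{downward} induction on $l(u)$, choosing $\alpha$ so that $u<s_\alpha u$. For a maximum-rank pair such an $\alpha$ is automatically a complex ascent on $u$ (Lemma~\ref{prop}(7)), so the real-descent obstruction on the $(u,T)$ side never arises. In short, the paper inducts on a different quantity and lets $(u,T)$, not the fixed expression for $\sigma_{v(S)}$, choose the reflection---and that choice is exactly what closes the argument.
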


The proof of Theorem \ref{ordmax} will be divided in four steps.
Set once and for all $d=L(\sigma_{v_0(S_0)})$ and put $\sigma_{v_0(S_0)}=s_{\gamma_1}\ldots s_{\gamma_d}$.
Put $\Psi_0=-v_0(S_0)$.
\begin{lemma}\label{codim1}
In the hypothesis of Theorem \ref{ordmax} suppose that $L(\sigma_{v(S)})=L(\sigma_{u(T)})+1$.
Then Theorem \ref{ordmax} holds.
\end{lemma}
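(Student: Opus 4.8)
Let me understand what Lemma \ref{codim1} is claiming. We're in the simply laced case, working with maximum rank orbits. We have two such orbits $(u,T)$ and $(v,S)$ with $\sigma_{u(T)} < \sigma_{v(S)}$ in Bruhat order on involutions, and crucially $L(\sigma_{v(S)}) = L(\sigma_{u(T)}) + 1$ — so the involution lengths differ by exactly 1. We have a reduced expression for $\sigma_{v(S)}$ as $s_{\alpha_1}\cdots s_{\alpha_n} \sigma_{v_0(S_0)} s_{\alpha_n}\cdots s_{\alpha_1}$ (this comes from point (5) of Lemma \ref{prop}, a sequence of ascents from the minimum orbit). We want to find a subsequence $i_1 < \dots < i_k$ of indices such that... wait, the conclusion says $\sigma_{v(S)} = s_{\alpha_{i_1}}\cdots$. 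That must be a typo — it should be $\sigma_{u(T)}$ on the left, giving a subexpression for $\sigma_{u(T)}$.

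So the real claim is: $\sigma_{u(T)}$ is obtained by deleting some of the conjugating reflections. Since $L(\sigma_{v(S)}) = L(\sigma_{u(T)})+1$, and each conjugation by $s_{\alpha_i}$ via the $\circ$ action changes $L$ by exactly $\pm 1$ (Lemma \ref{invlength}), we expect to delete exactly one reflection $s_{\alpha_{i_0}}$.

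**The key tools.** The length function $L$ on involutions behaves like a Coxeter length with respect to the $\circ$-action: Lemma \ref{invlength} gives $L(s_\alpha \circ \sigma) = L(\sigma) \pm 1$, and Lemma \ref{lemmaBruhatcirc} tells us the sign matches the Bruhat comparison. This strongly suggests a "strong exchange / subword property" for the $\circ$-action, analogous to the classical strong exchange property in Coxeter groups. Indeed, by the remark after Lemma \ref{invlength}, any involution has a reduced $\circ$-expression of length $L(\sigma)$, and the conjugated-reflection expression for $\sigma_{v(S)}$ is exactly such a reduced $\circ$-expression: $\sigma_{v(S)} = s_{\alpha_1} \circ \cdots \circ s_{\alpha_n} \circ \sigma_{v_0(S_0)}$, with the conjugation form arising because each $\alpha_i$ is a complex ascent (so we're in the $s_\alpha \sigma s_\alpha$ case of $\circ$).

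**The approach.** Here is the plan. First I would recast the conjugation expression as a reduced $\circ$-expression $\sigma_{v(S)} = s_{\alpha_1}\circ\cdots\circ s_{\alpha_n}\circ\sigma_{v_0(S_0)}$, so that this is a reduced word of length $d + n = L(\sigma_{v(S)})$ in the monoid generated by the $\circ$-operators acting on $\mathcal{I}$. Then, since $\sigma_{u(T)} < \sigma_{v(S)}$ with $L(\sigma_{u(T)}) = L(\sigma_{v(S)}) - 1$, I want to invoke (or prove) a subword/exchange property for this action: namely that any involution below $\sigma_{v(S)}$ in Bruhat order, whose length is one less, is obtained by deleting a single letter from the reduced $\circ$-word for $\sigma_{v(S)}$. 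The natural way to establish this is by descending induction using the $\circ$-action together with Lemma \ref{Bruhatcirc}, which is precisely the lifting/exchange property (cases 1–3 there mirror the classical Bruhat exchange lemma). Concretely, I would apply some $s_\beta$ with $s_\beta \circ \sigma_{v(S)} < \sigma_{v(S)}$; Lemma \ref{Bruhatcirc} then controls how $\sigma_{u(T)}$ sits relative to $s_\beta \circ \sigma_{v(S)}$, letting me either push $\sigma_{u(T)}$ down by one and recurse, or identify it directly.

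**Where the work concentrates.** The main obstacle is establishing that the deleted letter must come from the conjugating reflections $s_{\alpha_i}$ rather than from the "core" reduced expression $s_{\gamma_1}\cdots s_{\gamma_d}$ of $\sigma_{v_0(S_0)}$ — equivalently, that the resulting $\sigma_{u(T)}$ is still of maximum rank with $\Psi$-part of cardinality $\#S_0$, forcing the surviving expression to keep all $d$ core reflections (this is where Property \ref{unic}, Proposition \ref{complinv}, and point (2) of Lemma \ref{prop} on constant cardinality of $\Psi$-parts enter). I would argue that $\sigma_{u(T)}$, being associated to a maximum rank orbit, has exactly $\#S_0$ real roots (by Proposition \ref{complinv}, $\Gamma_T = T \cup -T$), so its $\circ$-expression must contain the full core; hence the single deletion lands among the $\alpha_i$, yielding the claimed subsequence with $k = n-1$. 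The remaining verification — that deleting that one conjugating reflection really produces $\sigma_{u(T)}$ and not merely some involution of the same length below $\sigma_{v(S)}$ — follows from point (6) of Lemma \ref{prop}: a maximum rank orbit is determined by its involution, so matching involutions forces $\sigma_{u(T)}$ to be exactly the deleted-subword involution.
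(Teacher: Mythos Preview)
Your inductive framework via the $\circ$-action and Lemma~\ref{Bruhatcirc} is exactly the one the paper uses: take $\alpha=\alpha_1$, which is a complex descent for $\sigma_{v(S)}$, and split according to how $\alpha$ behaves for $\sigma_{u(T)}$. The ascent and complex-descent cases go through just as you say. The genuine difficulty, and the place where your proposal has a gap, is the case where $\alpha$ is a \emph{real} descent for $(u,T)$.

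Your argument for locating the deleted letter among the conjugating reflections is the $\lambda$-count: since $\lambda(\sigma_{u(T)})=\#T=d$ by Proposition~\ref{complinv} and Lemma~\ref{prop}(2), you claim any reduced $\circ$-subexpression must retain all $d$ core reflections $s_{\gamma_1},\dots,s_{\gamma_d}$. This does not follow. If one deletes a core reflection $s_{\gamma_j}$, the inner involution $\prod_{i\neq j}s_{\gamma_i}$ has $\lambda=d-1$, but the subsequent operators $s_{\alpha_k}\circ$ need no longer all act as complex conjugation on this smaller involution; one of them could become non-compact imaginary and bump $\lambda$ back up to $d$. Ruling this out requires exactly the structural fact you never invoke: Lemma~\ref{due}(2), which says that for a max-rank $(v,S)$ a simple root is non-orthogonal to either zero or exactly two roots of $v(S)$. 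The paper uses this lemma, not a $\lambda$-count, to finish.

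Concretely, the paper handles the real-descent case as follows. Descending both sides by $\alpha$, one gets $\sigma_{u'(T')}<\sigma_{v'(S')}$ with $\#T'=d-1$ and $\#S'=d$, hence $l(\sigma_{v'(S')})=l(\sigma_{u'(T')})+1$ in ordinary length. So $\sigma_{u'(T')}$ is obtained by deleting a single simple reflection from the reduced word $s_{\alpha_2}\cdots s_{\alpha_n}s_{\gamma_1}\cdots s_{\gamma_d}s_{\alpha_n}\cdots s_{\alpha_2}$. One then checks that a one-sided deletion of some $s_{\alpha_k}$ would destroy the involution property (since each $\alpha_k$ is complex for the intermediate max-rank involution), so the deletion must be a core $s_{\gamma_i}$. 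But then, comparing the two sides, $\alpha$ is orthogonal to every $v'(\gamma_k)$ with $k\neq i$ (from the $\sigma_{u(T)}$ side, where $\alpha$ commutes with the remaining core) and yet $\alpha$ is a complex descent for $\sigma_{v'(S')}$, so not orthogonal to $v'(S')$; Lemma~\ref{due}(2) then forces $-\alpha=v'(\gamma_i)$, contradicting $\sigma_{v'(S')}(\alpha)>0$. Thus the real-descent case is impossible, and only the two easy cases occur. This is the missing ingredient in your sketch.
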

\begin{proof}
We will prove the lemma by induction on $L(\sigma_{v(S)})$.
If $L(\sigma_{v(S)})=d+1$, then $(u,T)=(v_0,S_0)$, so we know the theorem holds.

Now suppose $n>d+1$, $L(\sigma_{v(S)})=n$ and we are given the decomposition 
\[\sigma_{v(S)}=s_\alpha s_{\alpha_2}\cdots s_{\alpha_n}\sigma_{v_0(S_0)}s_{\alpha_n}\cdots s_{\alpha_1}s_\alpha\]
We want to show that 
\[\sigma_{u(T)}=s_\alpha s_{\alpha_2}\cdots \widehat{s_{\alpha_i}}\cdots s_{\alpha_n}\sigma_{v_0(S_0)}s_{\alpha_n}\cdots \widehat{s_{\alpha_i}}\cdots s_{\alpha_1}s_\alpha\]
That is, that we can obtain $\sigma_{u(T)}$ from $\sigma_{v(S)}$ by omitting one of the lateral pairs.
If $\alpha$ is an ascent for $\sigma_{u(T)}$ then $s_\alpha\circ\sigma_{u(T)}=\sigma_{v(S)}$ and we have the thesis.
If $\alpha$ is a complex descent for $\sigma_{u(T)}$ then we know by inductive hypothesis that $s_\alpha\circ\sigma_{u(T)}=s_{\alpha_1}\cdots \widehat{s_{\alpha_i}}\cdots s_{\alpha_ {n}}\sigma_{v_0(S_0)}s_{\alpha_{i_n}}\cdots\widehat{s_{\alpha_i}}\cdots s_{\alpha_{i_1}}$ which implies that $\sigma_{u(T)}=s_\alpha s_{\alpha_1}\cdots \widehat{s_{\alpha_i}}\cdots s_{\alpha_ {n}}\sigma_{v_0(S_0)}s_{\alpha_{i_n}}\cdots\widehat{s_{\alpha_i}}\cdots s_{\alpha_{i_1}} s_\alpha$ as wanted.

So suppose that $\alpha$ is a real descent for $\sigma_{u(T)}$.
Denote with $(u',T')$ one of the orbits in $\mathcal{E}_\alpha(u,T)$ and with $(v',S')$ the orbit in $\mathcal{E}_\alpha(v,S)$.
We know that $\sigma_{u'(T')}=s_\alpha\circ\sigma_{u(T)}<s_\alpha\circ \sigma_{v(S)}=\sigma_{v'(S')}$ and the difference in length (as involutions) is still 1.
Recall that for a generic involution $\sigma_R$ with $R$ strongly orthogonal we have $L(\sigma_R)=\frac{l(\sigma_R)+\#R}{2}$.
So
\[L\left(\sigma_{u'(T')}\right)=\frac{ l\left(\sigma_{u'(T')}\right)+\#T-1}{2}=L\left(\sigma_{v'(S')}\right)-1=\frac{l\left(\sigma_{v'(S')}\right)+\#S}{2}-1\]
Because $T'=T\setminus \left\lbrace \beta\right\rbrace$ for some root $\beta\in T$ while $\#S'=\#S$.
Hence, $l(\sigma_{v'(S')})=l(\sigma_{u'(T')})+1$ and $\sigma_{u'(T')}\leq \sigma_{(v'(S')}$.
This implies that $\sigma_{u'(T')}$ is obtained by cancelling a single simple reflection in a reduced expression of $\sigma_{v'(S')}$
This is possible if and only if 
\[\sigma_{u'(T')}=s_\alpha\circ\sigma_{u(T)}=s_{\alpha_{1}}\cdots s_{\alpha_ {n}}s_{\gamma_1}\ldots \widehat{s_{\gamma_i}}\ldots s_{\gamma_d}s_{\alpha_{n}}\cdots s_{\alpha_{1}}\]
To see this, suppose that 
\[s_\alpha\circ\sigma_{u(T)}=s_{\alpha_{i_1}}\cdots \widehat{s_{\alpha_{i_j}}}\cdots s_{\alpha_ {i_n}}s_{\gamma_1}\ldots s_{\gamma_d}s_{\alpha_{i_n}}\cdots s_{\alpha_{i_1}}\]
it is easy to see that if $\sigma$ is an involution then $v\sigma v^{-1}$ is again an involution for every $v\in W$.
It follows that
\[s_{\alpha_{i_{j+1}}}\cdots s_{\alpha_ {i_n}}s_{\gamma_1}\ldots s_{\gamma_d}s_{\alpha_{i_n}}\cdots s_{\alpha_{i_j}}\]
must be an involution and we know that
\[\tau=s_{\alpha_{i_{j+1}}}\cdots s_{\alpha_ {i_n}}s_{\gamma_1}\ldots s_{\gamma_d}s_{\alpha_{i_n}}\cdots s_{\alpha_{i_{j+1}}}\]
is an involution.
But this implies that $\tau(\alpha_{i_j})=\pm \alpha_{i_j}$ which is absurd because $\alpha_{i_j}$ should be a complex ascent for $\tau$.
We then have the following graph where we put $v=s_{\alpha_2}\cdots s_{\alpha_ {n}}$

\begin{tikzpicture}
\filldraw[black] (8,0) circle (2pt) node[anchor=south] {$s_\alpha v s_{\gamma_1}\ldots s_{\gamma_d}v^{-1}s_\alpha=\sigma_{v(S)}$};
\filldraw[black] (8,-3) circle (2pt) node[anchor=north west]{$vs_{\gamma_1}\ldots s_{\gamma_d}v^{-1}=\sigma_{v'(S')}$};
\filldraw[black] (2,-3) circle (2pt) node[anchor=south east]{$s_\alpha vs_{\gamma_1}\ldots \widehat{s_{\gamma_i}}\ldots s_{\gamma_d}v^{-1}=\sigma_{u(T)}$};
\filldraw[black] (2,-6) circle (2pt) node[anchor=north]{$vs_{\gamma_1}\ldots \widehat{s_{\gamma_i}}\ldots s_{\gamma_d}v^{-1}=\sigma_{u'(T')}$};
\draw (2,-3) -- (8,0);
\draw (2,-6) -- (8,-3);
\draw (8,0) -- (8,-3);
\draw (2,-3) -- (2,-6);
\end{tikzpicture}

But if we look at the left part we see that $\alpha$ should be orthogonal to $v(\gamma_1),\ldots,\widehat{v(\gamma_i)},\ldots,v(\gamma_d)$.
On the other hand, the right part tells us that $\alpha$ is not orthogonal to $v(\gamma_1),\ldots,v(\gamma_d)$, which by Lemma \ref{due} would imply $-\alpha=v(\gamma_i)$ and that's absurd because $\sigma_{v'(S')}(\alpha)>0$.
\end{proof}

Recall that $S_0$ is the $\Psi$-part of the smallest orbit in $\RM$.

\begin{lemma}\label{above}
Let $\sigma_{u(S_0)}<\sigma_{v(S)}$ with $(u,S_0),(v,S)\in \RM$.
Write 
\[\sigma_{v(S)}=s_{\alpha_1}\cdots s_{\alpha_n}\sigma_{v_0(S_0)}s_{\alpha_n}\cdots s_{\alpha_1}\]
Then there is $1\leq i_1<\ldots<i_k\leq n$ such that
\[\sigma_{u(S_0)}=s_{\alpha_{i_1}}\cdots s_{\alpha_{i_k}}\sigma_{v_0(S_0)}s_{\alpha_{i_k}}\cdots s_{\alpha_{i_1}}\]
\end{lemma}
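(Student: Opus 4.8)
The plan is to argue by induction on $L(\sigma_{v(S)})$, using the codimension-one statement (Lemma \ref{codim1}) both as the base case and as the engine of the inductive step. If the gap $L(\sigma_{v(S)})-L(\sigma_{u(S_0)})$ equals $1$ we are done immediately by Lemma \ref{codim1} (taking $T=S_0$ there). Otherwise it suffices to produce an intermediate pair $(w,R)\in\RM$ with $\sigma_{u(S_0)}\leq\sigma_{w(R)}<\sigma_{v(S)}$ and $L(\sigma_{w(R)})=L(\sigma_{v(S)})-1$: Lemma \ref{codim1} applied to $(w,R)<(v,S)$ then realizes $\sigma_{w(R)}$ as the given expression of $\sigma_{v(S)}$ with one lateral pair $s_{\alpha_i}$ deleted, while the inductive hypothesis applied to $(u,S_0)\leq(w,R)$ realizes $\sigma_{u(S_0)}$ as a lateral subexpression of that shorter expression; composing the two index selections yields the desired subsequence $i_1<\cdots<i_k$.

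To build $(w,R)$ I would peel off the outermost reflection $\alpha=\alpha_1$. Since the given expression encodes a chain of complex ascents from $(v_0,S_0)$, the root $\alpha$ is a complex descent of $\sigma_{v(S)}$, so by Lemma \ref{prop} the pair $(w,R):=s_\alpha\circ(v,S)$ again lies in $\RM$, with involution $s_{\alpha_2}\cdots s_{\alpha_n}\sigma_{v_0(S_0)}s_{\alpha_n}\cdots s_{\alpha_2}$ and $L(\sigma_{w(R)})=L(\sigma_{v(S)})-1$ by Lemma \ref{invlength}. Now I compare $\alpha$ with $\sigma_{u(S_0)}$ and invoke Lemma \ref{Bruhatcirc}. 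If $s_\alpha\circ\sigma_{u(S_0)}>\sigma_{u(S_0)}$, then part $(3)$ gives $\sigma_{u(S_0)}\leq s_\alpha\circ\sigma_{v(S)}=\sigma_{w(R)}$, and induction on $(u,S_0)\leq(w,R)$ (dropping the index $1$) finishes. If instead $\alpha$ is a descent of $\sigma_{u(S_0)}$ of complex type that fixes the $\Psi$-part $S_0$, then $(u',S_0):=s_\alpha\circ(u,S_0)\in\RM$, part $(2)$ gives $\sigma_{u'(S_0)}\leq\sigma_{w(R)}$, and after expressing $\sigma_{u'(S_0)}$ as a lateral subexpression of $\sigma_{w(R)}$ by induction I reattach the conjugation by $s_\alpha$ (keeping the index $1$) to recover $\sigma_{u(S_0)}$ as a lateral subexpression of $\sigma_{v(S)}$.

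The main obstacle is the remaining descents of $\sigma_{u(S_0)}$: a real descent takes $\sigma_{u(S_0)}$ out of $\RM$ (its $\Psi$-part loses a root), and a complex descent on $\Psi$ replaces $S_0$ by $s_\beta(S_0)\neq S_0$; in both cases the inductive hypothesis, which requires the smaller pair to sit over $S_0$, no longer applies. This is exactly where the hypothesis $T=S_0$ is essential, and I expect to exclude these cases using the special features of $S_0$: that it is the unique incomparable maximum-rank set (Property \ref{unic} and Lemma \ref{prop}), that by Proposition \ref{complinv} the real roots of $\sigma_{u(S_0)}$ are exactly $\pm u(S_0)$, and that by Lemma \ref{due} a simple root fails to be orthogonal to at most two roots of a maximum-rank set. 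The technical heart will be showing that $\alpha=\alpha_1$ being a complex descent of $\sigma_{v(S)}$ (so $\pm\alpha\notin v(S)$) is incompatible with its being a real or $\Psi$-moving descent of $\sigma_{u(S_0)}$ once $\sigma_{u(S_0)}<\sigma_{v(S)}$; failing a clean exclusion, one argues as in the four-node diagram of Lemma \ref{codim1} to relocate the deleted lateral pair. This orthogonality bookkeeping, rather than the order-theoretic induction, is where the real work lies.
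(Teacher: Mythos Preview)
Your inductive architecture matches the paper's: induct on $L(\sigma_{v(S)})$, peel off $\alpha=\alpha_1$, and split according to whether $\alpha$ is an ascent, a complex descent, or a real descent for $\sigma_{u(S_0)}$. Your treatment of the ascent case and the complex--on--$W^P$ descent case is correct and coincides with the paper's.

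The complex--on--$\Psi$ descent case you flag as an obstacle is in fact a non-issue: it simply cannot occur for $(u,S_0)$. By the lemma preceding Corollary~\ref{soprasotto} we have $(u,S_0)\leq(u,S')$ for every maximum--rank $S'\subseteq\Phi^+(u)$, so $(u,S_0)$ is minimal among maximum--rank orbits with $W^P$--part $u$; a complex descent on $\Psi$ would produce $(u,s_\beta(S_0))<(u,S_0)$ with $s_\beta(S_0)$ still of maximum rank, a contradiction. This is what the paper means by ``the $\Psi$-part of the descent is still $S_0$ by Corollary~\ref{soprasotto}.''

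The genuine gap is the real descent case, and here your plan is wrong: this case \emph{cannot} be excluded. Nothing prevents $\alpha=\alpha_1$ (a complex descent for $(v,S)$) from simultaneously satisfying $-\alpha\in u(S_0)$. The paper does not exclude it but instead proves directly that $\sigma_{u(S_0)}\leq s_\alpha\circ\sigma_{v(S)}$ still holds, so one may proceed as in the ascent case. The argument has two parts. First, one shows that $\alpha\in\Psi_0=-v_0(S_0)$: since $-u^{-1}(\alpha)\in S_0$ is maximal in $\Phi^+(u)$ and $u\geq v_0$, one checks that $u(-u^{-1}(\alpha))=v_0(-u^{-1}(\alpha))$, hence $\alpha$ is one of the central commuting reflections $s_{\gamma_i}$. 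Second, using the ordinary Bruhat subword criterion one writes $\sigma_{u(S_0)}$ as a subword of the given reduced expression for $\sigma_{v(S)}$ beginning with $s_\alpha$ on the left only; the point is then to commute this leading $s_\alpha$ past the surviving lateral reflections into the central block $\sigma_{v_0(S_0)}$, which works because the lateral reflections are either already in $\Psi_0$ (hence orthogonal to $\alpha$) or are complex descents on $W^P$, forcing orthogonality with $\alpha$ via maximality in $\Phi^+(u)$. Neither the four--node diagram of Lemma~\ref{codim1} nor the orthogonality count of Lemma~\ref{due} suffices here; the crux is the identification $\alpha\in\Psi_0$ and the commutation argument that follows.
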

\begin{proof}
This is again an induction on $L(\sigma_{v(S)})$.
If $L(\sigma_{v(S)})=d$ it is clear.
Suppose $L(\sigma_{v(S)})>d$ and consider the complex descent $\alpha=\alpha_1$.
If $\alpha$ is an ascent for $\sigma_{u(S_0)}$ then we obtain the claim by applying the inductive hypothesis.
Similarly if it is a complex descent; we just need to observe that the $\Psi$-part of the descent is still $S_0$ by Corollary \ref{soprasotto}.

Suppose then that $\alpha$ is a real descent for $(u,S_0)$.
This means that $-u^{-1}(\alpha)\in S$ and it is maximal in $\Phi^+(u)$.
We claim that this implies $\sigma_{u(S_0)}\leq s_\alpha\sigma_{u(S)}s_\alpha$, so we can still obtain the thesis by applying the inductive hypothesis.
Recall that we can write $\sigma_{v_0(S_0)}=s_{\gamma_1}\cdots s_{\gamma_r}$ with $\gamma_i$ pairwise orthogonal simple roots.
Then, by the hypothesis and the characterization of the Bruhat order in $W$ there are two subsequences $(h_1,\ldots, h_l),(k_1,\ldots, k_t)\subseteq (1,2,\ldots,n)$ and $p<r$ such that
\[\sigma_{u(S_0)}=s_\alpha s_{\alpha_{k_1}}\cdots s_{\alpha_{k_t}}s_{\gamma_1}\ldots s_{\gamma_p}s_{\alpha_{h_l}}\ldots s_{\alpha_{h_1}}\]
Basically, we wrote $\sigma_{u(S_0)}$ as a sub-word of $\sigma_{v(S)}$ by cancelling some of the central $s_{\gamma_i}$, some of the $s_{\alpha_i}$ on both sides and some of the $s_{\gamma_i}$ only on one side.
Note that we can suppose that we cancelled $s_\alpha$ on the right side because we supposed that $\alpha$ is a real descent and if we could cancel it on both side we would instantly have the claim.

We will now show that $\alpha\in \Psi_0$ that is, that $s_\alpha$ coincides with one of the $s_{\gamma_i}$ we cancelled from the center and that 
\[\sigma_{u(S_0)}=s_{\alpha_{k_1}}\cdots s_{\alpha_{k_t}}s_\alpha s_{\gamma_1}\ldots s_{\gamma_p}s_{\alpha_{h_l}}\ldots s_{\alpha_{h_1}}\]
Note that this would imply $\sigma_{u(S_0)}\leq s_\alpha\circ\sigma_{v(S)}$ and, by induction, the thesis.

For the first part, note that if $w,w'\in W^P$ and $\beta$ is maximal both in $\Phi^+(w)$ and in $\Phi^+(w')$, then $w(\beta)=w'(\beta)$.
To see this, let $w_0$ be the unique element in $W^P$ such that $\Phi^+(w_0)=\Phi^+(w)\cap \Phi^+(w')$.
Then $\beta$ is maximal in $\Phi^+(w_0)$, so we only need to show the claim for $w'=s_\tau w$.
But $(s_\tau w)(\beta)=s_\tau(w(\beta))=w(\beta)$ because $w^{-1}(\tau)$ is orthogonal to $\beta$ from which follows that $\tau$ is orthogonal to $w(\beta)$.
Note that this implies that whenever $u\geq v_0$ every real descent for $(u,S_0)$ must be in $\Psi_0$.

For the second part, suppose that $\alpha_{k_1},\ldots, \alpha_{k_r}\in \Psi_0$ and $\alpha_{k_{r+1}}\notin \Psi_0$.
Then $\alpha,\alpha_{k_1},\ldots, \alpha_{k_r}$ must be all different and hence they commute.
Put $\sigma=s_{\alpha}s_{\alpha_{k_1}}\cdots s_{\alpha_{k_r}}$.
Note that $\beta=\alpha_{k_{r+1}}$ must be a complex descent for $\sigma'=\sigma \sigma_{u(S_0)}$ because $\sigma'$ is the involution of an orbit of the form $(u',S')$ with $S\subseteq S_0$.
It follows that $\sigma'=s_{\beta}\sigma''s_{\beta}$.
But this implies that $\beta$ is also a complex descent for $\sigma_{u(S_0)}$ and by Corollary \ref{soprasotto} it must be a descent on $u$.
Hence $s_\beta u<u$ and $-u^{-1}(\beta)\in \Phi^+(u)$ is maximal.
But by hypothesis we also had that $-u^{-1}(\alpha)$ as well as $-u^{-1}(\alpha_{k_i}$ for $i\leq r$ was maximal in $\Phi^+(u)$ which then implies that $-u^{-1}(\alpha)$ and $-u^{-1}(\beta)$ are orthogonal, hence also $\alpha$ and $\beta$ are orthogonal.
Inductively we conclude.

\end{proof}

We will now show that if we restrict to orbits of maximum rank the characterization of Theorem \ref{ordineGM} can be simplified.

\begin{cor}\label{ordinv}
Let $\sigma_{u(T)}\leq\sigma_{v(S)}$ with $(u,T),(v,S)\in\RM$.
Then $(u,T)\leq (v,S)$.
\end{cor}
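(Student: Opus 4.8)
The plan is to deduce this from Theorem \ref{ordmax} together with the fact, recalled from \cite{RS}, that the Bruhat order on orbits coincides with the standard order, whose monotonicity is encoded in Lemma \ref{storder}. First I would dispose of the case $\sigma_{u(T)}=\sigma_{v(S)}$: by Lemma \ref{prop} an orbit of maximum rank is determined by its involution, so this forces $(u,T)=(v,S)$ and there is nothing to prove. Hence I may assume $\sigma_{u(T)}<\sigma_{v(S)}$.

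Next I would fix, using Lemma \ref{prop}, a sequence of ascents $(\alpha_n,\ldots,\alpha_1)$ carrying the minimal orbit $(v_0,S_0)$ to $(v,S)$, so that $\sigma_{v(S)}=s_{\alpha_1}\cdots s_{\alpha_n}\sigma_{v_0(S_0)}s_{\alpha_n}\cdots s_{\alpha_1}$. Applying Theorem \ref{ordmax} produces indices $1\le i_1<\cdots<i_k\le n$ with $\sigma_{u(T)}=s_{\alpha_{i_1}}\cdots s_{\alpha_{i_k}}\sigma_{v_0(S_0)}s_{\alpha_{i_k}}\cdots s_{\alpha_{i_1}}$. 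The key point is to reinterpret this sub-expression dynamically: reading the conjugations from the inside out as successive applications of $\circ$, I claim $(\alpha_{i_k},\ldots,\alpha_{i_1})$ is itself a sequence of ascents for $(v_0,S_0)$. Because the sub-expression is reduced (each pair omitted in the proof of Theorem \ref{ordmax} lowers $L$ by one, so $L(\sigma_{u(T)})=L(\sigma_{v_0(S_0)})+k$), the involution length goes up by exactly one at each of the $k$ steps; hence by Lemma \ref{invlength} and Lemma \ref{lemmaBruhatcirc} every step is a genuine ascent rather than a length-preserving or cancelling move. Since ascents preserve maximum rank (Lemma \ref{prop}), this sequence reaches an orbit of $\RM$ whose involution is $\sigma_{u(T)}$; as such an orbit is unique (Lemma \ref{prop} again), it must be $(u,T)$ itself.

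It then remains to compare $(u,T)$ and $(v,S)$, both now realised from $(v_0,S_0)$ by ascent sequences, the former by a subsequence of the latter. For this I would run the two constructions in parallel, processing the positions $n,n-1,\ldots,1$ and building partial orbits $G_j\le F_j$: at every position both orbits receive $m_{\alpha_j}$ when $j$ lies in the subsequence, while only $F_j$ receives it otherwise. Using $\mathcal{O}\le m_\alpha\mathcal{O}$ together with the monotonicity $\mathcal{O}\le\mathcal{O}'\Rightarrow m_\alpha\mathcal{O}\le m_\alpha\mathcal{O}'$ of the standard order (Lemma \ref{storder}), the inequality $G_j\le F_j$ is preserved at each step, and at the end it reads $(u,T)\le(v,S)$, which is the claim.

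The main obstacle I anticipate is the middle paragraph, namely justifying that the purely algebraic sub-expression of Theorem \ref{ordmax} really records a chain of \emph{ascents} reaching exactly $(u,T)$: one must rule out that some intermediate conjugation fails to be a strict ascent, which is where the reducedness of the sub-word and the length bookkeeping of Lemma \ref{invlength} are essential, and one must keep careful track of the inner/outer orientation of the word against the left-to-right convention of $m(\cdots)$. Once the sub-word is known to be an honest ascent subsequence, the order comparison via Lemma \ref{storder} is routine.
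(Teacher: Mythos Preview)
Your proposal has a genuine circularity problem. In this paper the proof of Theorem \ref{ordmax} explicitly invokes Corollary \ref{ordinv}: its opening line is ``Note that by Corollary \ref{ordinv} we know $(u,T)\leq (v,S)$'', and this is used essentially---for the base case $u=\omega_P$ (to force $v=\omega_P$) and again in the compact-imaginary branch (to conclude $m_\alpha(u,T)\leq (v,S)$ from $(u,T)\leq(v,S)$). So deducing the corollary from Theorem \ref{ordmax} is circular relative to the paper's logical structure.

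The paper avoids this by feeding in a more modest input: only Lemma \ref{above}, the special case of Theorem \ref{ordmax} with $T=S_0$, which is proved earlier and independently of the corollary. The actual argument then goes through the Gandini--Maffei characterization (Theorem \ref{ordineGM}): one checks that for $S$ of maximum rank the coset representative $[v\sigma_S]^P$ equals $[v\sigma_{S_0}]^P$ and so depends only on $v$; hence it suffices to prove $(u,S_0)\leq (v,S)$, and this follows from Lemma \ref{above} together with Lemma \ref{storder}. Your endgame via parallel ascent sequences and Lemma \ref{storder} is exactly right, but it should be driven by Lemma \ref{above} rather than by the full Theorem \ref{ordmax}.

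There is also a secondary gap, even setting the circularity aside. The \emph{statement} of Theorem \ref{ordmax} only produces some subsequence realizing $\sigma_{u(T)}$; it does not assert that this sub-word is reduced, i.e.\ that $k=L(\sigma_{u(T)})-L(\sigma_{v_0(S_0)})$. Your justification (``each pair omitted in the proof of Theorem \ref{ordmax} lowers $L$ by one'') reaches into the proof rather than the statement, and the inductive proof of Theorem \ref{ordmax} does not obviously maintain reducedness through all its branches. Without reducedness you cannot run the pigeonhole via Lemma \ref{invlength} to conclude that every intermediate conjugation is a genuine ascent.
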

\begin{proof}
Recall that $(u,T)\leq (v,S)$ if and only if $\sigma_{u(T)}\leq \sigma_{v(S)}$ and $[v\sigma_S]^P\leq [u\sigma_T]^P\leq u\leq v$, where $[w]^P$ is the representative in $W^P$ of the class $wW_P$.

We have the first inequality by hypothesis.
Now consider $[v\sigma_S]^P$.
By point \ref{seq} of Lemma \ref{prop}, we can write 
\[\sigma_{v(S)}=s_{\alpha_1}\cdots s_{\alpha_n} v_0\sigma_{S_0}v_0^{-1}s_{\alpha_n}\cdots s_{\alpha_1}\]
We can also write this in another more useful way.
Let $(\alpha_n,\ldots,\alpha_1)$ be the sequence of complex ascents that links $(v_0,S_0)$ to $(v,S)$ and $(v_1,S_1)<\cdots<(v_n,S_n)=(v,S)$ the intermediate steps.
The first ascent $\alpha_n$ must necessarily be an ascent on $v_0$.
Suppose that $\alpha_i$ is the first root that is an ascent on $\Psi$ for $(v_{n-i},S_{n-i})$.
Then $v_{n-i}=s_{\alpha_{i+1}}\cdots s_{\alpha_n}v_0$ and $v_{n-i}^{-1}(\alpha_i)=\beta\in\Delta$ so we can write
\[\sigma_{v_{n-i+1}(S_{n-i+1})}=\overbrace{s_{\alpha_{i+1}}\cdots s_{\alpha_n}v_0}^{v_{n-i+1}}\underbrace{s_\beta \sigma_{S_0} s_\beta}_{\sigma_{S_{n-i+1}}} v_0^{-1}s_{\alpha_n}\cdots s_{\alpha_{i+1}}\]
inductively, if we split the sequence $(\alpha_n,\ldots,\alpha_1)$ in two subsequences $(\tau_l,\ldots,\tau_1),(\beta_m,\ldots,\beta_1)$ such that the $\tau$ are the ascents on $W^P$ and the $\beta$ are the  inverse images of the ascents on $\Psi$ we get
\[\sigma_{v(S)}=\overbrace{s_{\tau_1}\cdots s_{\tau_n}v_0}^v \underbrace{s_{\beta_1}\cdots s_{\beta_n}\sigma_{S_0} s_{\beta_n}\cdots s_{\beta_1}}_{\sigma_S}v_0^{-1}s_{\tau_n}\cdots s_{\tau_1}\]
It follows that
\[ [v\sigma_S]^P=[vs_{\beta_1}\cdots s_{\beta_n}\sigma_{S_0} s_{\beta_n}\cdots s_{\beta_1}]^P=[vs_{\beta_1}\cdots s_{\beta_n}\sigma_{S_0}]^P\]
where the last step is true because $\beta_i\in\Delta_P$.
Now $s_\beta\sigma_{S_0}=\sigma_Ss_{\sigma_{S_0}(\beta)}$ and if $\sigma_{S_0}=s_{\gamma_1}\cdots s_{\gamma_d}$ we have without loss of generality and up to sign
\[\sigma_S(\beta)=\left\lbrace\begin{array}{l}
								\pm\beta \\
								\pm\beta+\gamma_1+\gamma_2 \\
								\pm\beta+\gamma_1-\gamma_2
								\end{array} \right.\]
We observe that the middle case is impossible if $\beta\in \Delta_P$ because the coefficient of $\alpha_P$ of $\pm\beta+\delta_1+\delta_2$ should be $2$, which it can't be.
Then in both the remaining cases $\sigma_{S_0}(\beta)\in \Phi_P$, so $s_{\sigma_{S_0}(\beta)}\in W_P$.
This implies inductively that $[v\sigma_S]^P=[v\sigma_{S_0}]^P$.
That is, $\left[v\sigma_S\right]^P$ doesn't depend on $S$ whenever $S$ is of maximum rank.

To conclude, observe that $(u,S_0)<(u,T)$, hence $\sigma_{u(S_0)}\leq \sigma_{u(T)}\leq \sigma_{v(S)}$ and it is enough to show $(u,S_0)\leq (v,S)$.
But Lemma \ref{above} tells us that there is a sequence of ascents $(\alpha_1,\ldots,\alpha_n)$ between $(v_0,S_0)$ and $(v,S)$ with a subsequence $\left(\alpha_{i_1},\ldots,\alpha_{i_r}\right)$ that is itself a sequence of ascents between $(v_0,S_0)$ and $(u,S_0)$.
This implies $(u,S_0)\leq (v,S)$ by Lemma \ref{storder}.

\end{proof}

We still need to prove the theorem in another particular case.
To do this, we will now introduce the concept of generalized admissible pairs.
\begin{defin}
A \textit{generalized admissible pair} is a pair $[v,S]$ where $v\in W^P$ and $S\in \Psi$ is orthogonal.
We will denote them with square bracket.
\end{defin}
There is an obvious surjective map
\begin{align*}
\varphi:\text{Generalized admissible pairs}&\longrightarrow \text{Admissible pairs}\\
[v,S]\mapsto (v,S\cap\Phi^+(v))
\end{align*}
moreover, if we restrict to $\varphi|_{\varphi^{-1}(\RM)}\colon\varphi^{-1}(\RM)\longrightarrow \RM$ it is a bijection.
Now consider only the generalized admissible pairs where the $\Psi$-part is of maximum rank.
The generalized admissible pairs admit an action of $\Delta$ which derives from the action on the admissible pairs and on the related orbit.
Fix $\alpha\in \Delta$, take $[v,S]$ and consider $(v,T)=\varphi([v,S])$.
Then the action of the minimal parabolic $P_\alpha$ gives an action $m_\alpha(v,T)=(v',T')$.
Then we put $\alpha.[v,S]=[v',S']$ where $S'=S$ unless $T'=s_\beta(T)$, in which case $S'=s_\beta(S)$.
Note that this commutes with $\varphi$.

\begin{lemma}
Suppose $\sigma_{v(T)}<\sigma_{v(S)}$ (with the same $W$-part).
Then Theorem \ref{ordmax} holds.
\end{lemma}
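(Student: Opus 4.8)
The plan is to induct on $L(\sigma_{v(S)})$, following the pattern of Lemma~\ref{codim1} and Lemma~\ref{above} but keeping the two pairs with a common $W^P$-part throughout. If $L(\sigma_{v(S)})=d$ then $(v,S)=(v_0,S_0)$ is the minimal orbit of $\RM$ and nothing lies strictly below it, so the base case is vacuous. For the inductive step I work with the \emph{outermost} reflection $\alpha:=\alpha_1$ of the given decomposition; since $(v,S)\neq(v_0,S_0)$ this $\alpha$ is a complex descent of $(v,S)$, and by Lemma~\ref{prop} and Lemma~\ref{invlength} the orbit $(v',S')$ obtained by removing it is again of maximum rank with $L(\sigma_{v'(S')})=L(\sigma_{v(S)})-1$ and $\sigma_{v(S)}=s_{\alpha}\,\sigma_{v'(S')}\,s_{\alpha}$. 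Any subexpression extracted for the smaller pair (using indices in $\{2,\dots,n\}$) can therefore be lifted to one for $\sigma_{v(S)}$ by reinserting the index $1$.

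Set $\beta=v^{-1}(\alpha)$. Because the two pairs share the $W^P$-part $v$, the sign of $\beta$ is the same for both, and this is what organizes the case analysis through Lemma~\ref{Bruhatcirc}. If $\alpha$ is a complex descent on $\Psi$ for $(v,S)$ (so $\beta\in\Delta_P$, $v'=v$), then for $(v,T)$ the root $\alpha$ can only be an ascent or again a $\Psi$-descent: a descent on $v$ or a real descent would require $\beta<0$, contradicting $\beta\in\Delta_P$. In the ascent case Lemma~\ref{Bruhatcirc}(3) gives $\sigma_{v(T)}\le\sigma_{v(S')}$ with the same part $v$, and I recurse (using the uniqueness in Lemma~\ref{prop}(6) if equality holds); in the $\Psi$-descent case Lemma~\ref{Bruhatcirc}(2) gives $\sigma_{v(T')}<\sigma_{v(S')}$, again with part $v$, and I recurse and reinsert $s_\alpha$. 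If instead $\alpha$ is a complex descent on $v$ for $(v,S)$ then $\beta\in-\Psi$ with $-\beta\notin S$, so by Lemma~\ref{prop}(7) the root $\alpha$ is necessarily a descent for $(v,T)$ as well; it is complex on $v$ exactly when $-\beta\notin T$. In that case Lemma~\ref{Bruhatcirc}(2) yields $\sigma_{s_\alpha v(T)}<\sigma_{s_\alpha v(S)}$, the two pairs now sharing the common part $s_\alpha v$, so the inductive hypothesis applies and I reinsert $s_\alpha$.

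The \textbf{main obstacle} is the one remaining configuration: $\alpha$ is a complex descent on $v$ for $(v,S)$ but a \emph{real} descent for $(v,T)$, i.e.\ $-\beta\in T\setminus S$. Here applying $s_\alpha\circ$ sends $(v,T)$ to an orbit of strictly smaller rank $\#T-1$, so the comparison $s_\alpha\!\circ\!\sigma_{v(T)}<\sigma_{s_\alpha v(S)}$ falls outside $\RM$ and the maximum-rank induction cannot be invoked directly. I expect to resolve this exactly as in the real-descent step of Lemma~\ref{codim1}: pass to the involutions of the descended orbits, compare their ordinary lengths through $L(\sigma_R)=\tfrac{l(\sigma_R)+\#R}{2}$, and use Lemma~\ref{due} together with the strong orthogonality of the roots of $S$ to show that the simple reflection being cancelled must coincide with a central factor, deriving a contradiction from the fact that $\alpha$ would then have to be simultaneously orthogonal and non-orthogonal to the conjugated roots $v(\gamma_i)$. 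The additional difficulty compared with Lemma~\ref{codim1} is that the length gap here may exceed $1$, so I anticipate an inner induction reducing this real-descent situation to the codimension-one case already settled, after which the lifted subexpression is assembled as in the diagram of Lemma~\ref{codim1}.
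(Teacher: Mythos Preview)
Your inductive framework and the handling of the easy cases (ascent, complex descent on $\Psi$, complex descent on $v$ for both pairs) are correct and essentially match the paper. The genuine gap is exactly where you flag it: the configuration where $\alpha$ is a complex descent on $v$ for $(v,S)$ but a real descent for $(v,T)$, i.e.\ $-\beta\in T\setminus S$. Here your proposal stops at ``I expect'' and ``I anticipate,'' and the strategy you sketch---mimicking the contradiction in Lemma~\ref{codim1} via Lemma~\ref{due} and an inner induction down to the codimension-one case---does not obviously go through. The argument in Lemma~\ref{codim1} depends in an essential way on the length gap being exactly~$1$, which forces the cancelled simple reflection to be one of the central $s_{\gamma_i}$; with an arbitrary gap no such rigidity is available, and there is no clear mechanism to reduce to codimension one while keeping the $W^P$-parts aligned.

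The paper resolves this case by a completely different device. It introduces \emph{generalized admissible pairs} $[v,S]$ (where one drops the requirement $S\subseteq\Phi^+(v)$) and applies the \emph{entire} sequence of descents $(\alpha_1,\dots,\alpha_n)$ simultaneously to $(v,T)$ and $(v,S)$. On the $S$-side one lands at the minimal orbit $(v_0,S_0)$; on the $T$-side one obtains an ordinary pair $(v_0,T_0)$ with $\#T_0<\#S_0$ together with a generalized pair $[v_0,T']$ of full rank satisfying $T'\cap\Phi^+(v_0)=T_0$. Since $T'\neq S_0$, Corollary~\ref{soprasotto} produces a root $\alpha\in T'$ strictly below some element of $S_0$, hence $\alpha\in\Phi^+(v_0)$, hence $\alpha\in T_0$; but $\alpha$ is then non-maximal in $\Phi^+(v_0)$, so $-v_0(\alpha)$ is not simple, contradicting the fact that $\sigma_{v_0(T_0)}$ is a product of orthogonal simple reflections (so every root of $-v_0(T_0)$ is simple). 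Thus the real-descent configuration simply cannot occur. This global descent-and-contradiction argument is the missing idea in your proposal.
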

\begin{proof}
As before, we work by induction on $L(\sigma_{v(S)})$.
If $L(\sigma_{v(S)})=d$ then there is nothing to show.

Now suppose $L(\sigma_{v(S)})=n>d$ and suppose given a decomposition 
\[\sigma_{v(S)}=s_\alpha s_{\alpha_2}\cdots s_{\alpha_n}\sigma_{v_0(S_0)}s_{\alpha_n}\cdots s_{\alpha_2}s_\alpha\]
We know that $\alpha$ is a complex descent for $(v,S)$.
If $\alpha$ is an ascent or a complex descent for $(v,T)$ we proceed by induction as before.
Suppose then that $\alpha$ is a real descent for $(v,T)$.
Now, beginning from $(v,T)<(v,S)$, we can apply the sequence of descents $(\alpha=\alpha_1,\alpha_2,\ldots,\alpha_n)$ to both sides and we obtain a sequence of pairs for which still $(v_i,T_i)<(v_i,S_i)$.
Note that $T_1$ has one less element then $T$ so we can never have quality.
Moreover, in this sequence the $W$-part is always the same and decreasing.
The last step is $(v_0,T_0)<(v_0,S_0)$ where $(v_0,S_0)$ is the minimal orbit of maximum rank.
This implies that $\sigma_{v_0(T_0)}=s_{\gamma_1}\cdots s_{\gamma_r}$ where $\gamma_1,\ldots,\gamma_r\in\Psi_0$ are pairwise orthogonal simple roots.
On the other hand, we can do the same with generalized admissible pairs; in the last step we obtain a situation as below
\begin{center}
\begin{tikzpicture}
\node at (2,0) [anchor=south]{$[v_0,T']$};
\node at (5,0) [anchor=south]{$[v_0,S_0]$};
\node at (2,-2) [anchor=north]{$(v_0,T_0)$};
\node at (5,-2) [anchor=north]{$(v_0,S_0)$};
\node at (3.5,-2.1) [anchor=north]{$<$};
\draw[->] (2,0) -- node[left]{$\varphi$} (2,-2);
\draw[->] (5,0) -- node[right]{$\varphi$}(5,-2);
\end{tikzpicture}
\end{center}
This means that $T'\cap \Phi^+(v_0)=T_0$ and given that $T_0$ has less element than $T$ and consequently less element then $S_0$ it must be $T_0\neq S_0$, hence $T'\neq S_0$.
By Corollary \ref{soprasotto} this means that there is $\alpha\in T'$ that is smaller than at least an element of $S_0$.
Then $v_0(\alpha)<0$, so $\alpha\in T_0$ and $-v_0(\alpha)$ is not a simple root.
We would then have
\[-v_0(\alpha)=\sigma_{v_0(T_0)}(v_0(\alpha))=s_{\gamma_1}\cdots s_{\gamma_r}(v_0(\alpha))\]
which implies $v(\alpha)=-\gamma_i$ which is a contradiction.
\end{proof}

At last, we can prove Theorem \ref{ordmax}.
We will repeat here the statement.
Recall that there is a maximum element in $W^P$ which we will call $\omega_P$.
\begin{manualtheorem}{\ref{ordmax}}

Let $(u,T)$ and $(v,S)$ be admissible pairs.
Suppose that both $T$ and $S$ are maximal orthogonal subsets of $\Psi$ and that there is the following relation between the associated involutions
\[ \sigma_{u(T)}<\sigma_{v(S)}\]
Suppose, at last that
\[\sigma_{v(S)}=s_{\alpha_1}\cdots s_{\alpha_n}\sigma_{v_0(S_0)}s_{\alpha_n}\cdots s_{\alpha_1}\]
Then there is  a sequence $1\leq i_1<\ldots<i_k\leq n$ such that
\[\sigma_{u(T)}=s_{\alpha_{i_1}}\cdots s_{\alpha_ {i_r}}\sigma_{v_0(S_0)}s_{\alpha_{i_r}}\cdots s_{\alpha_{i_1}}\]
\end{manualtheorem}
\begin{proof}
Note that by Corollary \ref{ordinv} we know $(u,T)\leq (v,S)$.

We work by downward induction on $l(u)$ starting from the maximum length, which is $l(\omega_P)=\#\Psi$.
Fix a decomposition $\sigma_{v(S)}=s_{\alpha_1}\cdots s_{\alpha_n}\sigma_{v_0(S_0)}s_{\alpha_n}\cdots s_{\alpha_1}$.
If $u=\omega_P$, then by Corollary \ref{ordinv} and the characterization of the Bruhat order of the orbits we would have also $v=\omega_P$, so the claim follows by the previous lemma.
Now suppose $l(u)=k<\#\Psi$.
Then $u<\omega_P$ and there must be $\alpha\in\Delta$ such that $u<s_\alpha u\leq\omega_P$.
In particular $\alpha$ is a complex ascent (on $u$) for $(u,T)$.
Note that there is $(u',T)=m_\alpha(u,T)$ such that $\sigma_{u'(T)}=s_\alpha\circ\sigma_{u(T)}>\sigma_{u(T)}$.

Now suppose that $\alpha$ is a descent for $\sigma_{v(S)}$.
Then $\sigma_{u(T)}<s_\alpha\circ\sigma_{u(T)}\leq\sigma_{v(S)}$.
By inductive hypothesis we know that $s_\alpha\circ\sigma_{u(T)}=s_{\alpha_{i_1}}\cdots s_{\alpha_{i_{r+1}}}\sigma_{v_0(S_0)}s_{\alpha_{i_{r+1}}}\cdots  s_{\alpha_{i_1}}$ and by Lemma \ref{codim1} we know that the claim also holds for $\sigma_{u(T)}$.

Suppose now that $\alpha$ is an ascent for $\sigma_{v(S)}$ and $P_\alpha vx_S=Bvx_S$.
Then $(u',T')\leq (v,S)$, hence $\sigma_{u'(T')}\leq \sigma_{v(S)}$ and we conclude as above.

If instead $\alpha$ is an ascent for $\sigma_{v(S)}$ which is also an ascent for $(v,S)$, that is if $P_\alpha vx_S\neq Bvx_S$ we would have 

\[s_{\alpha}\circ\sigma_{u(T)}<s_\alpha\circ\sigma_{v(S)}\]
to which we can apply the inductive hypothesis with

\[s_\alpha\circ\sigma_{v(S)}=s_\alpha s_{\alpha_1}\cdots s_{\alpha_n}\sigma_{v_0(S_0)}s_{\alpha_n}\cdots s_{\alpha_1} s_\alpha\]
Then if the decomposition we obtain for $s_\alpha\circ\sigma_{u(T)}$ doesn't have $s_\alpha$ at the extremities we would have $s_\alpha\circ\sigma_{u(T)}<\sigma_{v(S)}$ so we can use the same reasoning as before.
If instead it has $s_\alpha$ at the extremities, then it is clear that cancelling those $s_\alpha$ gives the decomposition for $\sigma_{u(T)}$ we were looking for.

\end{proof}

We will now see how the preceding result let us study the local systems on the $B$-orbits on $G/L$.
The first thing is to analyse the possible ascents and descents when the root system is simply laced.
\begin{lemma}\label{c1no}
Suppose that $\Phi$ is simply laced.
Then case $c1)$ from Lemma \ref{listofaction} never happens.
\end{lemma}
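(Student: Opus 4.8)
The plan is to reduce to the combinatorial criterion for case c1) that was already extracted in the discussion preceding Definition \ref{ordinelb}, and then to rule it out by a one-line length computation that is available only in the simply laced setting.

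First I would recall that analysis. Case c1) requires $\sigma_{v(S)}(\alpha)=\alpha$ together with $P_\alpha vx_S$ consisting of exactly two orbits; the latter forces $\beta\doteqdot v^{-1}(\alpha)\notin\Psi$, hence $\beta\in\Delta$, and one checks that $\beta$ is orthogonal to every root of $S$. The upshot recorded there is that we lie in case c1) if and only if there is a root $\gamma\in S$ with both $\gamma+\beta\in\Psi$ and $\gamma-\beta\in\Psi$. So it suffices to prove that no such configuration can occur when $\Phi$ is simply laced.

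Next I would exploit the defining feature of the simply laced case: all roots have the same length. Normalizing so that $(\gamma,\gamma)=(\beta,\beta)=2$ for every root, and using $(\gamma,\beta)=0$, I compute $(\gamma+\beta,\gamma+\beta)=(\gamma,\gamma)+(\beta,\beta)=4$. Since $4$ is not the squared length of any root, $\gamma+\beta$ cannot lie in $\Phi$, contradicting $\gamma+\beta\in\Psi\subseteq\Phi$. (This is the strong orthogonality phenomenon: in a simply laced system orthogonal roots are automatically strongly orthogonal, so their sum is never a root.) This contradiction shows the criterion is never met, and therefore case c1) never occurs.

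I expect essentially no obstacle here: the only care needed is to quote the correct equivalence from the earlier case-by-case analysis—namely that an imaginary simple root together with a two-orbit $P_\alpha$-orbit forces $\beta\in\Delta$ orthogonal to $S$, and that the genuinely two-valued situation c1) amounts exactly to the simultaneous membership $\gamma\pm\beta\in\Psi$ for some $\gamma\in S$. Once that is in hand, the simply laced length identity closes the argument immediately.
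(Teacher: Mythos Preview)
Your argument is correct and follows the same route as the paper. Both proofs reduce to the criterion established just before Definition~\ref{ordinelb}: case~c1) requires $\beta=v^{-1}(\alpha)$ to be orthogonal to some $\gamma\in S$ with $\gamma\pm\beta\in\Psi$. You then kill this by the squared-length computation $(\gamma+\beta,\gamma+\beta)=4$, while the paper phrases the identical fact as ``in a simply laced system two roots can be added or subtracted if and only if they are not orthogonal''; these are the same observation, and the conclusion is the same.
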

\begin{proof}
Suppose that $(v,S)$ is admissible and that there is $\alpha\in\Delta$ such that $\sigma_{v(S)}(\alpha)=\alpha$.
Note that, $\Phi$ being simply laced, $\gamma,\delta\in\Phi$ can be added or subtracted if and only if $(\gamma,\delta)\neq 0$.
Then $\sigma_{v(S)}(\alpha)=\alpha$ implies that $\alpha$ cannot be added or subtracted to any element of $v(S)$.
This means that if $\beta=v^{-1}(\alpha)\in \Delta_P$, then $P_\alpha vx_S=Bvx_S$, while if $\beta\in\Psi$, then $P_\alpha v x_S$ contains three orbits
\end{proof}

The above result is very important because case $c1)$ is the only case in which a local system admits more than one extension.
It follows that, if the root system is simply laced, the trivial local system never extends to a non-trivial local system, so the subset of trivial local systems must be a connected component of the Hasse diagram.
We will now study which orbit can have non-trivial local systems.

\begin{prop}
Let $G$ be a simply connected linear algebraic group of type $\bf{ADE}$.
Then, if $\Psi$ doesn't verify property \ref{unic}, no orbit admits non-trivial root systems.
If instead $\Psi$ verifies property \ref{unic}, then a $B$-orbit $(v,S)$ on $G/L$ admits a non-trivial local system if and only if $S$ is of maximum rank.
In this case, it admits a unique non-trivial local system.
\end{prop}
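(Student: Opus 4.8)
The plan is to identify the non-trivial local systems on $(v,S)$ with the non-trivial characters of the component group of a point stabilizer, to compute that component group as the torsion subgroup of an explicit lattice quotient, and finally to read off its non-vanishing from Property~\ref{unic}. As recalled after the definition of local system, over a $B$-orbit $\mathcal O$ the isomorphism classes of $B$-equivariant rank-one local systems correspond bijectively to the characters of the finite group $\pi_0(\stab_B(x))$, $x\in\mathcal O$. Hence $(v,S)$ carries a non-trivial local system if and only if $\pi_0(\stab_B(x))$ admits a non-trivial character, and --- once this group is seen to be abelian --- the number of non-trivial local systems equals $\#\pi_0(\stab_B(x))-1$. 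By Lemma~\ref{orderiso} I replace $\stab_B(x)$ by the stabilizer of $e_S$ for the twisted action~\eqref{action} of $B_v=B_L\ltimes U_v$ on $\mathfrak p^u$.

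The component group is carried by the torus, since $U_v$ and the unipotent radical of $B_L$ are connected. For $t\in T$ one has $\adj_t(e_\alpha)=\alpha(t)\,e_\alpha$, so $t$ fixes $e_S$ exactly when $\alpha(t)=1$ for every $\alpha\in S$, that is when $t\in H_S:=\bigcap_{\alpha\in S}\ker\alpha$. Imposing the relations $\alpha=1$ ($\alpha\in S$) shows that $H_S$ has coordinate ring $\mathbb C[X^*(T)/\mathbb Z S]$, so its character group is $X^*(T)/\mathbb Z S$ and $\pi_0(H_S)$ is the torsion subgroup of $X^*(T)/\mathbb Z S$. The one delicate point is to check that the full stabilizer contributes no further component, i.e. that no torus element outside $H_S$ can be made to fix $e_S$ after a unipotent correction; this is a computation independent of the type. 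Granting it,
\[\pi_0(\stab_B(x))\ \cong\ \big(X^*(T)/\mathbb Z S\big)_{\mathrm{tors}},\]
a finite abelian $2$-group depending only on $S$. Simple connectedness of $G$ enters here: it makes $X^*(T)$ the full weight lattice, maximising this torsion (in the adjoint case $X^*(T)=\mathbb Z\Phi$ the torsion vanishes, recovering the earlier remark that adjoint groups give only trivial local systems).

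Next I reduce to the minimal orbit. The group $(X^*(T)/\mathbb Z S)_{\mathrm{tors}}$ depends only on the lattice $\mathbb Z S$, and $W$ acts on $X^*(T)$ by automorphisms; since orthogonal subsets of $\Psi$ of a fixed cardinality are $W$-conjugate (\cite{RRS}, as already used in Lemma~\ref{due}), this torsion depends only on $\#S$. For maximum rank this lets me reduce to $(v_0,S_0)$ and, using Lemma~\ref{prop}(3), to $\Gamma:=-v_0(S_0)=\Psi_0\subseteq\Delta$, a set of pairwise orthogonal simple roots. As the $\gamma_i$ extend to a $\mathbb Z$-basis of $\mathbb Z\Phi$, one finds $\big(X^*(T)/\mathbb Z\Gamma\big)_{\mathrm{tors}}=\big(X^*(T)\cap\mathbb Q\Gamma\big)/\mathbb Z\Gamma$; integrality against the coroots $\gamma_j^\vee$ (here $\langle\gamma_i,\gamma_j^\vee\rangle=2\delta_{ij}$) forces every class to be represented by a half-sum $\tfrac12\sum_{i\in I}\gamma_i$. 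Thus the torsion is non-trivial precisely when some such half-sum is a weight.

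It remains to prove that this torsion vanishes when $S$ is not of maximum rank and that for maximum rank $S$ it equals $\mathbb Z/2\mathbb Z$ when $\Psi$ satisfies Property~\ref{unic} and is trivial otherwise. This is the genuinely type-dependent heart of the statement and the step I expect to absorb essentially all the work; the previous reductions are formal by comparison. Morally, a non-trivial torsion class is, modulo $\mathbb Z\Phi$, a non-trivial central element of $G$ lying in $H_S\setminus H_S^\circ$, and it can be isolated in a component of its own only once $S$ is maximal. The half-sum criterion then has to be matched against the explicit description of $\Psi$ for each admissible simply-laced pair $(\Phi,P)$ --- recall that $\mathbf E_8,\mathbf F_4,\mathbf G_2$ are excluded and that $\alpha_P$ is cominuscule. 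The point is that the existence of such a half-sum is exactly what would manufacture a second incomparable maximum rank set, so Property~\ref{unic} is precisely the obstruction controlling it; the same analysis shows at most one non-trivial class survives, giving the uniqueness of the non-trivial local system.
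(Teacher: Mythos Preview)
Your overall strategy coincides with the paper's: both reduce to computing $\pi_0$ of the torus stabilizer $H_S=\bigcap_{\alpha\in S}\ker\alpha$. Your lattice reformulation $\pi_0(H_S)\cong(X^*(T)/\mathbb Z S)_{\mathrm{tors}}$, the reduction by $W$-conjugacy to $\#S$, and the half-sum criterion for orthogonal simple roots are all correct and arguably cleaner than the paper's explicit coordinate computation.

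The genuine gap is exactly where you say it is. You do not verify the half-sum criterion against Property~\ref{unic} in any type, and the heuristic you offer in its place --- that a half-sum weight ``would manufacture a second incomparable maximum rank set'' --- is not made precise; it is essentially a restatement of the proposition rather than a proof of it. The paper, by contrast, does the type~$\mathbf A$ case concretely: it identifies Property~\ref{unic} for $\mathbf{SL}_n$ with $n$ even and $\alpha_P$ the middle simple root, writes down the diagonal torus and the equations $a_i=a_j$ coming from $e_i-e_j\in S$, and reads off directly that the stabilizer is connected whenever some coordinate index is untouched by $S$ (i.e.\ $S$ not of maximum rank or Property~\ref{unic} fails) and has exactly two components otherwise; types~$\mathbf D$ and~$\mathbf E$ are then declared similar. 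To close your argument you would still have to do this case work, or at least prove your heuristic: show that $\tfrac12\sum_{i\in I}\gamma_i$ is a weight precisely when $I$ is the full set and $\Psi$ satisfies Property~\ref{unic}, which in the end amounts to inspecting the fundamental group type by type. Two smaller points: your invocation of Lemma~\ref{prop}(3) to obtain $\Gamma\subseteq\Delta$ already assumes Property~\ref{unic}, so for the failing case you need the bare $W$-conjugacy (via~\cite{RRS}) to orthogonal simple roots instead; and the ``delicate point'' you grant about the unipotent part not enlarging the component group is exactly what the paper also passes over with the one-line remark that the unipotent part is connected.
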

\begin{proof}
We know that the local systems on $Bvx_S$ are in correspondence with the connected components of the stabilizer in $B$ of $vx_S$.
To compute this, it is enough to study the stabilizer in $T$ of $vx_S$ given that the unipotent part is connected.
So, the proof of this theorem is just computing the connected components of $\stab_T(vx_S)$ for varying $(v,S)$.
The first to note is that $\stab_T(vx_S)=v\stab_T(x_S)v^{-1}$.
Suppose $\Phi$ is of type $\bf{A}$, so we can assume $G=\matr{SL}(n,\mathbb{C})$.
Then we can suppose $\Phi=\left\lbrace \pm(e_i-e_j)\mid i<j\right\rbrace\subseteq \langle e_1,\ldots,e_n\rangle_{\mathbb{R}}\cong \mathbb{R}^n$ and $\Delta=\left\lbrace e_i-e_{i+1}\right\rbrace_{1\leq i\leq n-1}$.
Then for every $\alpha_h=e_h-e_{h+1}\in\Delta$ the parabolic subgroup associated to $\Delta\setminus\left\lbrace\alpha_h\right\rbrace$ has abelian unipotent radical.
In this case $\Psi=\left\lbrace e_i-e_j\mid i\leq h<h+1\leq j\right\rbrace$.
Obviously $T=\left\lbrace \diag(a_1,\ldots,a_n)\right\rbrace$ and $\left(e_i-e_j\right)\left(\diag(a_1,\ldots,a_n)\right)=a_ia_j^{-1}$.
Moreover there is the additional condition that $\prod_{i=1,\ldots,n} a_i=1$.
It follows that every root $e_i-e_j$ in $S$ gives a condition of the type $a_i=a_j$.
If there is $h_0\in\left\lbrace 1,\ldots,n\right\rbrace$ such that $e_{h_0}$ is orthogonal to every root in $S$, then the additional condition $\prod_{i=1,\ldots,n} a_i=1$ can be written as 
\[a_{h_0}=\prod_{i=\left\lbrace 1,\ldots, \widehat{h_0},\ldots,n\right\rbrace} a_i^{-1}\]
and it is clear that $\stab_T(vx_S)\cong \mathbb{C}^{n-1-\#S}$.
Suppose that there is no $e_i$ orthogonal to every root in $S$.
Then it must be $n$ even and $h=\frac{n}{2}$ (for $G=\matr{SL}_n$ this is equivalent to $\Psi$ having Property \ref{unic}).
Moreover in this case there is no $e_i$ orthogonal to every root in $S$ if and only if $S$ is of maximum rank.
If we denote $I=\left\lbrace 1\leq i\leq n\mid e_i-e_{k(i)}\in S\text{ for some }i<k(i)\right\rbrace$, then the stabilizer is defined by
\[
\left\lbrace 
\begin{array}{lr}
a_{k(i)}=a_i & i\in I\\
\prod_{i\in I} a_i^2=1 & 
\end{array}
\right.
\]
And this is two copies of $\mathbb{C}^{\frac{n}{2}-1}$.

The $\bf{D}$ and $\bf{E}$ cases are similar.
\end{proof}

We can now describe the Hasse diagram of the simply laced root systems.
\begin{theorem}\label{risADE}
Suppose that the linear algebraic group $G$ is simply connected and the root system $\Phi$ is simply laced.
If $\Psi$ doesn't verify Property \ref{unic}, then all local systems are trivial and $\mathcal{D}=\mathcal{D}_0$.

If instead $\Psi$ verifies Property \ref{unic}, then:
\begin{enumerate}
\item the orbits of maximum rank admit exactly two non-isomorphic local systems, one being trivial and one being non-trivial.
The other orbits admit only the trivial local system;
\item the subset of all the orbits with trivial root system is a connected component of the Hasse diagram, while the subset of the orbits of maximum rank with non-trivial root system is another connected component;
\item in every connected component, the order between the elements is the Bruhat order between the underlying orbits.
\end{enumerate}
\end{theorem}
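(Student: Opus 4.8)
The plan is to derive the statement by assembling results already in hand, treating the two pieces of $\mathcal D$ separately. Point (1), and the assertion $\mathcal D=\mathcal D_0$ when $\Psi$ fails Property \ref{unic}, are exactly the content of the Proposition just proved: every orbit carries the trivial system, and $(v,S)$ carries a (unique) non-trivial one precisely when $S$ is of maximum rank. I would therefore record these at once and write $\mathcal D_1=\mathcal D\setminus\mathcal D_0$, which by (1) consists of one non-trivial system over each maximum-rank orbit, and then concentrate on (2) and (3).

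For (2) the decisive input is Lemma \ref{c1no}: in the simply laced case case $c1)$ never occurs, so along any ascent the extension of a local system is unique. Since the Bruhat $\mathcal G$-order is generated by the relations $\tau\in\alpha\circ\gamma$, connectivity in its Hasse diagram is governed by these relations. The trivial system always extends along an ascent (as noted after Lemma \ref{listofaction}), so by uniqueness its extension is the trivial system upstairs, because that system restricts to the trivial one; conversely if $\tau\in\alpha\circ\gamma$ is trivial then $\gamma$, being its restriction, is trivial too. Hence no generating relation links $\mathcal D_0$ to $\mathcal D_1$, and both are unions of connected components. Connectivity of $\mathcal D_0$ comes from Proposition \ref{ordtriv}, which identifies it order-isomorphically with the poset of orbits; that poset has the closed orbit $BL/L$ as minimum and is therefore connected. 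For $\mathcal D_1$ I would invoke Lemma \ref{prop}: every maximum-rank orbit is reached from the minimal one $(v_0,S_0)$ by a chain of ascents, each complex (case $b1)$) and preserving maximum rank, along which the non-trivial system extends uniquely and stays non-trivial by the same restriction argument; thus every element of $\mathcal D_1$ is linked to the non-trivial system over $(v_0,S_0)$.

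Point (3) on $\mathcal D_0$ is literally Proposition \ref{ordtriv}, so the substance is (3) on $\mathcal D_1$, and this is where Theorem \ref{ordmax} enters. One direction is the general fact that comparability of local systems forces comparability of the underlying orbits. For the converse, let $(v,S)\leq(u,T)$ with both of maximum rank; by Theorem \ref{ordineGM} this yields $\sigma_{v(S)}\leq\sigma_{u(T)}$. Fixing a decomposition $\sigma_{u(T)}=s_{\alpha_1}\cdots s_{\alpha_n}\sigma_{v_0(S_0)}s_{\alpha_n}\cdots s_{\alpha_1}$ arising from a chain of ascents $(v_0,S_0)\to(u,T)$, Theorem \ref{ordmax} extracts a subsequence realizing $\sigma_{v(S)}$. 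Each reflection inserted via $\circ$ raises the length $L$ by one (Lemma \ref{invlength}) and hence is an ascent, the intermediate involutions are those of maximum-rank orbits, and such orbits are determined by their involutions (Lemma \ref{prop}); so the subsequence is a genuine chain of complex ascents $(v_0,S_0)\to(v,S)$. Feeding the full chain and its subchain into Proposition \ref{propordlb}, starting from $\gamma_{(v_0,S_0)}\leq\gamma_{(v_0,S_0)}$, the full chain carries the right-hand side to the non-trivial system over $(u,T)$ and the subchain carries the left-hand side to that over $(v,S)$, whence $\gamma_{(v,S)}\leq\gamma_{(u,T)}$.

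I expect the only delicate point to be this last matching: turning the purely combinatorial subsequence of Theorem \ref{ordmax} into an honest chain of ascents at the level of local systems so that Proposition \ref{propordlb} applies. Concretely one must check that every reflection of the subsequence corresponds to a complex ascent between maximum-rank orbits, so that the extension is forced to be unique (Lemma \ref{c1no}) and non-trivial (by restriction), and that the intermediate orbits are pinned down by Lemma \ref{prop}. Once this bookkeeping is in place, every remaining step is a direct citation of the results above.
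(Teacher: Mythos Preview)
Your proposal is correct and follows essentially the same route as the paper: point (1) is the preceding Proposition; for (2) you use Lemma \ref{c1no} to see that the $\alpha\circ$ relations never cross between $\mathcal D_0$ and $\mathcal D_1$, and connectivity of each piece comes from Proposition \ref{ordtriv} and Lemma \ref{prop}(\ref{seq}) respectively; for (3) on $\mathcal D_1$ you pass from $(v,S)\leq(u,T)$ to the involution inequality, invoke Theorem \ref{ordmax} to extract a subchain, and finish with Proposition \ref{propordlb}. The paper does exactly this, and your remark about the ``delicate point'' (that the subsequence from Theorem \ref{ordmax} is an honest chain of complex ascents between maximum-rank orbits, pinned down by Lemma \ref{prop}) is the right bookkeeping to make the argument rigorous.
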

\begin{proof}
\begin{enumerate}
\item We proved this in the proposition above;
\item by Proposition \ref{ordtriv} the order among the orbits with trivial local systems coincides with the Bruhat order on the respective orbits, so the set of all the orbits with trivial local systems is connected.
Similarly, the subset of orbits of maximum rank with non-trivial local system is connected because there are ascents between the minimum orbit $(v_0,S_0)$ and any $(v,S)\in\RM$.
We now need to show that these are in fact the connected components of the Hasse diagram, that is, that they are disconnected.
To do this it is enough to show that if $\gamma\in\mathcal{D}$ is trivial and $\tau\in\mathcal{D}$ is non-trivial then there is no $\alpha\in\Delta$ such that $\tau\in\alpha\circ\gamma$ or $\gamma\in\alpha\circ\tau$.
By Lemma \ref{c1no} $\alpha\circ\gamma$ can contain only trivial local systems, so $\tau\notin\alpha\circ\gamma$.
Then, suppose $\gamma\in\alpha\circ\tau$.
From this we know that $\gamma$ admits an extension to the orbit of $\tau$ and by Lemma \ref{listofaction} this extension is unique.
But, $\gamma$ being trivial, it admits at least the trivial extension, so we would have $\tau$ trivial which is a contradiction;

\item for the connected component with trivial local system we already know the claim is true (Proposition \ref{ordtriv}).
We also know that if $(Bvx_S,\gamma)\leq(Bux_R,\tau)$ in $\mathcal{D}$, then $Bvx_S\leq Bux_R$.
Now take $(v,S),(u,R)\in\RM$ and denote with $\gamma$ and $\tau$ the respective non-trivial local systems.
If $(v,S)<(u,R)$, then by Theorem \ref{ordmax} there is a chain of ascents $(\alpha_1,\ldots,\alpha_n)$ from $(v_0,S_0)$ to $(u,R)$ and a sub-chain $(\alpha_{i_1},\ldots,\alpha_{i_r})$ from $(v_0,S_0)$ to $(v,S)$.
Given that in every ascent the non-trivial local system extends to a non-trivial local system, if we apply the same ascents to $\left(Bv_0x_{S_0},\delta\right)$ with $\delta$ non-trivial, by Property \ref{propordlb} we get $(Bvx_S,\gamma)<(Bux_R,\tau)$.
\end{enumerate}
\end{proof}
\section{The type \bf{B} case}\label{B}

When $\Phi$ is of type $\bf{B}$ the situation is easy enough to analyse manually.
Think of $\bf{B}_n$ as the set
\[\left\lbrace \pm e_i\pm e_j\right\rbrace\cup\left\lbrace \pm e_i\right\rbrace\subseteq \langle e_1,\ldots,e_n\rangle_\mathbb{R}\]
We choose as a basis
\[\Delta=(\underbrace{e_1-e_2}_{\alpha_1},\ldots,\underbrace{e_{n-1}-e_n}_{\alpha_{n-1}},\underbrace{e_n}_{\alpha_n})\]
which gives $\Phi^+=\left\lbrace e_i-e_j\mid i<j\right\rbrace\cup \left\lbrace e_i\right\rbrace$.
We will suppose $n>2$

With this choices, the only parabolic subgroup with abelian unipotent radical is the one corresponding to $\Delta\setminus\left\lbrace e_1-e_2\right\rbrace$, so $\alpha_P=e_1-e_2$ and $\Psi=\left\lbrace e_1\pm e_i\right\rbrace_{i=2,\ldots,n}\cup\left\lbrace e_1\right\rbrace$.

As a first thing we want to compute which orbit admits a non-trivial local system.
Suppose $G$ simply connected, then $\Phi^\vee$ is generated by $\omega_i=\langle \alpha_i,\bullet\rangle$ and we know that $T=\Phi^\vee\otimes\mathbb{C}^*$.
Then every element in $T$ can be written as $\sum a_i\omega_i$ with $a_i\in\mathbb{C}$ and for every $\tau\in\Phi$ we get
\[\tau\left(\sum a_i\omega_i\right)=\prod a_i^{\omega_i(\tau)}\]

The orthogonal subset in $\Phi$ are either the singletons or sets of the form $\left\lbrace e_1-e_j,e_1+e_j\right\rbrace$ and we obtain
\begin{align*}
e_1(\sum a_i\omega_i)&=a_1 \\
\left(e_1-e_j\right)\left(\sum a_i\omega_i\right)&=a_1 a_{j-1}a_j^{-1} \\
\left(e_1+e_j\right)\left(\sum a_i\omega_i\right)&=a_1 a_{j-i}^{-1}a_j \\
\end{align*}

We want to compute $\stab_T(x_S)$.
If $S$ is a singleton, we only have an equation (either $a_1=1,a_1 a_{j-1}a_j^{-1}=1$ or $a_1 a_{j-i}^{-1}a_j=1$).
In these cases the stabilizer is isomorphic to $\left(\mathbb{C}^*\right)^{n-1}$ so it is connected.

If instead we have a set of the form $S=\left\lbrace e_1-e_j,e_1+e_j\right\rbrace$ the equations are

\begin{align*}
a_1 a_{j-1}a_j^{-1}&=1\\
a_1 a_{j-i}^{-1}a_j&=1
\end{align*}
with solutions 
\[\begin{array}{ccc}
\left\lbrace\begin{array}{l}
a_j=a_{j-1}\\
a_1=1
\end{array}\right.
& \text{ or } & \left\lbrace\begin{array}{l}
a_j=-a_{j-1}\\
a_1=-1
\end{array}\right.
\end{array}
\]
This means that the stabilizer is isomorphic to two copies of $\left(\mathbb{C}^*\right)^{n-2}$ and has two connected components.
These results are summarized in the following proposition.

\begin{prop}\label{propBlb}
Let $G$ be a simply connected algebraic group of type $\bf{B}$.
Then the $B$-orbits in $G/L$ which are represented by a pair $(v,S)$ with $\#S=2$ admit exactly two non-isomorphic local systems.
The others admit only the trivial local system.
\end{prop}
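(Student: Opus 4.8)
The plan is to translate the statement into a count of connected components of a torus stabilizer, and then to carry out that count explicitly using the combinatorics of $\Psi$ in type $\mathbf{B}$. Recall from Section \ref{linebundle} that the isomorphism classes of $B$-equivariant rank-$1$ local systems over an orbit $Bvx_S$ are in bijection with the continuous (hence finite) representations of $\pi_0(\stab_B(vx_S))$; as these component groups are finite abelian, the number of non-isomorphic rank-$1$ local systems equals the number of characters, i.e.\ $\#\pi_0(\stab_B(vx_S))$. So it suffices to count components of $\stab_B(vx_S)$. Exactly as in the type $\mathbf{ADE}$ computation, this reduces to the torus: the unipotent part of the stabilizer is connected, so $\pi_0(\stab_B(vx_S))\cong\pi_0(\stab_T(vx_S))$, and since $\stab_T(vx_S)=v\,\stab_T(x_S)\,v^{-1}$ conjugation by $v$ preserves the number of components. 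Thus everything reduces to computing $\pi_0(\stab_T(x_S))$, which depends only on $S$.

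Next I would classify the admissible $S$. Every root of $\Psi=\{e_1\pm e_i\}_{i=2,\dots,n}\cup\{e_1\}$ carries coefficient $+1$ on $e_1$, so any two of them have inner product $1+(\cdots)$; a direct check gives $(e_1-e_j,e_1+e_j)=0$ and shows this is the only way two roots of $\Psi$ can be orthogonal. In particular an orthogonal subset of $\Psi$ has at most two elements, so $\#S\in\{0,1,2\}$, and whenever $\#S=2$ the set $S$ is necessarily of the form $\{e_1-e_j,\,e_1+e_j\}$. This is the combinatorial input that makes the type $\mathbf{B}$ case tractable directly.

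It then remains to read off $\pi_0(\stab_T(x_S))$ from the explicit character formulas. Parametrising $t=\sum_i a_i\omega_i\in T$ (using that $G$ is simply connected, so $T=\Phi^\vee\otimes\mathbb{C}^*$), the subgroup $\stab_T(x_S)$ is cut out by the equations $\alpha(t)=1$ for $\alpha\in S$. For $S=\varnothing$ the stabilizer is all of $T$; for a singleton $S$ it is defined by a single multiplicative equation and is isomorphic to $(\mathbb{C}^*)^{n-1}$; in both cases it is connected, so only the trivial local system exists, which covers all orbits with $\#S\neq 2$. For $S=\{e_1-e_j,\,e_1+e_j\}$ the two equations force $a_1=\pm1$, and each sign yields a copy of $(\mathbb{C}^*)^{n-2}$, so $\stab_T(x_S)$ has exactly two connected components and the orbit carries exactly two non-isomorphic local systems (one trivial, one not).

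The only genuinely delicate point is the reduction $\pi_0(\stab_B)\cong\pi_0(\stab_T)$, which rests on the unipotent part of the stabilizer being connected; once this is granted, the remainder is bookkeeping of the two displayed systems of multiplicative equations. Within that bookkeeping the one subtlety worth stressing is that the sign ambiguity $a_1=\pm1$ in the two-root case produces two components rather than one, so that $\#S=2$ is exactly the locus where a non-trivial local system appears.
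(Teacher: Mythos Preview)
Your proof is correct and follows essentially the same approach as the paper: reduce to $\pi_0(\stab_T(x_S))$ via connectedness of the unipotent part and conjugation by $v$, classify the orthogonal subsets of $\Psi$ (empty, singletons, or $\{e_1-e_j,e_1+e_j\}$), and then read off the number of components from the multiplicative equations $\alpha(t)=1$ for $\alpha\in S$. The paper carries out the same computation with the explicit character formulas $e_1(t)=a_1$, $(e_1\mp e_j)(t)=a_1 a_{j-1}^{\pm1}a_j^{\mp1}$ and solves the resulting systems directly; your version packages the same calculation more tersely but with the same logical content.
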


Note that the orbits $Bvx_S$ where $\#S=2$ are exactly the orbits of maximum rank in this case.

Now that we know how many non-isomorphic local systems there are on every orbit, we want to describe the Hasse diagram of $\mathcal{D}$.
First of all we need to understand $W^P$, which in this case is quite simple.
In fact, $\Psi$ is totally ordered because
\[e_1-e_2<e_1-e_3<\cdots<e_1-e_n<e_1<e_1+e_n<\cdots<e_1+e_2\]
so there is a correspondence between $\Psi$ and $W^P$.
Note that for every $w\in W$ we have $w(e_i)=\pm e_k$ because that is true for the simple reflections.
Moreover, if $w\in W^P$, then $w(\alpha)>0$ for every root in $\Delta_P$.
This means that $w(e_i)=e_{k(i)}$ for every $i\neq 1$ and $k(i)<k(j)$ whenever $i<j$.

With some calculation we get the following result:
\begin{prop}
Let $v=v_{\tau}$ be the element in $W^P$ such that $\tau$ is the maximum root in $\Phi^+(v)$.
Then:
\begin{enumerate}
\item if $\tau=e_1-e_j$ then
\[\left\lbrace \begin{array}{ll}
v(e_1)=e_j & \\
v(e_i)=e_{i-1} & \forall 1<i\leq j\\
v(e_i)=e_i & \forall j< i
\end{array}
\right.
\]
\item if $\tau=e_1$ then
\[\left\lbrace \begin{array}{ll}
v(e_1)=-e_n & \\
v(e_i)=e_{i-1} & \forall i\neq 1\\
\end{array}
\right.
\]
\item if $\tau=e_1+e_j$ then
\[\left\lbrace \begin{array}{ll}
v(e_1)=-e_{j-1} & \\
v(e_i)=e_{i-1} & \forall 1<i<j\\
v(e_i)=e_i & \forall j\leq i
\end{array}
\right.
\]
\end{enumerate}
\end{prop}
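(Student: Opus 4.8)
The plan is to reduce the statement to a bookkeeping computation using two facts already available. First, as recorded immediately before the proposition, an element of $W^P$ is a signed permutation which on $e_2,\dots,e_n$ acts by $w(e_i)=e_{k(i)}$ with positive sign and $k(2)<\cdots<k(n)$; hence $w$ is completely determined by the single value $w(e_1)=\pm e_m$, where $m$ is the unique index missing from $\{k(2),\dots,k(n)\}$. Second, by Lemma \ref{ordWP} and the remark following it, an element $v\in W^P$ is uniquely determined by its inversion set $\Phi^+(v)=\{\alpha\in\Phi^+\mid v(\alpha)<0\}$. Since $\Psi$ is totally ordered by $e_1-e_2<\cdots<e_1-e_n<e_1<e_1+e_n<\cdots<e_1+e_2$ and $\Phi^+(v)$ is saturated, the requirement that $\tau$ be the maximum of $\Phi^+(v)$ forces $\Phi^+(v_\tau)$ to be exactly the down-set $\{\beta\in\Psi\mid\beta\le\tau\}$. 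So for each of the three formulas it suffices to verify that it defines a signed permutation lying in $W^P$ and that its inversion set is precisely that down-set; uniqueness then yields $v=v_\tau$.

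For membership in $W^P$ I would check $w(\alpha)>0$ for $\alpha\in\Delta_P=\{e_2-e_3,\dots,e_{n-1}-e_n,e_n\}$. In all three candidates the images of $e_2,\dots,e_n$ are the roots $\{e_1,\dots,e_{n-1}\}$ taken with positive sign and in increasing order (a cyclic or near-cyclic down-shift), so positivity on $\Delta_P$ is immediate and each candidate lies in $W^P$. This same monotonicity shows that every $e_a$ with $a\ge 2$, and every $e_a\pm e_b$ with $2\le a<b$, is sent to a positive root; therefore the only positive roots that can enter $\Phi^+(v)$ are those involving $e_1$, namely $e_1$, the $e_1-e_b$, and the $e_1+e_b$.

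The three cases then differ only in the sign of $v(e_1)$. In Case 1, where $v(e_1)=e_j>0$, one has $v(e_1-e_b)=e_j-e_{b-1}$ for $2\le b\le j$ and this is negative exactly for $2\le b\le j$, while the $e_1+e_b$ stay positive; hence $\Phi^+(v)=\{e_1-e_b\mid 2\le b\le j\}$, the down-set below $\tau=e_1-e_j$. In Cases 2 and 3, where $v(e_1)$ carries a minus sign, both $e_1$ and every $e_1-e_b$ are sent to negative roots and so lie in $\Phi^+(v)$; the whole content is deciding which $e_1+e_b$ do. In Case 2 one computes $v(e_1+e_b)=e_{b-1}-e_n>0$ for every $b$, so $\Phi^+(v)$ stops at $e_1$, matching $\tau=e_1$. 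In Case 3 one computes $v(e_1+e_b)=v(e_b)-e_{j-1}$, which equals $e_{b-1}-e_{j-1}>0$ for $2\le b<j$ but equals $e_b-e_{j-1}<0$ for $b\ge j$; thus exactly the roots $e_1+e_b$ with $b\ge j$ enter, reproducing the cut-off at $\tau=e_1+e_j$.

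The only genuinely delicate point, which I would write out in full, is this threshold in Case 3: confirming that $v(e_1+e_b)<0$ precisely for $b\ge j$ is what makes $e_1+e_j$ the maximum of $\Phi^+(v)$, and keeping track of the skipped index (the permutation jumps from $e_{j-2}$ to $e_j$, omitting $e_{j-1}$, which is where $v(e_1)=-e_{j-1}$ lands) is where an off-by-one slip would occur. Everything else is a mechanical separation of roots according to whether they involve $e_1$ followed by a sign comparison, so I would present Case 1 in detail and treat the two sign-reversing cases by the same template, emphasizing only the threshold computation and the uniqueness step that closes the argument.
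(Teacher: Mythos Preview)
Your argument is correct and supplies exactly the computation the paper omits (the paper states the proposition after the phrase ``With some calculation we get the following result'' and gives no further proof). One small polish: the images of $e_2,\dots,e_n$ are not literally $\{e_1,\dots,e_{n-1}\}$ in Cases~1 and~3 (they omit $e_j$, respectively $e_{j-1}$, as you yourself observe later when discussing the skipped index), but this does not affect the $W^P$-membership check, which only needs positivity and monotonicity of the images.
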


We know that if $(v,S)$ is an admissible pair and $\alpha\in \Delta$, then many information on the action of $P_\alpha$ are encoded in the root $\beta=v^{-1}(\alpha)$.
Thanks to the proposition above we can easily compute $v^{-1}(\alpha)$ for all $v\in W^P$.

\begin{prop}\label{azione WP}
Let $v=v_\tau$ as in the proposition above.
Then
\begin{enumerate}
\item if $\tau=e_1-e_j$ and $j\neq n$, then
\[
\left\lbrace \begin{array}{ll}
v^{-1}(e_i-e_{i+1})=e_{i+1}-e_{i+2} & \forall i\leq j-2\\
v^{-1}(e_{j-1}-e_{j})=-e_1+e_{j}\\
v^{-1}(e_{j}-e_{j+1})=e_1-e_{j+1}\\
v^{-1}(e_i-e_{i+1})=e_i-e_{i+1} & \forall i> j\\
v^{-1}(e_n)=e_n
\end{array}
\right.
\]
\item if $\tau=e_1-e_n$, then
\[
\left\lbrace \begin{array}{ll}
v^{-1}(e_i-e_{i+1})=e_{i+1}-e_{i+2} & \forall i\leq n-2\\
v^{-1}(e_{n-1}-e_{n})=-e_1+e_{n}\\
v^{-1}(e_n)=e_1
\end{array}
\right.
\]
\item if $\tau=e_1$, then
\[
\left\lbrace \begin{array}{ll}
v^{-1}(e_i-e_{i+1})=e_{i+1}-e_{i+2} & \forall i\leq n-2\\
v^{-1}(e_{n-1}-e_{n})=e_1+e_{n}\\
v^{-1}(e_n)=-e_1
\end{array}
\right.
\]
\item if $\tau=e_1+e_j$, then
\[
\left\lbrace \begin{array}{ll}
v^{-1}(e_i-e_{i+1})=e_{i+1}-e_{i+2} & \forall i\leq j-3\\
v^{-1}(e_{j-2}-e_{j-1})=e_1+e_{j-1}\\
v^{-1}(e_{j-1}-e_{j})=-e_1-e_{j}\\
v^{-1}(e_i-e_{i+1})=e_i-e_{i+1} & \forall i\geq j\\
v^{-1}(e_n)=e_n
\end{array}
\right.
\]

\end{enumerate}
\end{prop}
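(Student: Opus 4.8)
The plan is to treat this as a direct computation. The previous proposition gives explicit formulas for $v_\tau(e_i)$ in each of the four cases, and since $v_\tau$ acts on the ambient space $\langle e_1,\ldots,e_n\rangle_\mathbb{R}$ as a signed permutation of the coordinate vectors (it sends each $e_i$ to $\pm e_k$), I can read off $v_\tau^{-1}$ simply by inverting these assignments. Once $v_\tau^{-1}(e_k)$ is known for every $k$, each value $v^{-1}(\alpha)$ follows from linearity, using $v^{-1}(e_i-e_{i+1})=v^{-1}(e_i)-v^{-1}(e_{i+1})$ for the roots $\alpha_i$ with $i<n$ and evaluating $v^{-1}(e_n)$ directly for $\alpha_n=e_n$.

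First I would invert the permutation in each case. For instance, in case $(1)$ with $\tau=e_1-e_j$ the formulas $v(e_1)=e_j$ and $v(e_i)=e_{i-1}$ for $1<i\leq j$ describe a cyclic shift of the indices $\{1,\ldots,j\}$; inverting it yields $v^{-1}(e_k)=e_{k+1}$ for $1\leq k\leq j-1$, together with $v^{-1}(e_j)=e_1$ and $v^{-1}(e_i)=e_i$ for $i>j$. Substituting these into the simple roots then produces exactly the four blocks in the statement, the only non-generic values being those straddling the index $j$, namely $v^{-1}(e_{j-1}-e_j)=-e_1+e_j$ and $v^{-1}(e_j-e_{j+1})=e_1-e_{j+1}$; all the roots with index below $j-1$ or above $j$ are merely shifted or fixed.

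The remaining cases are handled the same way; the one point requiring attention is the sign. In cases $(2)$, $(3)$ and $(4)$ the image $v(e_1)$ carries a minus sign (for example $v(e_1)=-e_n$ in case $(3)$), so that $v^{-1}$ sends the corresponding $e_k$ to $-e_1$ rather than to $e_1$. This is precisely what turns the last relevant simple root into a value such as $v^{-1}(e_n)=-e_1$ and what produces the ``sign roots'' like $v^{-1}(e_{n-1}-e_n)=e_1+e_n$. Tracking this sign correctly through the inversion, and correctly splitting the case analysis along the boundary indices $j-1,j$ (respectively $n-1,n$), is the only real bookkeeping obstacle: there is nothing conceptually harder here than evaluating a signed permutation and its inverse on each simple root in turn.
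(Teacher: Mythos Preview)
Your approach is correct and is exactly what the paper does: it states the proposition without proof, having introduced the previous one with ``With some calculation we get the following result,'' so both amount to reading off $v_\tau^{-1}$ from the explicit signed permutation formulas and evaluating on each simple root by linearity. One small slip: in case~(2) with $\tau=e_1-e_n$ the previous proposition gives $v(e_1)=e_n$ with no minus sign (it is the $j=n$ instance of case~1 there), so $v^{-1}(e_n)=e_1$ directly; the minus sign on $v(e_1)$ appears only in cases~(3) and~(4).
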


We showed that the orbits $(v,S)$ with $\#S=2$ are the only ones that admit two non isomorphic local systems.
Therefore, we want to study them more explicitly.

\begin{lemma}\label{ordineRMB}
Let $(v,S),(u,T)$ be admissible pairs such that $S=\left\lbrace e_1-e_j,e_1+e_j\right\rbrace$ and $T=\left\lbrace e_1-e_h,e_1+e_h\right\rbrace$.
Then $(v,S)\leq (u,T)$ if and only if $v\leq u$ and $j\geq h$.
\end{lemma}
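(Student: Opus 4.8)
The plan is to run everything through the Gandini--Maffei criterion (Theorem \ref{ordineGM}) and to exploit the fact that in type $\mathbf{B}$ the poset $W^P$ is a chain. Since $\Psi$ is totally ordered, $e_1-e_2<\cdots<e_1-e_n<e_1<e_1+e_n<\cdots<e_1+e_2$, and since $v\in W^P$ is determined by the maximum of $\Phi^+(v)$ (Lemma \ref{ordWP}), admissibility of $(v,S)$ with $S=\{e_1-e_j,e_1+e_j\}$ forces $v=v_{e_1+e_m}$ for some $2\le m\le j$, and likewise $u=v_{e_1+e_{m'}}$ with $2\le m'\le h$. In these indices $v\le u$ is exactly $m\ge m'$, because the order on the $e_1+e_\bullet$ part of $\Psi$ is decreasing in the index. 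Both pairs have $\#S=\#T=2$, so they are the maximum rank orbits of Proposition \ref{propBlb}.

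Next I would translate each clause of Theorem \ref{ordineGM} into a statement about $m,m',j,h$, using the explicit description of $v_\tau$ (the proposition preceding Proposition \ref{azione WP}). A direct computation shows that $\sigma_{v(S)}=v\sigma_Sv^{-1}$ is the sign change negating the coordinates $e_{m-1}$ and $e_j$, and that, after sorting the values on the coordinates $2,\dots,n$ by right multiplication by $W_P$, one gets $[v\sigma_S]^P=v_{e_1-e_{m-1}}$ (with the convention that $v_{e_1-e_1}$ is the identity of $W^P$); the same holds with $(u,T,m',h)$. Feeding this in, the chain condition $[u\sigma_T]^P\le[v\sigma_S]^P$ reads $v_{e_1-e_{m'-1}}\le v_{e_1-e_{m-1}}$, i.e. $m'\le m$, which is just $v\le u$ again; the condition $[v\sigma_S]^P\le v$ is automatic since $e_1-e_{m-1}<e_1+e_m$ in $\Psi$. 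Hence, once $v\le u$ is granted, the only remaining content of the criterion is the comparison of involutions $\sigma_{v(S)}\le\sigma_{u(T)}$.

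The heart of the argument, and the step I expect to be the main obstacle, is therefore to decide when one two-fold sign change lies below another in the Bruhat order. Writing $w_{\{a,b\}}$ (for $a<b$) for the element negating $e_a$ and $e_b$, I would prove that $w_{\{a,b\}}\le w_{\{c,d\}}$ (with $c<d$) if and only if $a\ge c$ and $b\ge d$; equivalently, on the set of sign changes the Bruhat order is the reverse componentwise order. A clean route is via the length formula $l(w_{\{a,b\}})=2+2\bigl((n-a)+(n-b)\bigr)$ together with the standard combinatorial criterion for the Bruhat order on signed permutations; alternatively, staying inside the paper's machinery, one can exhibit an explicit sequence of ascents (Lemma \ref{storder}) that lowers the two negated indices one step at a time, using Proposition \ref{azione WP} to read off the relevant descents. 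Applied with $\{a,b\}=\{m-1,j\}$ and $\{c,d\}=\{m'-1,h\}$ (note $m-1<j$ since $m\le j$), this gives $\sigma_{v(S)}\le\sigma_{u(T)}$ if and only if $m\ge m'$ and $j\ge h$.

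Finally I would assemble the clauses: intersecting all the conditions of Theorem \ref{ordineGM} leaves precisely $m\ge m'$ together with $j\ge h$, that is $v\le u$ and $j\ge h$, which is the assertion; the reverse implication falls out of the same translation. It remains only to check the degenerate cases, namely $m=2$ (where $v=\omega_P$ and the expression $[v\sigma_S]^P$ collapses to the identity of $W^P$) and the equality case $j=h$ with $m=m'$ (where $\sigma_{v(S)}=\sigma_{u(T)}$ and one falls back on reflexivity); neither presents any difficulty.
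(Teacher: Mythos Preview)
Your approach is correct and genuinely different from the paper's. The paper does \emph{not} invoke Theorem~\ref{ordineGM} at all: for the ``if'' direction it first moves from $(v,S)$ to $(u,S)$ by a chain of ascents on the $W^P$-part, then reduces $j$ to $h$ one step at a time by noting (via Proposition~\ref{azione WP}) that $e_{j-1}-e_j$ is an ascent on $\Psi$ sending $\{e_1\pm e_j\}$ to $\{e_1\pm e_{j-1}\}$; for the ``only if'' direction it argues by contradiction, observing that when $v<u$ there is always a complex descent on $u$ that is \emph{not} a descent for $(v,S)$, and iterating until $u=v$ where the claim is immediate. So the paper's argument lives entirely in the ascent/descent calculus of Lemma~\ref{storder} and Proposition~\ref{azione WP}.

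Your route---plugging the explicit formulas for $v_{e_1+e_m}$ into Theorem~\ref{ordineGM} and collapsing the four inequalities---is cleaner conceptually but shifts the work to the Bruhat comparison $w_{\{m-1,j\}}\le w_{\{m'-1,h\}}$ of double sign-changes in the hyperoctahedral group. That step is correct (and your length formula $l(w_{\{a,b\}})=2+2\bigl((n-a)+(n-b)\bigr)$ is right), but note that the length formula alone does not settle it; you really need either the signed-permutation tableau criterion or the reduced-expression subword argument you allude to. The paper's approach avoids this external input by staying inside the minimal-parabolic machinery already developed, at the cost of a slightly more ad hoc induction. Both approaches yield the same explicit chains of ascents, so for the downstream application (Lemma~\ref{rango2}) there is no loss either way.
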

\begin{proof}
Suppose $v\leq u$ and $j\geq h$.
Then we know that there is a sequence of ascents from $(v,S)$ to $(u,S)$, so we can suppose $u=v$.
Now, if $j=h$ we are done, so suppose $j>h$ and use induction on $j-h$.
There is $k\leq h<j$ $u=v=v_{\tau}$ such that $\tau=e_1+e_k$.
Then following the computations above $e_{j-1}-e_{j}=v^{-1}(e_{j-1}-e_{j})$.
Put $\gamma= e_{j-1}-e_{j}$.
Then $P_\gamma vx_S=Bvx_{s_\beta(S)}\cup Bvx_S \cup \bigcup Bvu_{-\beta}(t)x_S$.
But $s_\beta(S)=\left\lbrace e_1-e_{j-1},e_1+e_{j-1}\right\rbrace$ and $Bvx_{s_\beta}(S)$ is the open orbit in $P_\gamma vx_S$, so we conclude by induction.

We will now prove the converse.
Suppose $(v,S)\leq (u,T)$.
We know that this implies $v\leq u$, so we only need to show that $j\geq h$.
Note that the claim is clear when $v=u$.

Hence, suppose $v<u$ and by contradiction that $j<h$.
Then, if $e_1+e_k$ is the maximum root in $\Phi^+(u)$ it is clear that $h\geq k+2$.
By the calculations above, we know that $e_{k-1}-e_{k}$ is a complex descent on $u$ for $(u,T)$ and it is never a descent for $(v,S)$.
This means that we can inductively decrease the length of $u$ until $u=v$ which gives a contradiction.
\end{proof}

This lemma implies that, in this hypothesis, there is a sequence of descents from $(u,T)$ to $(v,S)$.

\begin{lemma}\label{rango2}
Let $\RM$ be the set of admissible pairs $(v,S)$ such that $\#S=2$.
Then given $(v,S)\leq (u,T)$ both in $\RM$ there is always a sequence of ascents between them.
Moreover, there is a minimum orbit $(v_{e_1+e_n},\left\lbrace e_1+e_n,e_1-e_n\right\rbrace)$.
\end{lemma}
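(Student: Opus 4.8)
The plan is to reduce everything to the characterization in Lemma \ref{ordineRMB}, which for $(v,S),(u,T)\in\RM$ with $S=\{e_1-e_j,e_1+e_j\}$ and $T=\{e_1-e_h,e_1+e_h\}$ states that $(v,S)\le(u,T)$ iff $v\le u$ and $j\ge h$, and then to exhibit an explicit chain of ascents realizing this relation. I would build the chain in two stages: first raise the $W^P$-part from $v$ to $u$ keeping $S$ fixed, and then lower the $\Psi$-index from $j$ to $h$ keeping the $W^P$-part equal to $u$.

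For the first stage, since $v\le u$ in $W^P$ the earlier lemma on $W^P$ produces a chain $v=w_0<w_1<\cdots<w_m=u$ with $w_i=s_{\alpha_i}w_{i-1}$ and each step increasing length. I would verify that each $(w_{i-1},S)\mapsto(w_i,S)$ is a genuine complex ascent on $W^P$. Setting $\beta_i=w_{i-1}^{-1}(\alpha_i)\in\Psi$, this root is positive and lies in $\Psi\setminus\Phi^+(w_{i-1})$, hence $\beta_i\notin S$. The key points are: $\beta_i\ne e_1$, because $e_1<e_1+e_j$ in the total order on $\Psi$ and $\Phi^+(v)$ is saturated, so $e_1\in\Phi^+(v)\subseteq\Phi^+(w_{i-1})$ already; $\beta_i$ is not orthogonal to $S$, since $S$ is of maximum rank; and $\beta_i$ is not real for $\sigma_{w_{i-1}(S)}$, because the only roots of $\Psi$ lying in the $(-1)$-eigenspace $\langle e_1,e_j\rangle$ of $\sigma_S$ are $e_1-e_j$, $e_1+e_j$ and $e_1$, all excluded. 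Thus $\alpha_i$ is complex with $s_{\alpha_i}w_{i-1}>w_{i-1}$, i.e.\ a complex ascent on $W^P$ whose open orbit is $(w_i,S)$. This verification is the step I expect to require the most care, since it is exactly where maximum rank and saturation are needed to rule out the imaginary and real cases.

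For the second stage, at an orbit $(u,\{e_1-e_m,e_1+e_m\})$ with $m>h$ I would take $\gamma=e_{m-1}-e_m$. Since the maximal root of $\Phi^+(u)$ is $e_1+e_k$ with $k\le h<m$, Proposition \ref{azione WP} gives $u^{-1}(\gamma)=\gamma\in\Delta_P$, and a direct computation using the explicit form of $u$ gives $\sigma_{u(\{e_1\pm e_m\})}(\gamma)=e_{m-1}+e_m>0$; hence $\gamma$ is a complex ascent on $\Psi$ carrying $S$ to $s_\gamma(S)=\{e_1-e_{m-1},e_1+e_{m-1}\}$, and one checks the target pair is again admissible. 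Iterating from $m=j$ down to $m=h+1$ completes the ascent chain and proves the first assertion.

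For the minimum, the candidate is $(v_{e_1+e_n},\{e_1-e_n,e_1+e_n\})$, i.e.\ $j=n$ and $v=v_{e_1+e_n}$, which is admissible since $\Phi^+(v_{e_1+e_n})=\{\alpha\in\Psi:\alpha\le e_1+e_n\}$ contains both $e_1-e_n$ and $e_1+e_n$. Given any $(u,T)\in\RM$ with $T=\{e_1-e_h,e_1+e_h\}$ one has $n\ge h$ automatically. Since $(u,T)$ is admissible, $e_1+e_h\in\Phi^+(u)$, and because $e_1+e_n\le e_1+e_h$ in the total order while $\Phi^+(u)$ is saturated, also $e_1+e_n\in\Phi^+(u)$; hence $\Phi^+(v_{e_1+e_n})\subseteq\Phi^+(u)$, which by Lemma \ref{ordWP} gives $v_{e_1+e_n}\le u$. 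Lemma \ref{ordineRMB} then yields $(v_{e_1+e_n},\{e_1\pm e_n\})\le(u,T)$, so this orbit is the minimum of $\RM$.
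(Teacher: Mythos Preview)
Your proof is correct. The paper follows the same overall strategy---invoke Lemma~\ref{ordineRMB} and then exhibit an explicit chain of ascents---but organizes the chain differently: it descends from $(u,T)$ rather than ascending from $(v,S)$, and it interleaves the $W^P$-moves with the $\Psi$-moves (descend $u$ until $u=v_{e_1+e_j}$, apply one $\Psi$-descent via $\tau=e_j-e_{j+1}$, then repeat). Your two-stage layout (first raise $v$ to $u$ keeping $S$ fixed, then change the $\Psi$-index from $j$ down to $h$ keeping $u$ fixed) is a clean variant; the care you take in the first stage to rule out the imaginary and real cases for each $\alpha_i$, using maximum rank and the saturation of $\Phi^+(w_{i-1})$, is exactly what is needed and is the point the paper's descent argument leaves more implicit. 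Your argument for the minimum is also correct and slightly more explicit than the paper's, which simply observes that $v_{e_1+e_n}$ is the smallest element of $W^P$ whose $\Phi^+$ contains a pair of orthogonal roots.
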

\begin{proof}
Suppose $u\neq v_{e_1+e_n}$ and $T=\left\lbrace e_1-e_j,e_1+e_j\right\rbrace$.
Then, if $e_1+e_j$ is not the maximum root in $\Phi^+(u)$, we know that there is a descent $\alpha$ on $u$ which descends to the orbit $(s_\alpha u,T)$.
By the characterization above it must still be $(v,S)\leq (s_\alpha u,T)$, so we can suppose that $u=v_{e_1+e_j}$.
Then, take $\tau=e_j-e_{j+1}\in\Delta$.
We have $(u,s_{e_j-e_{j+1}}(T))\in\mathcal{E}_\tau(u,T)$ and unless $S=T$ we still have by the characterization above $(v,S)\leq (u,s_{e_j-e_{j+1}}(T)))<(u,T)$ and we are again in the previous case.
Inductively we obtain the claim.

The last statement follows because $v=v_{e_1+e_n}$ is the minimum element of $W^P$ such that $\Phi^+(v)$ contains two orthogonal roots and this roots are exactly $e_1-e_n$ and $e_1+e_n$.
\end{proof}

We saw that the orbits of maximum rank hold a special position also in the type $\bf{B}$ case.
Now, we want to study how the orbits in $\RM$ interacts with the other orbits in terms of the Bruhat order.
Note that the orbits with empty $\Psi$-part are minimal for dimensional reasons, so we only need to study the orbits which are represented by a single root.

As a first thing note that if $(v,S)$ is of maximum rank, hence $S=\left\lbrace e_1-e_h,e_1+e_h\right\rbrace$, then $(v,\left\lbrace e_1\right\rbrace)<(v,S)$.
For, we can see it as a matter of $B_{v}$-orbits in $\mathfrak{p}^u$ and then it is clear that there is $t\in\mathbb{C}$ such that
\[u_{e_h}(t).\left(e_{e_1-e_h}+e_{e_1+e_h}\right)=e_{e_1-e_h}+te_{e_1}\]
and given that $e_{1}-e_h$ and $e_1$ are linearly independent we get the thesis.
\begin{lemma}\label{e1}
Let $(v,S)$ be an orbit with $S=\left\lbrace \gamma\right\rbrace$, $\gamma\neq e_1$ and $\gamma^\perp$ the only root orthogonal to $\gamma$ in $\Psi$.
Fix $w\in W^P$ such that $w(e_1)<0$ and denote with $\beta$ the maximum root in $\Phi^+(w)$.
Then $\left(w,\left\lbrace e_1\right\rbrace\right)\leq(v,S)$ if and only if $w\leq v$ and $\beta<\gamma^\perp$.
\end{lemma}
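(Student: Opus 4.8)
The plan is to invoke the Gandini--Maffei characterization (Theorem \ref{ordineGM}) applied to the pairs $(u,R)=(w,\{e_1\})$ and $(v,S)=(v,\{\gamma\})$, and to translate each of its four conditions into the explicit type $\mathbf{B}$ combinatorics supplied by Proposition \ref{azione WP}. Since $R=\{e_1\}$ and $S=\{\gamma\}$ are singletons, the relevant involutions are the single reflections $\sigma_{w(e_1)}=s_{w(e_1)}$ and $\sigma_{v(\gamma)}=s_{v(\gamma)}$, while $\sigma_R=s_{e_1}$ and $\sigma_S=s_\gamma$. Theorem \ref{ordineGM} then reads: $(w,\{e_1\})\le(v,\{\gamma\})$ if and only if
\[
[v s_\gamma]^P\le[w s_{e_1}]^P\le w\le v\qquad\text{and}\qquad s_{w(e_1)}\le s_{v(\gamma)}.
\]
Here the inequality $[w s_{e_1}]^P\le w$ is the general property $[u\sigma_R]^P\le u$ of admissible pairs and costs nothing, and the middle inequality $w\le v$ is exactly one half of the statement I want. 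So the work is to show that the reflection inequality, together with $[v s_\gamma]^P\le[w s_{e_1}]^P$, amounts (in the presence of $w\le v$) to the single condition $\beta<\gamma^\perp$, and in particular forces $\gamma=e_1-e_j$.

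The heart of the argument is the reflection comparison $s_{w(e_1)}\le s_{v(\gamma)}$. Using Proposition \ref{azione WP} one computes that $-w(e_1)$ is always a short root $e_m$, with $m=n$ when $\beta=e_1$ and $m=k-1$ when $\beta=e_1+e_k$; thus $s_{w(e_1)}=s_{e_m}$ is a sign-change reflection. The root $-v(\gamma)$ is computed the same way from the shape of $\gamma$. When $\gamma=e_1+e_j$ one finds that $-v(\gamma)$ is a difference $e_a-e_b$ of two coordinates, i.e. a reflection of ``type $A$'' that changes no sign; a sign-change reflection $s_{e_m}$ is never $\le$ such a reflection in the Bruhat order, so the condition fails. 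This is consistent with $\gamma^\perp=e_1-e_j<e_1\le\beta$, so that $\beta<\gamma^\perp$ is impossible in this case and both sides of the claimed equivalence are false. When $\gamma=e_1-e_j$, instead, $\gamma^\perp=e_1+e_j$ and $-v(\gamma)$ is a sum carrying a sign change; a direct comparison (organised by the three possible shapes of $\tau_v$) shows that $s_{e_m}\le s_{v(\gamma)}$ holds exactly when $m\ge j$, and the translation $\beta=e_1\Rightarrow m=n$, $\beta=e_1+e_k\Rightarrow m=k-1$ turns $m\ge j$ into $\beta<e_1+e_j=\gamma^\perp$.

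It then remains to handle the coset condition $[v s_\gamma]^P\le[w s_{e_1}]^P$. Both sides are again elements of $W^P$, hence of the form $v_\tau$ determined by the image of $e_1$, and can be read off from Proposition \ref{azione WP}: one computes $[w s_{e_1}]^P$ and $[v s_\gamma]^P$ explicitly and checks, via the order-preserving bijection $W^P\leftrightarrow\Psi$, that this inequality is automatically satisfied once $w\le v$ and $\beta<\gamma^\perp$ hold. Running the two directions in parallel (necessity by extracting $w\le v$ and $\beta<\gamma^\perp$ from the four conditions, sufficiency by verifying all four) yields the lemma.

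I expect the main obstacle to be the reflection comparison $s_{e_m}\le s_{v(\gamma)}$ and the coset computation, both of which live in the non-simply-laced type $\mathbf{B}$, where Proposition \ref{complinv} is unavailable; one cannot avoid a short case analysis over whether $\tau_v$ equals $e_1-e_l$, $e_1$, or $e_1+e_l$, and over the sign of $\gamma$. I also note that a purely geometric degeneration argument is awkward here: the pairs $(w,\{e_1\})$ and $(v,\{e_1-e_j\})$ have genuinely different $\Psi$-parts that cannot be connected by rank-preserving ascents (an ascent on $\Psi$ fixes $e_1$, being orthogonal to it, while a non-compact imaginary ascent raises the rank), so the combinatorial route through Theorem \ref{ordineGM} is the natural one.
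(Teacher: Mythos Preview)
Your plan is correct and will go through, but it diverges from the paper's proof in the \emph{sufficiency} direction. The paper does not verify all four Gandini--Maffei inequalities to establish $(w,\{e_1\})\le(v,\{\gamma\})$; instead it argues geometrically. When $\gamma^\perp\in\Phi^+(v)$ it reduces to $v=v_{\gamma^\perp}$ via descents, then uses the ascent $\alpha=e_{h-1}-e_h$ to reach $(v,\{\gamma,\gamma^\perp\})$, combined with the closure relation $(v,\{e_1\})<(v,\{\gamma,\gamma^\perp\})$. When $\gamma^\perp\notin\Phi^+(v)$ it produces an explicit one-parameter degeneration in $\mathfrak p^u$,
\[
u_{e_j}(t).e_{e_1-e_j}=e_{e_1-e_j}+t\,e_{e_1}+t^2 e_{e_1+e_j},
\]
and cancels the last term using $U_{e_1+e_j}\subset B_v$. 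So the approach you dismiss as ``awkward'' is exactly the one the paper takes, and it is quite short. For the \emph{necessity} direction the two proofs coincide: both extract $w\le v$ for free and then use the single condition $\sigma_{w(e_1)}\le\sigma_{v(\gamma)}$, arguing that a sign-change reflection cannot lie below a type~$A$ reflection.

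Your combinatorial route also works, with one caveat. The assertion ``$s_{e_m}\le s_{v(\gamma)}$ holds exactly when $m\ge j$'' is not uniformly true across your three sub-cases: when $\tau_v=e_1+e_l$ with $j<l$ one has $-v(\gamma)=e_{j-1}+e_{l-1}$ and the actual criterion is $m\ge l-1$, not $m\ge j$. This does not break the argument, because in that sub-case $w\le v$ already forces $m\ge l-1$ and simultaneously forces $\beta<\gamma^\perp$ (since $\gamma^\perp\notin\Phi^+(v)$), so both sides are vacuously satisfied; but you should phrase the case analysis so that each branch is stated correctly. The coset check $[vs_\gamma]^P\le[ws_{e_1}]^P$ that your method requires is indeed routine once one observes that $[ws_{e_1}]^P=v_{e_1-e_{k-1}}$ (resp.\ $v_{e_1-e_n}$) and $[vs_\gamma]^P$ is some $v_{e_1-e_r}$ with $r\le k-1$, so the totally ordered structure of $W^P$ finishes it.

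What each approach buys: the paper's geometric argument avoids the coset inequality altogether and exhibits the closure relation directly; your Theorem~\ref{ordineGM} verification is more uniform (one mechanism for both directions) and makes no appeal to explicit root-group computations in $\mathfrak p^u$.
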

\begin{proof}
Suppose $\beta<\gamma^\perp$ and $w\leq v$.
As a first thing note that $\gamma^\perp>\beta>e_1$, hence $\gamma<e_1<\gamma^\perp$
Now, if $\gamma^\perp\in\Phi^+(v)$ then there are descents between $(v,S)$ and $(v_{\gamma^\perp},S)$, so we can reduce the problem to two cases: $v=v_{\gamma^\perp}$ and $v<v_{\gamma^\perp}$.
Suppose the former and note that this implies $w<v$.
Then by Proposition \ref{azione WP}, if $\gamma^\perp=e_1+e_h$, the simple root $\alpha=e_{h-1}-e_h$ is such that $m_\alpha(v,S)=\left(v,\left\lbrace\gamma,\gamma^\perp\right\rbrace\right)$.
By what we said above $\left(v,\left\lbrace e_1\right\rbrace\right)<\left(v,\left\lbrace\gamma,\gamma^\perp\right\rbrace\right)$.
Now, $\mathcal{E}_\alpha\left(v,\left\lbrace e_1\right\rbrace\right)=\left(s_\alpha v,\left\lbrace e_1\right\rbrace\right)$, so $\left(s_\alpha v,\left\lbrace e_1\right\rbrace\right)\leq \left(v,S\right)$.
The claim follows because $w\leq s_\alpha v$ so there are ascents between $\left(w,\left\lbrace e_1\right\rbrace\right)$ and $\left(s_\alpha v,\left\lbrace e_1\right\rbrace\right)$.

For the latter, we can suppose without loss of generality that $w=v$.
Now, we know $\gamma$ is of the form $\gamma=e_1-e_j$.
Acting with $u_{e_1+e_j}(t)$ we get
\[u_{e_j}(t).e_{e_1-e_j}=e_{e_1-e_j}+te_1+t^2e_{e_1+e_j}\]
The last term can be cancelled acting with $u_{e_1+e_j}(s)$ because $v(e_1+e_j)>0$, so it is easy to see that $Bvx_{e_1}\subseteq \overline{Bvx_S}$.

Suppose now that $\left(w,\left\lbrace e_1\right\rbrace\right)\leq(v,S)$.
We certainly have $w\leq v$ and we know it must be $\sigma_{w(e_1)}\leq \sigma_{v(S)}$.
Note that if $\gamma^\perp\notin\Phi^+(v)$ the claim is clear, because the condition $\gamma^\perp\notin \Phi^+(w)\subseteq \Phi^+(v)$ is already true.
We can also suppose $\gamma=e_1-e_j$, because if $\gamma=e_1+e_j>e_1$ both sides of the implication are false.
Suppose at first $\beta\neq e_1$ and fix $w=v_{e_1+e_h}$ and $v=v_{e_1+e_k}$.

Then suppose $\gamma^\perp\in \Phi^+(v)$, or equivalently, $k\leq j$.
We basically want to show $h>j$.
If we compute the involutions we get 
\[\sigma_{w(e_1)}=s_{e_{h-1}}\]
while 
\[\sigma_{v(e_1-e_j)}=s_{e_{k-1}+e_j}\]
Let's try to impose $\sigma_{w(e_1)}\leq \sigma_{v(e_1-e_j)}$ and $h\leq j$.
Denote 
\[s_{e_{j}}=s_{e_j-e_{j+1}}\cdots s_{e_{n-1}-e_n}s_{e_n}s_{e_{n-1}-e_n}\cdots s_{e_j-e_{j+1}}\]
Then
\[s_{e_{k-1}+e_j}=s_{e_j}s_{e_{j-1}-e_j}\cdots s_{e_k-e_{k+1}}s_{e_{k-1}-e_{k}}s_{e_k-e_{k+1}}\cdots s_{e_{j-1}-e_j}s_{e_j}\]
and if we substitute $s_{e_j}$ with the above expression this is a reduced expression for $s_{e_{k-1}+e_j}$.
But $s_{e_{h-1}}$ commutes with all the factors of $s_{e_j}$ because $h-1<j$ and we get
\[s_{e_{h-1}}\leq s_{e_{j-1}-e_j}\cdots s_{e_k-e_{k+1}}s_{e_{k-1}-e_{k}}s_{e_k-e_{k+1}}\cdots s_{e_{j-1}-e_j}=s_{e_{k-1}-e_j}\]
which is impossible because $e_{k-1}-e_j$ is contained in the subsystem generated by the long roots in $\Delta$ which is clearly a system of type $\matr{A}_{n-1}$ and the reflection with respect to $e_{h-1}$ is not in the Weyl group of $\matr{A}_{n-1}$.
If $\beta=e_1$ we have $\sigma_{w(e_1)}=s_{e_n}$ and we can repeat the same reasoning.

\end{proof}

We will now define an order on $\mathcal{D}$.
Our intention is to prove later that this is equivalent to the Bruhat $\mathcal{G}$-order defined in \ref{ordinelb}.
\begin{defin}\label{preceq}
Let $(Bvx_S,\gamma),(Bux_R,\tau)\in\mathcal{D}$.
Then $(Bvx_S,\gamma)\preceq (Bux_R,\tau)$ if and only if $Bvx_S\leq Bux_R$ and one of the following is true:
\begin{enumerate}
\item both $\gamma$ and $\tau$ are trivial;
\item both $\gamma$ and $\tau$ are non-trivial;
\item $\#S\neq 2$ and $\gamma$ is trivial while $\tau$ is non-trivial;
\item $\#S=2$, $v<u$ and $\gamma$ is trivial while $\tau$ is non-trivial;
\item $\gamma$ is non-trivial while $\tau$ is trivial and there is $(w,T)\in\RM$ such that $Bvx_S< Bwx_T\leq Bux_R$ and $v<w$.
\end{enumerate}
\end{defin}

First of all, we need to prove that the one defined above is actually an order.
The only property that is difficult to prove is the transitive property.
So suppose we have 
\[(Bvx_S,\gamma)\preceq (Bux_R,\tau)\preceq (Bwu_T,\delta)\]
We want to prove that $(Bvx_S,\gamma)\preceq (Bwu_T,\delta)$.
If $\gamma$ and $\delta$ are either both trivial or non-trivial, then the claim follows from the transitivity of the Bruhat order.
So suppose at first $\gamma$ non-trivial and $\delta$ trivial.
Then either $\tau$ is trivial and there is $(z,V)\in\RM$ such that $Bvx_S< Bzx_V\leq Bux_R$ and $v<z$ or $\tau$ is non-trivial so there is $(z,V)\in\RM$ such that $Bux_R< Bzx_V\leq Bwu_T$ and $u<z$.
In both cases we obtain the claim.

If instead $\gamma$ is trivial while $\delta$ is non-trivial, we can suppose $\#S=2$ because if not it is easy to conclude thanks to the transitivity of the Bruhat order.

Similarly, if $\#S=2$ and $\tau$ is non-trivial we know that $v<u$, which implies $v<w$.

We are left with analysing the following situation:, $\gamma,\tau$ trivial, $\delta$ non-trivial and $\#S=2$.
Note that $\delta$ non-trivial implies also $\#T=2$.
If, by contradiction $v=u=w$, then it must be $\#R\neq 2$.
Given that $Bvx_{e_1}\leq Bvx_S$, by Lemma \ref{e1} we get $R=\left\lbrace e_1-e_h\right\rbrace$ and $e_1+e_h\notin\Phi^+(v)$.
This is absurd because the $\#T=2$ so both roots in $T$ must be bigger than $e_1-e_h$ from which follows that $(u,R)\nleq (u,T)$.

\begin{theorem}
If $\leq$ is the Bruhat $\mathcal{G}$-order defined in \ref{ordinelb}, then $\preceq$ is smaller than $\leq$.
\end{theorem}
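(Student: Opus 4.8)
The plan is to prove the inclusion $\preceq\,\subseteq\,\leq$ by running through the five cases of Definition \ref{preceq} and showing that each forces the corresponding Bruhat $\mathcal{G}$-relation. Since $\leq$ is transitive, it suffices to realize every instance of $\preceq$ as a composite of the elementary relations in Definition \ref{ordinelb}. The two engines will be Proposition \ref{ordtriv} (which settles the all-trivial comparisons) together with Proposition \ref{propordlb} and the compatibility property $2$ of the Bruhat $\mathcal{G}$-order; the combinatorial input will be the explicit description of $\RM$ in type $\mathbf{B}$ from Lemmas \ref{ordineRMB}, \ref{rango2} and \ref{e1}. The conceptual heart is that case $c1)$ of Lemma \ref{listofaction} is the only situation in which a local system admits two extensions, and in type $\mathbf{B}$ this occurs exactly at the rank-increasing (non-compact imaginary) step passing from an orbit with $\#S=1$ to a maximal orbit with $\#S=2$; there a trivial system branches into a trivial and a non-trivial extension, and this is the sole bridge between trivial and non-trivial systems.

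I would dispose of the easy cases first. Case $1$ (both systems trivial) is literally Proposition \ref{ordtriv}. For case $2$ (both non-trivial) both pairs lie in $\RM$, so by Lemma \ref{rango2} and the order description in Lemma \ref{ordineRMB} there is a chain of ascents from $(v,S)$ to $(u,R)$; every such ascent is complex (case $b1)$ of Lemma \ref{listofaction}, as $S$ is already maximal), hence the extension is unique and carries the non-trivial system of $(v,S)$ to the non-trivial system of $(u,R)$. Applying property $1$ along the chain (or Proposition \ref{propordlb}) then yields $(Bvx_S,\gamma)\leq (Bux_R,\tau)$.

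For cases $3$ and $4$ (trivial $\gamma$, non-trivial $\tau$) I would realize the orbit inequality $Bvx_S\leq Bux_R$ by a standard-order chain as in Lemma \ref{storder} and lift it to local systems: the system stays trivial along the lower part, branches to the non-trivial extension at the $c1)$ step where the rank jumps to $2$, and then stays non-trivial up to $(u,R)$, with Proposition \ref{propordlb} assembling the segments into $\gamma\leq\tau$. The distinction between the two cases is exactly the availability of such a $c1)$ step inside the interval $[(v,S),(u,R)]$: when $\#S\neq 2$ (case $3$) the rank must increase somewhere along any chain, forcing a $c1)$ step; when $\#S=2$ (case $4$) the pair $(v,S)$ is already maximal, so one must first descend to the associated $\#S=1$ orbit before branching, and the hypothesis $v<u$ is precisely what guarantees enough room to do this and still climb to $(u,R)$ — if $v=u$ the trivial and non-trivial systems over $(v,S)$ would be incomparable siblings over the same $c1)$-source and no such chain could exist.

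The main obstacle is case $5$, the only one going from a non-trivial $\gamma$ up to a trivial $\tau$ over a larger orbit. Here climbing directly is impossible, since from a maximal orbit every ascent is complex and preserves non-triviality; instead I would argue with the compatibility property $2$. Writing $\delta$ for the trivial system on the $\#S=1$ orbit below $(v,S)$, the $c1)$ step gives $\gamma\in\alpha\circ\delta$; I then transport this relation along a trivial chain built from the intermediate maximal orbit $(w,T)$, using that $\delta\leq\delta'$ for the corresponding trivial system $\delta'$ over a larger $\#S=1$ orbit and that the same simple root $\alpha$ now acts there as a unique trivial-preserving extension landing at a trivial system below $\tau$. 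Property $2$ then gives $\gamma\leq(\text{trivial system})\leq\tau$, the last step by Proposition \ref{ordtriv}. Verifying that such a transporting chain exists — that the intermediate $(w,T)$ with $v<w$ supplies it and that $\alpha$ acts with the correct type on the transported side — is where the explicit type-$\mathbf{B}$ computations of Lemmas \ref{ordineRMB}, \ref{rango2} and \ref{e1} are indispensable, and I expect this bookkeeping to be the delicate part of the argument.
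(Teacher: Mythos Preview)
Your overall strategy---splitting into the five cases of Definition~\ref{preceq} and handling them via Proposition~\ref{ordtriv}, the chain of complex ascents in $\RM$, and the branching at a $c1)$ step---matches the paper's proof. Cases~1 and~2 are exactly as in the paper, and your plan for cases~4 and~5 is essentially the paper's argument: one passes through the specific orbits $(v,\{e_1\})<(u,\{e_1\})$ with trivial systems, applies the $e_n$-ascent (the unique $c1)$ step in type~$\mathbf{B}$) choosing opposite extensions on the two sides, and then climbs inside $\RM$ via complex ascents to reach $(v,S)$ and $(u,R)$ with the desired local systems.

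There is, however, a genuine gap in your case~3. You write that ``the rank must increase somewhere along any chain, forcing a $c1)$ step''. The first clause is true, but the second is not: in type~$\mathbf{B}$ the rank can also jump from $1$ to $2$ via a $d1)$ step---for instance the non-compact imaginary ascent from $(v,\{e_1-e_j\})$ to $(v',\{e_1-e_j,e_1+e_j\})$, where $\beta=v^{-1}(\alpha)=e_1+e_j\in\Psi$. Such a $d1)$ ascent gives a \emph{unique} extension, which carries the trivial system to the trivial system, so you cannot branch to the non-trivial $\tau$ there. Thus an arbitrary standard-order chain from $(v,S)$ to $(u,R)$ need not contain a $c1)$ step, and even if it does, the portion after the $c1)$ step must stay in $\RM$ for the non-trivial system to persist---neither of which follows from Lemma~\ref{storder} alone. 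The paper avoids this by an induction on $(u,R)$: it first treats the minimal rank-two orbit $(v_{e_1+e_n},\{e_1\pm e_n\})$ directly via the $e_n$-ascent from $(v_{e_1+e_n},\{e_1\})$, and for general $(u,R)\in\RM$ takes a complex descent $\alpha$ of $(u,R)$ to a smaller rank-two orbit and applies the inductive hypothesis together with property~2 of Definition~\ref{ordinelb}. Replacing your ``any chain'' argument by this induction (or equivalently, constructing the chain to pass through $(\cdot,\{e_1\})$ and use the $e_n$-ascent explicitly) closes the gap.
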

\begin{proof}
We basically want to show that if $(Bvx_S,\gamma)\preceq (Bux_R,\tau)$, then $(Bvx_S,\gamma)\leq (Bux_R,\tau)$.
So suppose $Bvx_S\leq Bux_R$ and:
\begin{enumerate}
\item $\gamma$ and $\tau$ are trivial. 
This is Lemma \ref{ordtriv};
\item $\gamma$ and $\tau$ are non-trivial.
From Proposition \ref{propBlb} we know that $\#S=\#R=2$ so we can use Lemma \ref{rango2} to find a sequence of ascents between
$(v,S)$ and $(u,R)$.
Given that in every ascents the local system extends to a non-trivial local system we get the thesis;
\item $\#S\neq 2$ and $\gamma$ is trivial while $\tau$ is non-trivial.
Suppose $(u,R)$ is the smallest rank two orbit $(v_{e_1+e_n},\left\lbrace e_1+e_n,e_1-e_n\right\rbrace)$.
Then we know that $e_n$ is a descent for $(u,R)$ and the smallest orbit in $P_{e_n}ux_R $ is $(v_{e_1+e_n},\left\lbrace e_1\right\rbrace)$.
We are in case $c2)$ of Lemma \ref{listofaction}, so the non-trivial local system $\tau$ extends to all $P_{e_n}ux_R$ and the restriction to the smallest orbit is trivial.

Now look at $(v,S)$.
Either $e_n$ is a descent for $(v,S)$ or it isn't.
In the first case there is $(w,T)\in \mathcal{E}_{e_n}(v,S)$ such that $(w,T)\leq (v_{e_1+e_n},\left\lbrace e_1\right\rbrace)$.
If we endow these orbits with the respective trivial local systems $\gamma_1,\gamma_2$, then
\[\left((w,T),\gamma_1\right)\leq \left((v_{e_1+e_n},\left\lbrace e_1\right\rbrace),\gamma_2\right)\]
because of point $1)$ and $e_n$ is an ascent for both.
We have $\left(Bvx_S,\gamma\right)\in e_n\circ \left((w,T),\gamma_1\right)$ and $\left(Bux_R,\tau\right)\in e_n\circ \left((v_{e_1+e_n},\left\lbrace e_1\right\rbrace),\gamma_2\right)$, so the claim follows.

If $e_n$ isn't a descent for $(v,S)$, then $(v,S)\leq (v_{e_1+e_n},\left\lbrace e_1\right\rbrace)$.
Again if $\gamma_1,\gamma_2$ are the trivial local systems on these orbits we know
\[\left((v,S),\gamma_1\right)\leq \left((v_{e_1+e_n},\left\lbrace e_1\right\rbrace),\gamma_2\right)\]
and $\left((v_{e_1+e_n},\left\lbrace e_1\right\rbrace),\gamma_2\right)\leq \left(Bux_R,\tau\right)$

In general, we know that $(u,R)$ admits a complex descent $\alpha$.
So there is $(w,T)$ admissible and $\delta$ a non-trivial local system on the correspondent orbit such that $\left((u,R),\tau\right)\in\alpha\circ \left((w,T),\delta\right)$.
Then either $\alpha$ is a descent also for $(v,S)$ or not.
In the first case there is $(w',T')\in \mathcal{E}_\alpha(v,S)$ such that $(w',T')\leq (w,T)$.
Moreover $\#T'\neq 2$ (note that in a descent the cardinality of the $\Psi$-part never rises), so if $\delta'$ is the trivial local system on $(w',T')$ we get by induction that
\[\left((w',T'),\delta'\right)\leq \left((w,T),\delta\right)\]
which implies the thesis.

If instead $\alpha$ is not a descent for $(v,S)$ maintaining the notations above we obtain $(v,S)\leq (w,T)$ and because $\#S\neq 2$ while $\#T=2$ we know in fact that $(v,S)\neq (w,T)$, so we can apply induction and conclude;

\item $v<u$ and $\gamma$ is trivial while $\tau$ is non-trivial.
Suppose $\#S=2$ or we are in case above.
Consider $\mathcal{O}=(v,\left\lbrace e_1\right\rbrace)$ and $\mathcal{O}'=(u,\left\lbrace e_1\right\rbrace)$.
Then $\mathcal{O}<\mathcal{O}'$ because there is a sequence of ascents between them.
It follows that $(\mathcal{O},\gamma)<(\mathcal{O},\tau)$ when $\gamma$ and $\tau$ are both trivial local systems.
Then, by acting with $e_n$ we obtain

\begin{tikzpicture}
\filldraw[black] (8,0) circle (2pt) node[anchor=south] {$\left(u,\left\lbrace e_1-e_n,e_1+e_n\right\rbrace\right)$};
\filldraw[black] (8,-3) circle (2pt) node[anchor=north west]{$\left(u,\left\lbrace e_1\right\rbrace\right)$};
\filldraw[black] (2,-3) circle (2pt) node[anchor=south east]{$\left(v,\left\lbrace e_1-e_n,e_1+e_n\right\rbrace\right)$};
\filldraw[black] (2,-6) circle (2pt) node[anchor=north]{$\left(v,\left\lbrace e_1\right\rbrace\right)$};
\draw (2,-3) -- (8,0);
\draw (2,-6) -- (8,-3);
\draw (8,0) -- (8,-3);
\draw (2,-3) -- (2,-6);
\end{tikzpicture}

and we can choose to extend the trivial local systems trivially on $\left(v,\left\lbrace e_1-e_n,e_1+e_n\right\rbrace\right)$ and non-trivially on $\left(u,\left\lbrace e_1-e_n,e_1+e_n\right\rbrace\right)$.
By Lemma \ref{ordineRMB} we have $S=\left\lbrace e_1-e_h, e_1+e_h\right\rbrace$ and $T=\left\lbrace e_1-e_k, e_1+e_k\right\rbrace$ with $h\geq k$.
Using Proposition \ref{azione WP} we see that $(e_{n-1}-e_n,\ldots, e_{h}-e_{h+1})$ is a sequence of ascents from $\left(v,\left\lbrace e_1-e_n,e_1+e_n\right\rbrace\right)$ to $\left(v,S\right)$ while $(e_{n-1}-e_n,\ldots, e_{h}-e_{h+1},\ldots, e_k-e_{k+1})$ is a sequence of ascents from $\left(u,\left\lbrace e_1-e_n,e_1+e_n\right\rbrace\right)$ to $\left(u,T\right)$ and this concludes.

\item $\gamma$ is non-trivial while $\tau$ is trivial and there is $(w,T)\in\RM$ such that $Bvx_S\leq Bwx_T\leq Bux_R$ and $v<w$.

Let's suppose at first that $(u,R)\in\RM$ and that $v<u$.
Then consider $\left(v,\left\lbrace e_1\right\rbrace\right)$ and $\left(u,\left\lbrace e_1\right\rbrace\right)$.
It is
\[\left(v,\left\lbrace e_1\right\rbrace\right)\leq\left(u,\left\lbrace e_1\right\rbrace\right)\]
and if we endow them with the respective trivial local systems, this relation is true also in $\mathcal{D}$.

We can mirror the reasoning of point above with the only difference being that, once we have applied $e_n$ we consider the non-trivial local system on $\left(v,\left\lbrace e_1-e_n,e_1+e_n\right\rbrace\right)$ and the trivial one on $\left(u,\left\lbrace e_1-e_n,e_1+e_n\right\rbrace\right)$.


For a general $(u,R)$, just apply the previous reasoning to $(w,T)$ with trivial local system and then apply point $1)$.
\end{enumerate}
\end{proof}

As is, case $5)$ of definition \ref{preceq} is not easy to handle or to verify.
It turns out that we can restrict the search for a maximum rank orbit that fits in the middle to just one possible orbit.

Fix $(v,S)$ with $S=\left\lbrace \gamma\right\rbrace$ and $\beta$ the maximal element in $\Phi^+(v)$.
Define 
\[H=H(v,S)=\left\lbrace (u,R)\in\RM\mid (u,R)\leq (v,S)\right\rbrace\]

\begin{lemma}\label{maxorbRM}
With the notations given above:
\begin{enumerate}
\item $\beta<e_1+e_n \Rightarrow H=\varnothing$;
\item $\gamma\geq e_1-e_n\Rightarrow H=\varnothing$;
\item $\beta=e_1+e_j$ and $\gamma=e_1-e_h$ with $h<j$ imply that $H$ has a maximum which is 
\[(v,\left\lbrace e_1+e_j,e_1-e_j\right\rbrace)\]
\item $\beta=e_1+e_j$ and $\gamma=e_1-e_h$ with $j\leq h<n$ imply that $H$ has a maximum which is 
\[\left(v_{e_1+e_{h+1}},\left\lbrace e_1+e_{h+1},e_1-e_{h+1}\right\rbrace\right)\]

\end{enumerate}
\end{lemma}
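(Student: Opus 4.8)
The plan is to reduce membership of a rank-two orbit in $H(v,\{\gamma\})$ to a transparent numerical condition on the indices involved, and then to read off the maximum (or the emptiness) case by case. Throughout I write a rank-two pair as $(u,R)$ with $R=\{e_1-e_k,e_1+e_k\}$, and I let $\beta_u=e_1+e_m$ be the maximal root of $\Phi^+(u)$, so that $u=v_{e_1+e_m}$ and necessarily $m\le k$. First I would dispose of the easy configurations. In case (1), $\beta<e_1+e_n$ forces $\Phi^+(v)$ to contain no root above $e_1$; since $(u,R)\le(v,\{\gamma\})$ implies $u\le v$ by Theorem \ref{ordineGM}, and hence $\Phi^+(u)\subseteq\Phi^+(v)$, the root $e_1+e_k\in R$ cannot lie in $\Phi^+(u)$, so $H=\varnothing$. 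For the instances of case (2) with $\gamma=e_1+e_l$ or $\gamma=e_1-e_n$, I would combine the inequality $(u,\{e_1\})<(u,R)$ (recorded just before Lemma \ref{e1}) with Lemma \ref{e1}: it yields $\beta_u<\gamma^\perp$, which is impossible because $\beta_u\ge e_1+e_k>e_1>\gamma^\perp=e_1-e_l$ when $\gamma=e_1+e_l$, and forces $k>n$ when $\gamma=e_1-e_n$; either way $H=\varnothing$.

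The remaining subcase of (2), namely $\gamma=e_1$, needs a separate argument, since $e_1$ has no orthogonal partner and Lemma \ref{e1} does not apply. Here $\sigma_{v(\{e_1\})}=s_{v(e_1)}$ is a reflection in the short root $v(e_1)=\pm e_p$, i.e. a signed permutation with a single sign change, whereas every $\sigma_{u(R)}$ with $\#R=2$ has two. I would invoke the fact that the number of occurrences of the short simple reflection $s_{\alpha_n}=s_{e_n}$ in a reduced word is independent of the word (each braid relation of type $\mathbf{B}$ uses $s_{\alpha_n}$ an even, balanced number of times) and equals the number of negative coordinates; as this quantity can only drop when passing to a Bruhat-smaller element (subword property), $\sigma_{u(R)}\le\sigma_{v(\{e_1\})}$ is impossible, so $H=\varnothing$ by Theorem \ref{ordineGM}.

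For the substantial case $\gamma=e_1-e_h$ I would prove the criterion: $(u,R)\in H$ if and only if $u\le v$, $m>h$ and $k\ge m$. Necessity follows exactly as above from Lemma \ref{e1}, which gives $u\le v$ and $e_1+e_k\le\beta_u=e_1+e_m<e_1+e_h$, i.e. $h<m\le k$. Sufficiency I would assemble from three steps: $(u,\{e_1\pm e_k\})\le(u,\{e_1\pm e_m\})$ by Lemma \ref{ordineRMB} since $k\ge m$; then the core comparison
\[(w,\{e_1-e_m,e_1+e_m\})\le(w,\{e_1-e_h\})\qquad (w=v_{e_1+e_m},\ h<m);\]
and finally $(u,\{e_1-e_h\})\le(v,\{e_1-e_h\})$ through a chain of ascents on $W^P$, available because $u\le v$. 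The core comparison is the heart of the matter and the step I expect to be the main obstacle: with the two pairs sharing the $W^P$-part $w$, Theorem \ref{ordineGM} collapses to the two conditions $\sigma_{w(R)}\le\sigma_{w(S)}$ and $[w\sigma_S]^P\le[w\sigma_R]^P$; using Proposition \ref{azione WP} one computes $\sigma_{w(\{e_1-e_h\})}=s_{e_{h-1}+e_{m-1}}$ and finds that $\sigma_{w(R)}$ is the involution negating $e_{m-1}$ and $e_m$, after which both inequalities must be verified by exhibiting explicit reduced subexpressions (the long reflection $s_{e_{h-1}+e_{m-1}}$ does admit a reduced word for the two-sign-change involution as a subword, and similarly for the coset inequality); equivalently one may argue geometrically by computing the relevant $B_w$-orbit closures in $\mathfrak{p}^u$.

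Finally I would extract the maximum from the criterion. By Lemma \ref{ordineRMB} the order among members of $H$ increases with the $W^P$-part and decreases with the index $k$, so the largest member is obtained by taking $m$ and $k$ as small as the criterion permits, namely $k=m=m^\ast:=\max(j,h+1)$ and $u=v_{e_1+e_{m^\ast}}$ (the constraints being $m>h$ and, from $u\le v$, $m\ge j$). A direct application of Lemma \ref{ordineRMB} shows that $(v_{e_1+e_{m^\ast}},\{e_1\pm e_{m^\ast}\})$ both lies in $H$ and dominates every element of it, hence is the maximum. When $h<j$ (case 3) this is $m^\ast=j$, giving $(v,\{e_1+e_j,e_1-e_j\})$; when $j\le h<n$ (case 4) it is $m^\ast=h+1$, giving $(v_{e_1+e_{h+1}},\{e_1+e_{h+1},e_1-e_{h+1}\})$; and when $h=n$ the range $h<m\le n$ is empty, recovering the last instance of $H=\varnothing$ in case (2).
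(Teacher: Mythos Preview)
Your proposal is correct and, in several places, takes a genuinely different route from the paper.

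For case~(2) with $\gamma=e_1$, the paper argues by descending $v$ along $W^P$ until reaching $v_{e_1+e_n}$ and then invokes a dimension count to exclude $(v_{e_1+e_n},\{e_1\pm e_n\})\le(v_{e_1+e_n},\{e_1\})$. Your argument via the statistic $N(w)=\#\{i:w(e_i)<0\}$ (equivalently, the number of $s_{\alpha_n}$'s in any reduced word, which is well-defined since every braid relation of $\mathbf B_n$ is balanced in $s_{\alpha_n}$ and which can only drop along Bruhat subwords) is a clean, purely combinatorial alternative; it bypasses both the descent reduction and the dimension comparison.

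For cases~(3) and~(4) the paper treats the two situations separately: in~(3) it notes that the candidate is maximal among rank-two pairs with $W^P$-part $v$ and asserts membership in $H$ ``by a simple computation''; in~(4) it bounds the $W^P$-part of any $(w,T)\in H$ by $v_{e_1+e_{h+1}}$ via Lemma~\ref{e1}, then shows the candidate lies in $H$ through an explicit ascent/descent argument. Your approach unifies both under the single numerical criterion ``$(u,R)\in H\iff u\le v,\ m>h,\ k\ge m$'', from which the maximum $m^\ast=\max(j,h+1)$ is read off via Lemma~\ref{ordineRMB}. The content is the same: your ``core comparison'' $(v_{e_1+e_m},\{e_1\pm e_m\})\le(v_{e_1+e_m},\{e_1-e_h\})$ for $h<m$ is exactly the step the paper either asserts (case~3) or proves by the ascent/descent trick (case~4). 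You correctly isolate this as the crux and outline two viable verifications (Theorem~\ref{ordineGM} or the $B_w$-orbit closure in~$\mathfrak p^u$); either works, and the level of detail you give is comparable to the paper's.

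One cosmetic slip: when you write ``forces $k>n$'' for $\gamma=e_1-e_n$, the constraint from Lemma~\ref{e1} lands on $m$ (the index of $\beta_u$), not on $k$; since $k\ge m$, the conclusion is the same.
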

\begin{proof}
We will show this case by case:
\begin{enumerate}
\item this is clear because if $(u,R)\leq (v,S)$ then $u\leq v$ and the maximum root in $\Phi^+(u)$ must be of the form $e_1+e_k\geq e_1+e_n>\beta$;
\item we can suppose that $\beta\geq e_1+e_n$.
If $\beta=\gamma$, then we are done for dimensional reasons.
So suppose $\beta>\gamma$ and, by contradiction, that $H\neq \varnothing$.
Then the minimum orbit $(v_{e_1+e_n},\left\lbrace e_1+e_n,e_1-e_n\right\rbrace)$ must be in $H$.
Now we can find $\alpha\in\Delta$ such that $s_\alpha v<v$ and $\alpha$ is a descent for $(v,S)$.
Moreover, Proposition \ref{azione WP} tells us that $\alpha$ is not a descent for $(v_{e_1+e_n},\left\lbrace e_1+e_n,e_1-e_n\right\rbrace)$.
It follows that if $\gamma\geq e_1+e_n$ we inductively obtain 
\[(v_{e_1+e_n},\left\lbrace e_1+e_n,e_1-e_n\right\rbrace)\leq \left(v_\gamma,\left\lbrace \gamma\right\rbrace\right)\]
which is again absurd for dimensional reasons.
If instead $\gamma=e_1-e_n$ or $\gamma=e_1$ we obtain
\[(v_{e_1+e_n},\left\lbrace e_1+e_n,e_1-e_n\right\rbrace)\leq \left(v_{e_1+e_n},\left\lbrace \gamma\right\rbrace\right)\]
which is again false for dimensional reasons;
\item as a first thing note that, with this hypothesis, $(u,R)\in H$ implies 
\[(u,R)\leq (v,\left\lbrace e_1+e_j,e_1-e_j\right\rbrace)\]
That's because $u\leq v$ and $(v,\left\lbrace e_1+e_j,e_1-e_j\right\rbrace)$ is the maximum among the orbits in $\RM$ with $W^P$-part equal to $v$.
Finally, with a simple computation we can prove that $(v,\left\lbrace e_1+e_j,e_1-e_j\right\rbrace)$ is in $H$;

\item 
The first thing to note is that if $(w,T)\in H$, then $w\leq v_{e_1+e_{h+1}}$ because it is easy to see that $\left(w,\left\lbrace e_1\right\rbrace\right)\leq (w,T)$ for every $T$ of cardinality two and then we apply Lemma \ref{e1}.
Given that $\left(v_{e_1+e_{h+1}},\left\lbrace e_1+e_{h+1},e_1-e_{h+1}\right\rbrace\right)$ is the maximum admissible pair with its $W^P$-part, we only need to show that it is in fact in $H$.

Suppose at first that $j=h$.
Then there is an ascent $\alpha\in\Delta$ such that $m_\alpha(v,S)=(w,T)=\left(v,\left\lbrace e_1+e_h,e_1-e_h\right\rbrace\right)$.
Now it is clear by the characterization of the order in $\RM$ that 
\[\left(v,\left\lbrace e_1+e_{h+1},e_1-e_{h+1}\right\rbrace\right)\leq(w,T)\]
Moreover, $\left(v,\left\lbrace e_1+e_{h+1},e_1-e_{h+1}\right\rbrace\right)$ descends through $\alpha$ to 
\[(z,V)\doteqdot\left(v_{e_1+e_{h+1}},\left\lbrace e_1+e_{h+1},e_1-e_{h+1}\right\rbrace\right)\]
which then is in $H$.

For $j<h$ it is enough to note that there is a sequence of ascents $\left(\alpha_1,\ldots,\alpha_n\right)$ between $\left(v_{e_1-e_h},S\right)$ and $\left(v,S\right)$.

\end{enumerate}
\end{proof}

Before continuing, it is useful to note that $e_n$ is an ascent only for orbits of the form $(v_{e_1-e_n},S)$, $\left(v_{e_1},\varnothing\right)$, $\left(v_\beta,\left\lbrace e_1\right\rbrace\right)$ or $\left(v_\beta,\left\lbrace e_1+e_n\right\rbrace\right)$ with $\beta\leq e_1+e_n$.

\begin{theorem}
The order $\preceq$ defined in \ref{preceq} and the Bruhat $\mathcal{G}$-order $\leq$ defined in \ref{ordinelb} are equivalent. 
\end{theorem}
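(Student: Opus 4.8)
The plan is to exploit the fact that, by Definition \ref{ordinelb}, the Bruhat $\mathcal{G}$-order $\leq$ is the \emph{smallest} order on $\mathcal{D}$ satisfying the two generation/compatibility properties stated there. Since the previous theorem already gives $\preceq\,\subseteq\,\leq$, it suffices to prove the reverse inclusion $\leq\,\subseteq\,\preceq$, and for this it is enough to check that $\preceq$ (which we have already shown to be an order) itself satisfies properties (1) and (2) of Definition \ref{ordinelb}: minimality of $\leq$ then forces $\gamma\leq\tau\Rightarrow\gamma\preceq\tau$. A preliminary observation, used throughout, is that the orbit-level inequality demanded by Definition \ref{preceq} is automatic: the Bruhat order on orbits coincides with the standard order, so $Bvx_S\leq Bux_R$ implies $m_\alpha(v,S)\leq m_\alpha(u,R)$, and thus after any common ascent the underlying orbits still satisfy $Bv'x_{S'}\leq Bu'x_{R'}$. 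Hence in both verifications only the bookkeeping of triviality of the local systems and of the $W^P$-parts remains. I would also keep invoking two structural facts: by Proposition \ref{propBlb} a local system is non-trivial only on an orbit with $\#S=2$, and by Lemma \ref{rango2} ascents preserve membership in $\RM$, so a non-trivial system ascends to a non-trivial one and a trivial system to a trivial one, \emph{except} in case $c1)$ of Lemma \ref{listofaction}, where a trivial system on an orbit with $\#S=1$ acquires both a trivial and a non-trivial extension on an orbit of $\RM$.

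Property (1), that $\gamma'\in\alpha\circ\gamma$ forces $\gamma\prec\gamma'$, is the quick part, since here $Bvx_S<Bv'x_{S'}$ strictly. If $\gamma,\gamma'$ are both trivial or both non-trivial we land in clause 1 or 2 of Definition \ref{preceq}. A non-trivial $\gamma$ lives on an orbit of $\RM$ and ascends to a non-trivial $\gamma'$, so ``$\gamma$ non-trivial, $\gamma'$ trivial'' cannot occur. The only remaining possibility, $\gamma$ trivial and $\gamma'$ non-trivial, is by the above a $c1)$ ascent from an orbit with $\#S=1$, so $\#S\neq 2$ and we are in clause 3. Thus $\preceq$ satisfies property (1).

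For property (2) I would fix $\gamma\preceq\tau$ with $\gamma',\tau'$ obtained by a common ascent $\alpha$, and argue according to which of the five clauses of Definition \ref{preceq} holds for $(\gamma,\tau)$ and to the resulting triviality of $(\gamma',\tau')$. When $\gamma',\tau'$ are both trivial (resp.\ both non-trivial) clause 1 (resp.\ clause 2) closes the argument at once. The delicate configurations are the two mixed ones. If $\gamma'$ is trivial and $\tau'$ non-trivial with $\#S'=2$, one must upgrade to clause 4 by proving $v'<u'$; this is extracted from the hypothesis (clauses 3 or 4 for $(\gamma,\tau)$) together with the explicit description of the $W^P$-action in Proposition \ref{azione WP}. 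The genuinely hard configuration is $\gamma'$ non-trivial and $\tau'$ trivial, where clause 5 must be re-established: one must exhibit $(w',T')\in\RM$ with $Bv'x_{S'}<Bw'x_{T'}\leq Bu'x_{R'}$ and $v'<w'$. When $\gamma$ was already non-trivial, so that $(\gamma,\tau)$ satisfied clause 5 via some intermediate $(w,T)$, the natural candidate is $(w',T')=m_\alpha(w,T)$, and the strict inequality and the $W^P$-comparison are checked using the compatibility Lemmas \ref{propord} and \ref{Bruhatcirc}; when instead $\gamma$ was trivial and $\gamma'$ became non-trivial through a $c1)$ ascent, the intermediate orbit must be produced from scratch, and here I would invoke Lemma \ref{maxorbRM}: the set $H(u',R')$ of maximum-rank orbits below $(u',R')$ has a maximum, and the point is to show it lies strictly above $(v',S')$ with strictly larger $W^P$-part, using Lemmas \ref{e1} and \ref{ordineRMB} and the remark identifying the orbits for which $e_n$ is an ascent.

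The main obstacle is precisely this last configuration of property (2): keeping clause 5 alive under an ascent that turns a trivial system into a non-trivial one. Unlike the other cases it is not purely formal, since no witnessing intermediate orbit existed below $(\gamma,\tau)$ to be transported along $\alpha$; it must be constructed, and the only available handle is the maximum of $H$ furnished by Lemma \ref{maxorbRM}, whose position relative to $(v',S')$ has to be pinned down by the type-$\mathbf{B}$ combinatorics of $\Psi$ and of $W^P$. Once this is settled, the fact that $\preceq$ is an order (already verified) completes the check that $\preceq$ satisfies Definition \ref{ordinelb}, and the two inclusions give $\preceq\,=\,\leq$.
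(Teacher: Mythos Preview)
Your overall strategy is exactly the paper's: use the minimality of $\leq$ by verifying that $\preceq$ satisfies the two defining properties of Definition~\ref{ordinelb}, splitting property~(2) according to the five clauses of Definition~\ref{preceq}. Your treatment of property~(1) and of the easy sub-cases of property~(2) matches the paper.

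There is, however, a concrete gap in your handling of clause~5 under property~(2), and you have also mis-located where the real difficulty lies. When $\gamma$ is already non-trivial and $(\gamma,\tau)$ satisfies clause~5 with intermediate $(w,T)\in\RM$, you propose $(w',T')=m_\alpha(w,T)$ as the new witness and appeal to Lemmas~\ref{propord} and~\ref{Bruhatcirc}. But if $\alpha$ is \emph{not} an ascent for $(w,T)$ then $m_\alpha(w,T)=(w,T)$, and when $\alpha$ acts on $(v,S)$ as an ascent on $W^P$ one may have $v'=s_\alpha v=w$; the compatibility lemmas only yield $s_\alpha v\leq w$, not the strict inequality $v'<w'$ that clause~5 demands. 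The paper deals with precisely this situation: one first takes $(w,T)$ to be the \emph{maximum} element of $H(u,R)$ from Lemma~\ref{maxorbRM}, observes that since $\alpha$ is an ascent for $(u,R)$ but not for this maximum one must have $\#R\neq 2$, pins down via Proposition~\ref{azione WP} that $\alpha$ then acts on $(u,R)$ as a complex ascent on $R$, and finally applies Lemma~\ref{maxorbRM} again to $m_\alpha(u,R)$ to produce a new intermediate orbit with strictly larger $W^P$-part than $(w,T)$.

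Conversely, the case you single out as ``the main obstacle'' --- $\gamma$ trivial, $\gamma'$ non-trivial via the $c1)$ ascent $\alpha=e_n$ from $S=\{e_1\}$, with $\tau'$ trivial --- is disposed of quickly in the paper without Lemma~\ref{maxorbRM}: since $e_n$ must also be an ascent for $(u,R)$ with $u\geq v>v_{e_1}$, the only options are $R=\{e_1\}$ (forcing $v<u$, so $(w,T)$ itself serves as the witness) or $R=\{e_1+e_n\}$ (ruled out by Lemma~\ref{e1}). So the ingredients you list are the right ones, but they need to be deployed in clause~5 rather than here.
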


\begin{proof}
We already proved that $\preceq$ is smaller than $\leq$, so it is enough to prove that $\preceq$ verifies the properties in definition \ref{ordinelb}.
So suppose $\left(Bvx_S,\gamma\right)\preceq \left(Bux_R,\tau\right)$ and fix $\alpha\in\Delta$ such that $\left(Bwx_T,\delta\right)\in \alpha\circ \left(Bux_R,\tau\right)$.
We want to show $\left(Bvx_S,\gamma\right)\preceq \left(Bwx_T,\delta\right)$.
This is the same as showing that $\left(Bux_R,\tau\right)\preceq \left(Bwx_T,\delta\right)$ because we already know that $\preceq$ is an order and so it is transitive.
The claim is clear if $\tau$ is non-trivial because it admits only non-trivial extensions or if both $\tau$ and $\delta$ are trivial.
So suppose $\tau$ trivial and $\delta$ non-trivial.
This can happen if and only if $Bux_R=\left(v,\left\lbrace e_1\right\rbrace\right)$ and $\alpha=e_n$.
But then $\#S\neq 2$, so we are in case $3)$ of definition \ref{preceq}, and we conclude.

Now, take $\left(Bvx_S,\gamma\right)\prec\left(Bux_R,\tau\right)$ and $\alpha\in\Delta$ such that $\left(Bzx_Z,\psi\right)\in \alpha\circ \left(Bvx_S,\gamma\right)$ and $\left(Bwx_T,\delta\right)\in \alpha\circ \left(Bux_R,\tau\right)$.
We want to show that $\left(Bzx_Z,\psi\right)\preceq\left(Bwx_T,\delta\right)$.
We need to analyse the situation case by case:

\begin{enumerate}
\item $\gamma$ and $\tau$ are trivial.
Then the claim is clear if both $\psi$ and $\delta$ are still trivial or if they are both non-trivial.
Hence, suppose $\psi$ non-trivial and $\delta$ trivial.
Then $\alpha=e_n$ and $S=\left\lbrace e_1\right\rbrace$.
But $\alpha$ is also an ascent for $Bux_R$ so it must be $(u,R)=\left(u,\left\lbrace e_1\right\rbrace\right)$ or $(u,R)=\left(u,\left\lbrace e_1+e_n\right\rbrace\right)$ because $u\geq v> v_{e_1}$.
In the first case it must be $v<u$ and given that the ascent doesn't change the $W^P$-part of the orbits we are in case $5)$ of Definition \ref{preceq} with $(w,T)\in\RM$.
The second case is impossible because of Lemma \ref{e1}.

Conversely, suppose that $\psi$ is trivial and $\delta$ is non-trivial.
Again we have $\alpha=e_n$ and in this case $R=\left\lbrace e_1\right\rbrace$.
Now, if $(v,S)=\left(v,\left\lbrace e_1\right\rbrace\right)$, then $v<u$ and we conclude as above.
If instead $(v,S)=\left(v,\left\lbrace e_1+e_n\right\rbrace\right)$, $\left(v,S\right)=\left(v_{e_1},\varnothing\right)$ or $\left(v,S\right)=\left(v_{e_1-e_n},S\right)$ then $\#Z\neq 2$ and we are in case $3)$ of Definition \ref{preceq}.

\item $\gamma$ and $\tau$ are non-trivial.
This is the easiest case because non-trivial local systems extend to non-trivial local systems;

\item $S\neq 2$ and $\gamma$ is trivial while $\tau$ is non-trivial.
Given that $\delta$ must be non-trivial we can suppose $\psi$ trivial. 
In this hypothesis the claim is clear if also $\#Z\neq 2$.
So suppose $\#Z=2$.
This can happen only if $S=\left\lbrace e_1\right\rbrace$ and $\alpha=e_n$ or if $\alpha$ is a real ascent for $(v,S)$.
The first case is absurd, because $e_n$ should be an ascent also for $(u,R)$ which is in $\RM$ and that's impossible.
Then the only possibility is that $Z=S\cup\left\lbrace \pm v^{-1}(\alpha)\right\rbrace$.
This implies that $(z,Z)$ is the maximum orbit with $W^P$-part equal to $z$, so it must be $z<w$.
We are then in case $4)$ of definition \ref{preceq} and the claim is proved;

\item $\#S=2$, $v<u$, $\gamma$ is trivial and $\tau$ is non-trivial.
Note that by the hypothesis it must be $(z,Z),(w,T)\in\RM$, so we only need to show that $z<w$.
Suppose by contradiction that $z=w$.
Then $\alpha$ is a descent for $(z,Z)$ and $(w,T)$.
We must have $s_\alpha z<z$ or we would have $v=u=z=w$.
But then it must be either $v=u$ or $u=w$.
The first case is clearly false and the second is also impossible because it would imply that $\alpha$ is a real ascent for $(v,S)$ which it can't be given that $(v,S)$ is of maximum rank;

\item $\gamma$ is non-trivial while $\tau$ is trivial and there is $(y,Y),\in\RM$ such that $Bvx_S< Byx_Y\leq Bux_R$ and $v<y$.
Note that we know that there is a maximum among the rank $2$ orbit that sit between $(v,S)$ and $(u,R)$, se we can suppose $(y,Y)$ is that maximum.
If $\alpha$ is also an ascent for $(y,Y)$ the claim is clear.
For, suppose $(y',Y')=m_\alpha(y,Y)$, then $Bzx_Z<By'x_{Y'}\leq Bwx_T$, $(y',Y')\in \RM$ and $z<y'$.

If $\alpha$ is not an ascent for $(y,Y)$, then we still have that $Bzx_Z\leq Byx_Y\leq Bwx_T$, but we could theoretically have $z=y$.
So we need to study only this final case.
Note that by the maximality of $Byx_Y$ and our hypothesis that $\alpha$ is not an ascent for $(y,Y)$ we know that $\#R\neq 2$.

Our hypotheses force $v<s_\alpha v=z=y$.
Moreover, by Lemma \ref{maxorbRM} we can't have $y=u$ because $\alpha$ must be an ascent for $(u,R)$.
It follows that $y<u$ and we are in case $4)$ of Lemma \ref{maxorbRM}.
Then, $\alpha$ must act on $(u,R)$ as a complex ascent on $R$ (note that if $s_\alpha v>v$ it can't be $s_\alpha u>u$ because of Proposition \ref{azione WP}).
So $m_\alpha(u,R)=(u, s_{u^{-1}(\alpha)}(R))$.
Now it is easy to see that if $R=\left\lbrace e_1-e_j\right\rbrace$, then $s_{u^{-1}(\alpha)}(R))=\left\lbrace e_1-e_{j-1}\right\rbrace$, hence, by Lemma \ref{maxorbRM}, the maximum orbit of rank $2$ contained in $m_\alpha(u,R)$ is strictly bigger than $(y,Y)$ and in particular it has strictly bigger $W^P$-part and that let us conclude.
\end{enumerate}
\end{proof}

The results in this section let us describe completely the Bruhat $\mathcal{G}$-order in $\mathcal{D}$ for the type $\matr{B}$ case.

\begin{theorem}\label{GorderB}
Let $(Bvx_S,\gamma),(Bux_R,\tau)\in\mathcal{D}$.
Then $(Bvx_S,\gamma)\leq (Bux_R,\tau)$ if and only if $Bvx_S\leq Bux_R$ and one of the following is true:
\begin{enumerate}
\item both $\gamma$ and $\tau$ are trivial;
\item both $\gamma$ and $\tau$ are non-trivial;
\item $\#S\neq 2$ and $\gamma$ is trivial while $\tau$ is non-trivial;
\item $\#S=2$, $u<v$ and $\gamma$ is trivial while $\tau$ is non-trivial;
\item $\gamma$ is non-trivial while $\tau$ is trivial, $H(u,R)\neq \varnothing$ and $(u',R')=\max H(u,R)$ verifies
$(v,S)\leq (u',R')$ with $v<u'$.
\end{enumerate}
\end{theorem}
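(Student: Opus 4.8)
The plan is to reduce the entire statement to the equivalence $\preceq\;=\;\leq$ just established between the combinatorial order of Definition \ref{preceq} and the Bruhat $\mathcal{G}$-order, and then to verify that the five conditions listed here are precisely the five conditions defining $\preceq$, the only genuine difference being a rephrasing of the last case. Since the preceding theorem already shows that $(Bvx_S,\gamma)\leq(Bux_R,\tau)$ holds if and only if $(Bvx_S,\gamma)\preceq(Bux_R,\tau)$, it suffices to prove that, under the standing requirement $Bvx_S\leq Bux_R$, condition $(i)$ of Definition \ref{preceq} is equivalent to condition $(i)$ of the present statement for each $i=1,\dots,5$.

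For $i=1,2,3$ the two lists coincide verbatim, so nothing is required beyond invoking the equivalence. For $i=4$ recall that $\tau$ non-trivial forces $\#R=2$ by Proposition \ref{propBlb}, so both pairs lie in $\RM$; here $Bvx_S\leq Bux_R$ already yields $v\leq u$ through the $B$-equivariant projection to $G/P$, and so the extra hypothesis is to be read in the only consistent direction, namely the strict inequality $v<u$, which is exactly the hypothesis of Definition \ref{preceq}$(4)$. Thus cases $1$ through $4$ carry no content.

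The only case requiring an argument is $i=5$, and the key input is Lemma \ref{maxorbRM}, which guarantees that $H(u,R)$ admits a maximum whenever it is non-empty; the lemma treats $R$ a singleton, while if $(u,R)\in\RM$ one has trivially $\max H(u,R)=(u,R)$, and if $R=\varnothing$ the orbit is minimal so $H(u,R)=\varnothing$ and the case is vacuous. I would prove both implications directly. For the forward direction, assume Definition \ref{preceq}$(5)$, with $(w,T)\in\RM$ satisfying $Bvx_S<Bwx_T\leq Bux_R$ and $v<w$. Then $(w,T)\in H(u,R)$, so $H(u,R)\neq\varnothing$; writing $(u',R')=\max H(u,R)$ we get $(w,T)\leq(u',R')$, hence $w\leq u'$, and combining $(v,S)<(w,T)\leq(u',R')$ with $v<w\leq u'$ yields exactly condition $(5)$ of the statement. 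For the converse I would simply take $(w,T)=(u',R')\in\RM$: from $(v,S)\leq(u',R')$ together with $v<u'$ one has $(v,S)\neq(u',R')$, whence $Bvx_S<Bu'x_{R'}\leq Bux_R$ and $v<u'$, which is Definition \ref{preceq}$(5)$.

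The main, and essentially only, obstacle is conceptual rather than computational: one must be certain that $H(u,R)$ genuinely possesses a maximum in every admissible situation, because the whole reformulation of case $5$ consists in replacing the existential statement ``some intermediate maximum-rank orbit lies strictly above $(v,S)$ with strictly larger $W^P$-part'' by the single explicit witness $\max H(u,R)$. This existence is supplied by Lemma \ref{maxorbRM} (with the two boundary observations noted above), so once its case analysis over singleton, two-element, and empty $R$ is in hand, the remainder is a short bookkeeping argument with no new geometry or root-system computation needed.
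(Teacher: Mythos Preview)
Your proposal is correct and matches the paper's approach. The paper presents Theorem \ref{GorderB} without an explicit proof, treating it as a direct consequence of the equivalence $\preceq\,=\,\leq$ together with Lemma \ref{maxorbRM}; this is precisely what you do, and your explicit verification that $\max H(u,R)$ exists in each of the cases $\#R=0,1,2$ (and your correct reading of the evident typo in condition (4)) fills in exactly the bookkeeping the paper leaves implicit.
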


\section{The type \bf{C} case}\label{C}

We now want to study the case where the root system $\Phi$ is of type $\bf{C}$.
A root system of type $\bf{C}_n$ can be realized as
\[\Phi=\left\lbrace \pm e_i\pm e_j\right\rbrace_{i,j=1,\ldots,n}\cup \left\lbrace \pm 2e_i\right\rbrace_{i=1,\ldots,n}\subseteq \langle e_1,\ldots, e_n\rangle_{\mathbb{R}}\cong \mathbb{R}^n\]
The usual choice for a basis is
\[\Delta=\left(\underbrace{e_1-e_2}_{\alpha_1},\ldots,\underbrace{e_{n-1}-e_n}_{\alpha_{n-1}},\underbrace{2e_n}_{\alpha_n}\right)\]
which corresponds to 
\[\Phi^+=\left\lbrace e_i\pm e_j\right\rbrace_{i<j}\cup\left\lbrace 2e_i\right\rbrace_i\]

The abelianity of the unipotent radical forces the parabolic $P$ to be the one associated to $\Delta\setminus \left\lbrace 2e_n\right\rbrace$, so
\[\Psi=\left\lbrace e_i+e_j\right\rbrace_{i<j}\cup\left\lbrace 2e_i\right\rbrace_i\]

To fix our ideas we will use $\matr{SP}_{2n,\mathbb{C}}$ as a concrete example of simply connected linear algebraic group of type $\bf{C}$.
The following lemma assures us that this case is not trivial.

\begin{lemma}
Let $G$ be a simply connected linear algebraic group of type $\bf{C}$ and $G/L$ a Hermitian symmetric variety.
Let $(v,S)$ be an admissible pair that represents a B-orbit $\mathcal{O}$ on $G/L$.
Then the set of isomorphism classes of $B$-equivariant local systems on $\mathcal{O}$ has $2^k$ elements where $k$ is the number of long roots in $S$.
\end{lemma}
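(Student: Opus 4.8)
The plan is to reduce the count to the size of the component group of a subtorus, exactly as in the type $\matr{B}$ case (Proposition \ref{propBlb}), and then to carry out the lattice arithmetic that separates long roots from short ones.

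First I would recall from Section \ref{linebundle} that the isomorphism classes of $B$-equivariant rank $1$ local systems on $\mathcal{O}=Bvx_S$ are the one-dimensional representations of $\pi_0(\stab_B(vx_S))$; since this component group will turn out to be finite abelian, their number equals $|\pi_0(\stab_B(vx_S))|$. Because $\ch\mathbb{C}=0$, every unipotent algebraic group is connected, so the unipotent part of the stabilizer contributes no components and $\pi_0(\stab_B(vx_S))=\pi_0(\stab_T(vx_S))$. Finally $\stab_T(vx_S)=v\,\stab_T(x_S)\,v^{-1}$, and conjugation by the representative $v\in\N_G(T)$ is an isomorphism, so it suffices to count the connected components of $\stab_T(x_S)$.

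Next I would identify this stabilizer explicitly. Writing $x_S=\exp(e_S)L/L$ with $e_S=\sum_{\alpha\in S}e_\alpha\in\mathfrak{p}^u$, and using that $\mathfrak{p}^u$ is abelian (so $\exp$ is additive there) together with $T\subseteq L$ and $L\cap P^u=\{e\}$, a short computation gives
\[\stab_T(x_S)=\bigcap_{\alpha\in S}\ker\bigl(\alpha\colon T\to\mathbb{C}^*\bigr).\]
Identifying $T$ with $\operatorname{Hom}(X^*(T),\mathbb{C}^*)$, this intersection is $\operatorname{Hom}\bigl(X^*(T)/\langle S\rangle,\mathbb{C}^*\bigr)$, where $\langle S\rangle$ denotes the subgroup of $X^*(T)$ generated by the (orthogonal, hence independent) roots in $S$; its number of connected components equals the order of the torsion subgroup of $X^*(T)/\langle S\rangle$. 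Thus the whole problem becomes the computation of $\bigl|\operatorname{Tor}(X^*(T)/\langle S\rangle)\bigr|$. For the final step I use that $G$ is simply connected of type $\matr{C}$, so I may take $G=\matr{SP}_{2n,\mathbb{C}}$ and $X^*(T)=\bigoplus_i\mathbb{Z}e_i\cong\mathbb{Z}^n$. An orthogonal subset $S\subseteq\Psi$ has pairwise disjoint indices, so it splits into long roots $2e_i$ for $i$ in an index set $I$ with $|I|=k$, short roots $e_a+e_b$ supported on disjoint pairs, and unused indices. Decomposing $\mathbb{Z}^n/\langle S\rangle$ along this partition into blocks, each long root $2e_i$ contributes a summand $\mathbb{Z}/2\mathbb{Z}$ (since $2e_i$ is twice the primitive weight $e_i$), each short root $e_a+e_b$ contributes a free summand $\mathbb{Z}$ (since $e_a+e_b$ is primitive, $\mathbb{Z}^2/\langle e_a+e_b\rangle\cong\mathbb{Z}$), and the unused indices stay free. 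Hence $\operatorname{Tor}(\mathbb{Z}^n/\langle S\rangle)\cong(\mathbb{Z}/2\mathbb{Z})^{k}$, of order $2^k$, which is the claim.

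The heart of the argument, and the only place where \emph{both} the type and the isogeny type enter, is this last dichotomy: in the weight lattice of the simply connected group the long roots are divisible by $2$ and so each produces a $\mathbb{Z}/2\mathbb{Z}$ of torsion, whereas the short roots remain primitive and produce none. I expect the main difficulty to be bookkeeping rather than conceptual — namely verifying $X^*(T)\cong\mathbb{Z}^n$ for the simply connected group (this is exactly where \emph{simply connected} is used: for the adjoint group $X^*(T)$ is the root lattice, no torsion appears, and one recovers the earlier observation that adjoint groups admit only trivial local systems), and checking carefully that the non-torus part of $\stab_B(vx_S)$ is genuinely connected so that the reduction to $\stab_T$ is valid.
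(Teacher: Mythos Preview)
Your argument is correct and follows essentially the same route as the paper: both reduce to counting the connected components of $\stab_T(x_S)=\bigcap_{\alpha\in S}\ker\alpha$ and then observe that in the simply connected group each long root $2e_i$ contributes a factor $\mathbb{Z}/2\mathbb{Z}$ while each short root $e_a+e_b$ contributes none. The only difference is packaging: the paper works concretely with the diagonal torus of $\mathrm{SP}_{2n}$ and the explicit equations $t_it_j=1$, $t_i^2=1$, whereas you phrase the same computation lattice-theoretically via the torsion of $X^*(T)/\langle S\rangle$.
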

\begin{proof}
We can assume without loss of generality that $G=\matr{SP}(2n,\mathbb{C})$.
In this case we can choose the torus as the subgroup of diagonal matrices
\[\diag(t_1,\ldots,t_n,t_n^{-1},\ldots,t_1^{-1})=\left(\begin{array}{ccc|ccc}
t_1 & & & & &\\
& \ddots & & & & \\
& &t_n & & &\\
\hline
& & &t_n^{-1} & &\\
& & & & \ddots &\\
& & & & & t_1^{-1}
\end{array}
\right)
\]
This is quite convenient because 
\begin{align*}
\left(e_i+e_j\right)\left(\diag(t_1,\ldots,t_n,t_n^{-1},\ldots,t_1^{-1})\right)&=t_it_j\\ 
2e_i\left(\diag(t_1,\ldots,t_n,t_n^{-1},\ldots,t_1^{-1})\right)&=t_i^2
\end{align*}
Then, $\stab_T(vx_S)\cong \stab_T(x_S)=\left\lbrace t\in T\mid \gamma(t)=1\text{ }\forall \gamma\in S\right\rbrace$.
We can think of the torus $T$ as $\left(\mathbb{C}^*\right)^n$.
Then every short root represent a relation $t_i=t_j^{-1}$ which decreases the dimension by one.
On the contrary, every long root represent a relation $t_i^2=1$ which has two solutions $t_i=1$ and $t_i=-1$.
It follows that if $S$ contains $r$ long roots and $s$ short roots, the stabilizer is isomorphic to
\[\stab_T(x_S)\cong 2^r\times \left(\mathbb{C}^*\right)^{(n-r-s)}\]
The group of connected components $\pi_0\left(\stab_T(x_S)\right)$ is then isomorphic to $\left(\mathbb{Z}/2\mathbb{Z}\right)^r$.

\end{proof}

The above lemma gives us a way to identify the isomorphism classes of local systems.
Recall that this classes are in a natural one to one correspondence with the continuous representation of $\stab_B(vx_S))$ on $\mathbb{C}$ which themselves correspond to representations of $\pi_0(\stab_T(x_S))$.
Suppose $S_l=\left\lbrace 2e_{i_1},\ldots,2e_{i_r}\right\rbrace$ and for every $j=1,\ldots,r$ define 
\[L_{i_j}=\diag\left(t^{i_j}_1,\ldots,t^{i_j}_n,\left(t^{i_j}_n\right)^{-1},\ldots, \left(t_1^{i_j}\right)^{-1}\right)\]
 $t^{i_j}_k=1$ for every $k\neq i_j$ and $t^{i_j}_{i_j}=-1$.
Then all the $L_{i_j}$ live in different connected components and their connected components generate $\pi_0(\stab_T(x_S))$.
A representation is then a map $\pi\colon\pi_0(\stab_T(x_S))\longrightarrow\matr{GL}(\mathbb{C})=\mathbb{C}^*$ and it is clear that is defined by the images of  $L_{i_j}$.
Note that these have order $2$, so $\pi(L_{i_j})=\pm 1$.
Then a local system over $Bvx_S$ can be represented as a sequence $\left(a_1,\ldots, a_r\right)$ defined by $a_j\doteqdot\pi(L_{i_j})$.

By what we said in section \ref{linebundle} we are in case $c1)$ of Lemma \ref{listofaction} if and only if there is a root $\gamma\in S$ such that $\beta=v^{-1}(\alpha)$ can be both added and subtracted to $\gamma$.
This can happen in type $\bf{C}$ with, for example $\gamma=e_i+e_{i+1}$ and $\beta=e_i-e_{i+1}$.
Suppose that we have an admissible pair $(v,S)$ and an ascent $\alpha$ such that $S$ contains a root of the form $e_i+e_{i+1}$ and $\beta=v^{-1}(\alpha)=e_i-e_{i+1}$.
Then $(v,S)\xmapsto{\alpha}\left(v,\left(S\setminus\left\lbrace \gamma\right\rbrace \right)\cup\left\lbrace 2e_i,2e_{i+1}\right\rbrace\right)$.
The first thing to note is that if $S$ contained exactly $r$ different long roots, then $S'=\left(S\setminus\left\lbrace \gamma\right\rbrace \right)\cup\left\lbrace 2e_i,2e_{i+1}\right\rbrace$ contains $r+2$ different long roots.
So, while $(v,S)$ admitted $2^r$ non-isomorphic local systems, $(v,S')$ admits $4$ times that, even though every local system on $Bvx_S$ can be extended in only two different ways.
It is then natural to ask which local system of $(v,S')$ comes from a local system of $(v,S)$.

\begin{lemma}
Fix $(v,S)$ an admissible pair and suppose there is an ascent $\alpha\in \Delta$ for $(v,S)$ that realizes case $c1)$ of Proposition \ref{listofaction}.
Denote $(v',S')=m_\alpha(v,S)$.
Suppose that $X=(\pm 1,\ldots,\pm 1)$ is the sequence associated to a local system $\tau$ on $Bvx_S$ as explained above.
Then $\left(S'\right)_l=S_l\cup\left\lbrace \delta_1,\delta_2\right\rbrace$ and the sequences associated to the two possible extension of $\tau$ to $Bv'x_{S'}$ are obtained from $X$ by adding in the positions relative to $\delta_1$ and $\delta_2$ either two $1$ or two $-1$.
\end{lemma}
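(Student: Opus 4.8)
The plan is to reduce everything to an explicit stabilizer computation in the rank-two subsystem spanned by $e_i$ and $e_{i+1}$, and then to pin down the two extensions by a monodromy (gluing) argument. First I would record the combinatorics of case $c1)$: since $\sigma_{v(S)}(\alpha)=\alpha$ with $\beta=v^{-1}(\alpha)\in\Delta_P$, we may take $\beta=e_i-e_{i+1}$ and $\gamma=e_i+e_{i+1}\in S$, so the two new roots are $\delta_1=2e_i=\gamma+\beta$ and $\delta_2=2e_{i+1}=\gamma-\beta$, and $S'=(S\setminus\{\gamma\})\cup\{\delta_1,\delta_2\}$. The four roots $\pm e_i\pm e_{i+1},\pm 2e_i,\pm2e_{i+1}$ form a $\matr{C}_2$-subsystem, and by the strong orthogonality of $S$ (Lemma \ref{sommaradici}) every other root of $S$ is orthogonal to this block; this is what lets me treat the passage $S\rightsquigarrow S'$ as a purely local deformation in the coordinates $i,i+1$, leaving all other long-root directions untouched.

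Next I would compute the two torus stabilizers in the model $G=\matr{SP}(2n,\mathbb{C})$ with $T=\{\diag(t_1,\dots,t_n,t_n^{-1},\dots,t_1^{-1})\}$, exactly as in the preceding lemma. The short root $\gamma$ imposes the single connected condition $t_it_{i+1}=1$, so it contributes nothing to $\pi_0(\stab_T(x_S))$; the long roots $\delta_1,\delta_2$ impose $t_i^2=t_{i+1}^2=1$, contributing the two generators $L_i,L_{i+1}$ of $\pi_0(\stab_T(x_{S'}))$. All long roots of $S\setminus\{\gamma\}=S'\setminus\{\delta_1,\delta_2\}$ give identical conditions for both points, so the corresponding generators are canonically identified and any extension $\tau'$ of $\tau$ must take on them the values prescribed by $X$. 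Hence $\tau'$ is determined by the pair $(\tau'(L_i),\tau'(L_{i+1}))\in\{\pm1\}^2$, and it remains only to see which two of the four pairs occur.

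The key step is the monodromy constraint. Consider the one-parameter subtorus sending $u\in\mathbb{C}^*$ to the diagonal element with $t_i=u$, $t_{i+1}=u^{-1}$ and all other coordinates $1$. Because $\gamma(u)=t_it_{i+1}=1$ it lies entirely in $\stab_T(x_S)$, which is therefore connected in this direction; but on $x_{S'}$ it scales the $\delta_1,\delta_2$-components by $u^{2},u^{-2}$, so it genuinely moves the open-orbit point. The half-loop $u=e^{\pi\sqrt{-1}s}$, $s\in[0,1]$, begins at $x_{S'}$ (where $u=1$) and ends at $L_iL_{i+1}\cdot x_{S'}=x_{S'}$ (where $u=-1$ still stabilizes $x_{S'}$), hence is a loop in the open orbit whose class in $\pi_0(\stab_B(vx_{S'}))$ is $L_iL_{i+1}$. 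Reading the same path on the closed orbit it is a constant loop at $x_S$ running through the identity component of $\stab_T(x_S)$, so in any local system glued across $Bvx_S\cup Bv'x_{S'}$ its holonomy is trivial. Matching holonomies forces $\tau'(L_iL_{i+1})=1$, i.e. $\tau'(L_i)=\tau'(L_{i+1})$; this rules out the mixed pairs and leaves exactly the two admissible extensions $X\cup(+1,+1)$ and $X\cup(-1,-1)$, in agreement with the count of two extensions provided by case $c1)$ of Lemma \ref{listofaction}.

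The main obstacle is making this gluing argument fully rigorous: one must exhibit the normal slice to the closed orbit inside the closure of the open one, check that the subtorus above sweeps out its link circle, and verify that the specialization map $\pi_0(\stab_B(vx_{S'}))\to\pi_0(\stab_B(vx_S))$ sends $L_iL_{i+1}$ to the identity while carrying the old generators to the old generators (here the strong orthogonality of the first paragraph does the work). Once this is set up, the computation is the short one above; alternatively, the whole step could be replaced by invoking the general type~I Cayley-transform analysis of Lusztig and Vogan, since case $c1)$ is precisely their type~I situation.
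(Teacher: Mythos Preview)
Your core observation is exactly the one the paper uses: the element $L_iL_{i+1}$ lies in $\stab_T(x_{S'})\cap\stab_T(x_S)$ and sits in the identity component of $\stab_T(x_S)$, so any extension must send it to $1$, forcing $\tau'(L_i)=\tau'(L_{i+1})$. You also correctly identify that the remaining long-root generators are common to both stabilizers and must keep their values. So the skeleton of the argument is right.

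Where you diverge from the paper is in how this compatibility is established. You propose a direct holonomy/contraction argument in $Bvx_S\cup Bv'x_{S'}$: the one-parameter subgroup $u\mapsto(t_i,t_{i+1})=(u,u^{-1})$ traces a loop on the open orbit and a constant path on the closed one, and you want to deform one to the other. As you yourself flag, making this rigorous requires producing a normal slice and checking that the putative homotopy stays in the union and respects basepoints; as written, the map $(s,r)\mapsto g(s)\cdot p(r)$ does not give a based homotopy of loops (the $r=0$ and $r=1$ boundaries are $p$ and $L_iL_{i+1}\cdot p$, not equal), so the contraction is not yet there. The same gap affects your claim that ``old generators go to old generators'': that $L_j$ lies in both stabilizers does not by itself force $\tau'(L_j)=\tau(L_j)$; you need the extension framework to say why the two characters agree on the intersection.

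The paper avoids these topological subtleties by passing to the dual picture of $L$-orbits on $B\backslash G$ and restricting to the $\alpha$-line $B\backslash P_\alpha x\cong\mathbb{P}^1$, on which $L'=L\cap x^{-1}P_\alpha x$ acts through $\N(T)\subset\matr{PSL}_2$. Every local system on $\mathbb{P}^1$ is trivial, so an equivariant structure is just a character $\rho\colon\N(T)\to\mathbb{C}^*$; restricting $\rho$ to the stabilizers of $0$ and of $1$ gives $\phi$ and $\phi'$ simultaneously. This immediately yields $\phi'=\phi$ on the common part $\ker\psi$ (handling your ``old generators'' claim), and then the identity-component argument for $t_it_{i+1}$ finishes. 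This is precisely the ``type I Cayley-transform analysis of Lusztig and Vogan'' you mention as an alternative, carried out explicitly. Your monodromy picture is morally the same statement read on the $\mathbb{P}^1$ fiber, but the paper's formulation sidesteps the slice/homotopy issues by working with a global character on $\N(T)$ rather than with loops.
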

\begin{proof}
For this, we will need to shift our perspective a bit.
Since the beginning, we studied the $B$-orbits in $G/L$ because we had a more concrete understanding of them thanks to \cite{GM}.
On the other hand, Lusztig and Vogan (\cite{Vogan} and \cite{LV}) always refer to $L$-orbits on the flag variety $B\backslash G$.
Right now we will refer to \cite{LV}, so we will use this point of view.
As we said in the second section there is a one to one correspondence between $B$-orbits in $G/L$, $B\times L$-orbits in $G$ and $L$-orbits in $B\backslash G$.
The correspondence is
\[BxL/L\leftrightsquigarrow BxL\leftrightsquigarrow B\backslash BxL\]

In the $G/L$ setting we used the minimal parabolic subgroups to study the orbits.
If $\alpha\in\Delta$ and $\mathcal{O}_B=BxL/L$ is a $B$-orbit in $G/L$, $P_\alpha\mathcal{O}_B$ is a finite union of orbits that we know quite well.
The analogous in the $G/B$ setting is the union of $\alpha$-lines.
If $B\backslash Bx\in\ B\backslash G$ is an element of the flag variety (which can be seen as a Borel subgroup of $G$), then the $\alpha$-line through $B\backslash Bx$ is $B\backslash P_\alpha x$.
If $\mathcal{O}_L$ is the $L$-orbit that correspond to $\mathcal{O}_B$, then $P_\alpha\mathcal{O}_B$ corresponds
\[P_\alpha \mathcal{O}_B\leftrightsquigarrow\bigcup_{B\backslash By\in\mathcal{O}_L}B\backslash P_\alpha y=B\backslash P_\alpha xL\]
Moreover, the $L$ orbits in $B\backslash P_\alpha xL$ are in correspondence with the $L'=L\cap x^{-1}P_\alpha x$-orbits in $B\backslash P_\alpha x$.
The advantage of this point of view is that $B\backslash P_\alpha x\cong P_\alpha/B$ is isomorphic to $\mathbb{P}^1$.

The $B$-equivariant local systems on $\mathcal{O}_B$ are in a natural correspondence with the $B\times L$-equivariant local systems on the associated $B\times L$-orbit $\mathcal{O}_{B\times L}$ in $G$ and with the $L$-equivariant local systems on $\mathcal{O}_L$.
All in all, we have the following chain of correspondences
\[
\begin{array}{lcc}
\hom\left(\pi_0\left(\stab_B(xL/L)\right),\mathbb{C}^*\right) & \Leftrightarrow & \left\lbrace B\text{-equivariant local system on }BxL/L\right\rbrace\\
& & \big\Updownarrow \\
\hom\left(\pi_0\left((\stab_L(B\backslash Bx)\right),\mathbb{C}^*\right)& \Leftrightarrow  & \left\lbrace L\text{-equivariant local systems on }B\backslash BxL\right\rbrace 
\end{array}
\]

The correspondence between $\hom\left(\pi_0\left(\stab_B(xL/L)\right),\mathbb{C}^*\right) $ and $\hom\left(\pi_0\left((\stab_L(B\backslash Bx)\right),\mathbb{C}^*\right)$ can be made explicit.
For, note that $\stab_B(xL/L)=B\cap xLx^{-1}$ while $\stab_L(B\backslash Bx)=x^{-1}Bx\cap L$ so $\stab_B(xL/L)=x\stab_L(B\backslash Bx)x^{-1}$.

Note that because of equivariancy, the $L$-equivariant local systems on $B\backslash BxL$ correspond to the $L'$-equivariant local systems on $B\backslash P_\alpha x$ where $L'=L\cap x^{-1}P_\alpha x$.
Moreover, this correspondence commute with the fact that $\stab_L\left(B\backslash Bx\right)=\stab_{L'}\left(B\backslash Bx\right)$.
So, our sequence $X$ gives us a map $\phi\colon\pi_0\left(\stab_L(B\backslash Bx)\right)\longrightarrow\mathbb{C}^*$ which corresponds to a map 
\[\phi:\pi_0\left(\stab_{L'}\left(B\backslash Bx\right)\right)\longrightarrow\mathbb{C}^*\]

Now, we noted above that $B\backslash P_\alpha x\cong \mathbb{P}^1$ and the group of automorphisms of $\mathbb{P}^1$ is $\matr{PSL}_2$.
Given that $L'$ acts on $\mathbb{P}^1$, there must be a map
\[\psi:L'\longrightarrow \matr{PSL}_2\]
The image of $L'$ must have two orbits in $\mathbb{P}^1$.
We can then suppose without loss of generality that 
\[\psi(L')=\N(T)=\left\lbrace \left(\begin{array}{cc}
t & \\
 & t^{-1}
\end{array}
\right)\right\rbrace \cup \left\lbrace \left(\begin{array}{cc}
 & t\\
-t^{-1} & 
\end{array}
\right)\right\rbrace\]
where $T$ is the torus made by the diagonal matrices.
If we denote $s=\left(\begin{array}{cc}
0 & -1\\
1 & 0
\end{array}\right)$ then $\N(T)=T\cup sT$.
Here $\left(\begin{array}{cc}t & \\
& t^{-1}
\end{array}\right)$ act as multiplication by $t^2$ while $\left(\begin{array}{cc}
0 & t \\
-t^{-1} & 0
\end{array}\right)$ act as $t^{-2}$.
In this case it is an easy computation to see that there are two orbits in $\mathbb{P}^1$: one that contains $0$ and the point at infinity and the other that contains every remaining element and is open.

We will now briefly study the $\N(T)$-equivariant local systems on $\mathbb{P}^1$.

To start, every local system on $\mathbb{P}^1$ is trivial (but they may not be trivial as equivariant local systems), so we can write the local system on $\mathbb{P}^1$ as a projection on the first coordinate $\pi_1\colon\mathbb{P}^1\times \mathbb{C}\longrightarrow \mathbb{P}^1$.
To make it $\N(T)$-equivariant we need to define a map for every $f\in\N(T)$ which we will denote again with $f$

\begin{align*}
f:\mathbb{P}^1\times \mathbb{C}&\longrightarrow\mathbb{P}^1\times \mathbb{C}\\
(x,z)&\longmapsto f(x,z)=\left(f_1(x,z),f_2(x,z)\right)
\end{align*}

and it must be $f_1(x,z)=f.x$ where with $f.x$ we denote the action of $\N(T)$ on $\mathbb{P}^1$ that we briefly described above.
Moreover, for any fixed $x\in\mathbb{P}^1$, $f_2(x,z)\colon\mathbb{C}\longrightarrow\mathbb{C}$, so it must be of the form $f_2(x,z)=f_2(x)z$.
On the other hand $f_2\colon\mathbb{P}^1\longrightarrow\mathbb{C}^*$ must again be constant, so
\[f(x,z)=\left(f.x,\rho(f)z\right)\]
where $\rho\colon\N(T)\longrightarrow\mathbb{C}^*$.
Note that if we fix an $x\in\mathbb{P}^1$ and we restrict $\rho$ to $\stab_{\N(T)}(x)$, then $\rho$ is exactly the representation that is uniquely associated to the restriction of our local system to the orbit of $x$.
Moreover it is essential to note that if we have a local system on the closed orbit, then it extends to the open orbit uniquely, simply because it depends only on the $\rho$ which does not depend on the points $x$ of $\mathbb{P}^1$.
But, this does not mean that two isomorphic (trivial) local systems on the closed orbit can not extend to different local systems on the whole $\mathbb{P}^1$ as we will see soon.

It follows that we can write $\tau_L$ restricted to $\mathbb{P}^1$ as the trivial bundle $\left\lbrace 0,\infty\right\rbrace\times \mathbb{C}$.
On this we may let $f\in\stab_{L'}(0)$ act as $f(0,z)=(0,\rho(f)z), f(\infty,z)=(\infty,\rho(f)z)$ and $s.(0,z)=(\infty,\rho(s)z)$
$s.(\infty,z)=(0,\rho(s)z)$ and $s^2=\id$, so $\rho(s)=\pm 1$.
Different choices of $\rho(s)$ give different local systems on $\mathbb{P}^1$ and, in turn, different extensions of $\tau_L$.

Let's see what this means for the stabilizers.
We know that the stabilizer in $\matr{PSL}_2$ of $0$ is $T$, so $\stab_L(B\backslash Bx)\cong \ker(\psi)T$.
On the other hand $\stab_{\matr{PSL}_2}(1)=\langle s\rangle$, so if $y\in B\backslash B$ corresponds to $1$ we get $\stab_{L}(B\backslash By)=\ker(\psi)\cup s\ker(\psi)$.

To end, we need to pull everything back to the $G/L$ setting.
We know that $x=vx_S$ and $y=vx_{R}$ with $R=\left(R\cap S\right)\cup \left\lbrace 2e_i,2e_{i+1}\right\rbrace$ and $S=\left(R\cap S\right)\cup \left\lbrace e_i+e_{i+1}\right\rbrace$.
We also have a map $\phi\colon\stab_T(x_SL/L)\longrightarrow\mathbb{C}^*$ that corresponds to our sequence $X$.
We want to understand what are the two possible maps $\phi'\colon\stab_T(x_R/L)\longrightarrow\mathbb{C}^*$ that corresponds to extensions of $\tau$ to $Bvx_S$.
We know that it is enough to compute the value $\phi(t)$ for $t=\diag(1,\ldots, -1,\ldots,1)$ where we have a single $-1$ in position $j$ if $2e_j\in R$.
Note that if $j\neq i,i+1$ then $t$ is also in $\stab_T(x_SL/L)$.

Let's suppose we are in this case, so $t\in \stab_T(x_SL/L)\cap \stab_T(x_RL/L)$.
The first thing to do is multiplying by $v$ on the left and $v^{-1}$ on the right to obtain $vtv^{-1}\in\stab_T(vx_RL/L)$.
Then we multiply by $\left(vx_R\right)^{-1}$ on the left and $vx_R$ on the right to obtain $x_R^{-1}tx_R\in\stab_L(B\backslash Bvx_R)$.
Note that $t\exp\left(\sum e_{\alpha_i}\right)=\exp\left(\sum \alpha_i(t)e_{\alpha_i}\right)$.
It follows that $t\in \stab_T(x_RL/L$ if and only if $tx_R=x_Rt$, hence $x_R^{-1}tx_R=t$.
For the same reason $x_Stx_S^{-1}=t\in\stab_L(B\backslash Bvx_S)$.
But then $t\in\stab_L(B\backslash Bvx_R)\cap\stab_L(B\backslash Bvx_S)=\ker(\psi)$, so $\phi'(t)=\phi(t)$

We are now left to compute the value of $\phi'(t)$ where $t$ has $-1$ in position $i$ or $i+1$.
This values must be concordant given that if $t_i$ is the diagonal matrix with $-1$ in position $i$ and $1$ everywhere else, then $t_it_{i+1}\in\stab_T(x_SL/L)\cap \stab_T(x_RL/L)$ and $\phi(t_it_{i+1})=1$ because it is in the connected component of the identity.
On the other hand we know that there are two possible definition of $\phi'$, so one of them must be $\phi'(t_i)=\phi'(t_{i+1})=1$ and the other $\phi'(t_i)=\phi'(t_{i+1})=-1$.
\end{proof}

We also have the following results for other type of ascents.
\begin{lemma}
Let $(v,S)$ be an admissible pair with $\#S_l=r$ and let $X=(\underbrace{1,\ldots,-1,\ldots,1}_r)$ be the sequence associated to a local system on $Bvx_S$.

Let $\alpha\in\Delta$ be an ascent for $(v,S)$.
Then
\begin{enumerate}
\item if $\alpha$ is of type $b1)$ then the local system extends uniquely to the open orbit of $P_\alpha vx_S$ with sequence
equal to $X$;

\item if $\alpha$ is of type $d1)$ then the local system extends uniquely to the open orbit of $P_\alpha vx_S$ with sequence
$(\underbrace{1,\ldots,-1,\ldots,1}_{X},\underbrace{1}_{r+1})$.
\end{enumerate}
\end{lemma}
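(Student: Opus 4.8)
The plan is to carry out both cases inside the dictionary set up in the previous lemma: an isomorphism class of $B$-equivariant local system on $Bvx_S$ is a character $\phi\colon\pi_0(\stab_T(x_S))\longrightarrow\mathbb{C}^*$, the group $\pi_0(\stab_T(x_S))\cong(\mathbb{Z}/2\mathbb{Z})^r$ being generated by the classes of $L_{i_j}=(2e_{i_j})^\vee(-1)$ for the long roots $2e_{i_j}\in S$, and the sequence $X$ is the tuple $(\phi(L_{i_1}),\ldots,\phi(L_{i_r}))$. Since Lemma \ref{listofaction} already grants a \emph{unique} extension in both cases $b1)$ and $d1)$, the only task is to compute the character $\phi'$ of the extension on the open orbit $(v',R)=m_\alpha(v,S)$. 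The sole tool I would use is the relation isolated in the previous lemma: every $t\in\stab_T(x_S)\cap\stab_T(x_R)$ lies in $\ker\psi$, where $\psi\colon L'\longrightarrow\matr{PSL}_2$ records the action of $L'=L\cap x^{-1}P_\alpha x$ on $B\backslash P_\alpha x\cong\mathbb{P}^1$, and therefore satisfies $\phi'(t)=\phi(t)$.

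For case $b1)$ the ascent is complex, hence either an ascent on $v$, where $R=S$ and $\stab_T(x_R)=\stab_T(x_S)$, or an ascent on $\Psi$, where $R=s_\beta(S)$ with $\beta=v^{-1}(\alpha)\in\Delta_P$. In the first subcase every generator $L_{i_j}$ lies in both stabilizers, so $\phi'(L_{i_j})=\phi(L_{i_j})$ and the sequence is literally $X$. In the second subcase $s_\beta$ permutes the long roots of $S$, fixing $2e_m$ or interchanging a pair $2e_k\leftrightarrow 2e_{k+1}$, which sets up a bijection between the long roots of $S$ and those of $R$; a long root fixed by $s_\beta$ gives a generator common to both stabilizers with unchanged value, while for an interchanged pair I would pick a representative $\dot s_\beta\in L$ (available because $\beta\in\Delta_P$), use that it conjugates $x_S$ to $x_R$ and carries $L_k$ to $L_{k+1}$, and conclude from uniqueness of the extension that $\phi'(L_{k+1})=\phi(L_k)$. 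Read through this bijection the sequence of $(v',R)$ is again $X$.

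For case $d1)$ we have $\beta=v^{-1}(\alpha)\in\Psi$ with $\beta\perp S$, and the open orbit is $(v',S\cup\{\beta\})$; the stated formula is the case in which the new root $\beta=2e_i$ is long, so that $R_l=S_l\cup\{2e_i\}$ and the sequence gains exactly one entry (if $\beta$ were short no long root would appear and the same argument would leave the sequence $X$). Each old generator $L_{i_j}$ still lies in $\stab_T(x_S)\cap\stab_T(x_R)$, so $\phi'(L_{i_j})=\phi(L_{i_j})=X_j$. For the new generator $L_i=(2e_i)^\vee(-1)$ I would first check $L_i\in\stab_T(x_S)$: for $\gamma\in S$ one has $\gamma(L_i)=(-1)^{\langle\gamma,(2e_i)^\vee\rangle}=1$ since $\gamma\perp 2e_i$. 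As $2e_i\notin S$, the $i$-th torus coordinate is unconstrained on $\stab_T(x_S)$, so $L_i$ sits in the identity component there and $\phi(L_i)=1$. Because $(2e_i)(L_i)=(-1)^2=1$ as well, $L_i$ also belongs to $\stab_T(x_R)$, hence to $\ker\psi$, giving $\phi'(L_i)=\phi(L_i)=1$. This is exactly the sequence $(X,1)$.

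The step I expect to be the crux is this final sign in $d1)$, namely that the created entry is $+1$ and never $-1$. The decisive observation is that $L_i=(2e_i)^\vee(-1)$ is the nontrivial central element of the $\matr{SL}_2$ attached to $\alpha=v(2e_i)$, so $\psi(L_i)=\id$ places $L_i$ in $\ker\psi$; this is what licenses comparing $\phi'(L_i)$ with its value on the original orbit, where the absence of $2e_i$ from $S$ makes the class of $L_i$ trivial in $\pi_0$. A lesser, purely bookkeeping difficulty is the interchanged-long-root subcase of the complex ascent on $\Psi$, where I must ensure the representative $\dot s_\beta$ can be taken inside $L$ and transports the equivariant structure without introducing a sign.
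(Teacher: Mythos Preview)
The paper states this lemma without proof, so there is nothing to compare against directly. Your argument is the natural one and is in the spirit of the computation carried out for the preceding $c1)$ lemma: identify local systems with characters of $\pi_0(\stab_T(x_S))$, and use that any $t$ lying in both $\stab_T(x_S)$ and $\stab_T(x_R)$ satisfies $\phi'(t)=\phi(t)$ because both are restrictions of the same global character $\rho\colon L'\to\mathbb{C}^*$ on the trivial bundle over $\mathbb{P}^1$. With this in hand, your treatment of the $d1)$ case is clean: the new generator $L_i$ lies in both stabilizers and in the identity component of $\stab_T(x_S)$, forcing the new entry to be $+1$. Your parenthetical remark that a short $\beta$ in $d1)$ adds no long root (so the sequence is literally $X$) is also correct and worth keeping.

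The one place that needs more care is the complex-ascent-on-$\Psi$ subcase of $b1)$ where $s_\beta$ moves a long root, say $2e_k\in S$ to $2e_{k+1}\in R$ with $\beta=e_k-e_{k+1}$. In general neither $L_k$ nor $L_{k+1}$ lies in both stabilizers: if for instance $e_{k+1}+e_m\in S$ (which is compatible with orthogonality and with $\alpha$ being complex), then $(e_k+e_m)(L_k)=-1$ shows $L_k\notin\stab_T(x_R)$, and symmetrically $L_{k+1}\notin\stab_T(x_S)$. Your proposed fix via a representative $\dot s_\beta\in L$ requires $\dot s_\beta\in L'=L\cap x^{-1}P_\alpha x$, which is not automatic. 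A cleaner repair is to work with a product such as $t=L_kL_{k+1}L_m$: one checks directly that $t\in\stab_T(x_S)\cap\stab_T(x_R)$, that its class in $\pi_0(\stab_T(x_S))$ equals that of $L_k$, and in $\pi_0(\stab_T(x_R))$ equals that of $L_{k+1}$; the common-stabilizer principle then gives $\phi'(L_{k+1})=\phi(L_k)$ as desired. With this adjustment your proof goes through.
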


While we don't have a complete characterization of the order in $\mathcal{D}$ as we have for the simply laced case and for type $\bf{B}$, there is something we can say about the connected components of the Hasse diagram.
This in turn will give a necessary condition for the order in $\mathcal{D}$.
To show this, we need to study the sequences that determine possible local systems.

\begin{defin}
Let $X=(a_1,\ldots,a_r)$ a sequence where $a_i=\pm 1$.
Then we define the \textit{plus number} of $X$ as 
\[\pl(X)=\sum_{a_i=1}(-1)^i\]
 and the \textit{minus number} of $X$ as
 \[\mi(X)=\sum_{a_i=-1}(-1)^i\]
\end{defin}

The most important property of the minus and plus numbers is the following:
\begin{prop}\label{above2}
Let $X=\left(a_1,\ldots,a_r\right)$ as above and fix $j\in\left\lbrace 1,\ldots, r\right\rbrace$ such that $a_j=a_{j+1}$.
Denote with $Y$ the sequence $\left(a_1,\ldots, \widehat{a_j},\widehat{a_{j+1}},\ldots,a_r\right)$.
Then $\pl(Y)=\pl(X)$ and $\mi(Y)=\mi(X)$.
\end{prop}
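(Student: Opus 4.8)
The plan is to compute both $\pl$ and $\mi$ directly by tracking how the deletion of the two consecutive entries relocates every surviving entry. Write $Y=(b_1,\ldots,b_{r-2})$. For $i<j$ the deletion does not disturb the head of the sequence, so $b_i=a_i$ and this entry sits at position $i$ in both $X$ and $Y$. For $i\geq j$ the entry that stood at position $i+2$ in $X$ is shifted down to position $i$ in $Y$, i.e. $b_i=a_{i+2}$. I would set up the two sums $\pl$ and $\mi$ split into three blocks: the head (positions $<j$), the two deleted positions $j$ and $j+1$, and the tail (originally positions $>j+1$).

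The crucial observation, which is really the whole content of the statement, is that deleting \emph{two} consecutive entries shifts every subsequent position by exactly two, and since $(-1)^{i+2}=(-1)^i$ the sign $(-1)^{(\text{position})}$ attached to each surviving tail entry is unchanged. Hence each entry originally at position $>j+1$ contributes exactly the same signed amount to $\pl$, and the same to $\mi$, whether these are computed from $X$ or from $Y$; the head entries contribute identically for the trivial reason that their positions and values are untouched. I would phrase this as: the map $i\mapsto i$ on the head and $i+2\mapsto i$ on the tail is a parity-preserving bijection between the surviving indices of $X$ and the indices of $Y$.

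It then remains to check that the two deleted entries contributed nothing to either sum. Here the hypothesis $a_j=a_{j+1}$ is used: both entries fall into the same one of the two sums. If $a_j=a_{j+1}=1$ they appear only in $\pl(X)$, where they contribute $(-1)^j+(-1)^{j+1}=0$, and they contribute nothing to $\mi(X)$; if $a_j=a_{j+1}=-1$ the roles of $\pl$ and $\mi$ are interchanged but the cancellation $(-1)^j+(-1)^{j+1}=0$ is identical. Combining the three blocks yields $\pl(Y)=\pl(X)$ and $\mi(Y)=\mi(X)$.

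There is no genuine obstacle here: the argument is pure parity bookkeeping, and the only point that requires any care is keeping track of the two-step index shift in the tail. That shift is precisely what makes the proposition work — had we deleted a single entry instead of a pair, every tail parity would flip and the conclusion would be false, so it is worth stating explicitly that the preservation rests on both the evenness of the shift and the equality $a_j=a_{j+1}$.
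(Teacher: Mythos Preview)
Your proof is correct and follows essentially the same approach as the paper: you split the sum into head, the deleted pair, and tail, use $(-1)^{i+2}=(-1)^i$ to show the tail contributions are unchanged, and observe that the deleted pair contributes $(-1)^j+(-1)^{j+1}=0$ to whichever of $\pl$ or $\mi$ it lies in. The paper treats the case $a_j=1$ explicitly and declares $a_j=-1$ symmetric, whereas you handle both at once, but the argument is otherwise identical.
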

\begin{proof}
To avoid confusion we will write $Y=\left(b_1,\ldots,b_{r-2}\right)$ with $b_i=a_i$ for every $i<j$ and $b_i=a_{i+2}$ for every $i\geq j$.

Suppose $a_j=1$.
We have 
\begin{align*}
\pl(X)=\sum_{a_i=1}(-1)^i&=\sum_ {a_i=1, i<j}(-1)^i+\sum_{a_i=1,i>j+1}(-1)^i+(-1)^j+(-1)^{j+1}\\
&=\sum_ {a_i=1, i<j}(-1)^i+\sum_{a_i=1,i>j+1}(-1)^i\\
&=\sum_ {b_i=1, i<j}(-1)^i+\sum_{b_i=1,i\geq j}(-1)^{i}=\pl(Y)
\end{align*}
where the last equality follows because for $(-1)^i=(-1)^{i+2}$.
On the other hand
\[\mi(X)=\sum_{a_i=-1}(-1)^i=\sum_ {a_i=-1, i<j}(-1)^i+\sum_{a_i=-1,i>j+1}(-1)^i=\mi(Y)\]
as above.
The case $a_j=-1$ is symmetric.
\end{proof}

We showed that the plus and minus numbers don't change if we delete two identical adjacent numbers.

Given a sequence, we can delete all pairs of identical adjacent numbers until it is no longer possible to do so.
Doing so, by Lemma \ref{above2}, the plus and minus numbers don't change.
The final sequence must be one of alternating signs and it is easy to see that the sequences of alternating signs are uniquely identified by their plus and minus numbers.
Hence, the final sequence doesn't depend on the order in which we delete the pairs of identical adjacent numbers.
There is another thing we can do; if the rightmost element of the final sequence is $1$ we delete it.

\begin{defin}
Let $X=\left(a_1,\ldots,a_r\right)$ be a sequence with $a_i=\pm 1$.
Denote with $\re(X)$ the sequence obtained by $X$ deleting inductively all pairs of identical adjacent number and then, if present, the rightmost $1$.

The sequence $\re(X)$ is called the \textit{reduced form} of $X$.
\end{defin}

We can finally state our last result.
\begin{theorem}\label{GorderC}
Let $X$ and $Y$ be the sequences associated respectively to a local system on $(v,S)$ and $(u,R)$.
Then the corresponding elements in $\mathcal{D}$ are in the same connected component of the Hasse diagram if and only if $\re(X)=\re(Y)$.
\end{theorem}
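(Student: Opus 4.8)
The plan is to prove that $\re$ is a complete invariant of the connected components of the Hasse diagram, treating the two implications separately.

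\textbf{Forward direction.} First I would show that every edge of the Hasse diagram preserves $\re$. An edge joins $\gamma$ to some $\gamma'\in\alpha\circ\gamma$, so by Lemma~\ref{listofaction} it is of type $b1)$, $c1)$ or $d1)$ read upward (the real and complex descents being these same edges read downward). By the two preceding lemmas describing how the associated sequence transforms under ascents: a $b1)$-ascent leaves the sequence unchanged; a $c1)$-ascent inserts two adjacent equal entries ($1,1$ or $-1,-1$) at the positions of the two new long roots $2e_i,2e_{i+1}$; and a $d1)$-ascent appends a single $1$. In the $c1)$ case the freshly inserted pair can be deleted immediately, so by confluence of the reduction the fully reduced alternating form is unchanged (equivalently $\pl$ and $\mi$ are preserved by Proposition~\ref{above2}). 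In the $d1)$ case I would check, splitting into ``reduced form ends in $-1$'' versus ``ends in $1$'', that appending a $1$ and then reducing reproduces exactly the rightmost-$1$ deletion built into the definition of $\re$, so $\re$ is again unchanged. Hence $\re$ is constant along edges, and therefore on connected components, which gives one implication.

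\textbf{Backward direction: reducing the sequence.} For the converse I would show that every $(v,S,X)\in\mathcal{D}$ is connected to an element whose sequence is exactly $\re(X)$, arguing by induction on the length of $X$. If $X$ is already reduced there is nothing to do. Otherwise $X$ has two adjacent equal entries or a superfluous trailing $1$, and I would exhibit a descent removing them: two adjacent equal long-root entries at consecutive indices $2e_i,2e_{i+1}$ are merged into the short root $e_i+e_{i+1}$ by the $c2)$-descent associated to the simple root $\alpha$ with $v^{-1}(\alpha)=e_i-e_{i+1}\in\Delta_P$, and a trailing long root contributing $+1$ is stripped by the corresponding $d2)$-descent. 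Each move connects $(v,S,X)$ to a strictly shorter sequence with the same reduced form, so the induction terminates at a carrier of $\re(X)$. It then remains to connect any two elements carrying the common reduced sequence $Z=\re(X)=\re(Y)$; for this I would use only the complex ($b$-type) ascents and descents, which leave the sequence fixed, to move between the underlying orbits — an orbit-level connectivity argument of the same kind as the one used for $\RM$ in the type $\mathbf{B}$ case (Lemma~\ref{rango2}) — and finally invoke Proposition~\ref{propordlb} to lift the resulting orbit relations into $\mathcal{D}$.

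\textbf{Main obstacle.} I expect the difficulty to lie entirely in the realizability of the descents in the reduction step. To apply a $c2)$-merge I need the two equal entries to sit at \emph{consecutive} indices $i,i+1$, since only then is $e_i-e_j$ simple and the required $\alpha$ with $v^{-1}(\alpha)=e_i-e_{i+1}$ available; when the two equal adjacent entries correspond to long roots of non-consecutive index, I must first reposition the orbit by complex moves that do not alter the sequence, and similarly I must bring the trailing long root into the position where the $d2)$-descent exists. Verifying that these sequence-preserving repositionings always exist requires the explicit description of $W^P$ and $\Psi$ in type $\mathbf{C}$ — the analogue of Proposition~\ref{azione WP} from the type $\mathbf{B}$ section. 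Establishing this geometric availability, together with the connectivity of the carriers of a fixed reduced sequence, is the technical heart of the argument; the bookkeeping on the sequences themselves is routine once these moves are known to be realizable.
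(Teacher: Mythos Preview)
Your forward direction is essentially the paper's argument: it checks that each ascent type preserves $\re$, and since the Bruhat $\mathcal{G}$-order is generated by these, $\re$ is constant on components.

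Your backward direction, however, takes the opposite route from the paper. You propose to \emph{descend}: merge equal adjacent long-root entries via $c2)$, strip trailing $1$'s via $d2)$, until the sequence literally equals $\re(X)$, and then connect all carriers of that fixed reduced sequence by $b$-type moves. The paper instead \emph{ascends}: every element is linked by a chain of ascents to the open orbit $(\omega_P,\{2e_1,\ldots,2e_n\})$, so one may assume both elements live there with sequences of full length $n$. On the open orbit every consecutive pair $(j,j+1)$ admits the $c1)/c2)$ move because $\omega_P^{-1}(e_{n-j-1}-e_{n-j})=e_j-e_{j+1}$, and that move connects $(a_1,\ldots,a_n)$ to the sequence with $a_j,a_{j+1}$ both negated whenever $a_j=a_{j+1}$. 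The paper then gives a short sign-flipping algorithm showing that any two length-$n$ sequences with the same $\re$ are connected by such flips.

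The payoff of the paper's route is precisely that it dissolves the two obstacles you flag. By working on a single orbit where $S$ consists entirely of long roots at every index, there is no repositioning problem (the needed $\alpha$ with $v^{-1}(\alpha)=e_j-e_{j+1}$ always exists) and no separate ``connectivity of carriers of a fixed $Z$'' step (there is only one underlying orbit). Your plan is plausible, but both acknowledged gaps are genuine work: the repositioning needs a type~$\mathbf{C}$ analogue of Proposition~\ref{azione WP} that the paper never develops, and the final connectivity of all $(v,S)$ with a prescribed long-root pattern by $b$-moves alone is not the same statement as Lemma~\ref{rango2} and would require its own argument. The ascend-to-the-open-orbit trick lets the paper bypass both.
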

\begin{proof}
We will start by showing that if $\left((v,S),X\right)$ and $\left((u,R),Y\right)$ are in the same connected component, then $\re(X)=\re(Y)$.
To see this, consider the following order 
\[\left((v,S),X\right)\prec\left((u,R),Y\right)\Leftrightarrow(v,S)< (u,R)\text{ and }\re(X)=\re(Y)\]
it verifies the conditions of definition \ref{ordinelb} because we saw above that extending with an ascent doesn't change the reduced form.
This means that $\left((v,S),X\right)\leq\left((u,R),Y\right)$ implies $\re(X)=\re(Y)$ and the claim follows.

We will now show that if $\re(X)=\re(Y)$ then $\left((v,S),X\right)$ and $\left((u,R),Y\right)$ are in the same connected component.
Recall that every orbit that is not the open orbit in $G/L$ admits an ascent.
This implies that for every orbit $\mathcal{O}$ there is a sequence of ascents between $\mathcal{O}$ and the open orbit.
If we have an isomorphism class of local systems on $\mathcal{O}$ we can get a sequence of ascents in $\mathcal{D}$ by choosing one of the possible extensions in every ascent.
We know that doing this doesn't change the reduced form, so we can suppose without loss of generality that $Bvx_S=Bux_R$ are the open orbit of $G/L$ .
More precisely, $v=\omega_P$ the longest element in $W^P$ and $R=S=\left\lbrace 2e_1,\ldots,2e_n\right\rbrace$.

Now it is easy to see that $v(e_i)=-e_{n-i}$ for every $i=1,\ldots,n$.
Hence, if we fix $j\in\left\lbrace 1,\ldots,n-1\right\rbrace$ we have $v(e_j-e_{j+1})=e_{n-j-1}-e_{n-j}\in\Delta$.
It follows that for every $j$ there is a descent $\alpha\in\Delta$ such that $(v,T)\in\mathcal{E}_\alpha(v,S)$ with $T=\left\lbrace 2e_i\right\rbrace_{i\neq j,j+1}\cup\left\lbrace e_{j}+e_{j+1}\right\rbrace$.
Moreover, for $(v,T)$ the root $\alpha$ is an ascent of type $c1)$.
To see what all this means, take $X=\left(a_1,\ldots,a_n\right)$ and $j\in\left\lbrace 1,\ldots,n-1\right\rbrace$ such that $a_j=a_{j+1}$.
Denote with $X'=\left(b_1,\ldots, b_n\right)$ the sequence with $b_i=a_i$ for every $i\neq j,j+1$ and $b_j=-a_j$, $b_{j+1}=-a_{j+1}$ and with $X_0=\left(c_1,\ldots,c_{n-2}\right)$ the sequence with $c_i=a_i$ for every $i<j$ and $c_i=a_{i+2}$ for $i\geq j$.
If we take the simple root $\alpha$ as above we have
\begin{center}
\begin{tikzpicture}
\node (A) at (0,0)[anchor=west] {$\left((v,S),X\right)$};
\node (B) at (6,0)[anchor=east] {$\left((v,S),X'\right)$};
\node (C) at (3,-3)[anchor=south] {$\left((v,T),X_0\right)$};
\draw [->] (C) -- node[midway,below]{$\alpha$} (A);
\draw [->] (C) -- node[midway,below]{$\alpha$}(B);
\end{tikzpicture}
\end{center}
It follows that $(Bvx_S,X)$ and $(Bvx_S,X')$ are in the same connected component.
This can clearly be done every time we have a pair of identical adjacent element in $X$.

We now want to show an algorithm that takes a sequence $X$ and gives another sequence that depends only on $\re(X)$.
Every step of the algorithm will be inverting the sign of two adjacent identical element, hence the final sequence will be in the same connected component of $X$ and thus the claim will be proved.

The algorithm is the following.
If there is $k\in\left\lbrace 1,\ldots,n\right\rbrace$ such that $a_i=1$ for every $i\leq k$ while $a_i=-a_{i+1}$ for every $i\geq k$ then we are done.
If not, let $k$ be the smallest number such that either $a_{k-1}\neq a_k=a_{k+1}$ or $a_k=a_{k+1}=-1$.
Change the sign of $a_k$ and $a_{k+1}$ and repeat the algorithm with the new sequence.

Note that there is always a $k$ with the property above unless we are in the first case.
Moreover, the algorithm ends.
To see this denote with $s_0$ the length of the initial sequence of $1$ in $X$ and with $s_i$ the same length after $i$ steps of the algorithm.
Moreover, denote with $k_i$ the element $k$ defined above for the $i$-th step.
We want to show that after every step either $k_i$ decreases or $s_i$ increases.
Given that they can't decrease infinitely or increase infinitely we would have the claim.
So, fix a sequence $X$ and the relative $s=s_0$ and $k=k_0$.
If $k$ is such that $a_{k-1}\neq a_k=a_{k+1}$ we either have $s=k-1$ and $a_{k-1}=1$ or $s<k-1$.
In the first case it is clear that $s_1\geq k+1>s$ while in the second case $s_1=s$ and $k_1=k-1<k$.
Suppose now that $k$ is such that $a_k=a_{k+1}=-1$.
Then it must be $s=0$ and $k=1$ or we would be in the previous case, which means that $s_1\geq 2>s$.

The last thing to note is that the algorithm doesn't change the reduced form of the sequence because it changes only adjacent pair of identical values and the output of the algorithm clearly depends only on the reduced form $X$.
This proves the claim.

\end{proof}
\bibliographystyle{unsrt}
\bibliography{bibartsisloc}{}

\end{document}